\documentclass{amsart}

\title{Scissors automorphism groups and their homology}

\author[A. Kupers]{Alexander Kupers}
\address{Department of Computer and Mathematical Sciences, University of Toronto Scarborough, 1265 Military Trail, Toronto, ON M1C 1A4, Canada}
\email{a.kupers@utoronto.ca}

\author[E. Lemann]{Ezekiel Lemann} 
\address{Department of Mathematics and Statistics, Binghamton University, PO Box 6000, Binghamton, NY 13902}
\email{elemann1@binghamton.edu}

\author[C. Malkiewich]{Cary Malkiewich}
\address{Department of Mathematics and Statistics, Binghamton University, PO Box 6000, Binghamton, NY 13902}
\email{cmalkiew@binghamton.edu}

\author[J. Miller]{Jeremy Miller} 
\address{Mathematical Sciences Building, Purdue University 150 N University St, West Lafayette, IN 47907}
\email{jeremykmiller@purdue.edu}

\author[R. J. Sroka]{Robin J. Sroka} 
\address{Mathematisches Institut, Universität Münster, Einsteinstrasse 62, 48149 Münster, Germany}
\email{robinjsroka@uni-muenster.de}


\usepackage[T1]{fontenc}
\usepackage{lmodern}
\usepackage[final]{microtype}
\usepackage[mathscr]{euscript}

\usepackage[backend=bibtex, style=alphabetic, url=false, hyperref, backref, backrefstyle=none, maxbibnames=99, sorting=nyt, safeinputenc, giveninits=true]{biblatex}
\AtEveryBibitem{\clearlist{language}}
\addbibresource{sources.bib}

\RequirePackage{doi}

\usepackage[dvipsnames,svgnames,x11names,hyperref]{xcolor}
\PassOptionsToPackage{hyphens}{url}\usepackage{hyperref}
\hypersetup{
	colorlinks=true,
	citecolor=Mahogany,
	linkcolor=Mahogany,
	urlcolor=Mahogany,
	filecolor=Mahogany,
	bookmarksdepth=3
}

\usepackage{multicol}
\usepackage{amssymb}
\usepackage{amsmath}
\usepackage{amsthm}
\usepackage{mathtools}
\addtolength{\textwidth}{+1.32cm}
\addtolength{\textheight}{+0.22cm}
\calclayout

\usepackage{tikz}
\usepackage{tikz-cd}

\newcommand*{\figmargin}{.2in} 
\newcommand*{\figwidth}{1.5in} 

\usepackage{xparse}
\usepackage{etoolbox}
\usepackage{aliascnt}
\usepackage{hyperref}
\hypersetup{
	colorlinks=true,
	linkcolor=blue,
	filecolor=blue,
	citecolor = blue,
	urlcolor=blue,
}
\usepackage[capitalize,nameinlink,noabbrev]{cleveref}

\AtBeginDocument{%
	\def\MR#1{}
}

\usepackage{enumitem}
\setenumerate[1]{label=(\roman*),}
\setitemize[1]{label=\raisebox{0.25ex}{\boldmath$\cdot$}}

\usepackage[textsize = footnotesize]{todonotes}
\setlength{\marginparwidth}{2.5cm}
\usepackage{comment}



\newcounter{environmentcounteralphabetic}

\numberwithin{environmentcounter}{section}


\newaliascnt{definitioncounteralias}{environmentcounter}

\newaliascnt{notationcounteralias}{environmentcounter}

\newaliascnt{remarkcounteralias}{environmentcounter}

\newaliascnt{examplecounteralias}{environmentcounter}

\newaliascnt{constructioncounteralias}{environmentcounter}

\newaliascnt{lemmacounteralias}{environmentcounter}

\newaliascnt{propositioncounteralias}{environmentcounter}

\newaliascnt{corollarycounteralias}{environmentcounter}

\newaliascnt{theoremcounteralias}{environmentcounter}

\newaliascnt{questioncounteralias}{environmentcounter}

\newaliascnt{conjecturecounteralias}{environmentcounter}

\theoremstyle{definition}
\newtheorem{definition}[definitioncounteralias]{Definition}

\newtheorem{notation}[notationcounteralias]{Notation}

\theoremstyle{plain}
\newtheorem{lemma}[lemmacounteralias]{Lemma}
\newtheorem{proposition}[propositioncounteralias]{Proposition}
\newtheorem{corollary}[corollarycounteralias]{Corollary}

\newtheorem{theorem}[theoremcounteralias]{Theorem}
\newtheorem{theoremalphabetic}[environmentcounteralphabetic]{Theorem}
\newtheorem{corollaryalphabetic}[environmentcounteralphabetic]{Corollary}
\newtheorem{conjecturealphabetic}[environmentcounteralphabetic]{Conjecture}

\theoremstyle{remark}
\newtheorem{remark}[remarkcounteralias]{Remark}
\newtheorem{example}[examplecounteralias]{Example}

\makeatletter
\def\namedlabel#1#2{\begingroup
	#2%
	\def\@currentlabel{#2}%
	\phantomsection\label{#1}\endgroup
}
\makeatother



\newcommand{\vol}{\on{vol}}
\newcommand{\cA}{\mathcal{A}}
\newcommand{\cC}{\mathcal{C}}
\newcommand{\cE}{\mathcal{E}}
\newcommand{\cH}{\mathcal{H}}
\newcommand{\cW}{\mathcal{W}}
\newcommand{\cG}{\mathcal{G}}
\newcommand{\cL}{\mathcal{L}}
\newcommand{\cU}{\mathcal{U}}
\newcommand{\cX}{\mathcal{X}}
\newcommand{\bR}{\mathbf{R}}
\newcommand{\congto}{\xrightarrow{\raisebox{-.5ex}[0ex][0ex]{$\sim$}}}
\DeclareMathOperator{\PT}{PT}
\DeclareMathOperator{\ST}{ST}

\DeclareMathOperator{\Pt}{Pt}

\newcommand{\cR}{\mathcal{R}}
\newcommand{\cD}{\mathcal{D}}
\newcommand{\apt}{\mathrm{apt}}


\newcommand{\on}[1]{\operatorname{#1}}
\newcommand{\scr}[1]{\mathscr{#1}}
\newcommand{\colim}{\on{colim}}
\newcommand{\hocolim}{\on{hocolim}}
\newcommand{\tcofib}{\on{tcofib}}
\newcommand{\coker}{\on{coker}}
\newcommand{\ab}{{ab}}
\newcommand{\diam}{\on{diam}}


\newcommand{\category}[1]{\mathcal{#1}}
\newcommand{\catC}{\category{C}}
\renewcommand{\hom}{\on{Hom}}
\newcommand{\aut}{\mathrm{Aut}}
\newcommand{\id}{\mathrm{id}}

\newcommand{\ob}{\mathrm{ob}}


\newcommand{\Z}{\mathbb{Z}}

\newcommand{\Q}{\mathbb{Q}}
\newcommand{\realnumbers}{\mathbb{R}}
\newcommand{\R}{\mathbb{R}}



\newcommand{\Link}[2]{\on{Link}_{#1}(#2)} 

\setcounter{tocdepth}{1}

\begin{document}

	\begin{abstract}
	In any category with a reasonable notion of cover, each object has a group of scissors automorphisms. We prove that under mild conditions, the homology of this group is independent of the object, and can be expressed in terms of the scissors congruence K-theory spectrum defined by Zakharevich. We therefore obtain both a group-theoretic interpretation of Zakharevich's higher scissors congruence K-theory, as well as a method to compute the homology of scissors automorphism groups. We apply this to various families of groups, such as interval exchange groups and Brin--Thompson groups, recovering results of Szymik--Wahl, Li, and Tanner, and obtaining new results as well. 
	\end{abstract}

	\maketitle

	\tableofcontents

	\vspace{-.5cm}
	\section{Introduction}
	In this paper, we study the group homology of various scissors congruence automorphism groups, or \emph{scissors automorphism groups} for short. Informally, a scissors automorphism of an $n$-dimensional Euclidean polytope is a self-map obtained by cutting the polytope into finitely many smaller polytopes and rearranging the pieces using isometries, as shown below.
	
	\vspace{1em}
	\centerline{
	\def\svgwidth{3in}
\begingroup%
  \makeatletter%
  \providecommand\color[2][]{%
    \errmessage{(Inkscape) Color is used for the text in Inkscape, but the package 'color.sty' is not loaded}%
    \renewcommand\color[2][]{}%
  }%
  \providecommand\transparent[1]{%
    \errmessage{(Inkscape) Transparency is used (non-zero) for the text in Inkscape, but the package 'transparent.sty' is not loaded}%
    \renewcommand\transparent[1]{}%
  }%
  \providecommand\rotatebox[2]{#2}%
  \newcommand*\fsize{\dimexpr\f@size pt\relax}%
  \newcommand*\lineheight[1]{\fontsize{\fsize}{#1\fsize}\selectfont}%
  \ifx\svgwidth\undefined%
    \setlength{\unitlength}{222.38027826bp}%
    \ifx\svgscale\undefined%
      \relax%
    \else%
      \setlength{\unitlength}{\unitlength * \real{\svgscale}}%
    \fi%
  \else%
    \setlength{\unitlength}{\svgwidth}%
  \fi%
  \global\let\svgwidth\undefined%
  \global\let\svgscale\undefined%
  \makeatother%
  \begin{picture}(1,0.4391301)%
    \lineheight{1}%
    \setlength\tabcolsep{0pt}%
    \put(0,0){\includegraphics[width=\unitlength,page=1]{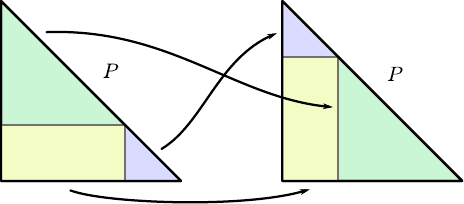}}%
  \end{picture}%
\endgroup%

	}
	\vspace{1em}
	
	We prove that the homology of these groups is independent of the choice of $n$-dimensional polytope, and that it admits a description in terms of the scissors congruence K-theory spectrum defined by Zakharevich \cite{zakharevich2014}. This spectrum is to the scissors automorphism groups what the algebraic K-theory spectrum of a ring is to the general linear groups. Our work thus leads to a group-theoretic interpretation of Zakharevich's higher scissors congruence K-theory, as well as a method to compute the homology of scissors automorphism groups. This homology computation is often tractable, using the third author's description of the scissors congruence K-theory spectrum in terms of homotopy orbits of a Thom spectrum over a Tits building \cite{malkiewich2022}.
	
	By varying the allowed polytopes and isometries, the scissors automorphism groups recover various families of groups of interest in dynamics and group theory. Two classes of such examples are the groups of rectangular exchange transformations \cite{tanner2023, cornulierlacourte2022} and the Brin--Thompson groups \cite{szymikwahl2019, li2022}. This work therefore provides an interesting connection between these subjects and Zakharevich's K-theory of assemblers \cite{zakharevich2014}.

	We begin by describing our results in detail for Euclidean scissors automorphism groups before discussing the more general context of assemblers, which we use to formulate our results in the body of the paper.

	\subsection{Scissors automorphism groups}
	Two polytopes $P$ and $Q$ in $n$-dimensional Euclidean geometry are \emph{scissors congruent} if $P$ can be cut into finitely many polytopes, and these can be rearranged by isometries to form $Q$. Hilbert's third problem asks for the classification of polytopes up to this relation of scissors congruence. Volume is invariant under scissors congruence, but famously Dehn showed that it is not a complete invariant in dimension 3 \cite{dehn}. Later, Sydler showed that the volume and Dehn invariant suffice in dimension 3 \cite{sydler} and Jessen extended this result to dimension 4 \cite{jessen}. In dimension 5 and above, the problem is still open---see e.g.~\cite{sah1979,dupont_book,zak_perspectives}.

	As mentioned above, in this paper we consider the complementary problem of fixing one polytope $P$, and examining the group of transformations from $P$ to itself that cut $P$ into pieces, and then rearrange those pieces to form $P$. This is the natural extension of the scissors congruence problem, since these transformations are precisely the ones that realize scissors congruences between distinct polytopes $P$ and $Q$.

	To make this precise, let $E^n$ denote $n$-dimensional Euclidean geometry, and $I(E^n)$ its \emph{group of isometries}---compositions of translations, rotations, and reflections. A \emph{simplex} $\Delta \subseteq E^n$ is a convex hull of $(n+1)$ points in general position. A \emph{polytope} $P \subset E^n$ is a subset that can be expressed as a finite union of simplices. Note that while a polytope must be bounded, it does not need to be convex. A \emph{cover} $\{ P_i \subseteq P \}_{i \in I}$ is a finite set of polytopes $P_i \subseteq P$ whose union is $P$ and whose interiors are disjoint, as illustrated below.
	
	\vspace{1em}
	\centerline{
	\def\svgwidth{1.0in}
\begingroup%
  \makeatletter%
  \providecommand\color[2][]{%
    \errmessage{(Inkscape) Color is used for the text in Inkscape, but the package 'color.sty' is not loaded}%
    \renewcommand\color[2][]{}%
  }%
  \providecommand\transparent[1]{%
    \errmessage{(Inkscape) Transparency is used (non-zero) for the text in Inkscape, but the package 'transparent.sty' is not loaded}%
    \renewcommand\transparent[1]{}%
  }%
  \providecommand\rotatebox[2]{#2}%
  \newcommand*\fsize{\dimexpr\f@size pt\relax}%
  \newcommand*\lineheight[1]{\fontsize{\fsize}{#1\fsize}\selectfont}%
  \ifx\svgwidth\undefined%
    \setlength{\unitlength}{65.96903301bp}%
    \ifx\svgscale\undefined%
      \relax%
    \else%
      \setlength{\unitlength}{\unitlength * \real{\svgscale}}%
    \fi%
  \else%
    \setlength{\unitlength}{\svgwidth}%
  \fi%
  \global\let\svgwidth\undefined%
  \global\let\svgscale\undefined%
  \makeatother%
  \begin{picture}(1,1.19959207)%
    \lineheight{1}%
    \setlength\tabcolsep{0pt}%
    \put(0,0){\includegraphics[width=\unitlength,page=1]{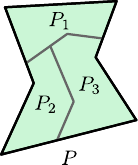}}%
  \end{picture}%
\endgroup%

	}
	
	\begin{definition}A \emph{scissors congruence} $P \congto Q$ is an equivalence class of the data of a cover $\{P_i \subseteq P \}_{i \in I}$ and a collection of isometries $\{ g_i \in I(E^n)\}_{i \in I}$ such that $\{ g_iP_i \subseteq Q \}_{i \in I}$ is a cover, as illustrated below.

	\centerline{
	\def\svgwidth{2.8in}
\begingroup%
  \makeatletter%
  \providecommand\color[2][]{%
    \errmessage{(Inkscape) Color is used for the text in Inkscape, but the package 'color.sty' is not loaded}%
    \renewcommand\color[2][]{}%
  }%
  \providecommand\transparent[1]{%
    \errmessage{(Inkscape) Transparency is used (non-zero) for the text in Inkscape, but the package 'transparent.sty' is not loaded}%
    \renewcommand\transparent[1]{}%
  }%
  \providecommand\rotatebox[2]{#2}%
  \newcommand*\fsize{\dimexpr\f@size pt\relax}%
  \newcommand*\lineheight[1]{\fontsize{\fsize}{#1\fsize}\selectfont}%
  \ifx\svgwidth\undefined%
    \setlength{\unitlength}{174.73782514bp}%
    \ifx\svgscale\undefined%
      \relax%
    \else%
      \setlength{\unitlength}{\unitlength * \real{\svgscale}}%
    \fi%
  \else%
    \setlength{\unitlength}{\svgwidth}%
  \fi%
  \global\let\svgwidth\undefined%
  \global\let\svgscale\undefined%
  \makeatother%
  \begin{picture}(1,0.53060647)%
    \lineheight{1}%
    \setlength\tabcolsep{0pt}%
    \put(0,0){\includegraphics[width=\unitlength,page=1]{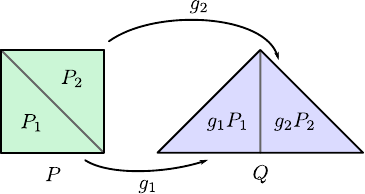}}%
  \end{picture}%
\endgroup%

	}
	
	\noindent The equivalence relation is given by passing to further covers: the pair $\smash{\{P_j' \subseteq P\}_{j \in J}}$ and $\{g_j'\}_{j \in J}$ gives the same scissors congruence if there is a common refinement $\smash{\{P_k'' \subseteq P\}_{k \in K}}$ such that each $P_k''$ is in some intersection $P_i \cap P_j'$ and then we have $g_k'' = g_j' = g_i$.\end{definition}

	\begin{definition}The \emph{scissors automorphism group} $\aut(P)$ is given by scissors congruences from $P$ to itself, under composition.\end{definition}
	
	We will give many examples in \cref{sec:thom-spectrum-computations} and \cref{sec:computations-families}. The reader may find it helpful to think of a scissors automorphism as a function $P \to P$ defined almost everywhere. Then composition in the scissors automorphism group is given by composition of such functions. Our first main result concerns the homology of these groups; in its statement, an \emph{abelian local coefficient} system for a group is one where the action factors over its abelianisation.

	\begin{theoremalphabetic}[\cref{thm:embeddings-induce-homology-isomorphisms-vol}]\label{thm:main-iso} For any two nonempty $n$-dimensional polytopes $P$ and $Q$, there is a canonical isomorphism
		\[H_*(\aut(P)) \cong H_*(\aut(Q)).\]
	The same is true with abelian local coefficients.
	\end{theoremalphabetic}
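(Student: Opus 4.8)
The plan is to exhibit, for each inclusion of a nonempty $n$-dimensional polytope $P$ into another such polytope $Q$, a group homomorphism $\aut(P) \to \aut(Q)$ that induces an isomorphism on homology, and then to deduce the general statement by a zig-zag. The key point is that scissors automorphisms are supported "almost everywhere," so a scissors automorphism of $P$ extends to one of $Q$ by acting as the identity on the complement $\overline{Q \setminus P}$ (which, after subdividing, is again a polytope). This gives a homomorphism $\iota_{P,Q} \colon \aut(P) \to \aut(Q)$ whenever $P \subseteq Q$; note this requires only that $P$ be a sub-polytope of $Q$, not that they have equal volume, so via a common ambient polytope containing translated copies of both $P$ and $Q$ we reduce the general case to the case of such inclusions. (Strictly, we should compose with a conjugation by an isometry carrying $P$ into position inside $Q$; the resulting map on homology is independent of this choice up to inner automorphism, hence canonical.)

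The heart of the argument is to show $\iota_{P,Q}$ is a homology isomorphism. For this I would find a third group, sitting inside $\aut(Q)$, that receives $\aut(P)$ and whose homology visibly agrees with that of $\aut(P)$. The natural candidate is the "stabilizer" subgroup $\aut(Q)_P \leq \aut(Q)$ of scissors automorphisms that preserve $P$ — i.e. restrict to a scissors automorphism of $P$ and, separately, of $\overline{Q \setminus P}$. Restriction gives a split surjection $\aut(Q)_P \twoheadrightarrow \aut(P)$ (split by $\iota_{P,Q}$), whose kernel is identified with $\aut(\overline{Q\setminus P})$; but $\overline{Q \setminus P}$ can be chosen to have arbitrarily small volume, so this by itself does not immediately collapse. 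Instead, the strategy should be the standard "swindle"/homological stability input: one shows the inclusion $\aut(Q)_P \hookrightarrow \aut(Q)$ is a homology isomorphism because $\aut(Q)$ acts highly transitively on sub-polytopes of $Q$ isometric to $P$, yielding a highly connected semi-simplicial space (a "complex of embedded copies of $P$") on which $\aut(Q)$ acts with stabilizers the conjugates of $\aut(Q)_P$; the spectral sequence of this action then forces $H_*(\aut(Q)_P) \to H_*(\aut(Q))$ to be an isomorphism. Combined with the split surjection $\aut(Q)_P \to \aut(P)$, one must also check that the kernel $\aut(\overline{Q\setminus P})$ acts homologically trivially — here one invokes a further swindle, absorbing the small complementary region into infinitely many copies of itself (possible because we may take $\overline{Q \setminus P}$ itself to contain infinitely many disjoint isometric sub-polytopes and run an Eilenberg swindle on the homology), showing $H_*(\aut(\overline{Q\setminus P})) $ contributes trivially; equivalently, one argues the section $\iota$ and the projection are mutually inverse on homology.

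I expect the main obstacle to be making the transitivity/high-connectivity input precise and verifying that the relevant semi-simplicial complex of "disjoint isometric copies of a small polytope inside a given one" is highly connected in a range growing to infinity — this is where the combinatorial geometry of polytopes and the flexibility of cutting-and-rearranging must be used carefully, and it is the analogue of classical connectivity results for arc complexes or complexes of embeddings. A cleaner route, which I would pursue if the direct complex is awkward, is to bypass polytopes entirely and prove the statement at the level of the general assembler/cover formalism promised later in the paper: isolate the abstract properties of the category of polytopes-with-covers (existence of "disjoint copies," a "cancellation" or "absorption" axiom) that make the swindle go through, prove the homology-invariance abstractly in that setting (which is presumably \cref{thm:embeddings-induce-homology-isomorphisms-vol}), and then observe $n$-dimensional Euclidean polytopes satisfy those axioms. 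The abelian-local-coefficients refinement should follow formally: every step above is natural, and a coefficient system factoring through the abelianization is pulled back compatibly along $\iota_{P,Q}$ since $\iota_{P,Q}$ induces an isomorphism (or at least a compatible map) on $H_1$; one runs the same spectral-sequence and swindle arguments with twisted coefficients, using that the swindle identifies the twisted homology of the stabilizer with that of the whole group.
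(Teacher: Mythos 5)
Your general picture is right — extension-by-identity homomorphisms, independence up to conjugation, and a reduction to a high-connectivity statement about a complex of disjoint embedded copies — and your ``cleaner route'' at the end (axiomatize the properties of polytopes-with-covers and run the argument at that level) is exactly what the paper does via the assembler formalism and the Randal-Williams--Wahl machinery. However, the concrete mechanism you propose in the middle paragraph has a genuine gap.

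The problem is the Eilenberg swindle you invoke to show the kernel $\aut(\overline{Q\setminus P})$ ``contributes trivially.'' In $n$-dimensional Euclidean geometry a finite polytope cannot contain infinitely many disjoint copies of a subpolytope of fixed positive volume, so there is no infinite absorption available; this is precisely why the scissors congruence K-theory is \emph{not} trivial and why a swindle cannot be the engine of the argument. Worse, the group $\aut(\overline{Q\setminus P})$ does not have trivial homology: by the very theorem being proved, its homology is isomorphic to that of $\aut(P)$. So if you try to run a stabilizer-subgroup argument with $\aut(Q)_P \cong \aut(P) \times \aut(\overline{Q\setminus P})$, the K\"unneth theorem gives $H_*(\aut(Q)_P)$ as a genuine tensor product, and there is no shortcut to make the second factor disappear. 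The paper sidesteps this entirely: it does not work with a stabilizer subgroup, but directly applies the RWW homological-stability theorem to the category of scissors embeddings. The trick that replaces your swindle is the Archimedean axiom (A): for any target degree $q$, one subdivides the stabilizing object $\scr{X}$ into pieces each of volume less than $\vol(\scr{A})/(2q+2)$, so that the base object already contains enough disjoint copies that the RWW range covers degree $q$. The isomorphism in \emph{all} degrees is then obtained one degree at a time, by choosing finer and finer subdivisions, rather than by any absorption argument. Your conjugation argument for independence of the embedding is essentially correct and matches \cref{disjoint_implies_conjugate}; for abelian coefficients the paper refines it by using $3$-cycles so that the relevant elements lie in the commutator subgroup.
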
 

	These isomorphisms are induced by extension-by-identity homomorphism $\aut(P) \to \aut(Q)$ arising from \emph{scissors embeddings} $P \hookrightarrow Q$, which are defined similarly to scissors congruences by allowing the union of $g_iP_i$ to not be all of $Q$, while retaining the condition that their interiors are disjoint, as illustrated below. We not only prove that the induced map on group homology (possibly with abelian coefficients) is an isomorphism, but also that it does not depend on the choice of scissors embedding. Scissors embeddings $P \hookrightarrow Q$ exist if $\vol(P) < \vol(Q)$ (see \cref{euclidean_ea}), and if $\vol(P) = \vol(Q)$ then the isomorphism in \cref{thm:main-iso} is induced by a zigzag of scissors embeddings of $P$ and $Q$ into any polytope of strictly larger volume. Again, the isomorphism is independent of the choice of zigzag.
	
	\vspace{1em}
	\centerline{
	\def\svgwidth{2.8in}
\begingroup%
  \makeatletter%
  \providecommand\color[2][]{%
    \errmessage{(Inkscape) Color is used for the text in Inkscape, but the package 'color.sty' is not loaded}%
    \renewcommand\color[2][]{}%
  }%
  \providecommand\transparent[1]{%
    \errmessage{(Inkscape) Transparency is used (non-zero) for the text in Inkscape, but the package 'transparent.sty' is not loaded}%
    \renewcommand\transparent[1]{}%
  }%
  \providecommand\rotatebox[2]{#2}%
  \newcommand*\fsize{\dimexpr\f@size pt\relax}%
  \newcommand*\lineheight[1]{\fontsize{\fsize}{#1\fsize}\selectfont}%
  \ifx\svgwidth\undefined%
    \setlength{\unitlength}{180.35181639bp}%
    \ifx\svgscale\undefined%
      \relax%
    \else%
      \setlength{\unitlength}{\unitlength * \real{\svgscale}}%
    \fi%
  \else%
    \setlength{\unitlength}{\svgwidth}%
  \fi%
  \global\let\svgwidth\undefined%
  \global\let\svgscale\undefined%
  \makeatother%
  \begin{picture}(1,0.54709743)%
    \lineheight{1}%
    \setlength\tabcolsep{0pt}%
    \put(0,0){\includegraphics[width=\unitlength,page=1]{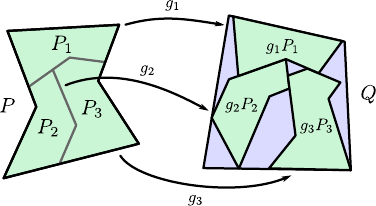}}%
  \end{picture}%
\endgroup%

	}
	\vspace{1em}
	
	Our proof of \cref{thm:main-iso} uses the homological stability machinery of Randal-Williams and Wahl \cite{randalwilliamswahl2017}. As Szymik and Wahl discovered, for ``big'' groups homological stability can present itself as an isomorphism in \emph{all} degrees, as in e.g.~\cite{szymikwahl2019,li2022,PalmerWu}, whose proofs resemble ours. This machinery also gives the following group-theoretic result:

	\begin{corollaryalphabetic}[\cref{thm:quasi-perfectness}]\label{cor:main-quasi-perfect} For any $n$-dimensional polytope $P$, the scissors automorphism group $\aut(P)$ is quasi-perfect, that is, its commutator subgroup is perfect.
	\end{corollaryalphabetic}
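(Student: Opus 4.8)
The strategy is to deduce quasi-perfectness from the homological stability already established, together with the symmetric monoidal structure given by disjoint union, via the classical relationship between stabilisation and the Quillen plus construction. The elementary input is: if a group $G$ admits an acyclic map $BG \to Y$ --- one whose homotopy fibre has vanishing reduced integral homology --- with $\pi_1(Y)$ abelian, then $G$ is quasi-perfect. Indeed, the map $G \to \pi_1(Y)$ is then surjective with kernel a perfect normal subgroup $N \trianglelefteq G$ (it is the image of $\pi_1$ of the homotopy fibre, which is perfect since the fibre is acyclic); as $\pi_1(Y)$ is abelian we get $[G,G] \subseteq N$, while $N$ perfect forces $N = [N,N] \subseteq [G,G]$, so $N = [G,G]$ and $[G,G]$ is perfect.

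To use this I would assemble the groups $\aut(P^{\sqcup k})$ of $k$ disjoint copies of $P$, for $k \ge 0$, into the symmetric monoidal groupoid of polytopes and scissors congruences under $\sqcup$ (equivalently, the assembler-theoretic model used in the body of the paper). Its group completion is the infinite loop space $\Omega^\infty K$ of a connective spectrum $K$, and by the group-completion theorem the homology of its basepoint component is $\colim_k H_*(\aut(P^{\sqcup k}))$; by \cref{thm:main-iso} every transition map here is an isomorphism, so the canonical map $B\aut(P) \to \Omega^\infty_0 K$ is a homology isomorphism, already at $k = 1$. In the symmetric monoidal setting this map is moreover acyclic --- this is the theorem identifying the plus construction with the group completion, and it is precisely here that \cref{thm:main-iso} is used, to guarantee that the group completion is modelled by $\aut(P)$ itself. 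Since $\Omega^\infty_0 K$ is an H-space, $\pi_1(\Omega^\infty_0 K)$ is abelian, so the observation above, applied to the acyclic map $B\aut(P) \to \Omega^\infty_0 K$, shows $\aut(P)$ is quasi-perfect. (Along the way one identifies $\pi_1(\Omega^\infty_0 K) = \pi_1(K) = H_1(\aut(P)) = \aut(P)^{\ab}$, so the plus construction is the one with respect to $[\aut(P),\aut(P)]$.)

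The delicate point --- and where the real content lies --- is the acyclicity of the group-completion map rather than it being merely a homology isomorphism: quasi-perfectness is invisible to constant-coefficient homology, so some genuinely stronger statement is unavoidable. One can either invoke a version of the group-completion theorem valid with abelian local coefficients, yielding $H_1(\aut(P); \Z[\aut(P)^{\ab}]) \cong H_1(\Omega^\infty_0 K; \Z[\pi_1])$, which vanishes since it is the first homology of the simply connected universal cover of $\Omega^\infty_0 K$, and then conclude by Shapiro's lemma that $[\aut(P),\aut(P)]^{\ab} = 0$; or one can cite the standard fact that the stable group $\colim_k \aut(P^{\sqcup k})$ attached to a symmetric monoidal groupoid has perfect commutator subgroup, transporting this back to $\aut(P)$ along the homology isomorphisms of \cref{thm:main-iso} with $\Z[\aut(P)^{\ab}]$-coefficients. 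The remaining steps are formal, and this infinite-loop-space input is the same one behind the description of $H_*(\aut(P))$ via Zakharevich's $K$-theory spectrum mentioned in the introduction.
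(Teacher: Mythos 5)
Your proposal is correct. The argument you relegate to an aside --- option (b), citing that the stable group $\colim_k \aut(P^{\sqcup k})$ is quasi-perfect and transporting this to $\aut(P)$ along the homology isomorphisms of \cref{thm:main-iso} with $\Z[\aut(P)^{\ab}]$-coefficients via Shapiro's lemma --- is precisely the paper's proof of \cref{thm:quasi-perfectness}; the stable quasi-perfectness is the block-sum observation of \cite[Section 3.2]{randalwilliamswahl2017} for homogeneous symmetric monoidal categories. Your primary route (the acyclic-map lemma together with the improved group-completion theorem) is also sound, but in the paper this material is deliberately ordered afterwards: \cref{cor:stable_homology_acyclic} establishes the acyclicity of $B\aut(P)\to\Omega^\infty_{[P]}K(\cA)$ using \cite[Theorem~1.1]{RandalWilliamsGC} together with \cref{thm:main-iso} and \emph{without} invoking quasi-perfectness --- so your route is not circular --- and then \cref{thm:plus-construction} combines both ingredients to identify $\Omega^\infty K(\cA)$ with a plus construction. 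What the paper's order buys is a more elementary argument for quasi-perfectness, avoiding the group-completion machinery entirely; what your order buys is that quasi-perfectness falls out for free once you have committed to proving acyclicity anyway. In either packaging the genuinely hard input is \cref{thm:main-iso} with abelian local coefficients, and your remark that constant-coefficient stability cannot detect quasi-perfectness is exactly the reason the paper proves the abelian-coefficient version of \cref{thm:main-iso} in the first place.
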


	\cref{thm:main-iso} raises the question of what the common value of the homology groups is, in other words, what the \emph{stable homology} is. Our second main result describes this in terms of the infinite loop space of the scissors congruence K-theory spectrum $K(\cE^n)$ defined by Zakharevich \cite{zakharevich2014}.

	\begin{theoremalphabetic}[\cref{cor:stable_homology_acyclic}]\label{thm:main-stable} For any nonempty $n$-dimensional polytope $P$, there is an isomorphism
		\[H_*(\aut(P)) \cong H_*(\Omega^\infty_0 K(\cE^n)).\]
	\end{theoremalphabetic}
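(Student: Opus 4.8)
The plan is to combine \cref{thm:main-iso}, which pins all the groups $H_*(\aut(P))$ to a single ``stable'' value independent of $P$, with a group-completion theorem identifying that stable value with the homology of a component of $\Omega^\infty K(\cE^n)$.

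First I would record the symmetric monoidal structure in play. Disjoint union $\sqcup$ of polytopes in $E^n$ --- realised by translating the summands into disjoint regions, which is well defined up to canonical scissors congruence --- makes the groupoid $\cG$ of polytopes and scissors congruences symmetric monoidal, with $\aut(P)=\aut_{\cG}(P)$; its classifying space is $\coprod_{[Q]}B\aut(Q)$, the coproduct running over scissors congruence classes. Fix a nonempty $n$-dimensional polytope $P$. Two facts about stabilisation by $P$ are needed. First, the maps $-\sqcup P\colon B\aut(Q)\to B\aut(Q\sqcup P)$ are induced by scissors embeddings, so by \cref{thm:main-iso} they induce isomorphisms on homology. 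Second, the class $[P]$ is cofinal: for any polytope $Q$ and any $k$ with $\vol(Q)<k\cdot\vol(P)$, \cref{euclidean_ea} provides a scissors embedding $Q\hookrightarrow P^{\sqcup k}$, and the closure $R$ of the complement of its image satisfies $[Q]+[R]=[P^{\sqcup k}]$; thus $[P]$ becomes invertible after group completion, and inverting it suffices to invert all of $\pi_0\cG$.

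With these in hand I would apply the group-completion theorem (in the form used by Randal-Williams--Wahl \cite{randalwilliamswahl2017}, or McDuff--Segal), together with the identification of $\Omega^\infty K(\cE^n)$ with the group completion of $\coprod_{[Q]}B\aut(Q)$: the canonical map from the mapping telescope $\hocolim\bigl(B\aut(P)\xrightarrow{-\sqcup P}B\aut(P^{\sqcup 2})\to\cdots\bigr)$ to a path component of the grouplike infinite loop space $\Omega^\infty K(\cE^n)$ is acyclic. This uses the cofinality of $[P]$, and since all path components of $\Omega^\infty K(\cE^n)$ are homotopy equivalent, the target may be taken to be $\Omega^\infty_0 K(\cE^n)$. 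As each stabilisation map is a homology isomorphism, the telescope has the homology of $B\aut(P)$, and hence $H_*(\aut(P))\cong H_*(\Omega^\infty_0 K(\cE^n))$. The step I expect to be the main obstacle is the identification just invoked --- that $\Omega^\infty K(\cE^n)$ really is the group completion of $\coprod_{[Q]}B\aut(Q)$ --- since this is where one must verify that \emph{scissors congruences}, rather than mere isometric isomorphisms of polytopes, are the morphisms encoding the cutting relations that make $\pi_0\cG$ the scissors congruence group $K_0(\cE^n)$. I would obtain this either from the general assembler group-completion theorem proved elsewhere in the paper, or directly from the description of $K(\cE^n)$ as the homotopy orbits of a Thom spectrum over the Tits building \cite{malkiewich2022}, which transparently also computes that group completion.
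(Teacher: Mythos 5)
Your proposal is correct and follows essentially the same route as the paper: identify $\Omega^\infty K(\cE^n)$ with the group completion of $B\cG \simeq \coprod_{[Q]}B\aut(Q)$ (the paper's \cref{thm:group-completion}), check cofinality of $[P]$ via \cref{euclidean_ea}, apply the Randal-Williams form of the group-completion theorem, and then use \cref{thm:main-iso} to collapse the resulting telescope to a single term. The one thing you flag as a possible shortcut --- reading the group-completion identification off the Thom spectrum model of \cite{malkiewich2022} --- is not really independent of the localisation argument, since that model is built from $B\cW^\circ$ and one still needs the comparison $B\cW^\circ \simeq B\cG$ (Gabriel--Zisman plus Day's lemma) to relate it to scissors automorphism groups; the first alternative you mention is the one the paper actually uses.
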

	
	As part of our proof of \cref{thm:main-stable} we give two alternative descriptions of $\Omega^\infty K(\cE^n)$: one as a group completion (\cref{thm:group-completion}) and one as a plus construction (\cref{thm:plus-construction}),
	\[\Omega^\infty K(\cE^n) \simeq \Omega B(\sqcup_{[P]} B\aut(P)) \simeq K_0(\cE^n) \times B\aut(P)^+.\]
	The coproduct in the middle expression ranges over all scissors congruence classes of $n$-dimensional polytopes, and $P$ in the right-hand expression is any nonempty $n$-dimensional polytope. This is analogous to the construction of classical algebraic K-theory of a ring $R$ via group-completion or the $+$-construction:
	\[ \Omega^\infty K(R) \simeq \Omega B(\sqcup_{[P]} B\mathrm{GL}(P)) \simeq K_0(R) \times B\mathrm{GL}_\infty(R)^+, \]
	where the coproduct in the middle expression ranges over all isomorphism classes of finitely-generated projective $R$-modules (e.g.~\cite[\S IV.1, IV.4.5.1]{weibel2013}). Actually, our description of $\Omega^\infty K(\cE^n)$ is somewhat simpler than in the ring case, because of \cref{thm:main-iso}---it is only necessary to take $B\aut(P)^+$ for a single polytope $P$, rather than taking a colimit over an increasing sequence of polytopes, because all scissors embeddings with domain $P$ induce isomorphisms on homology with abelian local coefficients.
	
	\smallskip

	On the one hand, \cref{thm:main-stable} gives an interpretation of the higher scissors congruence K-theory groups. For example, the Hurewicz theorem identifies the first $K$-group with the abelianisation of $\aut(P)$:
	\[ \aut(P)^{ab} \overset{\cong}\longrightarrow H_1(\aut(P)) \overset{\cong}\longrightarrow \pi_1(K(\cE^n)) = K_1(\cE^n). \]
	We obtain from this a presentation of $K_1(\cE^n)$ with one generator for each scissors automorphism of any polytope $P$ and relations that identify the automorphisms of different polytopes $P$. The corollary below follows from this observation (the authors thank Thor Wittich for suggesting this application):
	
	\begin{corollaryalphabetic}[\cref{complete_presentation}]\label{cor:k1_presentation} The partial presentation of $K_1(\cE^n)$ from \cite[Theorem B]{zak_k1} is in fact a presentation, that is, no additional relations are needed.
	\end{corollaryalphabetic}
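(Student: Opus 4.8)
The plan is to compare the partial presentation of $K_1(\cE^n)$ from \cite[Theorem B]{zak_k1} with the presentation coming from the Hurewicz isomorphism $\aut(P)^{\ab} \cong K_1(\cE^n)$ established above. Zakharevich's Theorem B gives a group, call it $G$, defined by explicit generators (coming from scissors automorphisms, or equivalently from pairs of polytopes together with a scissors congruence between them) and a set of relations known to hold in $K_1(\cE^n)$; a priori there is a surjection $G \twoheadrightarrow K_1(\cE^n)$, and the content of the corollary is that this surjection is an isomorphism. First I would recall precisely which generators and relations appear in \cite[Theorem B]{zak_k1}, and observe that each generator can be interpreted as an element of $\aut(P)$ for a suitable polytope $P$ (enlarging $P$ via scissors embeddings as needed, using \cref{euclidean_ea} to have enough room), and that each relation of Zakharevich's presentation holds already in the abelianisation $\aut(P)^{\ab}$ for $P$ large enough.

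Next I would produce the comparison map in the other direction. Since \cref{thm:main-iso} shows that every scissors embedding $P \hookrightarrow Q$ induces an isomorphism on $H_1$, and these isomorphisms are compatible and independent of choices, the groups $\aut(P)^{\ab}$ form a system with canonical isomorphisms between them; by \cref{thm:main-stable} and the Hurewicz theorem, their common value is $K_1(\cE^n)$. I would then construct a homomorphism $\aut(P)^{\ab} \to G$ by sending each scissors automorphism of $P$ to the corresponding generator of Zakharevich's presentation, and check that the defining relations of $\aut(P)^{\ab}$ — namely, commutators (automatically killed in $G$) together with the relations expressing that a scissors automorphism decomposes according to a chosen cover — are among the relations imposed in \cite[Theorem B]{zak_k1}. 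This uses that Zakharevich's relations already encode the "cut-and-rearrange" compatibility, so the induced map is well defined. Composing the two maps $G \twoheadrightarrow K_1(\cE^n) \cong \aut(P)^{\ab} \to G$ and checking it is the identity on generators, and similarly in the other order, then shows both are isomorphisms, which is exactly the claim.

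The main obstacle I expect is bookkeeping: matching up Zakharevich's generating set and relations with the presentation of $\aut(P)^{\ab}$ obtained from a presentation of $\aut(P)$, across the scissors-embedding stabilisation maps, and in particular verifying that \emph{no} relation of $\aut(P)^{\ab}$ is missing from Zakharevich's list — i.e. that Zakharevich's relations, together with commutativity, already suffice to present the abelian group. A clean way to organise this is to note that the presentation of $\aut(P)$ one reads off from the Randal-Williams–Wahl machinery (or directly from the groupoid of scissors congruences) has generators and relations of precisely the same combinatorial type — "an automorphism for each cut-and-rearrange, a relation for each common refinement" — as Zakharevich's, so that after abelianising the two presentations literally coincide. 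The remaining point, that passing from $P$ to a larger polytope does not introduce or destroy relations, is handled by \cref{thm:main-iso}. I would close by remarking that, conversely, this argument also re-derives \cref{thm:main-iso} in degree $1$ from \cite{zak_k1}, so the two results are mutually consistent.
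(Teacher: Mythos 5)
Your overall strategy is the same as the paper's: use the group-completion/Hurewicz description of $K_1$ in terms of scissors automorphism groups, and then match generators and relations with those in \cite[Theorem B]{zak_k1}. But you substantially underestimate the key difficulty and mischaracterize it as bookkeeping. The paper's proof of \cref{complete_presentation} identifies four sufficient relations for the filtered-colimit description of $K_1$ — refinement invariance, additivity under composition of scissors automorphisms, invariance under scissors congruence $\scr{P} \congto \scr{Q}$, and invariance under extension by the identity on a second factor — and then must \emph{derive} each of these from Zakharevich's relations (A), (B), (C). The first, third, and fourth are routine, but the second (composition of scissors automorphisms maps to addition in $K_1$) is genuinely nontrivial: Zakharevich's relation (B) only lets you compose a zigzag $\scr{P}\leftarrow\scr{R}\rightarrow\scr{P}$ with one of the form $\scr{Q}\leftarrow\scr{P}\rightarrow\scr{Q}$, i.e. where the middle term of the second equals the endpoints of the first, whereas composing two scissors automorphisms of the same $\scr{P}$ requires first building a common refinement $\scr{R}''$ of the two middle terms and then a further refinement $\scr{R}'''$, and then running a chain of five applications of (A) and (B) together with commutativity to reduce to the desired sum. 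Your claim that "after abelianising the two presentations literally coincide" is not correct — the relations are of different shapes and one must actually do this reduction.

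Two smaller points. First, the Randal-Williams–Wahl machinery does not produce a presentation of $\aut(P)$; it proves stability of its homology, so your proposed ``presentation of $\aut(P)^{\ab}$ read off from the Randal-Williams–Wahl machinery'' does not exist in the form you describe. Second, the paper proves the stronger \cref{complete_presentation} for an \emph{arbitrary} assembler $\cA$, using only \cref{cor:stable-homology} (the filtered colimit description), which requires no EA- or S-axioms; restricting to a single object $P$ via \cref{thm:main-iso} as you propose is a valid but less general shortcut that only works for EA- or S-assemblers, and it is not actually needed, since the filtered colimit presentation already has enough relations to make everything independent of $\scr{P}$.
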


	One could imagine a group-theoretic approach to studying higher scissors congruence K-theory groups. For example, in dimension $n \geq 2$ even the groups $K_1(\cE^n)$ are unknown. It is a conjecture of Zakharevich that these groups vanish; when translated into a statement about scissors automorphism groups, this becomes the following conjecture:
	
	\begin{conjecturealphabetic}\label{k1_conjecture}
		For any polytope $P \subseteq E^n$ with $n \geq 2$, $\aut(P)$ is perfect, that is, its abelianisation vanishes.
	\end{conjecturealphabetic}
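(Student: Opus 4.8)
The conjecture is equivalent, via \cref{thm:main-stable} and the Hurewicz isomorphism $\aut(P)^{\ab}\cong K_1(\cE^n)$ recorded above, to Zakharevich's vanishing conjecture $K_1(\cE^n)=0$ for $n\ge 2$, and by \cref{cor:k1_presentation} this in turn amounts to the finitary statement that every generator of the presentation of \cite[Theorem B]{zak_k1} dies modulo its relations. The plan is to prove it group-theoretically: using \cref{thm:main-iso} to reduce to a single convenient polytope, say $P=[0,1]^n$, and then showing directly that every $\phi\in\aut(P)$ is a product of commutators, by a fragmentation-and-commutator argument.

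The first step is fragmentation: show that $\aut([0,1]^n)$ is generated by scissors automorphisms each supported on (that is, restricting to the identity outside) a cube small enough to be isometrically displaced off itself inside $P$. Given $\phi$, one passes to a common refinement of the covers defining it, subdivides into a fine grid, writes $\phi$ as a product of elementary moves each relocating a single tiny grid cell, and realises each such move as a product of automorphisms supported on small displaceable cubes by moving the cell in many small steps. The second step is a commutator trick on an automorphism $\phi$ supported on a small cube $C$: choosing an isometry $g$ --- realised in $\aut(P)$ by first translating to make room --- with $C,gC,\dots,g^{N}C$ pairwise disjoint in $P$, the Mather-style swindle would give $\phi=[\,\prod_{k\ge 0}g^{k}\phi g^{-k},\ g\,]$, exhibiting $\phi$ as a commutator; but because isometries preserve volume only finitely many translates of $C$ fit into $P$, the infinite product is truncated, and the identity picks up a leftover term that is again a translate of $\phi$, now supported near the boundary of the region on which $g$ acts by translation.

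The hard part is exactly this leftover term, and I do not see how to remove it --- which is precisely why the conjecture is open and why dimension one is excluded. In $E^1$ there is no way around it: $K_1(\cE^1)$ is nonzero, detected by a Sah--Arnoux--Fathi-type invariant on interval exchange transformations, so any proof of \cref{k1_conjecture} \emph{must} use something special to $n\ge 2$. The natural candidates are the extra rotational and reflectional freedom available when $n\ge 2$, which one would hope allows the leftover term to be rewritten, after further subdivision, as a product of commutators of the type already under control; or an inductive scheme reducing the leftover to automorphisms of strictly smaller support. The naive form of the inductive scheme stalls because isometries cannot rescale, so the support of the leftover does not shrink; making either candidate work seems to require genuinely new combinatorics.

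A complementary route would sidestep group theory and compute $K_1(\cE^n)$ directly from \cite{malkiewich2022}, where $K(\cE^n)$ is identified with the homotopy orbits of a Thom spectrum over the Tits building of $E^n$: one would run the associated spectral sequence and show that $\pi_1(K(\cE^n))$ vanishes for $n\ge 2$ while surviving for $n=1$, exploiting the higher-dimensional simplices of the building. As evidence that \cref{k1_conjecture} is within reach, for the sub-geometry of $E^n$ allowing only axis-parallel hyperplane cuts the corresponding abelianisation question for rectangular exchange groups has been studied by Cornulier--Lacourte and Tanner; the extra content of \cref{k1_conjecture} is to incorporate rotations and reflections and to show that their contributions to the abelianisation also cancel.
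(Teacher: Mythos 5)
This statement is \cref{k1_conjecture}, which is an open conjecture in the paper (attributed to Zakharevich); there is no proof of it in the paper for you to be compared against, and you correctly recognize this. Your reduction to $K_1(\cE^n)=0$ via \cref{thm:main-stable} and Hurewicz is correct, the identification of the finite-volume truncation of the Mather swindle as the essential obstruction is apt, and the two candidate escape routes (exploiting rotations/reflections, or computing $\pi_1$ directly from the Thom-spectrum spectral sequence of \cite{malkiewich2022}) are the right things to think about.

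However, there is a concrete factual error that skews your diagnosis of the problem. You assert that $K_1(\cE^1)\neq 0$, detected by a Sah--Arnoux--Fathi invariant, and conclude that the conjecture \emph{must} exclude $n=1$ because it is false there. This confuses $\cE^1=\cE^1_{I(E^1)}$, the full-isometry assembler, with $\cE^1_{\R}$, the translation-only assembler whose scissors automorphism group of an interval is $IET$. It is $K_1(\cE^1_{\R})\cong\Lambda^2_\Q\R$ that is nonzero (\cref{IET_1}); the SAF invariant lives there. With reflections allowed, the ``centre kills'' argument (cf.\ \cref{center_kills}, and the discussion after \cref{IET_1} citing \cite[Theorem 1.11]{malkiewich2022}) kills the even exterior powers, and $K_1(\cE^1)=0$. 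The paper's remark immediately after \cref{k1_conjecture} says explicitly that the statement \emph{is} true for $n=1$ with an elementary proof; the restriction to $n\geq 2$ is because that case is open, not because it is false. This matters for your proposed strategy: the reflectional freedom you are hoping to exploit already suffices to kill the SAF obstruction in dimension one, so the genuine difficulty in $n\geq 2$ is not the one you attribute to the finite-volume truncation.
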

	
	\begin{remark}
		The statement in \cref{k1_conjecture} is true with an elementary proof when $n = 1$. For $n \geq 2$, all known additive invariants---such as the SAF invariant of \cite{saf3,saf1,saf2} and its generalisations in \cite{bgmmz}---have not yet produced a nonzero map from $\aut(P)$ to an abelian group and \cref{k1_conjecture} says that no such map exists.
	\end{remark}
	
	\cref{thm:main-stable} furthermore gives an approach for computing the homology of scissors automorphism groups: we can use tools for scissors congruence K-theory spectra to prove results about scissors automorphism groups that seem difficult to prove directly. In particular, we take advantage of recent results of the third author expressing scissors congruence K-theory spectra as homotopy orbits of a Thom spectrum over a Tits building \cite{malkiewich2022}. The following is an illustrative example:

	\begin{example}[\cref{cor:homology-two-dim}]For any nonempty $2$-dimensional polytope $P$ we have 
		\[H_*(\aut(P)) \cong \Lambda^* \left( \bigoplus_{p+2q \geq 1} H_p(O(2) ; \Lambda^{2q+2}_\Q (\R^2)^t) [p+2q] \right),\]
		where $g \in O(2)$ acts on $\Lambda^{2q+2}_\Q (\R^2)$ via naturality, and the decoration $(-)^t$ adds an additional twist or determinant representation:
		\[ g \cdot (v_1 \wedge \cdots \wedge v_{2q+2}) = \det(g) (gv_1 \wedge \cdots \wedge gv_{2q+2}). \]
		In particular, the abelianisation of $\aut(P)$ is given by $H_1(O(2);\Lambda^2_\Q (\R^2)^t)$.\end{example}

	\subsection{Overview, generalisations and open questions} Our results are not specific to $n$-dimensional polytopes in Euclidean space. An assembler $\cA$ is a category with a reasonable notion of covers and hence of scissors congruence; these were introduced in \cite{zakharevich2014}, and we recall them in \cref{sec:assemblers}. Given an assembler $\cA$, we can define scissors automorphism groups $\aut_\cA(P)$ for objects $P$ of $\cA$ and a scissors K-theory spectrum $K(\cA)$. In \cref{sec:scissors-congruecne-from-a-stability-perspective} and \cref{sec:connection-to-assembler-k-theory}, we show that analogues of the above results (\cref{thm:main-iso}--\cref{thm:main-stable}) hold in this general setting, as long as the assembler satisfies suitable axioms: \emph{EA- and S-assemblers} (\cref{def:ass-vol-props} and \cref{def:s-assembler}). We also prove \cref{cor:k1_presentation} for all assemblers. The main body of the paper is written in this generality, but we invite the reader to keep the example of $n$-dimensional Euclidean polytopes in mind.
		
	In \cref{sec:applications-i-classical-sissors-congruence} and \cref{sec:applications-ii-restricting-the-polytopes} we study variations of the scissors automorphism groups of the following form:
	\begin{itemize}
		\item Euclidean, spherical and hyperbolic polytopes.
		\item Restricted polytopes, e.g.~allowing only rectangles.
		\item Smaller transformation groups, e.g.~allowing only translations.
		\item Larger transformation groups, e.g.~allowing scalings.
	\end{itemize}

	The verification of our axioms for $n$-dimensional hyperbolic or spherical polytopes is much more involved than in the Euclidean setting (\cref{sec:ea}). It relies on the following non-trivial geometric input, which we establish along the way and which is of independent interest.
	\begin{theoremalphabetic}[\cref{hyperbolic_ea}, \cref{spherical_ea}]\label{thm:main-ea} For any pair of $n$-dimensional hyperbolic polytopes $P$ and $Q$, if $\vol(P) < \vol(Q)$ then there exists a scissors embedding of $P$ into $Q$. The same holds in $n$-dimensional spherical geometry as well.
	\end{theoremalphabetic}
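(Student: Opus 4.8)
The plan is to deduce this from the Euclidean case (\cref{euclidean_ea}) by a small‑scale approximation argument. The natural proof of \cref{euclidean_ea} cuts $P$ into tiny cubes and packs them into $Q$ along a fine cubical grid; the only obstruction to importing it verbatim is that $\mathbb{H}^n$ and $S^n$ carry no self‑similar grid. The remedy is to exploit that both geometries are, on small balls, uniformly $(1+\eta)$‑bi‑Lipschitz to Euclidean space and that the isometry group acts transitively: one works at a single small scale, inserts a small amount of slack to absorb the bi‑Lipschitz distortion, and reserves a region of fixed positive size to mop up the low‑volume leftover that a grid cannot handle. Throughout, set $\mu:=\vol(Q)-\vol(P)>0$; every error term will be driven below a fixed fraction of $\mu$.

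First I would reduce to a packing statement. Triangulating $P$, it suffices to find isometric copies of the pieces of some subdivision of this triangulation that lie inside $Q$ with \emph{pairwise} disjoint interiors and total volume at most $\vol(Q)$. I would then reserve, once and for all, a small cube $C_0\subseteq\operatorname{int}(Q)$ of fixed positive volume $v_0\le\mu/3$ (possible since $Q$ has nonempty interior), and triangulate the polytope $Q\smallsetminus\operatorname{int}(C_0)$ into simplices $R_j$ each of diameter $<\delta$, where $\delta$ is chosen small enough that every metric $\delta$‑ball meeting $Q$ is $(1+\eta)$‑bi‑Lipschitz to a flat ball. Inside each simplex $\Delta_i$ of the triangulation of $P$, cutting along the pullback under a normal chart of a flat cubical grid of mesh $\delta'\ll\delta$ produces genuine, slightly distorted $\delta'$‑cubes together with a leftover $L_i$ contained in the $\delta'$‑collar of $\partial\Delta_i$; with the triangulation of $P$ fixed, the total volume $\rho$ of $L:=\bigcup_i L_i$ tends to $0$ as $\delta'\to 0$, and the total boundary area of the finitely many pieces of $L$ stays bounded by a constant $\Lambda$ depending only on that triangulation. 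I would then fix $\rho$, $\delta'$, and $\eta$ small in terms of $v_0$, $\operatorname{area}(\partial C_0)$, $\Lambda$ and $\mu$.

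Next I would transport the $\delta'$‑cubes of $P$ into the $R_j$ by isometries of the ambient space. In a normal chart at the barycentre of $R_j$, lay down a flat grid of slots of side $(1+10\eta)\delta'$; for $\eta$ small each slot comfortably contains an isometric copy of any $\delta'$‑cube of $P$, and the slots exhaust $R_j$ outside its $\delta'$‑collar. Filling the $R_j$ greedily, slot by slot, the total volume of $P$‑cubes one can place is at least $(\vol(Q)-v_0)(1+10\eta)^{-n}-C_n(\delta'/\delta)\vol(Q)$, the second term bounding the accumulated collar waste (since $\sum_j\operatorname{area}(\partial R_j)\le C_n\vol(Q)/\delta$); as $\vol(Q)-v_0\ge\vol(P)+2\mu/3$, this exceeds $\vol(P)-\rho$ once $\eta$ and $\delta'/\delta$ are small, so every $\delta'$‑cube of $P$ gets placed. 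The remaining pieces of $L$ are then packed one at a time into the reserved cube $C_0$: if at some stage none fits, the free part of $C_0$ lies within $\delta'$ of $\partial C_0$ and of the already‑placed pieces, hence has volume at most $\delta'(\operatorname{area}(\partial C_0)+\Lambda)<v_0/2$, contradicting that at least $v_0-\rho>v_0/2$ of $C_0$ remains free. All pieces of $P$ now sit inside $Q$ with pairwise disjoint interiors, which is the desired scissors embedding. The spherical case runs identically, using in addition that small spherical polytopes lie in an open hemisphere, are flat to first order, and that $O(n+1)$ acts transitively on such small configurations, so that no antipodal obstruction arises.

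I expect the main difficulty to lie in the uniform quantitative control of curvature, and specifically in beating the accumulated collar waste: since $\sum_j\operatorname{area}(\partial R_j)$ grows like $\vol(Q)/\delta$, the waste $\delta'\sum_j\operatorname{area}(\partial R_j)$ can only be controlled by forcing $\delta'/\delta\to 0$, i.e.\ by cutting $P$ far finer than $Q$, and one must verify that the greedy packing still terminates with \emph{all} of $P$ placed and that the bi‑Lipschitz distortion genuinely fits inside the $(1+10\eta)$ slack. A secondary point is that a hyperbolic polytope can be arbitrarily ``thin'' near an almost‑ideal vertex, so the scale $\delta$ at which the geometry looks Euclidean over all of $Q$ degenerates with $Q$; since $\delta,\delta',\eta,v_0$ and the triangulations may all depend on $P$, $Q$ and $\mu$, this is a bookkeeping issue rather than an essential obstacle. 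Finally, one must check that this construction is formulated so as to verify the axioms of an EA‑assembler (\cref{def:ass-vol-props}), which is the use to which the theorem is put.
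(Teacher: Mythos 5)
Your proposal takes a genuinely different route from the paper. The paper first uses \cref{equal_ratio_pieces} to reduce to $P,Q$ of small diameter inside a single ball $B_r(x)$, then constructs two covers $\{C_i\}$, $\{D_j\}$ of that ball by \emph{flow polytopes} (\cref{flow_polytopes_1}, \cref{flow_polytopes_2}) so that every $C_i$ embeds by an oriented isometry into every $D_j$ and the volume ratio is bounded by $(\cosh 2r)^{2n-2}+\epsilon$; a counting argument then finds more usable $D_j$'s than $C_i$'s. Your plan instead reserves a fixed garbage-collection cube $C_0$ and packs a grid of nearly-cubical pieces of $P$ greedily into slots of $Q$, using small-scale bi-Lipschitz closeness to Euclidean space to absorb the error. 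That is a reasonable organizational idea, but it has a gap precisely where hyperbolic geometry bites.

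The gap is that ``cutting along the pullback under a normal chart of a flat cubical grid'' does not produce polytopes of $H^n$ (or $S^n$). The image under $\exp_p$ of a Euclidean hyperplane \emph{not} through the origin is not a totally geodesic hypersurface; only central hyperplanes map to geodesic ones. Hence your ``slightly distorted $\delta'$-cubes'' and the leftover $L$ are not finite unions of geometric simplices, so they are not objects of the assembler $\cH^n$ (resp.\ $\category{S}^n$), the decomposition of $P$ is not a cover in $\cW(\cH^n)$, and the construction does not yield a scissors embedding as defined in the paper. This is not a bookkeeping slip. If one repairs it by cutting along genuine geodesic hyperplanes close to the flat grid, the resulting pieces are honest polytopes but are no longer congruent, and their distortions are correlated with their position in the ball. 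Your claim that each such piece fits by an isometry into every slot of side $(1+10\eta)\delta'$ then requires a real argument: a naive diameter-versus-inscribed-ball comparison loses a factor of $\sqrt n$ that no choice of small $\eta$ recovers, so one must align the geodesic-framed piece with the slot's frame and control the distortion in each of the $n$ directions simultaneously. The flow-polytope construction in the paper is exactly the device that supplies both missing ingredients at once --- it cuts by families of parallel geodesic hyperplanes, so the pieces are polytopes, and its inductive structure (using the height estimate from \cref{thin_rectangle}) builds each $D_j$ so that it literally contains an isometric copy of each $C_i$. Some substitute for that construction is the substantive content absent from your proposal; the surrounding combinatorics (reserving $v_0\le\mu/3$, bounding collar waste by $\delta'/\delta$, greedily filling $C_0$) is plausible and could in principle replace the paper's equal-ratio reduction and counting argument, but only once the polytopal cutting-and-fitting step is secured.
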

	We note that Danny Calegari has also obtained this result in unpublished work, and the authors thank him for sharing his insights.
	
	By varying the allowed polytopes and the transformation groups for e.g.\ $1$-dimensional Euclidean polytopes, we can cover many families of groups in the literature, including the \emph{interval exchange group} $IET$ (\cref{sec:IET}) and \emph{Thompson's group} $V$ (\cref{sec:brin-thompson}). With the techniques in this paper, we can give an alternative proof of the result of Tanner \cite[Lemma 5.6]{tanner2023} that 
	\[H_*(IET) \cong \Lambda^*\left(\bigoplus_{n \geq 0} \,(\Lambda^{n}_\mathbb{Q} \mathbb{R})[n]\right),\]
	and the result of Szymik--Wahl \cite[Corollary B]{szymikwahl2019} that \[\widetilde{H}_*(V) = 0.\] 
	In fact, it was Tanner's result and its similarity to the calculation in \cite[Theorem 1.11]{malkiewich2022} that inspired the investigations in the present work: the relationship between these two results is explained in \cref{sec:one-dim}. Our techniques also apply to higher-dimensional analogues of these groups, such as \emph{rectangle exchange groups} \cite{cornulierlacourte2022} and \emph{Brin--Thompson groups} \cite{brin2004, li2022}.
	
	The aforementioned groups are of interest in group theory---Thompson's group $V$ is a famous example of an infinite finitely-presented simple group (see e.g.~\cite{CFP})---and dynamics---interval exchange transformations are prototypical one-dimensional dynamical systems and play a role studying flows on surfaces (see e.g.~\cite{masur_annals,veech_annals}). Li obtained results closely related to ours in the setting of topological full groups arising from generalised dynamical systems known as ample groupoids \cite{li2022}. In the final \cref{sec:topological-full-groups}, we explain how these fit into the framework of assemblers used in this paper.

	\subsection*{Acknowledgments}

	AK acknowledges the support of the Natural Sciences and Engineering Research Council of Canada (NSERC) [funding reference number 512156 and 512250]. AK is supported by an Alfred P.~Sloan Research Fellowship. CM was partially supported by the National Science Foundation (NSF) grants DMS-2005524 and DMS-2052923. He benefited greatly from conversations with Matt Brin, Danny Calegari, Xin Li, John Rached, Daniil Rudenko, and Inna Zakharevich during the writing of this paper. JM was partially supported National Science Foundation (NSF) grant DMS-2202943. RJS was supported by NSERC Discovery Grant A4000 in connection with a Postdoctoral Fellowship at McMaster University, as well as by Deutsche Forschungsgemeinschaft (DFG, German Research Foundation) -- Project-ID 427320536 -- SFB 1442 and Germany’s Excellence Strategy EXC 2044 -- 390685587, Mathematics Münster: Dynamics–Geometry–Structure as a Postdoctoral Research Associate at the University of Münster. RJS would like to thank Xin Li, Michael A.\ Mandell, and Thor Wittich for helpful conversations.
	
	The authors would like to thank the organizers and participants of the Summer school on Scissors Congruence, Algebraic K-theory, and Trace Methods at IU Bloomington in June 2023 for conversations that inspired them to begin working on this paper. The authors would like to thank Oscar Randal-Williams for pointing out an error in an earlier version.
	
	\section{Assemblers, scissors automorphisms, and scissors embeddings}\label{sec:assemblers}

	In this section we recall the definition of an assembler $\cA$, which is a category with enough structure to define a scissors congruence relation on its objects. We then define three associated categories:
	\begin{itemize}
		\item the symmetric monoidal category of covers $\cW(\cA)$,
		\item the symmetric monoidal groupoid of scissors congruences $\cG(\cA)$, and
		\item the symmetric monoidal category of scissors embeddings $\category{UG}(\cA)$.
	\end{itemize}
	Most of our work will take place in $\cG(\cA)$, resp.~$\category{UG}(\cA)$, which we think of as categories of formal ``cut-and-paste'' isomorphisms, resp.~embeddings, between formal finite disjoint unions of objects in $\cA$.

	\subsection{Definition of an assembler}\label{sec:def-assemblers}
	This section is based on \cite[Section 2.1]{zakharevich2014}. The starting point of the definition is a small \emph{Grothendieck site} $\cA$, i.e.~a small category $\cA$ together with a distinguished collection of \emph{sieves} in the over-category $\cA_{/P}$ of every object $P \in \cA$ that obey a few axioms (see e.g.~\cite[Definition 2.3]{zakharevich2014}). Given this, a \emph{covering family} for an object $P$ of $\cA$ is a family of morphisms $\{P_i \to P\}_{i \in I}$ that generates a distinguished sieve over $P$; i.e.~the collection of all morphisms $X \to P$ factoring through some $P_i$ is a distinguished sieve over $P$ in $\cA$. Covering families can be composed: if we have a covering family $\{ Q_{ij} \to P_i \}_{j \in J_i}$ for each $P_i$, then the resulting collection of composite maps $\{ Q_{ij} \to P_i \to P \}_{i \in I, j \in J_i}$ forms a covering family of $P$. One says the composite covering family is a \emph{refinement} of the original covering family $\{P_i \to P\}_{i \in I}$. Finally, a pair of morphisms $P_1 \to P$, $P_2 \to P$ in $\cA$ is said to be \emph{disjoint} if the pullback $P_1 \times_P P_2$ exists and is initial. A collection of morphisms $\{ P_i \to P\}_{i \in I}$ is disjoint if they are pairwise disjoint. We will be interested in the following class of covering families:
	
	\begin{definition}\label{def:cover}
		A \emph{cover} of $P \in \cA$ is a disjoint finite covering family of $P$.
	\end{definition}
	
	Zakharevich then defines an assembler as follows \cite[Definition 2.4]{zakharevich2014}:

	\begin{definition}\label{def:assembler} 
		An \emph{assembler} $\cA$ is a small Grothendieck site that satisfies:
		\begin{description}
			\item[\namedlabel{enum:assembler-initial}{(I)}] $\cA$ has an initial object $\varnothing$, and the empty family is a cover of $\varnothing$.
			\item[\namedlabel{enum:assembler-refinement}{(R)}] Every two covers of an object $P$ of $\cA$ have a common refinement.
			\item[\namedlabel{enum:assembler-mono}{(M)}] Every morphism of $\cA$ is a monomorphism.
		\end{description}
	\end{definition}

	\begin{remark}
		\label{rem:distinguished-sieves-and-empty-covers}
		If $S$ is a distinguished sieve over $P$ in $\cA$ and $T$ is any sieve over $P$ containing $S$, it follows from the definition of a Grothendieck topology \cite[Definition 2.3]{zakharevich2014} that $T$ is also a distinguished sieve over $P$. In particular, if the empty sieve, which is generated by the empty family, is distinguished over $P$, then all sieves over $P$ have to be distinguished. Axiom \ref{enum:assembler-initial} asserts this for the initial object $\varnothing$. However, there might be noninitial objects in $\cA$ that also admit an empty cover.
	\end{remark}

	\begin{definition}\label{scissors_congruent}
		Two objects $P$, $Q$ in $\cA$ are \emph{scissors congruent} if there exist covers $\{P_i \to P\}_{i \in I}$ and $\{Q_i \to Q\}_{i \in I}$ with $P_i \cong Q_i$ for every $i \in I$.
	\end{definition}
	
	Informally, two objects are scissors congruent if they can be cut into isomorphic pieces.
	
	\begin{remark}
		Note that any object that admits an empty cover is scissors congruent to the initial object $\varnothing$.
	\end{remark}

	\begin{remark}\label{empty_set_properties}
		By axiom \ref{enum:assembler-mono} and the first part of axiom \ref{enum:assembler-initial} every map $P \to \varnothing$ is an isomorphism and therefore no morphism $P \to \varnothing$ exists if $P \in \cA$ is noninitial \cite[Lemma 2.5]{zakharevich2014}. The second part of axiom \ref{enum:assembler-initial} and \cite[Definition 2.3 (T2)]{zakharevich2014} therefore imply that the empty family is a cover of $P$ if and only if the unique morphism $\{\varnothing \to P\}$ is. More generally, it follows that the unique morphism $\varnothing \to P$ can be added to or removed from any collection of morphisms $\{ P_i \to P \}_{i \in I}$ without affecting whether that collection is a cover of $P$: If the collection is empty, this was already established. If the collection is nonempty, this holds because we can only map from initial objects to $\varnothing$ and these are contained in any nonempty sieve over $P$.
	\end{remark}

	\subsection{The category of covers \texorpdfstring{$\cW(\cA)$}{W(A)}}\label{sec:category-covers}
	The study of scissors congruence in an assembler $\cA$ begins by defining a second category $\cW(\cA)$, in which an object is a formal finite disjoint union of objects in $\cA$, and a morphism is a finite collection of covers in $\cA$.

	\begin{definition}\label{def:ccategory-covers}
		The \emph{category of covers} $\cW(\cA)$ of an assembler $\cA$ is given as follows:
		\begin{itemize}
			\item The objects are pairs $\scr{P} = (I,\{P_i\}_{i \in I})$ of a finite set $I$ and a collection of objects $P_i$ of $\cA$ for $i \in I$.
			\item The morphisms $\scr{P} = (I,\{P_i\}_{i \in I}) \to \scr{Q} = (J,\{Q_j\}_{j \in J})$ are pairs $(\phi,\{f(i)\}_{i \in I})$ of a map of finite sets $\phi \colon I \to J$ and a collection of morphisms $f(i) \colon P_i \to Q_{\phi(i)}$ in $\cA$ for $i \in I$, such that $\{f(i)\colon P_i \to Q_j \}_{i \in \phi^{-1}(j)}$ is a cover of $Q_j$.
			\item The composition is given by
			\[(\psi,\{g(j)\})_{j \in J}) \circ (\phi,\{f(i)\}_{i \in I}) \coloneqq \big(\psi \circ \phi,\{g(\phi(i)) \circ f(i)\}_{i \in I}\big).\]
		\end{itemize}	
		A morphism in $\cW(\cA)$ from a 5-tuple $\scr{P} = \{P_1, P_2, P_3, P_4, P_5\}$ to a 2-tuple $\scr{Q} = \{Q_1, Q_2\}$ is pictured below.
		
			\centerline{
	\def\svgwidth{4.3in}
\begingroup%
  \makeatletter%
  \providecommand\color[2][]{%
    \errmessage{(Inkscape) Color is used for the text in Inkscape, but the package 'color.sty' is not loaded}%
    \renewcommand\color[2][]{}%
  }%
  \providecommand\transparent[1]{%
    \errmessage{(Inkscape) Transparency is used (non-zero) for the text in Inkscape, but the package 'transparent.sty' is not loaded}%
    \renewcommand\transparent[1]{}%
  }%
  \providecommand\rotatebox[2]{#2}%
  \newcommand*\fsize{\dimexpr\f@size pt\relax}%
  \newcommand*\lineheight[1]{\fontsize{\fsize}{#1\fsize}\selectfont}%
  \ifx\svgwidth\undefined%
    \setlength{\unitlength}{284.98535803bp}%
    \ifx\svgscale\undefined%
      \relax%
    \else%
      \setlength{\unitlength}{\unitlength * \real{\svgscale}}%
    \fi%
  \else%
    \setlength{\unitlength}{\svgwidth}%
  \fi%
  \global\let\svgwidth\undefined%
  \global\let\svgscale\undefined%
  \makeatother%
  \begin{picture}(1,0.5712364)%
    \lineheight{1}%
    \setlength\tabcolsep{0pt}%
    \put(0,0){\includegraphics[width=\unitlength,page=1]{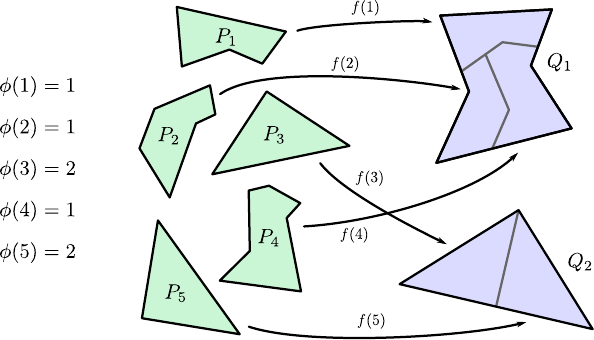}}%
  \end{picture}%
\endgroup%

	}
	\vspace{1em}
	
		The category $\cW(\cA)$ admits a symmetric monoidal structure whose tensor product is given by taking disjoint unions
		\[\scr{P} \sqcup \scr{Q} = (I,\{P_i\}_{i \in I}) \sqcup (J,\{Q_j\}_{j \in J}) \coloneqq (I \sqcup J,\{P_i\}_{i \in J} \sqcup \{Q_j\}_{j \in J})\]
		on objects and analogously on morphisms. The unit object in $\cW(\cA)$ is the pair $\varnothing = (\varnothing, \varnothing)$ given by the empty set and the empty collection of objects in $\cA$.
	\end{definition}

	Observe that two singletons $\{P\}$ and $\{Q\}$ are connected by a zigzag of morphisms in $\cW(\cA)$ if and only if $P$ and $Q$ are scissors congruent. In this sense $\cW(\cA)$ encodes the relation of scissors congruence, though it has the disadvantage that the cuttings and pastings point in opposite directions and can not be composed. \cref{scissors_congruence_groupoid} will address this (see also \cref{rem:maps-into-and-out-of-varnothing}).
	
	\begin{notation}
		By abuse of notation, we will call a morphism $\scr{P} \to \scr{Q}$ in $\cW(\cA)$ a \emph{cover}, because it is nothing more than a formal finite disjoint union of covers in $\cA$.
	\end{notation}

	We now make some remarks about this definition:

	\begin{remark}Despite the notation, the tensor product on $\cW(\cA)$ need not agree with the coproduct in the category $\cW(\cA)$.
	\end{remark}

	\begin{remark}
		\label{rem:maps-into-and-out-of-varnothing}
		On the one hand, there is a morphism $\varnothing = (\varnothing, \varnothing) \to \scr{P} = (I, \{P_i\})$ in $\cW(\cA)$ if and only if each $P_i$ has an empty cover. On the other hand, there is a morphism $\scr{P} = (I, \{P_i\}) \to \varnothing = (\varnothing, \varnothing)$ in $\cW(\cA)$ if and only if $\scr{P} = \varnothing$.
	\end{remark}

	\begin{remark}\label{empty_sets_in_W_discussion}
		Our definition of $\cW(\cA)$ differs slightly from that in \cite[Section 2.1]{zakharevich2014} in that we allow the initial objects to occur in the tuple of objects $\scr{P} = (I,\{P_i\}_{i \in I})$. These two variants of $\cW(\cA)$ have equivalent classifying spaces: letting $\cW^\circ(\cA)$ be the variant without initial objects, the evident inclusion $i^\circ \colon \cW^\circ(\cA) \to \cW(\cA)$ admits a right adjoint $p^\circ$ that deletes all the instances of initial objects from the tuples. To see this is well-defined we need \cref{empty_set_properties}---deleting instances of initial objects does not affect whether something is a cover---and axiom \ref{enum:assembler-initial}---an initial object has an empty cover. The lack of initial objects $\varnothing$ in the tuples was assumed in \cite[Section 2.1]{zakharevich2014} in order to show that $\cW(\cA)$ consists of monomorphisms. We will not need this fact.
	\end{remark}

	\subsection{The category of scissors congruences \texorpdfstring{$\cG(\cA)$}{G(A)}} \label{sec:scissors-congruences}
	Scissors congruences will be equivalence classes of zigzags of morphisms in $\cW(\cA)$, and we can conveniently express these as morphisms in a category of fractions. Recall that for a category $\catC$ with a wide subcategory $W$ of weak equivalences, the localisation $\catC[W^{-1}]$ is modelled by a category of fractions provided that the following conditions are satisfied \cite[Chapter 2]{gabrielzisman1967}:

	\begin{definition}\label{def:loc-conditions} Let $(\catC,W)$ be a pair of a category and a wide subcategory.
	\begin{enumerate}
		\item \label{enum:ore} $(\catC,W)$ satisfies the \emph{Ore condition} if given $f \colon x \rightarrow z$ in $W$ and $g\colon y \rightarrow z$ in $\catC$ there exists $h \colon w \rightarrow x $ in $\catC$ and $i \colon w \rightarrow y $ in $W$ such that the following square commutes
		\[ \begin{tikzcd}
			w \ar[r, "h"] \ar[d, "i"] & x \ar[d, "f"]\\
			y \ar[r,"g"] & z
		\end{tikzcd} \]
		\item \label{enum:equalising} $(\catC,W)$ satisfies the \emph{equalising condition} if given a diagram
		\[ \begin{tikzcd} x \ar[r, "f", shift left=.75ex] \ar[r, "g"', shift right = .75ex] & y \ar[r, "h"] & z \end{tikzcd}\]
		with $h$ in $W$ such that $hf=hg$  there exists a morphism $i :w \rightarrow x$ in $W$ such that $fi=gi$.
	\end{enumerate}
	\end{definition}

	\begin{definition}\label{def:loc} The \emph{category of fractions} $\catC[W^{-1}]$ of a pair $(\catC,W)$ satisfying \cref{def:loc-conditions} \ref{enum:ore} and \ref{enum:equalising}, is given as follows:
	\begin{itemize}
		\item The objects are those of $\catC$.
		\item The morphisms are equivalence classes of spans with the left leg in $W$:
		\[ \catC[W^{-1}](a,b) = \left\{ a \xleftarrow{f} a' \xrightarrow{g} b \mid f \in W \right\} \big /{\sim}\]
		where $(a \xleftarrow{f} a' \xrightarrow{g} b) \sim (a \xleftarrow{h} a'' \xrightarrow{i} b)$ if there exists an object $x$ and morphisms $u,v$ such that the following diagram commutes:
		\[\begin{tikzcd}[sep=small]
			& a' \ar[dl , "f"'] \ar[dr, "g"]& \\
			a & \ar[u, "u"']x \ar[d, "v"]& b \\
			& a'' \ar[ul, "h"]\ar[ur, "i"']&
		\end{tikzcd}\]
		and $fu = hv$ are morphisms in $W$.

		\item The composition of spans $b \leftarrow b' \rightarrow c$ and $a \leftarrow a' \rightarrow b$ is given by applying the Ore condition to construct the diagram
		\[ \begin{tikzcd}[sep=small]
			a  &a' \ar[l] \ar[r] &b \\
			& z \ar[u] \ar[r] & b' \ar[u] \ar[d]\\
			& & c,
		\end{tikzcd}\]
		and then we set $(b \leftarrow b' \rightarrow c)\circ (a \leftarrow a' \rightarrow b ) = (a \leftarrow a' \leftarrow z \rightarrow b' \rightarrow c )$.
		\end{itemize}
	There is a localisation functor $\catC \rightarrow \catC[W^{-1}]$, given by the identity on objects and on morphisms by 
	\[(c \xrightarrow{f} c') \longmapsto (c \xleftarrow{1_c} c \xrightarrow{f} c').\]
	\end{definition}

	This construction is compatible with symmetric monoidal structures \cite{Day}:

	\begin{lemma}[Day] \label{lem:loc-sym} Suppose we have a pair $(\catC,W)$ satisfying \cref{def:loc-conditions} \ref{enum:ore} and \ref{enum:equalising}. If $\catC$ is symmetric monoidal and $W$ is closed under the tensor product, then pointwise tensor product of spans induces a symmetric monoidal structure on $\catC[W^{-1}]$ so that the localisation functor $\iota \colon \catC \to \catC[W^{-1}]$ is symmetric monoidal.
	\end{lemma}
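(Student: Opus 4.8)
The plan is to define the tensor product on $\catC[W^{-1}]$ by that of $\catC$ on objects, and on morphisms by the pointwise tensor of spans,
\[ [a \xleftarrow{f} a' \xrightarrow{g} b] \otimes [c \xleftarrow{f'} c' \xrightarrow{g'} d] \coloneqq [a \otimes c \xleftarrow{f \otimes f'} a' \otimes c' \xrightarrow{g \otimes g'} b \otimes d], \]
which makes sense because $f \otimes f' \in W$ by the closure hypothesis. Rather than verify well-definedness and bifunctoriality of this assignment by hand, I would deduce it from a universal property. The composite $\catC \times \catC \xrightarrow{\otimes} \catC \xrightarrow{\iota} \catC[W^{-1}]$ sends every morphism $(f, f')$ with $f, f' \in W$ to an isomorphism, since $\otimes$ carries such a morphism into $W$ and $\iota$ inverts $W$. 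A functor out of $\catC \times \catC$ inverts all such pairs as soon as it inverts the morphisms $(f, \id)$ and $(\id, f')$ with $f, f' \in W$, because $(f, f') = (f, \id) \circ (\id, f')$; hence $\iota \times \iota$ exhibits $\catC[W^{-1}] \times \catC[W^{-1}]$ as the localisation of $\catC \times \catC$ at those pairs, and there is a unique functor $\overline{\otimes} \colon \catC[W^{-1}] \times \catC[W^{-1}] \to \catC[W^{-1}]$ with $\overline{\otimes} \circ (\iota \times \iota) = \iota \circ \otimes$. Unwinding the span description of \cref{def:loc} --- in particular the facts that the composite of two spans may be computed from any square furnished by the Ore condition, and that the tensor of two such squares is again a square of the required shape with its relevant leg in $W$ --- identifies $\overline{\otimes}$ with the pointwise tensor of spans above, which is thus automatically well-defined and bifunctorial.

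Second, I would transport the coherence data. The associativity, unit and symmetry isomorphisms of $\catC$ are isomorphisms in $\catC$, so $\iota$ carries them to isomorphisms in $\catC[W^{-1}]$; as $\iota$ is the identity on objects and strictly compatible with $\otimes$ by construction, these have the correct source and target to serve as the structure isomorphisms of $\overline{\otimes}$. Naturality with respect to arbitrary morphisms of $\catC[W^{-1}]$ follows from naturality in $\catC$ together with the observation that every morphism of $\catC[W^{-1}]$ is of the form $\iota(g) \circ \iota(s)^{-1}$ with $s \in W$ --- indeed the span $a \xleftarrow{s} a' \xrightarrow{g} b$ equals this composite --- since a natural transformation between functors that is compatible with the images of all morphisms of $\catC$ is automatically compatible with inverses of images of isomorphisms. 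The pentagon, triangle and hexagon identities hold in $\catC[W^{-1}]$ because, $\iota$ being the identity on objects and strict monoidal, these diagrams are the images under $\iota$ of the corresponding diagrams in $\catC$. This produces a symmetric monoidal structure on $\catC[W^{-1}]$ for which $\iota$ is (strictly) symmetric monoidal.

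The one point requiring genuine care is the identification of $\overline{\otimes}$ with the pointwise tensor of spans, i.e.\ the compatibility of $\otimes$ with the composition of fractions: one must check that tensoring two Ore squares yields an Ore square computing the composite of the tensored spans, and then invoke the independence of the composite in $\catC[W^{-1}]$ from the chosen square (part of \cref{def:loc}). I expect this bookkeeping --- rather than the formal localisation argument or the coherence verification --- to be the main, though modest, obstacle; everything else is routine.
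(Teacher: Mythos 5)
The paper itself does not give a proof of this lemma; it is stated as a known result and attributed to Day, so there is no in-paper argument to compare against. Judged on its own merits, your outline is a sound route, and you have correctly identified the one place where a non-formal verification is required (compatibility of the pointwise tensor of spans with the span composition). Two small remarks.

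First, the step ``hence $\iota \times \iota$ exhibits $\catC[W^{-1}] \times \catC[W^{-1}]$ as the localisation of $\catC \times \catC$ at $W \times W$'' is doing more work than the sentence before it supports. The preceding observation --- that inverting $(f,\id)$ and $(\id,f')$ already inverts $(f,f')$ --- shows that the saturation of the generating class coincides with $W \times W$, but it does not by itself establish the universal property of the product. You should either (a) invoke the general fact that localisation commutes with finite products, which one proves by currying: a functor $F \colon \catC \times \catC \to \cE$ inverting $W \times W$ gives, for each fixed $c$, a functor $F(c,-)$ inverting $W$, hence a functor $\catC \to \mathrm{Fun}(\catC[W^{-1}],\cE)$ which sends $W$ to natural isomorphisms and therefore descends to $\catC[W^{-1}]$; or (b) in the present setting simply compare the explicit span descriptions of $\catC[W^{-1}] \times \catC[W^{-1}]$ and $(\catC\times\catC)[(W\times W)^{-1}]$, noting that $W\times W$ again satisfies \cref{def:loc-conditions} \ref{enum:ore} and \ref{enum:equalising} componentwise and that a pair of spans with a common refinement in each coordinate is the same datum as a span in the product with a common refinement. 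Option (b) is more in the spirit of the rest of the argument, since you will want the span model explicitly anyway for the final identification of $\overline{\otimes}$ with the pointwise tensor.

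Second, your reduction of naturality of the coherence cells to naturality in $\catC$ plus compatibility with inverses of isomorphisms is correct, and the requisite fact --- that every morphism of $\catC[W^{-1}]$ factors as $\iota(g)\circ\iota(s)^{-1}$ with $s \in W$ --- is immediate from \cref{def:loc}. The pentagon, triangle, and hexagon are then automatic, as you say. In sum, the argument is correct in outline and the gap you flag (tensoring Ore squares and invoking independence of the composite from the chosen square) is real but routine; filling it in would complete the proof.
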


	\begin{lemma}\label{lem:covers-loc} 
	If $\cA$ is an assembler then the pair $(\cW(\cA), \cW(\cA))$ satisfies the conditions in \cref{def:loc-conditions} \ref{enum:ore} and \ref{enum:equalising}.
	\end{lemma}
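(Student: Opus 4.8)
The plan is to verify the two conditions in \cref{def:loc-conditions} directly, exploiting that the subcategory of weak equivalences here is all of $\cW(\cA)$, so that every ``in $W$'' requirement is automatically met. Thus the Ore condition reduces to showing that every cospan $\scr{X} \xrightarrow{f} \scr{Z} \xleftarrow{g} \scr{Y}$ in $\cW(\cA)$ extends to a commuting square, and the equalising condition reduces to showing that a parallel pair $f, g \colon \scr{X} \to \scr{Y}$ which is equalised after postcomposing with some morphism already admits a morphism $\scr{W} \to \scr{X}$ that coequalises it.

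For the Ore condition, I would first unwind $f$ and $g$ into index maps $\phi \colon I \to L$ and $\psi \colon J \to L$ together with componentwise data; by definition of $\cW(\cA)$, for each $l \in L$ the families $\{X_i \to Z_l\}_{i \in \phi^{-1}(l)}$ and $\{Y_j \to Z_l\}_{j \in \psi^{-1}(l)}$ are covers of $Z_l$ in $\cA$. The geometric input I would invoke is that in an assembler covers pull back along arbitrary morphisms --- the relevant fibre products exist and the pulled-back family is again a cover (this is where the Grothendieck site axioms, via \cite{zakharevich2014}, are used). I would then let $\scr{W}$ have index set $\{(i,j) : \phi(i) = \psi(j)\}$ and objects $W_{ij} = X_i \times_{Z_{\phi(i)}} Y_j$, with the two legs of the square the projections $h \colon \scr{W} \to \scr{X}$ and $\scr{W} \to \scr{Y}$. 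Over a fixed $X_i$ the components of $h$ form precisely the pullback of the cover $\{Y_j \to Z_{\phi(i)}\}_j$ along $X_i \to Z_{\phi(i)}$, hence a cover of $X_i$, so $h$ is a genuine morphism of $\cW(\cA)$; symmetrically for $\scr{W} \to \scr{Y}$. Commutativity of the resulting square --- on index sets and on morphisms in $\cA$ --- is exactly the universal property of the fibre product.

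For the equalising condition, write $f = (\phi_f, \{f(i)\})$, $g = (\phi_g, \{g(i)\})$, and suppose $h f = h g$ for some $h = (\psi, \{h(j)\})$. On index sets this forces $\psi \phi_f = \psi \phi_g$, and on morphisms it forces $h(\phi_f(i)) \circ f(i) = h(\phi_g(i)) \circ g(i)$ for every $i$. I would then check that $f$ and $g$ agree on every non-initial component: if $X_i$ is non-initial and $\phi_f(i) \neq \phi_g(i)$, then $Y_{\phi_f(i)}$ and $Y_{\phi_g(i)}$ are distinct members of the cover $\{Y_j \to Z_{\psi\phi_f(i)}\}_j$, hence disjoint, so the common composite $X_i \to Z_{\psi\phi_f(i)}$ would factor through the initial object, contradicting that $X_i$ is non-initial; therefore $\phi_f(i) = \phi_g(i)$, and since $h(\phi_f(i))$ is a monomorphism by axiom \ref{enum:assembler-mono} we conclude $f(i) = g(i)$. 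Taking $\scr{W}$ to be $\scr{X}$ with every initial-object component deleted, and $\scr{W} \to \scr{X}$ the evident inclusion that is the identity on each surviving component, yields a morphism of $\cW(\cA)$ --- the fibre over a deleted (hence initial) component is the empty cover, which is allowed by axiom \ref{enum:assembler-initial} --- and this morphism coequalises $f$ and $g$ by the previous step.

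The one point that is not pure bookkeeping is the input that covers pull back to covers in an assembler; everything else is a componentwise manipulation governed by the monomorphism and disjointness conditions built into \cref{def:cover}. (One could equally phrase the Ore step through common refinements, as in axiom \ref{enum:assembler-refinement}, but this needs the same fact that a refinement restricts to a cover of each piece of the original cover.) I would also remark, with a view to the subsequent application of \cref{lem:loc-sym}, that all of these constructions are visibly compatible with the disjoint-union symmetric monoidal structure, although this is not needed for the statement at hand.
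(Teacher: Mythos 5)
Your argument is a direct verification from the assembler axioms, whereas the paper reduces to Zakharevich's Proposition 2.11 (completability of cospans and the monomorphism property in $\cW^\circ(\cA)$) via the adjunction $i^\circ \dashv p^\circ$ with the variant $\cW^\circ(\cA)$ that excludes initial components. Your equalising-condition argument is correct, and its device of deleting the initial-object components to build $\scr{W}$ parallels the role played by $p^\circ$ in the paper; the disjointness-plus-monomorphism reasoning forcing $\phi_f(i) = \phi_g(i)$ and $f(i) = g(i)$ on non-initial components is sound.

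Your Ore-condition argument has a gap: you build $\scr{W}$ from fibre products $X_i \times_{Z_l} Y_j$ in $\cA$, but the assembler axioms do not guarantee that these pullbacks exist. The Grothendieck topology axioms concern covering sieves and can be stated without any pullbacks at all, and the disjointness requirement in \cref{def:cover} asserts the existence (and initiality) of the pullback only for a pair of morphisms drawn from a single cover, not across two different covers. (It is not hard to cook up an abstract assembler in which a pair of pieces $X_i$, $Y_j$ from two crossing covers of $Z_l$ has no pullback, because the ``intersection'' would need to consist of several incomparable pieces of a common refinement rather than a single object of $\cA$.) Zakharevich separately singles out ``closed'' assemblers as those admitting all pullbacks, which confirms this is a genuinely extra hypothesis. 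Your parenthetical fallback---apply axiom \ref{enum:assembler-refinement} to get a common refinement of the two covers of each $Z_l$, and then check that the refinement restricts to a cover of each $X_i$ and each $Y_j$---is the right route and does not presuppose fibre products: the restriction fact can be established from the Grothendieck base-change axiom for sieves together with axiom \ref{enum:assembler-mono} and disjointness of covers. In short, ``refinements restrict to covers of the pieces'' and ``fibre products of cover pieces exist'' are not interchangeable, and only the former holds in a general assembler.
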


	\begin{proof}
	Recall that \cite[Proposition 2.11]{zakharevich2014} says that in $\cW^\circ(\cA)$ all cospans can be completed to commutative squares and all morphisms are monomorphisms, and hence $(\cW^\circ(\cA), \cW^\circ (\cA))$ satisfies \cref{def:loc-conditions} \ref{enum:ore} and \ref{enum:equalising}.  We will use this to prove the corresponding result for $\cW(\cA)$.
	
	First we verify \cref{def:loc-conditions} \ref{enum:equalising} for $\cW(\cA)$. We start with a diagram
	\[ \begin{tikzcd} \scr{P} \ar[r, "f", shift left=.75ex] \ar[r, "g"', shift right = .75ex] & \scr{Q} \ar[r, "h"] & \scr{R} \end{tikzcd}\]
	in $\cW(\cA)$ with $hf=hg$, we can construct a larger commuting diagram with the bottom row in $\cW^\circ (\cA)$ using the adjunction $i^\circ \dashv p^\circ$, where we write $(-)^\circ \coloneqq i^\circ p^\circ(-)$ and $\epsilon \colon i^\circ p^\circ \Rightarrow \id$ for the counit:
	\[ \begin{tikzcd}
	\scr{P} \ar[r, "f", shift left=.75ex] \ar[r, "g"', shift right = .75ex]  & \scr{Q} \ar[r, "h"] & \scr{R} \\
	\scr{P}^\circ \ar[u, "\epsilon_\scr{P}"]\ar[r, "f^{\circ}", shift left=.75ex] \ar[r, "g^{\circ}"', shift right = .75ex]  &\scr{Q}^\circ \ar[u, "\epsilon_{\scr{Q}}"']\ar[r, "h^\circ"] & \scr{R}^\circ \ar[u, "\epsilon_\scr{R}"]
	\end{tikzcd}\]
	Since all morphisms in $\cW^\circ (\cA)$ are monomorphisms, we have $f^\circ=g^\circ$. Using commutativity, we have  $f \epsilon_\scr{P}= \epsilon_\scr{Q} f^\circ = \epsilon_\scr{Q} g^\circ = g \epsilon_\scr{P}$.

	Next we verify \cref{def:loc-conditions} \ref{enum:ore} for $\cW(\cA)$. Given a cospan $\smash{\scr{P} \xrightarrow{\phi} \scr{R} \xleftarrow{\psi} \scr{Q}}$ in $\cW(\cA)$, its image under $p^\circ$ in $\cW^\circ (\cA)$ can be completed to a commutative square by some $\scr{L}$ and applying $i^\circ$ we can complete the original span as follows:
	\[\begin{tikzcd}
	& i^\circ(\scr{L})\ar[out=0,in=90,dr] \ar[out=180,in=90,dl]&\\[-5pt]
	\scr{P}^\circ \ar[r,"\phi^\circ"] \ar[d,"\epsilon_{\scr{P}}"]& \scr{R}^\circ \ar[d, "\epsilon_{\scr{R}}"]& 	\scr{Q}^\circ \ar[l, "\psi^\circ"'] \ar[d, "\epsilon_{\scr{Q}}"]\\
	\scr{P} \ar[r,"\phi"'] & \scr{R} & \scr{Q} \ar[l, "\psi"]
	\end{tikzcd}\]
	\end{proof}

	\begin{definition}\label{scissors_congruence_groupoid}
		The \emph{scissors congruence groupoid} $\cG(\cA)$ is $\cW(\cA)[\cW(\cA)^{-1}]$.
	\end{definition}

	We will refer to morphisms $\scr{P} \congto \scr{Q}$ in $\cG(\cA)$ as \emph{scissors congruences}. We say that two finite collections of objects $\scr{P}$ and $\scr{Q}$ in $\cA$ are \emph{scissors congruent}, denoted $\scr{P} \simeq \scr{Q}$, if there is a scissors congruence between them---note that this generalizes \cref{scissors_congruent} from individual objects in $\cA$ to finite tuples of objects. Concretely, a scissors congruence $f \colon \scr{P} \congto \scr{Q}$ is represented by a zig-zag of covers
	\begin{equation}\label{map_in_G}
	\scr{P} = (I,\{P_i\}_{i \in I}) \longleftarrow \scr{R} = (L,\{R_\ell\}_{\ell \in L}) \longrightarrow \scr{Q} = (J,\{Q_j\}_{j \in J}).
	\end{equation}
	That is, it is given by a pair of maps of sets $\phi \colon L \to I$ and $\psi \colon L \to J$, together with morphisms in $\cA$ that make $\{R_\ell\}_{\ell \in \phi^{-1}(i)}$ into a cover of $P_i$ for each $i \in I$, and $\{R_\ell\}_{\ell \in \psi^{-1}(j)}$ into a cover of $Q_j$ for each $j \in J$. Two such zig-zags represent the same morphism in $\cG(\cA)$ if they become the same after passing to a refinement of the middle term.
	
	\vspace{1em}
			\centerline{
	\def\svgwidth{5.2in}
\begingroup%
  \makeatletter%
  \providecommand\color[2][]{%
    \errmessage{(Inkscape) Color is used for the text in Inkscape, but the package 'color.sty' is not loaded}%
    \renewcommand\color[2][]{}%
  }%
  \providecommand\transparent[1]{%
    \errmessage{(Inkscape) Transparency is used (non-zero) for the text in Inkscape, but the package 'transparent.sty' is not loaded}%
    \renewcommand\transparent[1]{}%
  }%
  \providecommand\rotatebox[2]{#2}%
  \newcommand*\fsize{\dimexpr\f@size pt\relax}%
  \newcommand*\lineheight[1]{\fontsize{\fsize}{#1\fsize}\selectfont}%
  \ifx\svgwidth\undefined%
    \setlength{\unitlength}{351.75189941bp}%
    \ifx\svgscale\undefined%
      \relax%
    \else%
      \setlength{\unitlength}{\unitlength * \real{\svgscale}}%
    \fi%
  \else%
    \setlength{\unitlength}{\svgwidth}%
  \fi%
  \global\let\svgwidth\undefined%
  \global\let\svgscale\undefined%
  \makeatother%
  \begin{picture}(1,0.27027988)%
    \lineheight{1}%
    \setlength\tabcolsep{0pt}%
    \put(0,0){\includegraphics[width=\unitlength,page=1]{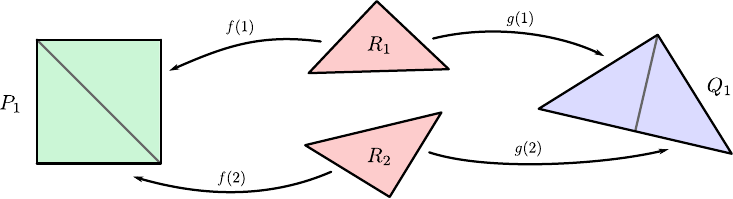}}%
  \end{picture}%
\endgroup%

	}
	\vspace{1em}
	
	\begin{remark}
	Any two covers of $\scr{P}$ have a common refinement, so two zig-zags as above can be refined so as to give the same cover of $\scr{P}$. Similarly, we could refine them so to give the same cover of $\scr{Q}$. But we cannot do both of these things simultaneously, unless the zig-zags define the same morphism in $\cG(\cA)$.
	\end{remark}
	
	\begin{definition}\label{scissors_automorphism_group}
	For any $\scr{P} \in \ob \cG(\cA)$, the \emph{scissors automorphism group} $\aut(\scr{P})$ is its automorphism group in $\cG(\cA)$.
	\end{definition}
	
	\begin{remark}
	It would be convenient to call $\aut(\scr{P})$ the ``scissors congruence group,'' but that term is already reserved for the group completion of $\pi_0\,\cG(\cA)$. The term ``automorphism'' emphasizes that we are considering cut-and-paste maps from a single object to itself.
	\end{remark}

	By \cref{lem:loc-sym}, $\cG(\cA)$ admits a canonical structure of a symmetric monoidal groupoid. We denote its tensor product by $\sqcup$ and we let $\varnothing$ denote to the empty tuple in $\cG(\cA)$, which is the unit of the symmetric monoidal structure. \cref{rem:maps-into-and-out-of-varnothing} implies the following characterisation of the unit up to isomorphism in $\cG(\cA)$:

	\begin{lemma}\label{sc_to_empty}
		$\scr{P} \simeq \varnothing$ in $\cG(\cA)$ if and only if every $P_i \in \scr{P}$ has an empty cover.
	\end{lemma}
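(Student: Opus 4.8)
The plan is to deduce the lemma directly from Remark~\ref{rem:maps-into-and-out-of-varnothing}, which pins down exactly when there is a morphism into or out of $\varnothing$ in $\cW(\cA)$, once one has recorded the elementary fact that $\cG(\cA)$ is a groupoid. For the latter, I would note that by \cref{def:loc} the localisation functor $\iota\colon \cW(\cA) \to \cG(\cA)$ inverts every morphism of $\cW(\cA)$, and every morphism of $\cG(\cA)$ is represented by a span $\scr{P} \xleftarrow{f} \scr{R} \xrightarrow{g} \scr{Q}$ in $\cW(\cA)$, which by construction equals $\iota(g)\circ\iota(f)^{-1}$ and is hence invertible; in particular $\scr{P} \simeq \varnothing$ if and only if there exists a morphism $\varnothing \to \scr{P}$ in $\cG(\cA)$.

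For the ``if'' direction I would assume that each $P_i \in \scr{P}$ has an empty cover. By the first half of Remark~\ref{rem:maps-into-and-out-of-varnothing} there is then a morphism $\varnothing \to \scr{P}$ in $\cW(\cA)$, whose image under $\iota$ is a morphism $\varnothing \to \scr{P}$ in $\cG(\cA)$, so $\scr{P} \simeq \varnothing$ by the previous observation. For the ``only if'' direction I would assume $\scr{P} \simeq \varnothing$ and pick a representative span $\varnothing \xleftarrow{f} \scr{R} \xrightarrow{g} \scr{P}$ in $\cW(\cA)$ of a morphism $\varnothing \to \scr{P}$ in $\cG(\cA)$, using \cref{def:loc}. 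Applying the second half of Remark~\ref{rem:maps-into-and-out-of-varnothing} to $f\colon \scr{R} \to \varnothing$ forces $\scr{R} = \varnothing$; then $g$ is itself a morphism $\varnothing \to \scr{P}$ in $\cW(\cA)$, and the first half of the same remark shows that each $P_i \in \scr{P}$ has an empty cover, as desired.

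I do not anticipate a genuine obstacle here: the mathematical content is entirely contained in Remark~\ref{rem:maps-into-and-out-of-varnothing}, which in turn rests on \cref{empty_set_properties}. The only point demanding a little care is the bookkeeping in the category of fractions---one must make sure that a morphism of $\cG(\cA)$ really is represented by a single span whose two legs are honest morphisms of $\cW(\cA)$, so that the remark applies to each leg separately---but this is immediate from the construction in \cref{def:loc} together with \cref{lem:covers-loc}, which guarantees that the category of fractions is defined in the first place.
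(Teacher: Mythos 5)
Your proof is correct and follows exactly the approach the paper indicates (the paper states only that \cref{rem:maps-into-and-out-of-varnothing} implies the lemma, without giving details). You supply precisely the right elaboration: that $\cG(\cA)$ is a groupoid, so any morphism in either direction between $\varnothing$ and $\scr{P}$ suffices; and that a representative span for such a morphism must have $\scr{R} = \varnothing$, reducing everything to the two cases of the remark.
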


	\subsection{The category of scissors embeddings \texorpdfstring{$\category{UG}(\cA)$}{UG(A)}}

	Following the work of Randal-Williams and Wahl, in particular \cite[Section 1.1]{randalwilliamswahl2017}, we can associate  to $\cG(\cA)$ another symmetric monoidal category with the same objects, but whose morphisms are ``embeddings'' determined by the symmetric monoidal structure. The idea for this construction goes back to the Quillen bracket from \cite{grayson_higher2}.

	\begin{definition}\label{scissors_embedding_category}
	The category $\category{UG}(\cA)$ is given as follows:
	\begin{itemize}
		\item The objects are those of $\cG(\cA)$.
		\item The morphisms $e\colon \scr{P} \to \scr{Q}$ in $\category{UG}(\cA)$ are equivalence classes of scissors congruences $\scr{P} \sqcup \scr{P'} \congto \scr{Q}$ in $\cG(\cA)$, where $\scr{P'}$ is an object in $\cG(\cA)$. Here two such scissors congruences, giving the diagonal arrows, are equivalent if there exists a commutative diagram in $\cG(\cA)$
		\[\begin{tikzcd}[sep=small] \scr{P} \sqcup \scr{P'} \arrow{rd}{\simeq} \arrow{dd}{\simeq}[swap]{\id_\scr{P} \sqcup \phi} &[20pt] \\[-5pt]
			& \scr{Q}. \\[-5pt]
			\scr{P} \sqcup \scr{P''}  \arrow{ru}[swap]{\simeq} &\end{tikzcd}\] 
		\item The composition is induced by the composition in $\cG(\cA)$.
	\end{itemize}  
	We shall refer to the morphisms in $\category{UG}(\cA)$ as \emph{scissors embeddings}.
	\end{definition}

	\begin{definition}A \emph{complement} of a scissors embedding $e \colon  \scr{P} \to \scr{Q}$ is any $\scr{P'} \in \cG(A)$ that can be used to represent $e$ as a scissors congruence $\smash{\scr{P} \sqcup \scr{P'} \congto \scr{Q}}$. It is evidently well-defined up to isomorphism in $\cG(\cA)$. We sometimes use the notation $\scr{Q} - e(\scr{P})$ to refer to any choice of complement of $e$.\end{definition}

	Using that $\scr{P'} = \scr{Q} - e(\scr{P})$ is unique up to isomorphism in $\cG(\cA)$, we could have also defined the morphisms in $\category{UG}(\cA)$ by
	\[\hom_{\category{UG}(\cA)}(\scr{P},\scr{Q}) = \left(\bigsqcup_{\scr{P}'} \hom_{\cG(\cA)}(\scr{P} \sqcup \scr{P}',\scr{Q})\right)/\aut_{\cG(\cA)}(\scr{P}'),\]
	where the disjoint union runs over all isomorphism classes of objects in $\cG(\cA)$, and where $f \in \aut_{\cG(\cA)}(\scr{P}')$ acts by precomposition with $\id_\scr{P} \sqcup f$.

	Any zigzag $\scr{P} \sqcup \scr{P'} \leftarrow \scr{R} \rightarrow \scr{Q}$ representing a scissors embedding $e \colon \scr{P} \hookrightarrow \scr{Q}$ yields three morphisms $\scr{R}_{\scr{P}} \to \scr{P}$, $\scr{R}_{\scr{P'}} \to \scr{P'}$, and $\scr{R} = \scr{R}_{\scr{P}} \sqcup \scr{R}_{\scr{P'}} \to \scr{Q}$ in $\cW(\cA)$:
	\begin{align*}
		\scr{P} = (I,\{P_i\}_{i \in I}) \longleftarrow \scr{R}_{\scr{P}} & = (L_{\scr{P}},\{R_\ell\}_{\ell \in L_{\scr{P}}}), \\
		\scr{P}' = (I',\{P'_{i'}\}_{i' \in I'}) \longleftarrow \scr{R}_{\scr{P}'} & = (L_{\scr{P}'},\{R_{\ell}\}_{\ell \in L_{\scr{P}'}}), \\
		\scr{R}_{\scr{P}} \sqcup \scr{R}_{\scr{P'}} & = (L_{\scr{P}} \sqcup L_{\scr{P'}},\{R_\ell\}_{\ell \in L_{\scr{P}} \sqcup L_{\scr{P'}}}) \longrightarrow \scr{Q} = (J,\{Q_j\}_{j \in J})
	\end{align*}
	 The next lemma shows that we can ``forget'' the part of the data corresponding to $\scr{R}_{\scr{P'}}$.

	\begin{lemma}
		A scissors embedding is uniquely determined by the following data:
		\begin{enumerate}
			\item $\scr{R}_{\scr{P}}$, 
			\item the functions $\phi\colon L_{\scr{P}} \to I$ and $\psi\colon L_{\scr{P}} \to J$, and 
			\item the maps $R_\ell \to P_{\phi(\ell)}$ and $R_\ell \to Q_{\psi(\ell)}$.
		\end{enumerate}
		Such data defines a scissors embedding if and only if the resulting map $\scr{R}_{\scr{P}} \to \scr{P}$ is a cover (i.e.~a map in $\cW(\cA)$), and the map $\scr{R}_{\scr{P}} \to \scr{Q}$ extends to a cover. Two collections of such data give the same scissors embedding if they agree up to refinement.
	\end{lemma}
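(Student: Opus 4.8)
The plan is to set up a bijection between scissors embeddings $\scr{P}\to\scr{Q}$ and equivalence classes (under refinement) of the data $(1)$--$(3)$, by showing that the ``forgetful'' passage described just before the lemma --- choose a representing zigzag $\scr{P}\sqcup\scr{P}'\leftarrow\scr{R}\rightarrow\scr{Q}$, split $\scr{R}=\scr{R}_{\scr{P}}\sqcup\scr{R}_{\scr{P}'}$ according to which summand of $\scr{P}\sqcup\scr{P}'$ each piece covers, and discard $\scr{R}_{\scr{P}'}$, $\scr{P}'$ and the maps $\scr{R}_{\scr{P}'}\to\scr{P}'$ --- is inverse to an explicit reconstruction.

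First I would describe the reconstruction and identify the valid data. Given $\scr{R}_{\scr{P}}$, functions $\phi,\psi$ and maps as in $(1)$--$(3)$, the hypotheses say $\scr{R}_{\scr{P}}\to\scr{Q}$ extends to a cover $\scr{R}=\scr{R}_{\scr{P}}\sqcup\scr{R}_{\scr{P}'}\to\scr{Q}$ in $\cW(\cA)$. Taking $\scr{P}':=\scr{R}_{\scr{P}'}$ with the identity cover $\scr{R}_{\scr{P}'}\to\scr{P}'$, the zigzag $\scr{P}\sqcup\scr{P}'\leftarrow\scr{R}\rightarrow\scr{Q}$ (left leg the given cover on $\scr{R}_{\scr{P}}$, the identity on $\scr{R}_{\scr{P}'}$) represents a scissors congruence, hence a scissors embedding $e$, and applying the forgetful passage to it returns the original data up to relabelling initial pieces. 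Conversely, any zigzag visibly restricts its left leg over $\scr{P}$ to a cover $\scr{R}_{\scr{P}}\to\scr{P}$ and has $\scr{R}_{\scr{P}}\to\scr{Q}$ extending to the cover $\scr{R}\to\scr{Q}$, so the two displayed conditions cut out exactly the data in the image of the forgetful passage.

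Next I would verify well-definedness of the reconstruction, i.e.\ independence of the two choices involved: (a) the zigzag representing $e$, and (b) the extension $\scr{R}\to\scr{Q}$ of $\scr{R}_{\scr{P}}\to\scr{Q}$. For (a), the equivalence relation on $\hom_{\category{UG}(\cA)}(\scr{P},\scr{Q})$ is generated by refining the middle term of a zigzag and by precomposing with $\id_\scr{P}\sqcup\phi$ for an isomorphism $\phi$ of complements in $\cG(\cA)$; refining the middle term $\scr{R}$ refines $\scr{R}_{\scr{P}}$ together with its maps, while $\id_\scr{P}\sqcup\phi$ only refines the $\scr{R}_{\scr{P}'}$-summand of a representing zigzag, leaving $\scr{R}_{\scr{P}}$ untouched up to refinement. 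Throughout one uses \cref{empty_set_properties} and the adjunction $i^\circ\dashv p^\circ$ of \cref{empty_sets_in_W_discussion} to disregard initial objects, and the monomorphism property of $\cW^\circ(\cA)$ recalled in the proof of \cref{lem:covers-loc}. For (b), given two extensions $\scr{R}\to\scr{Q}$ and $\scr{R}^*\to\scr{Q}$, complete the cospan $\scr{R}\to\scr{Q}\leftarrow\scr{R}^*$ in $\cW(\cA)$ to a commutative square with vertex $\scr{R}''$ (possible since cospans in $\cW^\circ(\cA)$ complete to commutative squares, as in \cref{lem:covers-loc}); disjointness of covers forces the maps $\scr{R}''\to\scr{R}$ and $\scr{R}''\to\scr{R}^*$ to induce the same splitting $\scr{R}''=\scr{R}''_{\scr{P}}\sqcup\scr{R}''_{\scr{P}'}$ and, since $\scr{R}_{\scr{P}}\to\scr{Q}$ is common to both extensions and $\cW^\circ(\cA)$-morphisms are monic, the same map $\scr{R}''_{\scr{P}}\to\scr{R}_{\scr{P}}$, so the remaining legs $\scr{R}''_{\scr{P}'}\to\scr{R}_{\scr{P}'}$ and $\scr{R}''_{\scr{P}'}\to\scr{R}^{*}_{\scr{P}'}$ form the span of an isomorphism $\phi$ in $\cG(\cA)$ between the two complements; checking that $\phi$ fits into the defining triangle of \cref{scissors_embedding_category} then shows the two reconstructed scissors embeddings coincide.

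Finally, I would observe that the forgetful passage and the reconstruction are mutually inverse modulo the respective equivalences (refinement on the data side), which is the content of the last sentence of the lemma, thereby completing the proof. The main obstacle I expect is step (b): producing the isomorphism of complements out of a common refinement and verifying it sits inside the triangle of \cref{scissors_embedding_category}; the bookkeeping with initial objects in the tuples (permitted here, unlike in Zakharevich's setup) is the other delicate point, but it is handled uniformly by passing to $\cW^\circ(\cA)$.
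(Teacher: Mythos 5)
Your argument is correct and rests on the same key observation as the paper's proof: given two extensions of $\scr{R}_{\scr{P}} \to \scr{Q}$ to covers, a common refinement of the resulting covers of $\scr{Q}$ induces (by disjointness and axiom \ref{enum:assembler-mono}) a common refinement of the two complements, hence a scissors congruence between them. The paper's proof records only this one nontrivial point and treats the remaining bookkeeping (well-definedness of the forgetful passage under refinement and under $\id_{\scr{P}}\sqcup\phi$, and the verification that the constructions are mutually inverse) as routine; your proposal spells out that bookkeeping in full, including the compatibility with the defining triangle of $\category{UG}(\cA)$, but does not introduce a genuinely different idea.
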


	\begin{proof}
	If the maps extend to a cover of $\scr{Q}$, the complement is unique up to scissors congruence. We see this by taking any two such complements $\scr{R}_1$ and $\scr{R}_2$, taking a common refinement of the resulting covers of $\scr{Q}$, and observing that such a refinement induces a common refinement of $\scr{R}_1$ and $\scr{R}_2$.
	\end{proof}

	Since a scissors embedding is determined by a cover $\scr{R}_{\scr{P}} \to \scr{P}$ and a partial cover $\scr{R}_{\scr{P}} \to \scr{Q}$, it makes sense to say that scissors embeddings with the same target are disjoint, or that they cover: we simply consider the partial covers $\scr{R}_{\scr{P}} \to \scr{Q}$ and ask whether the resulting collections of morphisms to each $Q_j$ are disjoint, or cover.

	\begin{definition}
		A collection of scissors embeddings $e_i\colon \scr{P}_i \to \scr{Q}$ is \emph{disjoint} if there exist spans representing the embeddings in which for each $j \in J$, the corresponding maps $R_\ell \to Q_j$ for $\ell \in L_{\scr{P}_i}$ (not $L_{\scr{P}_i'}$) are disjoint. The embeddings $e_i$ are a \emph{cover} if the corresponding maps $R_\ell \to Q_j$ for $\ell \in L_{\scr{P}_i}$ form a cover of $Q_j$.
	\end{definition}

	\begin{remark}
		If $\cA$ has objects with empty covers, then it is possible to add such objects to a disjoint pair of collections of maps $R_\ell \to Q$, and get a new pair of collections of maps that are non-disjoint. This is why the definition asks for \emph{some} span in which the maps are disjoint, not that \emph{all} spans have this property.
	\end{remark}

	\begin{example}For any finite collection $\{\scr{P}_i\}_{i=1}^n$ of objects in $\cG(\cA)$, there are canonical scissors embeddings $\iota_{\scr{P}_i} \colon \scr{P}_i \hookrightarrow \sqcup_{i=1}^n \scr{P}_i$ and together these cover.\end{example}

	\begin{remark}
		In most examples, there is a functor $\cA \to \category{UG}(\cA)$, but this is not true in general. It amounts to asking that every morphism in $\cA$ can be completed to a cover.
	\end{remark}

	\subsubsection{Properties of scissors embeddings} We now establish some properties of scissors embeddings.

	\begin{lemma}\label{disjoint_iff_complement}
		Scissors embeddings $e_1\colon \scr{P}_1 \to \scr{Q}$ and $e_2\colon \scr{P}_2 \to \scr{Q}$ are disjoint if and only if there is a factorisation of $e_2$ through the complement $\scr{Q} - e_1(\scr{P}_1)$.
	\end{lemma}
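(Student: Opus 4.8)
The plan is to prove both directions by unwinding the definitions of ``disjoint'' and ``complement'' in terms of representing spans in $\cW(\cA)$. For the forward direction, suppose $e_1$ and $e_2$ are disjoint. By definition there exist spans $\scr{P}_1 \sqcup \scr{P}_1' \leftarrow \scr{R}^{(1)} \to \scr{Q}$ and $\scr{P}_2 \sqcup \scr{P}_2' \leftarrow \scr{R}^{(2)} \to \scr{Q}$ representing $e_1, e_2$ in which, for each $j \in J$, the maps $R_\ell \to Q_j$ coming from $\ell \in L_{\scr{P}_1}$ are disjoint from those coming from $\ell \in L_{\scr{P}_2}$. I would then take a common refinement of the two resulting partial covers of $\scr{Q}$ and use it to assemble a single object $\scr{R}$ together with maps exhibiting $\scr{R}_{\scr{P}_2} \to \scr{Q}$ as factoring through the partial cover $\scr{R}_{\scr{P}_1} \sqcup (\text{rest}) \to \scr{Q}$ — here the ``rest'' is precisely a complement $\scr{P}_1' = \scr{Q} - e_1(\scr{P}_1)$, because the maps $R_\ell \to Q_j$ for $\ell \notin L_{\scr{P}_1}$ together with the $\scr{P}_1$-part form a cover of $\scr{Q}$ by the definition of $e_1$ being a scissors embedding with the specified partial cover. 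Disjointness of the two families is exactly what guarantees that the $\scr{P}_2$-part lands in the complementary piece, which gives the desired factorisation $e_2 \colon \scr{P}_2 \to \scr{Q} - e_1(\scr{P}_1) \hookrightarrow \scr{Q}$ in $\category{UG}(\cA)$.

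For the converse, suppose $e_2$ factors as $\scr{P}_2 \xrightarrow{e_2'} \scr{Q} - e_1(\scr{P}_1) \xrightarrow{\iota} \scr{Q}$, where $\iota$ is the canonical scissors embedding realising the complement. Pick a complement $\scr{P}_1' := \scr{Q} - e_1(\scr{P}_1)$ and a span $\scr{P}_1 \sqcup \scr{P}_1' \leftarrow \scr{R} \to \scr{Q}$ representing $e_1$; this also represents $\iota$ (with $\scr{P}_1$ now playing the role of the complement of $\iota$). Choose a span representing $e_2'$ with target $\scr{P}_1'$, and compose with $\scr{R} \to \scr{Q}$ to get a span representing $e_2 = \iota \circ e_2'$. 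Refining as needed so that both spans restrict to the \emph{same} cover of $\scr{P}_1'$, I would observe that the resulting partial covers $\scr{R}_{\scr{P}_1} \to \scr{Q}$ and $\scr{R}_{\scr{P}_2} \to \scr{Q}$ factor through the two pieces of the cover $\scr{R}_{\scr{P}_1} \sqcup \scr{R}_{\scr{P}_1'} \to \scr{Q}$ coming from $\scr{P}_1$ and $\scr{P}_1'$ respectively. Since morphisms in a cover are pairwise disjoint (Definition \ref{def:cover}), and disjointness is inherited by precomposition with further morphisms in $\cA$ — pullbacks of initial objects along monomorphisms remain initial — the two families $R_\ell \to Q_j$ are disjoint, so $e_1$ and $e_2$ are disjoint.

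The main obstacle I anticipate is bookkeeping around refinements: the definition of disjointness asks for the existence of \emph{some} representing spans with the disjointness property (see the remark following the definition), so in the converse direction I must be careful to produce explicit spans rather than argue ``up to scissors congruence,'' and in the forward direction I must check that the span I build out of the common refinement genuinely represents $e_2$ as a composite through the complement in $\category{UG}(\cA)$ — i.e.\ that it agrees with the given representative up to refinement. A secondary technical point is verifying that disjointness of a family of morphisms to $Q_j$ is stable under refining each morphism by a cover; this follows from \ref{enum:assembler-mono} and the elementary fact that if $P_1 \times_{Q_j} P_2$ is initial then so is $P_1' \times_{Q_j} P_2$ for any $P_1' \to P_1$, since the latter pullback maps to the former. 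Once these points are nailed down, the equivalence is essentially a restatement of the definitions.
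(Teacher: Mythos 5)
Your overall approach---unwinding the definitions of disjointness and complement, passing to common refinements, and checking compatibility---is essentially the same as the paper's proof, and your converse direction (factorisation implies disjoint) is sound.

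The forward direction (disjoint implies factorisation) has a genuine gap. You assert that ``disjointness of the two families is exactly what guarantees that the $\scr{P}_2$-part lands in the complementary piece,'' but this is precisely the nontrivial content of the direction, not a restatement of the definitions. After taking a common refinement $\scr{R}$ of the two covers of $\scr{Q}$, a piece $R_\ell$ could a priori be assigned both to the $\scr{P}_1$-part and to the $\scr{P}_2$-part of the refined spans, and such a piece would obstruct the desired factorisation through $\scr{Q} - e_1(\scr{P}_1)$. Your argument never rules this out. The paper's proof closes this by noting that such an $R_\ell$ maps, via its two assignments, into the pullback $R_\ell \times_{Q_j} R_\ell$ over some $Q_j$; disjointness of $e_1$ and $e_2$ forces this pullback to be initial; the diagonal $R_\ell \to R_\ell \times_{Q_j} R_\ell$ then gives a map from $R_\ell$ to $\varnothing$, which by \cref{empty_set_properties} forces $R_\ell \cong \varnothing$. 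After discarding these empty pieces, the $\scr{P}_2$-part does lie entirely in the complement. Your ``secondary technical point'' (stability of disjointness under refining one leg of a pullback) is one necessary ingredient of this argument, but the crucial missing step is the diagonal map into the self-pullback together with the conclusion that $R_\ell$ is initial; without this, the forward direction does not actually close.
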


	\begin{proof}
		Suppose a factorisation exists. Then after passing to refinements, each term $R_\ell$ mapping to $\scr{P}_2$ participates in a cover of some $Q_j$ and is therefore disjoint from every other term in that cover, including in particular the terms that map to $\scr{P}_1$. Therefore the terms mapping to $\scr{P}_2$ are disjoint in each $Q_j$ from the terms mapping to $\scr{P}_1$, so by definition the embeddings are disjoint.

		Conversely, suppose the embeddings are disjoint. Then we may take a common subdivision as covers of $\scr{Q}$, eliminate any $\varnothing$ terms, and examine each term $R_\ell$ that maps to $\scr{P}_2$. If $R_\ell$ also mapped to $\scr{P}_1$, then since the scissors embeddings are disjoint we have $R_\ell \times_{Q_j} R_\ell \cong \varnothing$, in other words $R_\ell$ is disjoint from itself as an object over $Q_j$. But there is always a map $R_\ell \to R_\ell \times_{Q_j} R_\ell$, so this implies the existence of a map $R_\ell \to \varnothing$, which by \cref{empty_set_properties} means that $R_\ell \cong \varnothing$. But we assumed there were no such terms. Therefore the terms in the cover of $\scr{Q}$ mapping to $\scr{P}_2$ are in the complement of the terms mapping to $\scr{P}_1$, so that we get a factorisation of $e_2$ through $\scr{Q} - e_1(\scr{P}_1)$.
	\end{proof}

	\begin{lemma}\label{disjoint_implies_conjugate}
		If $e,e'\colon \scr{P} \rightrightarrows \scr{Q}$ are disjoint embeddings of the same object $\scr{P}$, then there exists a scissors congruence $h\colon \scr{Q} \to \scr{Q}$ such that $he_1 = e_2$.
	\end{lemma}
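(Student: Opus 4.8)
The plan is to deduce the statement from the previous lemma, \cref{disjoint_iff_complement}, by exhibiting $\scr{Q}$ as a threefold disjoint union in which two summands are copies of $\scr{P}$, realising $e_1(\scr{P})$ and $e_2(\scr{P})$ respectively, and then taking $h$ to be the scissors congruence that swaps these two summands. Throughout I work in $\category{UG}(\cA)$, using that an isomorphism in $\cG(\cA)$ is a scissors embedding with trivial complement and that the canonical summand inclusions $\iota_{(-)}$ are scissors embeddings.

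First I would apply \cref{disjoint_iff_complement} to the disjoint pair $e_1, e_2 \colon \scr{P} \rightrightarrows \scr{Q}$, obtaining a scissors embedding $\bar e_2 \colon \scr{P} \to \scr{Q} - e_1(\scr{P})$ with $e_2 = j_1 \circ \bar e_2$, where $j_1 \colon \scr{Q} - e_1(\scr{P}) \hookrightarrow \scr{Q}$ is the canonical complement embedding. Write $\scr{P}' := \scr{Q} - e_1(\scr{P})$, so that $e_1$ is represented by a scissors congruence $\alpha \colon \scr{P} \sqcup \scr{P}' \congto \scr{Q}$ with $e_1 = \alpha \circ \iota_{\scr{P}}$ and $j_1 = \alpha \circ \iota_{\scr{P}'}$. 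Likewise write $\scr{R} := \scr{P}' - \bar e_2(\scr{P})$, so that $\bar e_2$ is represented by a scissors congruence $\beta \colon \scr{P} \sqcup \scr{R} \congto \scr{P}'$ with $\bar e_2 = \beta \circ \iota_{\scr{P}}$. Composing yields a scissors congruence
\[\gamma := \alpha \circ (\id_{\scr{P}} \sqcup \beta) \colon \scr{P} \sqcup \scr{P} \sqcup \scr{R} \congto \scr{Q},\]
where I suppress associators by coherence for the symmetric monoidal structure on $\cG(\cA)$ supplied by \cref{lem:loc-sym}; under $\gamma$ the first $\scr{P}$-summand corresponds to $e_1(\scr{P})$ and the second to $e_2(\scr{P})$.

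Next, using the symmetry isomorphism $\tau \colon \scr{P} \sqcup \scr{P} \to \scr{P} \sqcup \scr{P}$ of $\cG(\cA)$, I would set
\[h := \gamma \circ (\tau \sqcup \id_{\scr{R}}) \circ \gamma^{-1} \colon \scr{Q} \to \scr{Q},\]
which is a scissors congruence since $\gamma$, $\tau$, and $\id_{\scr{R}}$ are isomorphisms in $\cG(\cA)$. It then remains to verify $h \circ e_1 = e_2$. This is a formal computation: from $\gamma = \alpha \circ (\id_{\scr{P}} \sqcup \beta)$ one gets $\gamma^{-1} \circ \alpha = \id_{\scr{P}} \sqcup \beta^{-1}$, so $\gamma^{-1} \circ e_1 = (\id_{\scr{P}} \sqcup \beta^{-1}) \circ \iota_{\scr{P}}$ is the inclusion of the first $\scr{P}$-summand of $\scr{P} \sqcup \scr{P} \sqcup \scr{R}$; applying $\tau \sqcup \id_{\scr{R}}$ turns this into the inclusion of the second $\scr{P}$-summand; and applying $\gamma$ to the latter and tracing the embedded copy of $\scr{P}$ gives $\alpha \circ \iota_{\scr{P}'} \circ (\beta \circ \iota_{\scr{P}}) = j_1 \circ \bar e_2 = e_2$, using the factorisation from the first step.

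The entire content of the argument sits in \cref{disjoint_iff_complement}; the only mild difficulty is the bookkeeping of the three summands of $\scr{Q}$ and keeping straight which summand inclusion realises $e_1$ versus $e_2$, which is why I isolated the complement inclusions $j_1 = \alpha \circ \iota_{\scr{P}'}$ and $\bar e_2 = \beta \circ \iota_{\scr{P}}$ explicitly. Beyond that, the verification is a routine diagram chase in the symmetric monoidal groupoid $\cG(\cA)$ (together with the inclusion $\cG(\cA) \to \category{UG}(\cA)$), and I anticipate no genuine obstacle.
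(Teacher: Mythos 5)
Your proof is correct. At its core it uses the same construction as the paper---identify $\scr{Q}$ up to scissors congruence as a threefold disjoint union with two summands corresponding to $e_1(\scr{P})$ and $e_2(\scr{P})$, and take $h$ to be the swap of those two summands---but you implement it at a higher level of abstraction. The paper's proof manipulates cover representatives directly: it refines both zig-zags to a common cover of $\scr{Q}$ in which the two image sub-tuples are disjoint and each covers $\scr{P}$, refines further so these two sub-tuples coincide as a single cover $\scr{R} \to \scr{P}$, obtains $\scr{Q} \simeq \scr{R} \sqcup \scr{R} \sqcup \scr{Q}'$, and lets $h$ interchange the two copies of $\scr{R}$. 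You instead package the refinement step into a single invocation of \cref{disjoint_iff_complement} (factoring $e_2$ through the complement $\scr{Q} - e_1(\scr{P})$) and then argue formally in the symmetric monoidal groupoid, conjugating the symmetry isomorphism $\tau$ by the scissors congruence $\gamma\colon \scr{P} \sqcup \scr{P} \sqcup \scr{R} \congto \scr{Q}$. What this buys you is a clean explicit check that $h e_1 = e_2$, which the paper asserts but does not spell out, and insulation from re-deriving the refinement bookkeeping already encapsulated in \cref{disjoint_iff_complement}. The paper's version is shorter on the page; yours makes the logical dependence on the preceding lemma explicit, which is a reasonable trade-off.
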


	\begin{proof}
		Taking common refinements and eliminating any objects with empty covers, we get one cover of $\scr{Q}$ with two disjoint sub-tuples, each of which also forms a cover of $\scr{P}$. Taking common refinements of those two covers of $\scr{P}$, we get a single cover $\scr{R} \to \scr{P}$, and now our cover of $\scr{Q}$ is of the form $\scr{R} \sqcup \scr{R} \sqcup \scr{Q'}$. Now define $h$ to be the scissors congruence that interchanges the two copies of $\scr{R}$.
	\end{proof}

	\begin{lemma}\label{embeddings_are_monomorphisms}
		Scissors embeddings are monomorphisms.
	\end{lemma}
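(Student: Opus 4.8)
The plan is to unwind the definition of $\category{UG}(\cA)$ and reduce the claim to a statement about $\cG(\cA)$, which is a groupoid. Concretely, suppose $e \colon \scr{P} \to \scr{Q}$ is a scissors embedding and $d_1, d_2 \colon \scr{S} \rightrightarrows \scr{P}$ are scissors embeddings with $e d_1 = e d_2$; I want to conclude $d_1 = d_2$. Represent $e$ by a scissors congruence $f \colon \scr{P} \sqcup \scr{P}' \congto \scr{Q}$ and each $d_k$ by a scissors congruence $g_k \colon \scr{S} \sqcup \scr{S}'_k \congto \scr{P}$. The composite $e d_k$ is then represented (after applying the Ore condition to compose in $\cG(\cA)$) by a scissors congruence $\scr{S} \sqcup \scr{S}'_k \sqcup \scr{P}' \congto \scr{Q}$ that restricted to $\scr{S}$ agrees with the composite $\scr{S} \xrightarrow{\iota} \scr{S}\sqcup\scr{S}'_k \xrightarrow{g_k} \scr{P} \xrightarrow{\iota} \scr{P}\sqcup\scr{P}' \xrightarrow{f} \scr{Q}$. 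The hypothesis $ed_1 = ed_2$ says these two composites define the same scissors embedding $\scr{S} \to \scr{Q}$.

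Next I would use that $f$ is an \emph{isomorphism} in $\cG(\cA)$: it has an inverse $f^{-1} \colon \scr{Q} \congto \scr{P} \sqcup \scr{P}'$. Precomposing the equality $ed_1 = ed_2$ of scissors embeddings into $\scr{Q}$ with $f^{-1}$ (which makes sense because $f$ is invertible, so composing with it on the target of a scissors embedding is a bijection on $\hom$-sets in $\category{UG}(\cA)$ — indeed isomorphisms in $\cG(\cA)$ induce isomorphisms in $\category{UG}(\cA)$), the two composites become scissors embeddings $\scr{S} \to \scr{P}\sqcup\scr{P}'$ that factor through the canonical embedding $\iota_{\scr{P}} \colon \scr{P} \hookrightarrow \scr{P}\sqcup\scr{P}'$ via $d_1$ and $d_2$ respectively, and they are equal. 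So it suffices to show $\iota_{\scr{P}}$ is a monomorphism in $\category{UG}(\cA)$, i.e.~to handle the case $e = \iota_{\scr{P}} \colon \scr{P} \hookrightarrow \scr{P} \sqcup \scr{P}'$.

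For that special case, I would argue directly with complements. A scissors embedding $d_k \colon \scr{S} \to \scr{P}$ has a complement $\scr{S}'_k = \scr{P} - d_k(\scr{S})$, and the composite $\iota_{\scr{P}} d_k \colon \scr{S} \to \scr{P}\sqcup\scr{P}'$ has complement $\scr{S}'_k \sqcup \scr{P}'$, represented by the scissors congruence $\scr{S}\sqcup\scr{S}'_k\sqcup\scr{P}' \xrightarrow{g_k \sqcup \id_{\scr{P}'}} \scr{P}\sqcup\scr{P}'$. Saying $\iota_{\scr{P}}d_1 = \iota_{\scr{P}}d_2$ means there is an isomorphism $\phi \colon \scr{S}'_1 \sqcup \scr{P}' \congto \scr{S}'_2 \sqcup \scr{P}'$ in $\cG(\cA)$ making the triangle with the two scissors congruences into $\scr{P}\sqcup\scr{P}'$ commute, and with $\id_{\scr{S}}\sqcup\phi$ on the domain. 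Restricting this commuting triangle along the inclusion of the summand $\scr{P}$, and using that the scissors congruences into $\scr{P}\sqcup\scr{P}'$ restrict to $g_1$ and $g_2$ composed with $\iota_{\scr{P}}$ — together with the fact that $\iota_{\scr{P}} \colon \scr{P}\to\scr{P}\sqcup\scr{P}'$ is split monic in $\cW(\cA)$ hence reflects identifications — one extracts an isomorphism $\scr{S}'_1 \congto \scr{S}'_2$ witnessing $d_1 = d_2$ in $\category{UG}(\cA)$.

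The main obstacle is the bookkeeping in this last step: carefully tracking how a commuting triangle of scissors congruences in $\cG(\cA)$, after restricting to a summand, produces the required equivalence datum between $d_1$ and $d_2$, while correctly accounting for the action of $\aut_{\cG(\cA)}(\scr{S}'_k)$ and the possible presence of objects with empty covers (which can be absorbed into complements, cf.~\cref{sc_to_empty}). An alternative, possibly cleaner route that avoids some of this is to invoke a general principle: in any symmetric monoidal groupoid $\cG$ satisfying cancellation, the associated category $\category{UG}$ of Randal-Williams--Wahl has all morphisms monic — this is \cite[Proposition 1.4]{randalwilliamswahl2017} applied once cancellation is known. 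So the cleanest proof is likely: (i) observe $\cG(\cA)$ satisfies cancellation, i.e.~$\scr{P}\sqcup\scr{A}\simeq\scr{P}\sqcup\scr{B}$ implies $\scr{A}\simeq\scr{B}$ — which one checks using common refinements of covers, essentially the argument in the proof of \cref{disjoint_implies_conjugate} — and then (ii) cite the general Randal-Williams--Wahl result that $\category{UG}$ of a groupoid with cancellation has monic morphisms. I would present the proof this way, isolating the cancellation property of $\cG(\cA)$ as the one thing that needs verification.
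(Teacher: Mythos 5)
Your proposal pivots on two routes, and the one you ultimately recommend — ``the cleanest proof'' — is based on a false premise. You claim that $\cG(\cA)$ satisfies cancellation for \emph{any} assembler, i.e.~that $\scr{P}\sqcup\scr{A}\simeq\scr{P}\sqcup\scr{B}$ implies $\scr{A}\simeq\scr{B}$, and that this follows by a common-refinement argument in the spirit of \cref{disjoint_implies_conjugate}. This is not true, and the paper stresses exactly this point immediately after stating property \ref{enum:ass-vol-zylev}: ``in S-assemblers this axiom typically does not hold.'' Concretely, in $\cE^n_G$ with scalings allowed, every nonempty polytope is scissors congruent to every other nonempty polytope, so taking $\scr{P}$ and $\scr{A}$ nonempty and $\scr{B}=\varnothing$ gives $\scr{P}\sqcup\scr{A}\simeq\scr{P}\sqcup\scr{B}$ while $\scr{A}\not\simeq\scr{B}$. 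The paper's entire device of introducing the restricted groupoid $\widetilde\cG(\cA)$ is a workaround for precisely this failure of cancellation. Since \cref{embeddings_are_monomorphisms} is stated and used for \emph{all} assemblers (and is applied later when passing to $\cU\widetilde{\cG}(\cA)$), it has to be proved without assuming cancellation, which is what the paper's direct proof does: it unwinds the zig-zag representatives for $g$, $h$, $f$, passes to a common refinement of the three induced covers of $\scr{Q}$, and reads off $g=h$ from the equality $fg=fh$ at the level of refined covers.

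Your first route (reducing to $e = \iota_{\scr{P}}$ and arguing with complements) is a reasonable reduction in outline, but it is left incomplete exactly at the step you flag as ``the main obstacle,'' and the proposed crutch there is also shaky: you appeal to ``$\iota_{\scr{P}} \colon \scr{P}\to\scr{P}\sqcup\scr{P}'$ is split monic in $\cW(\cA)$,'' but $\iota_{\scr{P}}$ is not even a morphism of $\cW(\cA)$ when $\scr{P}'\neq\varnothing$ — morphisms in $\cW(\cA)$ are covers of their target, and a proper sub-tuple of $\scr{P}\sqcup\scr{P}'$ cannot cover it. So the lemma that would finish Route 1 is still missing. If you want to salvage this approach without cancellation, you would have to make the complement-tracking argument honest, which in practice brings you back to passing to common refinements of zig-zags — at which point the paper's direct argument is both shorter and cleaner than either of your routes.
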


	\begin{proof}
		The proof differs from that in \cite[Proposition 2.11 (1)]{zakharevich2014} because we allowed $\varnothing$ to appear in the objects of $\cW(\cA)$, see \cref{empty_sets_in_W_discussion}. However, we are also allowed to freely pass to refinements, which changes the proof, and eliminates any potential problems because up to refinement all instances of $\varnothing$ can be eliminated.

		We assume we are given two parallel embeddings $g,h$ and a third embedding $f$:
		\[ \scr{P} \rightrightarrows \scr{Q} \rightarrow \scr{S} \]
		Taking the zig-zags for each of these maps gives covers
		\[ \scr{R}_f \to \scr{Q}, \quad \scr{R}_f \sqcup \scr{R}_{f'} \to \scr{S}, \]
		\[ \scr{R}_g \to \scr{P}, \quad \scr{R}_g \sqcup \scr{R}_{g'} \to \scr{Q}, \]
		\[ \scr{R}_h \to \scr{P}, \quad \scr{R}_h \sqcup \scr{R}_{h'} \to \scr{Q}. \]
		Passing to a common refinement of the three covers of $\scr{Q}$, we may assume that $\scr{R}_g$ and $\scr{R}_h$ are sub-tuples of $\scr{R}_f$, in other words
		\[ \scr{R}_f = \scr{R}_g \sqcup \scr{R}_{g'} = \scr{R}_h \sqcup \scr{R}_{h'}. \]

		The composite map $fg$ is then represented by
		\[ \scr{R}_g \to \scr{P}, \quad \scr{R}_g \sqcup \scr{R}_{g'} \sqcup \scr{R}_{f'} \to \scr{S}, \]
		and similarly $fh$ is represented by
		\[ \scr{R}_h \to \scr{P}, \quad \scr{R}_h \sqcup \scr{R}_{h'} \sqcup \scr{R}_{f'} \to \scr{S}, \]
		If $fg = fh$, then after further refinement, these presentations are the same, so $\scr{R}_g = \scr{R}_h$, with the same cover of $\scr{P}$. Furthermore, the maps $\scr{R}_g \sqcup \scr{R}_{g'} \to \scr{Q}$ and $\scr{R}_g \sqcup \scr{R}_{g'} \to \scr{Q}$ agree with the fixed map $\scr{R}_f \to \scr{Q}$, so they also agree. Therefore $g = h$ in $\category{UG}(\cA)$.
	\end{proof}

	\begin{corollary}
		The complement $\scr{Q} - e(\scr{P})$ of a scissors embedding $e\colon \scr{P} \to \scr{Q}$ is well-defined up to unique isomorphism, as an object of the comma category $\category{UG}(\cA)/\scr{Q}$. 
	\end{corollary}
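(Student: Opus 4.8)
The plan is to observe first that a complement is not merely an abstract object of $\cG(\cA)$, but comes with a canonical scissors embedding into $\scr{Q}$, and then to deduce both the existence and the uniqueness of the comparison isomorphism from \cref{disjoint_iff_complement} and \cref{embeddings_are_monomorphisms}. Concretely, if $\scr{P}'$ is a complement of $e$, witnessed by a scissors congruence $\alpha\colon \scr{P} \sqcup \scr{P}' \congto \scr{Q}$, then precomposing $\alpha$ with the canonical embedding $\scr{P}' \hookrightarrow \scr{P} \sqcup \scr{P}'$ yields a scissors embedding $\bar e\colon \scr{P}' \to \scr{Q}$, so that $(\scr{P}',\bar e)$ is an object of $\category{UG}(\cA)/\scr{Q}$. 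Spelling out a zig-zag $\scr{P} \sqcup \scr{P}' \leftarrow \scr{R} \to \scr{Q}$ representing $\alpha$ and splitting $\scr{R} = \scr{R}_{\scr{P}} \sqcup \scr{R}_{\scr{P}'}$ according to which summand of $\scr{P} \sqcup \scr{P}'$ each term covers, the partial covers $\scr{R}_{\scr{P}} \to \scr{Q}$ and $\scr{R}_{\scr{P}'} \to \scr{Q}$ represent $e$ and $\bar e$ respectively and together form a cover of $\scr{Q}$; in particular $e$ and $\bar e$ are disjoint scissors embeddings. The corollary is really the assertion that this object $(\scr{P}',\bar e)$ is independent of all choices, so I fix two complements $(\scr{P}'_1,\bar e_1)$ and $(\scr{P}'_2,\bar e_2)$ of $e$ and compare them in $\category{UG}(\cA)/\scr{Q}$.

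Since $e$ is disjoint from $\bar e_2$, \cref{disjoint_iff_complement} provides a factorisation of $\bar e_2$ through our chosen complement $(\scr{P}'_1,\bar e_1)$, i.e.\ a scissors embedding $f\colon \scr{P}'_2 \to \scr{P}'_1$ with $\bar e_1 \circ f = \bar e_2$; symmetrically there is a scissors embedding $g\colon \scr{P}'_1 \to \scr{P}'_2$ with $\bar e_2 \circ g = \bar e_1$. Then $\bar e_1 \circ (f \circ g) = \bar e_2 \circ g = \bar e_1$ and $\bar e_2 \circ (g \circ f) = \bar e_1 \circ f = \bar e_2$, so because scissors embeddings are monomorphisms (\cref{embeddings_are_monomorphisms}) we get $f \circ g = \id$ and $g \circ f = \id$. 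Hence $f$ is an isomorphism $(\scr{P}'_1,\bar e_1) \cong (\scr{P}'_2,\bar e_2)$ in the comma category. Uniqueness is immediate from the same monomorphism property: any morphism $h\colon (\scr{P}'_1,\bar e_1) \to (\scr{P}'_2,\bar e_2)$ in $\category{UG}(\cA)/\scr{Q}$ satisfies $\bar e_2 \circ h = \bar e_1 = \bar e_2 \circ f$, whence $h = f$.

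The one step that needs care is the bookkeeping in the first paragraph: verifying that the partial cover extracted from a representative of $\alpha$ genuinely represents $e$ (and not some other embedding), that the decomposition $\scr{R} = \scr{R}_{\scr{P}} \sqcup \scr{R}_{\scr{P}'}$ is available after passing to a refinement, and that the two pieces really are disjoint as partial covers of $\scr{Q}$ — but this is precisely the setup of \cref{disjoint_iff_complement}, so once the structure map $\bar e$ is in place everything is formal. I also note that the argument makes no distinctness assumption on the two complements, so it simultaneously covers the case $\scr{P}'_1 = \scr{P}'_2$ with structure maps arising from two different representatives of $e$; thus the comparison isomorphism is even functorial in the obvious sense, as the monomorphism argument for uniqueness shows.
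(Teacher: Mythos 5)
Your argument is correct and fills in exactly the proof the paper leaves implicit: equip each complement with its canonical structure map $\bar e_i$, obtain comparison maps in both directions from \cref{disjoint_iff_complement}, and extract both invertibility and uniqueness from the fact that scissors embeddings are monomorphisms (\cref{embeddings_are_monomorphisms}). One small slip to fix: in the uniqueness paragraph you write $\bar e_2 \circ h = \bar e_1 = \bar e_2 \circ f$ and conclude $h = f$, but $f\colon \scr{P}'_2 \to \scr{P}'_1$ points the wrong way for $\bar e_2 \circ f$ to type-check; you mean $\bar e_2 \circ h = \bar e_1 = \bar e_2 \circ g$, whence $h = g$ by the monomorphism property.
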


	It also follows that the factorisations of \cref{disjoint_iff_complement} are unique. In particular, the data of a scissors embedding $X^{\sqcup p+1} \hookrightarrow Y$ is equivalent to the data of $(p+1)$ disjoint scissors embeddings $X \hookrightarrow Y$, a fact that we will use frequently in the subsequent proofs.

	\subsubsection{Scissors embeddings and scissors congruences} 
	We end with a few results that relate scissors embeddings to scissors congruences. Of course we can compose any embedding with a congruence and get another embedding. The following results help us understand how they interact with $\varnothing$.

	\begin{lemma}\label{emb_into_zero}
		If there is a scissors embedding $\scr{P} \to \varnothing$ then $\scr{P} \simeq \varnothing$.
	\end{lemma}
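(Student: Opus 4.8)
The plan is to reduce immediately to the characterisation of the unit object given in \cref{sc_to_empty}. Unpacking \cref{scissors_embedding_category}, a scissors embedding $e\colon \scr{P} \to \varnothing$ is represented by a scissors congruence $\scr{P} \sqcup \scr{P}' \congto \varnothing$ in $\cG(\cA)$ for some complement $\scr{P}' \in \cG(\cA)$. In particular $\scr{P} \sqcup \scr{P}' \simeq \varnothing$ in $\cG(\cA)$, so by \cref{sc_to_empty} every object appearing in the tuple $\scr{P} \sqcup \scr{P}'$ admits an empty cover.

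First I would observe that the objects of $\scr{P}$ form a sub-tuple of the objects of $\scr{P} \sqcup \scr{P}'$ (this is immediate from the definition of $\sqcup$ in \cref{def:ccategory-covers}), so each $P_i \in \scr{P}$ admits an empty cover. Applying \cref{sc_to_empty} in the converse direction then gives $\scr{P} \simeq \varnothing$, as desired. I do not expect any genuine obstacle here: the statement is a direct corollary of \cref{sc_to_empty} together with the definition of a scissors embedding, and the only point worth spelling out is that the ``every object has an empty cover'' condition is inherited by sub-tuples.
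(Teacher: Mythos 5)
Your proof is correct and takes precisely the route the paper intends: the paper's entire proof is the one-line remark that the lemma is a straightforward consequence of \cref{sc_to_empty}, and your write-up supplies exactly the reduction (unpack the embedding as $\scr{P} \sqcup \scr{P}' \simeq \varnothing$, apply \cref{sc_to_empty} in both directions, noting that ``every object has an empty cover'' is inherited by sub-tuples).
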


	\begin{proof}
		This is a straightforward consequence of \cref{sc_to_empty}.
	\end{proof}

	\begin{lemma}\label{when_an_emb_is_a_cong}
		If $e\colon \scr{P} \to \scr{Q}$ is a scissors embedding and the complement $\scr{Q} - e(\scr{P})$ is scissors congruent to $\varnothing$, then $e$ defines a scissors congruence $\scr{P} \simeq \scr{Q}$.
	\end{lemma}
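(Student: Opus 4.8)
The plan is to unwind the definition of $\category{UG}(\cA)$ and observe that, under the stated hypothesis, $\varnothing$ itself may be used as a complement of $e$; a scissors embedding with complement $\varnothing$ is essentially the same datum as a scissors congruence.

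Concretely, I would first fix a complement $\scr{P}' = \scr{Q} - e(\scr{P})$, so that $e$ is represented by some scissors congruence $g \colon \scr{P} \sqcup \scr{P}' \congto \scr{Q}$ in $\cG(\cA)$. By the hypothesis that $\scr{P}'$ is scissors congruent to $\varnothing$ (equivalently, by \cref{sc_to_empty}, that every object occurring in $\scr{P}'$ has an empty cover), there is a scissors congruence $\alpha \colon \scr{P}' \congto \varnothing$ in the groupoid $\cG(\cA)$; only this morphism $\alpha$ will be used. Set $h \coloneqq g \circ (\id_\scr{P} \sqcup \alpha^{-1}) \colon \scr{P} \sqcup \varnothing \congto \scr{Q}$. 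Then the triangle
\[
\begin{tikzcd}[sep=small]
\scr{P} \sqcup \scr{P}' \arrow[rd, "g"] \arrow[dd, "\id_\scr{P} \sqcup \alpha"'] & \\
& \scr{Q} \\
\scr{P} \sqcup \varnothing \arrow[ru, "h"'] &
\end{tikzcd}
\]
commutes, because $h \circ (\id_\scr{P} \sqcup \alpha) = g \circ (\id_\scr{P} \sqcup \alpha^{-1}) \circ (\id_\scr{P} \sqcup \alpha) = g$. By the equivalence relation defining the morphisms of $\category{UG}(\cA)$ in \cref{scissors_embedding_category}, the scissors congruence $h$ represents the same scissors embedding as $g$, namely $e$, but now exhibits $\varnothing$ as a complement of $e$. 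Composing $h$ with the inverse of the unitor $\lambda_\scr{P} \colon \scr{P} \sqcup \varnothing \congto \scr{P}$ of the symmetric monoidal structure on $\cG(\cA)$ yields a scissors congruence $f \coloneqq h \circ \lambda_\scr{P}^{-1} \colon \scr{P} \congto \scr{Q}$, which is the scissors congruence induced by $e$ (its image under the canonical functor $\cG(\cA) \to \category{UG}(\cA)$ is the class of $h$, i.e.~$e$). In particular $\scr{P} \simeq \scr{Q}$, which is the assertion.

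I do not expect any serious obstacle here: the statement is a direct bookkeeping consequence of the preceding constructions. The only points requiring a little care are that the relation defining $\category{UG}(\cA)$ is genuinely symmetric — which holds because $\cG(\cA)$ is a groupoid, so $\id_\scr{P} \sqcup \alpha$ is invertible — and keeping track of the symmetric monoidal unit isomorphisms, which is why the argument is phrased in terms of the unitor $\lambda_\scr{P}$ rather than literally identifying $\scr{P} \sqcup \varnothing$ with $\scr{P}$.
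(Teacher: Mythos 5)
Your proof is correct, and it takes a slightly different route than the paper's. The paper unwinds the definitions to the level of zigzag spans: since $\scr{P}' \simeq \varnothing$ means every $P_i'$ has an empty cover (\cref{sc_to_empty}), one can refine the span $\scr{P} \sqcup \scr{P'} \leftarrow \scr{R} \rightarrow \scr{Q}$ so that the terms of $\scr{R}$ over $\scr{P'}$ become empty, leaving a bona fide scissors congruence $\scr{P} \leftarrow \scr{R}' \rightarrow \scr{Q}$. You instead stay at the level of morphisms in $\cG(\cA)$ and the equivalence relation defining $\category{UG}(\cA)$: you use the isomorphism $\alpha\colon \scr{P}' \congto \varnothing$ in the groupoid $\cG(\cA)$ to show directly that $\varnothing$ is also a valid complement, then pass through the unitor. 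This is a cleaner formal argument that avoids explicit refinement of covers, at the cost of leaning on the canonical functor $\cG(\cA) \to \category{UG}(\cA)$ which the paper does not make explicit (it is implicit in the Randal-Williams--Wahl framework). Both approaches are really powered by the same fact, namely \cref{sc_to_empty}; the paper uses it to modify the span, while you use it only to produce the isomorphism $\alpha$. Your observations about symmetry of the defining relation (because $\cG(\cA)$ is a groupoid) and about keeping unitors explicit are accurate and worth having spelled out.
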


	\begin{proof}
		This is also a straightforward consequence of \cref{sc_to_empty}: by taking a refinement, the terms in the span making up the complement $\scr{Q} - e(\scr{P})$ can be replaced by the empty tuple, and we are left with a scissors congruence $\scr{P} \simeq \scr{Q}$.
	\end{proof}

	\begin{remark}
	This implies that the isomorphisms in $\category{UG}(\cA)$ are exactly the scissors congruences $\cG(\cA)$. In more detail, given two inverse embeddings, the complements add to $\varnothing$, and therefore by \cref{emb_into_zero} the complements are equivalent to $\varnothing$. Therefore by \cref{when_an_emb_is_a_cong} the embeddings are isomorphisms.
	\end{remark}

	Finally, recall the definition of locally standard from \cite[Definition 2.5]{randalwilliamswahl2017}.

	\begin{lemma}\label{lem:ug-conditions}	The category $\category{UG}(\cA)$ is locally standard at any pair $(\scr{A},\scr{X})$ with $\scr{X} \not \simeq \varnothing$.
	\end{lemma}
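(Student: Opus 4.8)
The plan is to unwind the definition of local standardness from \cite[Definition~2.5]{randalwilliamswahl2017} and check its requirements for the pair $(\scr A,\scr X)$ directly, using the structural results for $\category{UG}(\cA)$ established above. Write $\scr Y_m \coloneqq \scr A \sqcup \scr X^{\sqcup m}$, let $\iota_{p,n} \colon \scr X^{\sqcup p+1} \hookrightarrow \scr Y_n$ be the canonical scissors embedding onto the last $p+1$ summands, and put $G_m \coloneqq \aut_{\category{UG}(\cA)}(\scr Y_m)$, which by the remark following \cref{when_an_emb_is_a_cong} agrees with $\aut_{\cG(\cA)}(\scr Y_m)$. Unwound, local standardness at $(\scr A, \scr X)$ asks, for all $n$ and all $0 \le p \le n-1$: (a) that $G_n$ act transitively on the set $\hom_{\category{UG}(\cA)}(\scr X^{\sqcup p+1}, \scr Y_n)$ of scissors embeddings, by postcomposition; and (b) that the stabiliser of $\iota_{p,n}$ be the image of the injection $G_{n-p-1} \hookrightarrow G_n$, $\theta \mapsto \theta \sqcup \id_{\scr X^{\sqcup p+1}}$. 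The compatibility with face maps needed to phrase this as an equivariant isomorphism onto the standard semi-simplicial model is supplied by the equivalence, recalled just after \cref{embeddings_are_monomorphisms}, between a scissors embedding $\scr X^{\sqcup p+1} \hookrightarrow \scr Y_n$ and a tuple of $p+1$ pairwise-disjoint scissors embeddings $\scr X \hookrightarrow \scr Y_n$.

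I would handle (b) first, as it is the more formal half. The complement of $\iota_{p,n}$ is $\scr Y_{n-p-1}$, and by the corollary to \cref{embeddings_are_monomorphisms} it is well-defined up to unique isomorphism as an object of $\category{UG}(\cA)/\scr Y_n$; hence any $h \in G_n$ fixing $\iota_{p,n}$ restricts to a unique $\bar h \in \aut_{\category{UG}(\cA)}(\scr Y_{n-p-1})$ with $h = \bar h \sqcup \id_{\scr X^{\sqcup p+1}}$, which gives the claimed description of the stabiliser (the reverse containment being clear). The map $G_{n-p-1} \to G_n$ is injective because precomposition with the canonical embedding $\scr Y_{n-p-1} \hookrightarrow \scr Y_n$ --- a monomorphism by \cref{embeddings_are_monomorphisms} --- is injective, so $\theta \sqcup \id_{\scr X^{\sqcup p+1}} = \id$ forces $\theta = \id$.

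For (a), given an arbitrary scissors embedding $e \colon \scr X^{\sqcup p+1} \hookrightarrow \scr Y_n$ with complement $\scr C$, its defining span presents a scissors congruence $\scr X^{\sqcup p+1} \sqcup \scr C \congto \scr Y_n$ restricting to $e$. If one knows $\scr C \simeq \scr Y_{n-p-1}$, then composing this presentation with the analogous one for $\iota_{p,n}$ and an isomorphism $\scr C \congto \scr Y_{n-p-1}$, exactly as in the proof of \cref{disjoint_implies_conjugate}, produces $h \in G_n$ with $h \circ \iota_{p,n} = e$. To establish $\scr C \simeq \scr Y_{n-p-1}$ I would pass through the disjoint-embeddings dictionary and \cref{disjoint_iff_complement} to reduce to $p=0$, and then induct on $n$: an arbitrary scissors embedding $\scr X \hookrightarrow \scr Y_n$ and the canonical $\iota_{0,n}$ can be realised by sub-tuples of a common cover of $\scr Y_n$ in $\cW(\cA)$, after refining and deleting every occurrence of $\varnothing$ --- this is where $\scr X \not\simeq \varnothing$ enters, via \cref{sc_to_empty}, ensuring the $\scr X$-parts survive the deletion --- at which point \cref{disjoint_implies_conjugate} applies and in particular identifies the two complements.

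The step I expect to be the main obstacle is exactly this last one: proving that every scissors embedding of $\scr X$ into $\scr Y_n$ has complement scissors congruent to $\scr Y_{n-1}$, equivalently that one can always carry an arbitrary copy of $\scr X$ onto the standard last summand by an automorphism of $\scr Y_n$. Here the hypothesis $\scr X \not\simeq \varnothing$ is genuinely needed, to keep the powers $\scr X^{\sqcup k}$ from collapsing into one another and to keep the refine-and-delete-$\varnothing$ manoeuvre non-degenerate; \cref{disjoint_iff_complement} and \cref{disjoint_implies_conjugate} are the engine that makes the manoeuvre work, and the remaining bookkeeping with refinements in $\cW(\cA)$ should be routine.
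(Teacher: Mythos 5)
You have unwound the wrong definition. What you describe---$G_n$ acting transitively on $\hom(\scr X^{\sqcup p+1}, \scr Y_n)$, with stabiliser of $\iota_{p,n}$ equal to the image of $G_{n-p-1}$---is the content of \emph{(local) homogeneity}, i.e.\ conditions (LH1)/(LH2) of \cite[Definition 2.2]{randalwilliamswahl2017} (or (H1)/(H2) of \cite[Definition 1.3]{randalwilliamswahl2017}). The statement at hand is \emph{local standardness}, \cite[Definition 2.5]{randalwilliamswahl2017}, which is a much weaker and more formal pair of conditions: (LS1) the two canonical scissors embeddings $\scr X \rightrightarrows \scr A \sqcup \scr X \sqcup \scr X$ (onto the two distinct $\scr X$-summands) are distinct morphisms; and (LS2) for every $n\geq 1$ postcomposition with the embedding $\scr A \sqcup \scr X^{\sqcup(n-1)} \hookrightarrow \scr A \sqcup \scr X^{\sqcup n}$ is injective on $\hom_{\category{UG}(\cA)}(\scr X, -)$. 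The paper's proof checks exactly these: (LS1) holds because if the two maps agreed then $\scr X$ would admit an empty cover, forcing $\scr X \simeq \varnothing$; (LS2) holds because every scissors embedding is a monomorphism by \cref{embeddings_are_monomorphisms}.

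Beyond the definitional mix-up, the argument you sketch cannot in fact be completed with the stated hypotheses. The key step you flag---showing that the complement $\scr C$ of any embedding $\scr X \hookrightarrow \scr A \sqcup \scr X^{\sqcup n}$ satisfies $\scr C \simeq \scr A \sqcup \scr X^{\sqcup (n-1)}$---amounts to cancelling a copy of $\scr X$ from a scissors congruence $\scr X \sqcup \scr C \simeq \scr X \sqcup (\scr A \sqcup \scr X^{\sqcup (n-1)})$. That is precisely Zylev's theorem, property \ref{enum:ass-vol-zylev}, which need not hold for a general assembler. This is why the paper proves homogeneity as the separate \cref{lem:ug-conditions-hom}, under the extra hypothesis \ref{enum:ass-vol-zylev} or by passing to the restricted category $\cU\widetilde{\cG}(\cA)$. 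The present lemma, by contrast, is designed to hold with no such extra hypothesis, and the two monomorphism-flavoured conditions it actually asserts follow immediately from what is already in hand.
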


	\begin{proof}
		We first need to know that the two obvious maps $\scr{X} \to \scr{A} \sqcup \scr{X} \sqcup \scr{X}$ are distinct when $\scr{X} \not\simeq \varnothing$. This follows because if they were equal then $\scr{X}$ gets an empty cover, hence $\scr{X} \simeq \varnothing$. We also need to know that composing with the embedding $\varnothing \to \scr{X}$ induces an injective map
		\[ \hom_{\category{UG}(\cA)}(\scr{X},\scr{A} \sqcup \scr{X}^{\sqcup (n-1)}) \to \hom_{\category{UG}(\cA)}(\scr{X},\scr{A} \sqcup \scr{X}^{\sqcup n}). \]
		This follows because all scissors embeddings are monomorphisms (\cref{embeddings_are_monomorphisms}).
	\end{proof}

	\section{Scissors congruences from a stability perspective}
	\label{sec:scissors-congruecne-from-a-stability-perspective}
	In this section we establish homological stability results for the groupoid $\cG(\cA)$ of an assembler $\cA$, as long as $\cA$ satisfies a small list of axioms.

	\subsection{EA- and S-assemblers} Though we are interested in scissors automorphism groups arising from classical geometries, the homological stability results of this section hold for any assembler satisfying one of two sets of axioms, inspired by Euclidean scissors congruences without scaling and with scaling respectively.

	\subsubsection{EA-assemblers} Let $(\bR,\geq)$ be an ordered abelian group and $\bR_{\geq 0}$ the subset of elements $r$ satisfying $r \geq 0$. In most examples $\bR$ will be given by the real numbers, and the reader is invited to keep this example in mind.

	\begin{definition}\label{volume_function}
	A \emph{volume function} on an assembler $\cA$ is a function $\vol \colon \ob(\cA) \to \bR_{\geq 0}$ such that if $\{P_i \to P\}_{i \in I}$ is a cover then $\vol(P) = \sum_{i \in I} \vol(P_i)$.
	\end{definition}

	This is similar to a measure in the sense of \cite[Definition 6.1]{bgmmz}, except that there is no group of equivariance $G$, the abelian group is ordered, and only nonnegative measures are allowed. Note that the above definition implies that $\vol(P) = \vol(Q)$ if $P \cong Q$, because isomorphisms are covers. It also implies that $\vol(\varnothing) = 0$ because $\varnothing$ admits an empty cover.

	We extend the volume function from $\cA$ to $\cG(\cA)$ by setting $\vol(\scr{P}) = \sum_{i \in I} \vol(P_i)$ for $\scr{P}=(I,\{P_i\}_{i \in I})$. It is immediate that $\scr{P} \simeq \scr{Q}$ implies $\vol(\scr{P}) = \vol(\scr{Q})$, and the existence of an embedding $\scr{P} \to \scr{Q}$ implies $\vol(\scr{P}) \leq \vol(\scr{Q})$.

	\begin{definition}\label{def:ass-vol-props} An \emph{EA-assembler} is an assembler $\cA$ with a volume function $\vol \colon \ob(\cA) \to \bR_{\geq 0}$ such that:
		\begin{description}
			\item[\namedlabel{enum:ass-vol-existence}{(E)}] If $\vol(\scr{P}) < \vol(\scr{Q})$, then there exists a $\scr{P}'$ such that $\scr{P} \sqcup \scr{P}' \simeq \scr{Q}$.
			\item[\namedlabel{enum:ass-vol-archimedean}{(A)}] 
			Given an $\scr{P}$, every $\scr{Q}$ admits a cover $\{\scr{Q}_i\}$ with $2\vol(\scr{Q}_i) \leq \vol(\scr{P})$ for each $i \in I$.
		\end{description}
	\end{definition}

	Intuitively, this means that volume determines the existence of scissors embeddings, and that covers with arbitrarily small volume exist.
	
	\begin{remark}When we take $\bR$ to be the real numbers $\mathbb{R}$ with its usual order, it suffices to take $\scr{P} = \scr{Q}$ in \ref{enum:ass-vol-archimedean}. However, in general it may not.
	\end{remark}

	\begin{remark} \ref{enum:ass-vol-existence} stands for ``existence of embeddings'' and \ref{enum:ass-vol-archimedean} for ``archimedean property''. These are similar to axiom (VA) in \cite[Theorem 3.1]{sah1979} (Property A in \cite{zak_perspectives}), except that here the volume is quantified as a number, instead of being characterized abstractly by the existence of embeddings.
	\end{remark}

	\begin{example}
		In \cref{sec:ea} we will define the assembler of polytopes in Euclidean, spherical, or hyperbolic geometry, and prove that it satisfies axioms \ref{enum:ass-vol-existence} and \ref{enum:ass-vol-archimedean}.
	\end{example}

	\subsubsection{S-assemblers} We also consider a second set of axioms, satisfied by the assembler of polytopes in Euclidean space where scaling is allowed.

	\begin{definition} \label{def:s-assembler} An \emph{S-assembler} is an assembler $\cA$  such that:
	\begin{description}
		\item[\namedlabel{enum:ass-zae-ae}{(S)}] For all $\scr{P},\scr{Q} \not\simeq \varnothing$ there exists a $\scr{P}' \not\simeq \varnothing$ such that $\scr{P} \sqcup \scr{P}' \simeq \scr{Q}$.
	\end{description}
	\end{definition}

	Intuitively, this means that any nonempty object can be made to strictly fit inside any other nonempty object.

	\begin{remark} \ref{enum:ass-zae-ae} stands for ``squeezing''.
	\end{remark}

	\begin{example}In \cref{sec:ea} we will see that the assembler of polytopes in Euclidean geometry satisfies axiom \ref{enum:ass-zae-ae} if we additionally allow scaling.
	\end{example}

	\subsection{Connectivity of the destabilisation complex}
	We may define various destabilisation complexes for the symmetric monoidal category $\category{UG}(\cA)$ as in \cite[Section 2]{randalwilliamswahl2017}. For us, the most important one is the following:

	\begin{definition}\label{embedding_complex_1}
		Let $\scr{A}$, $\scr{B}$, and $\scr{X}$ be objects in $\category{UG}(\cA)$. The simplicial complex $S(\scr{X},\scr{B})$ has vertices given by scissors embeddings $\scr{X} \hookrightarrow \scr{B}$, and a $(p+1)$-tuple of vertices forms a $p$-simplex if they are disjoint.
	\end{definition}

	Note that by \cref{disjoint_iff_complement}, the data of a $(p+1)$-tuple of disjoint scissors embeddings $\scr{X} \hookrightarrow \scr{B}$ is equivalent to the data of a scissors embedding $\scr{X}^{\sqcup p+1} \hookrightarrow \scr{B}$. 
	
	The homological stability machinery is powered by the connectivity of the simplicial complex $S(\scr{X},\scr{B})$. To establish this connectivity we use property \ref{enum:ass-vol-existence} or \ref{enum:ass-zae-ae} from the previous section.

	\begin{theorem}
		\label{thm:connectivity-of-complexes-of-destabilisation}\,
		\begin{enumerate}
			\item \label{enum:connectivity-i} Suppose $\cA$ satisfies \ref{enum:ass-vol-existence}. Let $\scr{X},\scr{B} \in \category{UG}(\cA)$ and suppose there is a $k \in \mathbb{N}$ such that
			\[ 0 \leq k\vol(\scr{X}) < \vol(\scr{B}). \]
			Then $S(\scr{X},\scr{B})$ is $\lfloor \frac{k-3}{2} \rfloor$-connected.
			\item \label{enum:connectivity-ii}  Suppose $\cA$ satisfies \ref{enum:ass-zae-ae}. Let $\scr{X},\scr{B} \in \category{UG}(\cA)$ such that $\scr{B} \not\simeq \varnothing$. Then $S(\scr{X},\scr{B})$ is contractible.
		\end{enumerate}
	\end{theorem}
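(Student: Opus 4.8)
The plan is to prove the two parts by rather different arguments, since part \ref{enum:connectivity-ii} is a "coning off" statement whereas part \ref{enum:connectivity-i} requires genuine control on the connectivity in terms of the volume.

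For part \ref{enum:connectivity-ii}, I would argue that $S(\scr{X},\scr{B})$ is contractible by an explicit "add a disjoint copy" homotopy. Since $\cA$ is an S-assembler and $\scr{B} \not\simeq \varnothing$, for any finite simplicial subcomplex — in fact for any simplex $\sigma$ of $S(\scr{X},\scr{B})$, which corresponds to a scissors embedding $\scr{X}^{\sqcup(p+1)} \hookrightarrow \scr{B}$ with complement $\scr{C}_\sigma := \scr{B} - \scr{X}^{\sqcup(p+1)}$ — I first want to find a single vertex $v_0$, i.e. a scissors embedding $\scr{X} \hookrightarrow \scr{B}$, that is disjoint from every vertex of $\sigma$, so that $\sigma \cup \{v_0\}$ spans a simplex. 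The existence of such a $v_0$ is exactly the statement that the complement $\scr{C}_\sigma$ contains a scissors embedding of $\scr{X}$, by \cref{disjoint_iff_complement}; and this holds iff $\scr{C}_\sigma \not\simeq \varnothing$... but that need not hold. So instead I would use axiom \ref{enum:ass-zae-ae} directly: given any finite set of vertices $v_1,\dots,v_m$ (not necessarily forming a simplex), write $\scr{X}$ as a strictly smaller piece of $\scr{X}$ — concretely, using \ref{enum:ass-zae-ae} with $\scr{P} = \scr{P}' = \scr{X}$ we can't literally shrink $\scr{X}$, so the right move is: the standard Randal-Williams--Wahl criterion \cite[Section 2]{randalwilliamswahl2017} only needs that $S(\scr{X},\scr{B})$ is nonempty and that any finite subcomplex can be coned off. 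The cleanest route is: $\scr{B}$ receives a scissors embedding of $\scr{X}$ because by \ref{enum:ass-zae-ae} applied with $\scr{P} = \scr{X}$ and $\scr{Q} = \scr{B}$ there is $\scr{P}' \not\simeq \varnothing$ with $\scr{X} \sqcup \scr{P}' \simeq \scr{B}$; this gives a vertex, so $S$ is nonempty. For coning off a simplex $\sigma = \{v_1,\dots,v_{p+1}\}$ with complement $\scr{C}_\sigma$: apply \ref{enum:ass-zae-ae} to $\scr{X}$ and $\scr{X}$ itself is the wrong target; apply it instead to get inside $\scr{C}_\sigma$ when $\scr{C}_\sigma \not\simeq \varnothing$, and when $\scr{C}_\sigma \simeq \varnothing$ handle it by first shrinking the embedding defining $\sigma$. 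This last case is the one to think about carefully, and I expect the argument to go: replace the given embedding $\scr{X}^{\sqcup(p+1)}\hookrightarrow \scr{B}$ by a "strictly smaller" one using \ref{enum:ass-zae-ae} on $\scr{X} \sqcup \cdots$, producing a nonempty complement, which then contains a copy of $\scr{X}$ giving the cone vertex. By \cite[Section 2]{randalwilliamswahl2017}, nonempty plus coning off of all finite subcomplexes implies contractibility.

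For part \ref{enum:connectivity-i}, the key tool is again \cite[Section 2]{randalwilliamswahl2017}: it suffices to produce, for each simplex $\sigma$ of $S(\scr{X},\scr{B})$, enough vertices disjoint from $\sigma$ to run the standard "link is highly connected" bootstrap, and the combinatorial input reduces to a volume count. A simplex $\sigma$ of dimension $p$ corresponds to a scissors embedding $\scr{X}^{\sqcup(p+1)} \hookrightarrow \scr{B}$, whose complement $\scr{C}_\sigma$ satisfies $\vol(\scr{C}_\sigma) = \vol(\scr{B}) - (p+1)\vol(\scr{X})$. Vertices disjoint from $\sigma$ are exactly scissors embeddings $\scr{X} \hookrightarrow \scr{C}_\sigma$, and $r$ disjoint such vertices exist as soon as $r\vol(\scr{X}) < \vol(\scr{C}_\sigma)$, i.e. $(p+1+r)\vol(\scr{X}) < \vol(\scr{B})$, using axiom \ref{enum:ass-vol-existence} together with \cref{disjoint_iff_complement} (and the remark that disjoint tuples of embeddings $\scr{X}\hookrightarrow\scr{C}_\sigma$ are the same as an embedding $\scr{X}^{\sqcup r}\hookrightarrow\scr{C}_\sigma$). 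This says the link of any $p$-simplex $\sigma$ in $S(\scr{X},\scr{B})$ contains a copy of $S(\scr{X}, \scr{C}_\sigma)$, whose connectivity we control by induction on $k$. One then feeds this into the standard argument — e.g. the "bad simplex" / link argument of Randal-Williams--Wahl, or directly: a complex in which the link of every $p$-simplex is $\lfloor\frac{(k-p-1)-3}{2}\rfloor$-connected and which is nonempty once $k\ge 1$, is $\lfloor\frac{k-3}{2}\rfloor$-connected. I would set up the induction on $k$: the base cases $k \le 2$ are vacuous (connectivity $\le -1$) or say only nonemptiness, which follows from \ref{enum:ass-vol-existence}; the inductive step uses that $S(\scr{X},\scr{B})$ is "weakly Cohen--Macaulay" with the link bound above, invoking the link-connectivity criterion for simplicial complexes (this is where one cites the relevant lemma of \cite{randalwilliamswahl2017}, adapted to simplicial complexes as in e.g. their Section 2, or uses a direct surgery-on-simplices argument).

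The main obstacle is part \ref{enum:connectivity-ii}, specifically the case where the complement $\scr{B} - \scr{X}^{\sqcup(p+1)}$ of a simplex is scissors congruent to $\varnothing$: here one cannot directly find a disjoint cone vertex inside the complement, and must instead exploit that scissors embeddings of $\scr{X}$ can be "made smaller" using \ref{enum:ass-zae-ae}, replacing the given simplex by an equivalent (under the $\aut(\scr{B})$-action, or after subdivision) one with strictly positive complement. Making this precise — showing that the coning-off still produces the required homotopies on the level of the simplicial complex $S(\scr{X},\scr{B})$ and not just up to a change of embedding — is the delicate point, and I would expect to handle it by working with the variant of the complex where vertices also record a choice of span, or by a direct verification that any map from a sphere into $S(\scr{X},\scr{B})$ has image in a finite subcomplex all of whose simplices have complement that can be arranged nonempty after one application of \ref{enum:ass-zae-ae}. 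In part \ref{enum:connectivity-i}, by contrast, everything is driven by the single volume inequality and the only real work is bookkeeping the induction; I anticipate no conceptual difficulty there beyond correctly matching the connectivity range $\lfloor\frac{k-3}{2}\rfloor$ to the Randal-Williams--Wahl input.
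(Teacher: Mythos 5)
The paper proves both parts with a single bad-simplex surgery argument in the style of Hatcher--Wahl and Randal-Williams--Wahl. One fixes a vertex $v_0$ (supplied by \ref{enum:ass-vol-existence}, resp.\ \ref{enum:ass-zae-ae}). Given a simplicial map $\phi\colon S^d\to S(\scr{X},\scr{B})$ from a combinatorial $d$-sphere, a simplex $\Delta$ of $S^d$ is declared \emph{bad} if every vertex maps to an embedding that meets $v_0(\scr{X})$. For a maximal bad simplex $\Delta$ with image $\{w_0,\dots,w_p\}$, the image of $\Link{S^d}{\Delta}$ lies in the subcomplex of vertices disjoint from $v_0(\scr{X})$ and from all $w_i(\scr{X})$; the volume inequality (resp.\ nonemptiness) lets one apply the inductive hypothesis to that subcomplex and null-homotope $\phi$ restricted to $\Link{S^d}{\Delta}$. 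One then replaces the star of $\Delta$ by a new disc whose bad simplices are strictly smaller. Iterating eliminates all bad simplices and pushes $\phi$ into the star of $v_0$.

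Your part \ref{enum:connectivity-i} correctly reduces the problem to the volume bound $\vol(\scr{C}_\sigma)\geq\vol(\scr{B})-(p+1)\vol(\scr{X})$ on the complement of a $p$-simplex, but the step you treat as a citation---that a complex all of whose links are sufficiently connected is itself highly connected---is not a black box. A weakly Cohen--Macaulay hypothesis already presupposes the connectivity of the complex, so the implication is circular as stated; there is no general lemma deducing global connectivity from link connectivity alone. The bad-simplex surgery above is exactly the mechanism that converts a complement bound into connectivity, so it is the content of the proof rather than an alternative to a citation.

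Your part \ref{enum:connectivity-ii} has a genuine gap, which you flag but do not close. Coning off a simplex $\sigma$ requires a vertex of $S(\scr{X},\scr{C}_\sigma)$, and this fails when $\scr{C}_\sigma\simeq\varnothing$; for instance the identity vertex of $S(\scr{X},\scr{X})$ has empty complement. The repair you suggest, replacing $\sigma$ by a ``smaller'' simplex, is not a move on the simplicial complex: a simplex of $S(\scr{X},\scr{B})$ is a fixed tuple of embeddings, and there is no simplicial homotopy of $S(\scr{X},\scr{B})$ that shrinks it, nor does passing to the $\aut(\scr{B})$-orbit help. The paper's surgery avoids the issue entirely: the only complement whose nonemptiness is needed a priori is that of the single vertex $v_0$, and axiom \ref{enum:ass-zae-ae} by definition furnishes $v_0$ with $\scr{B}-v_0(\scr{X})\not\simeq\varnothing$. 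The argument never cones off a full simplex, so the case you identify as delicate does not arise.
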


	\begin{proof}We first prove part \ref{enum:connectivity-i}. If we let $k(z) \coloneqq 2z+3$, the claim is that for any $z \geq -1$, if $\vol(\scr{B}) > k(z)\vol(\scr{X})$, then $S(\scr{X},\scr{B})$ is $z$-connected.

	We prove this by induction on the integer $z$, simultaneously for all $\scr{B}$. The base case is for $z = -1$, so $\vol(\scr{B}) > k(-1)\vol(\scr{X})=\vol(\scr{X})$. We claim that $S(\scr{X},\scr{B})$ is $(-1)$-connected, i.e.~nonempty; this holds since property \ref{enum:ass-vol-existence} implies the existence of a vertex in $S(\scr{X},\scr{B})$.

	Now let $z > -1$ be an integer. For the induction step we assume that $\vol(\scr{B}) > k(z)\vol(\scr{X})$ and that the claim holds for $z' < z$. Let $\phi \colon S^d \to S(\scr{X},\scr{B})$ be a simplicial map where $d \leq z$ and $S^d$ is a combinatorial manifold. Using property \ref{enum:ass-vol-existence}, fix a vertex $v_0 \colon \scr{X} \hookrightarrow \scr{B}$ of $S(\scr{X},\scr{B})$ and call a simplex $\Delta$ of $S^d$ \emph{bad} if every vertex $x \in \Delta$ has the property that the image of $\phi(x) \colon \scr{X} \hookrightarrow \scr{B}$ is not disjoint from $v_0(\scr{X})$.

	Assume that $\Delta$ is a bad simplex of maximal dimension in $S^d$ and let $\phi(\Delta) = \{w_0, \dots, w_p\}$ be the image of this bad simplex in $S(\scr{X},\scr{B})$. For any vertex $x$ in $\Link{S^d}{\Delta}$, the simplex $x * \Delta$ is not bad, because $\Delta$ has maximal dimension, so $x$ must go to a vertex $w = \phi(x)$ such that $w(\scr{X})$ is disjoint from $v_0(\scr{X})$. By assumption $\{w, w_0, \dots, w_p\}$ is a simplex in $S(\scr{X},\scr{B})$, so $w(\scr{X})$ is also disjoint from $w_i(\scr{X})$ for $0 \leq i \leq p$, in addition to being disjoint from $v_0(\scr{X})$. In summary, $w$ is a vertex of the full simplicial subcomplex
	\[S(\scr{X},\scr{B} - (\cup_i w_i(\scr{X})) - v_0(\scr{X})) \subseteq S(\scr{X},\scr{B}). \]

	\vspace{1em}
	\centerline{
	\def\svgwidth{2.9in}
\begingroup%
  \makeatletter%
  \providecommand\color[2][]{%
    \errmessage{(Inkscape) Color is used for the text in Inkscape, but the package 'color.sty' is not loaded}%
    \renewcommand\color[2][]{}%
  }%
  \providecommand\transparent[1]{%
    \errmessage{(Inkscape) Transparency is used (non-zero) for the text in Inkscape, but the package 'transparent.sty' is not loaded}%
    \renewcommand\transparent[1]{}%
  }%
  \providecommand\rotatebox[2]{#2}%
  \newcommand*\fsize{\dimexpr\f@size pt\relax}%
  \newcommand*\lineheight[1]{\fontsize{\fsize}{#1\fsize}\selectfont}%
  \ifx\svgwidth\undefined%
    \setlength{\unitlength}{178.16855211bp}%
    \ifx\svgscale\undefined%
      \relax%
    \else%
      \setlength{\unitlength}{\unitlength * \real{\svgscale}}%
    \fi%
  \else%
    \setlength{\unitlength}{\svgwidth}%
  \fi%
  \global\let\svgwidth\undefined%
  \global\let\svgscale\undefined%
  \makeatother%
  \begin{picture}(1,0.5398249)%
    \lineheight{1}%
    \setlength\tabcolsep{0pt}%
    \put(0,0){\includegraphics[width=\unitlength,page=1]{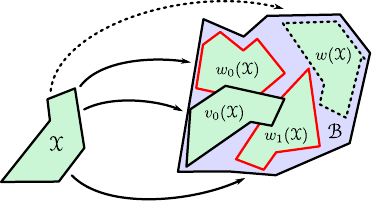}}%
  \end{picture}%
\endgroup%

	}
	\vspace{1em}
	
	It follows that $\phi$ induces a map $\Link{S^d}{\Delta} \to S(\scr{X},\scr{B} - (\cup_i w_i(\scr{X})) - v_0(\scr{X}))$.

	Note that in an assembler with a volume function as in \cref{volume_function}, volume is sub-additive in covers. Therefore
	\begin{align*}
	\vol\big(\scr{B} - (\cup_i w_i(\scr{X})) - v_0(\scr{X})\big)
	&\geq \vol(\scr{B}) - \sum_i \vol w_i(\scr{X})- \vol v_0(\scr{X}) \\
	&= \vol(\scr{B}) - (p+2)\vol(\scr{X}).
	\end{align*}

	Since $\vol(\scr{B}) > k(z)\vol(\scr{X})$, we have
	\begin{align*}\vol\big(\scr{B} - (\cup_i w_i(\scr{X})) - v_0(\scr{X})\big) &> (k(z) - p - 2)\vol(\scr{X}) \\
	&= (2z - p + 1)\vol(\scr{X}) \\
	&\geq (2z - 2p + 1)\vol(\scr{X}) \\
	&= k(z-p-1)\vol(\scr{X}).\end{align*}
	Therefore, by the induction hypothesis, the complex $S(\scr{X},\scr{B}- (\cup_i w_i(\scr{X})) - v_0(\scr{X}))$ is $(z-p-1)$-connected. Since $S^d$ is a combinatorial manifold, $\Link{S^d}{\Delta}$ is a combinatorial $(d - \dim(\Delta) - 1)$-sphere. Recalling that $d \leq z$ and $p \leq \dim(\Delta)$, we have
	\[ d - \dim(\Delta) - 1 \leq z-p-1, \]
	so the dimension of $\Link{S^d}{\Delta}$ is less than or equal to the connectivity of $S(\scr{X},\scr{B} - (\cup_i w_i(\scr{X})) - v_0(\scr{X}))$. Therefore $\phi$ when restricted to $\Link{S^d}{\Delta}$ is null homotopic via some map
	\[ \psi \colon D(\Link{S^d}{\Delta}) \to S(\scr{X},\scr{B} - (\cup_i w_i(\scr{X})) - v_0(\scr{X})), \]
	for $D(\Link{S^d}{\Delta})$ a combinatorial $(d-\dim(\Delta))$-disc. As $\psi$ sends each vertex of $D(\Link{S^d}{\Delta})$ to a vertex $w$ of $S(\scr{X},\scr{B})$ for which $w(\scr{X})$ is disjoint from $w_i(\scr{X})$ for $0 \leq i \leq p$, we get a simplicial map from the join $\Delta \ast D(\Link{S^d}{\Delta})$ to $S(\scr{X},\scr{B})$ given by $\phi$ on the vertices of $\Delta$ and $\psi$ on the vertices of $D(\Link{S^d}{\Delta})$.

	Note that $\Delta \ast D(\Link{S^d}{\Delta})$ is a $(d+1)$-dimensional disc, whose boundary sphere is composed of two $d$-dimensional discs $\Delta \ast \Link{S^d}{\Delta}$ and $\partial \Delta \ast D(\Link{S^d}{\Delta})$, meeting at the $(d-1)$-sphere $\partial \Delta \ast \Link{S^d}{\Delta}$. Therefore the maps from these two $d$-discs into $S(\scr{X},\scr{B})$ are homotopic relative to the boundary. One of these $d$-discs $\Delta \ast \Link{S^d}{\Delta}$ is the star of the bad simplex $\Delta$ inside our original combinatorial manifold $S^d$. So we replace it with the other disc $\partial \Delta \ast D(\Link{S^d}{\Delta})$ and get a new combinatorial manifold $d$-sphere, whose map into $S(\scr{X},\scr{B})$ is homotopic to the original map. Furthermore, in this new disc that we have added, the bad simplices all have dimension smaller than that of $\Delta$, since they do not contain all of the vertices of $\Delta$, and the remaining vertices in $D(\Link{S^d}{\Delta})$ are all good.

	Iterating this procedure removes all of the bad simplices, giving a homotopy of $\phi$ to a map that lands in the star of $v_0$. One final homotopy in this star deforms the map $\phi$ to a constant map, proving the desired connectivity.
	
	\medskip
	
	The proof for part \ref{enum:connectivity-ii} is identical except that we replace the condition that $\vol > 0$ by $\not\simeq \varnothing$, and remove all mentions of volume. All the complements that appear are nonempty, so the required embeddings exist.
	\end{proof}

	\subsection{The restricted category of scissors embeddings}

	Our stability arguments will require another assumption for our categories, that in the context of \cite{randalwilliamswahl2017} is called ``cancellation,'' and that in scissors congruence is called ``Zylev's theorem.''
	\begin{description}
		\item[\namedlabel{enum:ass-vol-zylev}{(Z)}] If $\scr{P} \sqcup \scr{Q} \simeq \scr{P}' \sqcup \scr{Q}$ then $\scr{P} \simeq \scr{P}'$.
	\end{description}
	However, in S-assemblers this axiom typically does not hold. In EA-assemblers it typically does, but still, it is not necessary for our proofs to work. The following trick from \cite{szymikwahl2019} (see \cite[Remark 1.11]{randalwilliamswahl2017}) gets around the issue.

	\begin{definition}
		The \emph{restricted scissors congruence groupoid} $\widetilde\cG(\cA)$ is the following wide subcategory of $\cG(\cA)$. There are no morphisms $(I,\{P_i\}_{i \in I}) \to (J,\{Q_j\}_{j \in J})$ unless there exists a bijection $\phi\colon I \to J$ along which $P_i \cong Q_{\phi(i)}$ in the category $\cA$, i.e.~the two sets of objects are isomorphic in $\cA$ up to rearrangement. When this condition holds, we define the morphisms to be the same as in $\cG(\cA)$, i.e.~the scissors congruences $\scr{P} \simeq \scr{Q}$. We emphasize that for each of these morphisms, the scissors congruence $\scr{P} \simeq \scr{Q}$ does not have to respect the decomposition or the bijection $\phi$ in any way.
	\end{definition}

	The category $\widetilde\cG(\cA)$ inherits the symmetric monoidal structure from $\cG(\cA)$, as the required associativity, unit, and symmetry isomorphisms of $\cG(\cA)$ all lie in the subcategory $\smash{\widetilde\cG(\cA)}$, because if we concatenate tuples of objects of $\cA$ in different orders, we end up with the same tuple up to rearrangement of the terms.

	Note that property \ref{enum:ass-vol-zylev} is automatically true in $\widetilde{\cG}(\cA)$ because the existence of an isomorphism $\scr{P} \sqcup \scr{Q} \simeq \scr{P}' \sqcup \scr{Q}$ in $\widetilde{\cG}(\cA)$ gives the extra condition that the two sides are sums of the same terms up to rearrangement. It follows that, after changing the rearrangement, it sends $\scr{Q}$ to $\scr{Q}$ and therefore the rearrangement gives an isomorphism $\scr{P} \simeq \scr{P}'$ in $\widetilde{\cG}(\cA)$.

	\begin{definition}
		The restricted scissors embedding category $\cU\widetilde{\cG}(\cA)$ is constructed as in \cref{scissors_embedding_category}, but on the symmetric monoidal category $\widetilde{\cG}(\cA)$. So there are no morphisms $(I,\{P_i\}_{i \in I}) \to (J,\{Q_j\}_{j \in J})$ unless there is an injection $\phi\colon I \to J$ along which $P_i \cong Q_{\phi(i)}$, i.e. the objects in the first tuple are a subset of the objects in the second tuple. In that case, the morphisms are the scissors embeddings in $\cU\cG(\cA)$, that again do not have to respect the map $\phi$.
	\end{definition}

	The subcategory $\cU\widetilde{\cG}(\cA)$ inherits most properties of $\cU\cG(\cA)$ from earlier. It has the same automorphism groups, every morphism is a monomorphism, and $\cU\widetilde{\cG}(\cA)$ is locally standard at any pair $(\scr{A},\scr{X})$ with $\scr{X} \not \simeq \varnothing$ (\cref{lem:ug-conditions}).

	Recall the notion of homogeneous from \cite[Definition 1.3]{randalwilliamswahl2017}.

	\begin{lemma}\label{lem:ug-conditions-hom}\,
		\begin{enumerate}
			\item \label{enum:ug-conditions-i} If $\cA$ is any assembler with property \ref{enum:ass-vol-zylev}, $\category{UG}(\cA)$ is homogeneous.
			\item \label{enum:ug-conditions-ii} If $\cA$ is any assembler, $\cU\widetilde{\cG}(\cA)$ is homogeneous.
		\end{enumerate}
	\end{lemma}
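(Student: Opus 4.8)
The plan is to observe that both categories in question are instances of Quillen's bracket construction: $\category{UG}(\cA) = U\cG(\cA)$ by \cref{scissors_embedding_category}, and $\cU\widetilde\cG(\cA) = U\widetilde\cG(\cA)$ by construction, where $\cG(\cA)$ and $\widetilde\cG(\cA)$ are symmetric monoidal groupoids (the former by \cref{lem:loc-sym}, the latter as recorded in the discussion preceding the lemma). Randal-Williams and Wahl prove that for a braided monoidal groupoid $\mathcal{G}$ the category $U\mathcal{G}$ automatically satisfies condition (H1) and satisfies (H2)---hence is homogeneous---as soon as $\mathcal{G}$ has cancellation, i.e.\ $\scr{P} \sqcup \scr{Q} \simeq \scr{P}' \sqcup \scr{Q}$ implies $\scr{P} \simeq \scr{P}'$ (see \cite[Section 1]{randalwilliamswahl2017}). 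So both parts follow once we check cancellation: for part \ref{enum:ug-conditions-i} it is exactly the hypothesis \ref{enum:ass-vol-zylev}, while for part \ref{enum:ug-conditions-ii} it holds unconditionally, since an isomorphism in $\widetilde\cG(\cA)$ matches up the two tuples of objects of $\cA$ up to reordering and can therefore be arranged to send $\scr{Q}$ to $\scr{Q}$, as observed right after the definition of $\widetilde\cG(\cA)$. The remaining standing hypotheses of the cited criterion (that $\varnothing$ becomes initial in $U\mathcal{G}$, that $\aut(\varnothing)$ is trivial, and that there are no ``zero divisors'' $\scr{P} \sqcup \scr{Q} \simeq \varnothing$ with $\scr{P}\not\simeq\varnothing$) are immediate from \cref{sc_to_empty} and \cref{rem:maps-into-and-out-of-varnothing}.

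Should one instead want to verify \cite[Definition 1.3]{randalwilliamswahl2017} by hand, here is how the two conditions would go. For (H1), one uses that a scissors embedding $e\colon \scr{A} \hookrightarrow \scr{B}$ determines its complement up to canonical isomorphism (the corollary to \cref{embeddings_are_monomorphisms}): given two embeddings $\scr{A}\hookrightarrow\scr{B}$, an isomorphism between their complements assembles with the two embeddings into an automorphism of $\scr{B}$ taking one to the other, so $\aut(\scr{B})$ acts transitively on the nonempty hom-set. For (H2) one considers $\aut(\scr{A}) \to \aut(\scr{A}\sqcup\scr{B})$, $f\mapsto f\sqcup\id_\scr{B}$; injectivity follows by precomposing with the canonical embedding $\iota_\scr{A}\colon \scr{A}\hookrightarrow\scr{A}\sqcup\scr{B}$ and invoking that it is a monomorphism (\cref{embeddings_are_monomorphisms}). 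To identify the image with the stabiliser of the canonical embedding $\iota_\scr{B}\colon\scr{B}\hookrightarrow\scr{A}\sqcup\scr{B}$: an automorphism $\phi$ fixing $\iota_\scr{B}$ carries $\iota_\scr{A}$ to an embedding disjoint from $\iota_\scr{B}$, hence factoring through the complement $\scr{A}$ of $\iota_\scr{B}$ (\cref{disjoint_iff_complement}) as $\iota_\scr{A}\circ f$ for some embedding $f\colon\scr{A}\to\scr{A}$; comparing complements yields $\scr{B}\simeq\scr{B}\sqcup(\scr{A}-f(\scr{A}))$, so cancellation forces $f$ to be an isomorphism (\cref{when_an_emb_is_a_cong}), and then $\phi = f\sqcup\id_\scr{B}$ since both sides have the same restrictions along the covering pair $\iota_\scr{A},\iota_\scr{B}$.

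I expect the crux to be this last step in (H2): it is the only place where cancellation is genuinely used, and it is precisely why part \ref{enum:ug-conditions-ii} must be phrased in terms of $\cU\widetilde\cG(\cA)$, where cancellation comes for free, rather than $\category{UG}(\cA)$. Everything else is formal given the Quillen-bracket description together with the structural lemmas on scissors embeddings (\cref{embeddings_are_monomorphisms}, \cref{disjoint_iff_complement}, \cref{when_an_emb_is_a_cong}, \cref{sc_to_empty}) already established for $\category{UG}(\cA)$; these carry over verbatim to $\cU\widetilde\cG(\cA)$, which has the same automorphism groups and whose morphisms are again monomorphisms.
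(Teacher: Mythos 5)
Your proof is correct and takes essentially the same route as the paper: both invoke the criterion \cite[Theorem 1.10]{randalwilliamswahl2017} (your \cite[Section 1]{randalwilliamswahl2017}), both reduce to checking cancellation and both verify the injectivity of $(-)\sqcup\id$ via the monomorphism property of \cref{embeddings_are_monomorphisms}. One small caution: your opening paragraph asserts that $U\mathcal{G}$ is homogeneous ``as soon as $\mathcal{G}$ has cancellation,'' which understates the hypotheses of the cited criterion --- injectivity of $\aut(\scr{P}) \to \aut(\scr{P}\sqcup\scr{Q})$, $f\mapsto f\sqcup\id_\scr{Q}$, is a separate hypothesis there, not a consequence of cancellation, and it is the one the paper singles out to verify. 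You do check it in your hands-on verification of (H2), so the proof as a whole is complete; but if you rely only on the first paragraph, the list of conditions to verify is missing this item. The paper's proof is tighter: it records the no-zero-divisors observation (\cref{emb_into_zero} for part (i), the definition of $\cU\widetilde{\cG}(\cA)$ for part (ii)), cites the cancellation hypothesis \ref{enum:ass-vol-zylev} or its automatic validity in $\widetilde{\cG}(\cA)$, and then verifies injectivity via $\iota_\scr{P}$ being a monomorphism, exactly as you do.
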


	\begin{proof}
		For \ref{enum:ug-conditions-i}, note that $\category{UG}(\cA)$ has no zero divisors by \cref{emb_into_zero}. By \cite[Theorem 1.10]{randalwilliamswahl2017}, to prove that $\category{UG}(\cA)$ is homogeneous, it suffices to check property \ref{enum:ass-vol-zylev}, which is true by assumption, and to check that the map
		\[ (-)\sqcup \id_\scr{Q} \colon \aut_{\cG(\cA)}(\scr{P}) \to \aut_{\cG(\cA)}(\scr{P} \sqcup \scr{Q}) \]
		is injective. This is true, because by precomposing $f \sqcup \id_\scr{Q}$ with the scissors embedding $\iota_\scr{P}$ we obtain $\iota_\scr{P} \circ f$. Since $\iota_\scr{P}$ is a monomorphism (\cref{embeddings_are_monomorphisms}), the map $\iota_\scr{P} \circ f$ then determines $f$.

		For \ref{enum:ug-conditions-ii}, we note that $\cU\widetilde{\cG}(\cA)$ has no zero divisors by definition (there is no injection from a nonempty $I$ into the empty set). Then the remaining property follows by the same argument, because every morphism in $\cU\widetilde{\cG}(\cA)$ is a monomorphism.
	\end{proof}

	The complexes $S(\scr{X},\scr{B})$ from \cref{embedding_complex_1} become truncated when we pass from $\cU\cG(\cA)$ to $\cU\widetilde{\cG}(\cA)$---they have the same simplices but only up to dimension $k$, and then no simplices of dimension above $k$, if $k$ is the number of disjoint sub-tuples in $\scr{B}$ which are isomorphic to $\scr{X}$. As a result, if we let $\widetilde{S}(\scr{X},\scr{B})$ denote the complex of restricted scissors congruence embeddings, the connectivity of $\widetilde{S}(\scr{X},\scr{B})$ is at least the minimum of $(k-1)$ and the connectivity of $S(\scr{X},\scr{B})$. From \cref{thm:connectivity-of-complexes-of-destabilisation} we thus conclude:

	\begin{corollary}
		\label{cor:connectivity-of-complexes-of-destabilisation-res}\,
		\begin{enumerate}
			\item \label{enum:cor-connectivity-i} If $\cA$ satisfies \ref{enum:ass-vol-existence}, $\scr{X},\scr{B} \in \cU\widetilde{\cG}(\cA)$, $\vol(\scr{B}) > 0$, and $\scr{B}$ contains $k$ disjoint sub-tuples isomorphic to $\scr{X}$, then $\widetilde{S}(\scr{X},\scr{B})$ is $\lfloor \frac{k-3}{2} \rfloor$-connected.
			\item \label{enum:cor-connectivity-ii} If $\cA$ satisfies \ref{enum:ass-zae-ae}, $\scr{X},\scr{B} \in \cU\widetilde{\cG}(\cA)$, $\scr{B} \not\simeq \varnothing$, and $\scr{B}$ contains $k$ sub-tuples isomorphic to $\scr{X}$, then $\widetilde{S}(\scr{X},\scr{B})$ is $(k-1)$-connected.
		\end{enumerate}
	\end{corollary}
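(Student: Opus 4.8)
The plan is to recognise $\widetilde{S}(\scr{X},\scr{B})$ as a skeleton of the complex $S(\scr{X},\scr{B})$, and then to combine the elementary fact that skeleta inherit connectivity up to the truncation dimension with the connectivity statements of \cref{thm:connectivity-of-complexes-of-destabilisation}.

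The first step is to compare the two complexes directly. Because $\scr{B}$ contains a sub-tuple isomorphic to $\scr{X}$, the indexing set of $\scr{X}$ injects into that of $\scr{B}$ compatibly with the objects, so by the construction of $\cU\widetilde{\cG}(\cA)$ out of $\widetilde{\cG}(\cA)$ we have $\hom_{\cU\widetilde{\cG}(\cA)}(\scr{X},\scr{B}) = \hom_{\cU\cG(\cA)}(\scr{X},\scr{B})$; thus $\widetilde{S}(\scr{X},\scr{B})$ and $S(\scr{X},\scr{B})$ have the same vertices. For the higher-dimensional simplices I would use the equivalence recorded after \cref{embeddings_are_monomorphisms} between a $(p{+}1)$-tuple of pairwise disjoint scissors embeddings $\scr{X}\hookrightarrow\scr{B}$ and a single scissors embedding $\scr{X}^{\sqcup p+1}\hookrightarrow\scr{B}$: a $p$-simplex of $S(\scr{X},\scr{B})$ belongs to $\widetilde{S}(\scr{X},\scr{B})$ exactly when the corresponding embedding is a morphism of $\cU\widetilde{\cG}(\cA)$, which by the definition of $\widetilde{\cG}(\cA)$ holds precisely when $\scr{B}$ contains $p{+}1$ pairwise disjoint sub-tuples isomorphic to $\scr{X}$. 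Since this is a condition on $p$ alone, $\widetilde{S}(\scr{X},\scr{B})$ is exactly a skeleton of $S(\scr{X},\scr{B})$, truncated at a dimension governed by the number $k$ of disjoint copies of $\scr{X}$ sitting inside $\scr{B}$.

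The second step is to invoke the standard fact that the $m$-skeleton of an $n$-connected simplicial complex is $\min(m-1,n)$-connected --- immediate from simplicial approximation, since $\pi_i$ is unchanged by attaching cells of dimension larger than $i+1$ --- and then to feed in \cref{thm:connectivity-of-complexes-of-destabilisation}. In case \ref{enum:cor-connectivity-i}, sub-additivity of $\vol$ together with the presence of $k$ disjoint copies of $\scr{X}$ in $\scr{B}$ gives $\vol(\scr{B}) \geq k\,\vol(\scr{X})$; combined with $\vol(\scr{B}) > 0$ (and the observation that $\vol(\scr{X}) = 0$ makes $S(\scr{X},\scr{B})$ contractible), this lets one apply \cref{thm:connectivity-of-complexes-of-destabilisation} \ref{enum:connectivity-i} to see that $S(\scr{X},\scr{B})$, and hence its skeleton $\widetilde{S}(\scr{X},\scr{B})$, is $\lfloor\frac{k-3}{2}\rfloor$-connected. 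In case \ref{enum:cor-connectivity-ii}, $\scr{B}\not\simeq\varnothing$ makes $S(\scr{X},\scr{B})$ contractible by \cref{thm:connectivity-of-complexes-of-destabilisation} \ref{enum:connectivity-ii}, so its $k$-skeleton $\widetilde{S}(\scr{X},\scr{B})$ is $(k-1)$-connected.

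The main obstacle is the indexing bookkeeping in the first step: pinning down exactly which simplices of $S(\scr{X},\scr{B})$ survive in $\cU\widetilde{\cG}(\cA)$ and matching the resulting truncation dimension against the parameter $k$ in the statement and in \cref{thm:connectivity-of-complexes-of-destabilisation}, together with the slightly delicate passage from the non-strict inequality $\vol(\scr{B}) \geq k\,\vol(\scr{X})$ to the strict inequality needed to apply that theorem. None of this is deep, but the off-by-one accounting and the corner cases $\scr{X}\simeq\varnothing$ and $\vol(\scr{X})=0$ require care.
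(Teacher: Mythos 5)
Your plan---identify $\widetilde{S}(\scr{X},\scr{B})$ as a skeleton of $S(\scr{X},\scr{B})$, use that a skeleton inherits connectivity one below the truncation dimension, and feed in \cref{thm:connectivity-of-complexes-of-destabilisation}---is exactly the route the paper takes in the paragraph preceding the corollary. But you rightly flag the ``off-by-one accounting'' as the delicate point, and it is in fact where the argument as written falls one short. In your first step you derive correctly that a $p$-simplex of $S(\scr{X},\scr{B})$ survives in $\widetilde{S}(\scr{X},\scr{B})$ precisely when $\scr{B}$ contains $p+1$ pairwise-disjoint sub-tuples isomorphic to $\scr{X}$, i.e.\ precisely when $p+1\le k$, i.e.\ $p\le k-1$. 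So $\widetilde{S}(\scr{X},\scr{B})$ is the $(k-1)$-skeleton of $S(\scr{X},\scr{B})$, not the $k$-skeleton you then invoke in the conclusion. Applying your (correct) skeleton-connectivity fact to the $(k-1)$-skeleton of a contractible complex yields $(k-2)$-connectedness, not the $(k-1)$-connectedness claimed in part \ref{enum:cor-connectivity-ii}. (The same off-by-one appears in the paper's own informal account, which says ``simplices\dots up to dimension $k$'' and ``the minimum of $(k-1)$ and the connectivity of $S$''; the paper's downstream uses of this corollary build in extra room by choosing $\scr{A}'$ with a spare copy of $\scr{X}$, so the discrepancy does not propagate, but literal reproduction of that paragraph does not prove part \ref{enum:cor-connectivity-ii} as stated.)

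The strict-versus-non-strict volume inequality in part \ref{enum:cor-connectivity-i} is a second point you flag but do not resolve, and it interacts with the off-by-one above. Having $k$ disjoint copies only gives $\vol(\scr{B})\ge k\vol(\scr{X})$. When $\vol(\scr{X})=0$ one has contractibility of $S$ and hence $(k-2)\ge\lfloor\frac{k-3}{2}\rfloor$-connectedness of the $(k-1)$-skeleton, which is fine. When $\vol(\scr{X})>0$ and the inequality is tight ($\vol(\scr{B})=k\vol(\scr{X})$), the best one can extract from \cref{thm:connectivity-of-complexes-of-destabilisation} \ref{enum:connectivity-i} is that $S$ is $\lfloor\frac{(k-1)-3}{2}\rfloor=\lfloor\frac{k-4}{2}\rfloor$-connected, which for odd $k$ is one less than $\lfloor\frac{k-3}{2}\rfloor$. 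So both parts need either the strict inequality $\vol(\scr{B})>k\vol(\scr{X})$ as an extra hypothesis, or the conclusion weakened by one, or a more refined argument than the skeleton comparison you and the paper both use.
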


	\subsection{Stability for EA-assemblers and S-assemblers}

	We now employ the homological stability machinery from \cite{randalwilliamswahl2017} applied to the category of restricted scissors embeddings $\cU\widetilde{\cG}(\cA)$.
	
	Let $\scr{A}$ and $\scr{X}$ be objects in $\cG(\cA)$. To apply the machinery we need to consider a collection of semi-simplicial sets $\widetilde{W}_n(\scr{A},\scr{X})$ and simplicial complexes $\widetilde{S}_n(\scr{A},\scr{X})$ constructed from the category $\cU\widetilde{\cG}(\cA)$ \cite[Definition 2.1 and Definition 2.8]{randalwilliamswahl2017}:

	\begin{definition}
		Let $\widetilde{W}_n(\scr{A},\scr{X})$ be the semi-simplicial set with $p$-simplices
		\[ \hom_{\cU\widetilde{\cG}(\cA)}(\scr{X}^{\sqcup p+1}, \scr{A} \sqcup \scr{X}^{\sqcup n}), \]
		and face maps given by composing with the inclusions of the form $\scr{X}^{\sqcup p} \to \scr{X}^{\sqcup p+1}$.
		
		Let $\widetilde{S}_n(\scr{A},\scr{X})$ be the simplicial complex with vertices $\widetilde{W}_n(\scr{A},\scr{X})_0$, where a $(p+1)$-tuple forms a $p$-simplex if and only if there exists a simplex in $\widetilde{W}_n(\scr{A},\scr{X})_p$ with these vertices.
	\end{definition}
	
	In other words, a $(p+1)$-tuple of embeddings $\scr{X} \to \scr{A} \sqcup \scr{X}^{\sqcup n}$ forms a $p$-simplex in $\widetilde{S}_n(\scr{A},\scr{X})$ if and only if they form a restricted scissors embedding $\scr{X}^{\sqcup p+1} \to \scr{A} \sqcup \scr{X}^{\sqcup n}$, which happens if and only if they are disjoint and $\scr{A} \sqcup \scr{X}^{\sqcup n}$ contains at least $(p+1)$ disjoint copies of $\scr{X}$. It is easy to see that
	\[ \widetilde{S}_n(\scr{A},\scr{X}) = \widetilde{S}(\scr{X},\scr{A} \sqcup \scr{X}^{\sqcup n}) \]
	where $\widetilde{S}(\scr{X},\scr{A} \sqcup \scr{X}^{\sqcup n})$ is the complex of scissors congruence embeddings from \cref{embedding_complex_1}, taken in the category of restricted embeddings $\cU\widetilde{\cG}(\cA)$.

The semi-simplicial sets $\widetilde{W}_n(\scr{X},\scr{A})$ do not play a significant role in this paper, because of the following lemma:

	\begin{lemma}\label{s_to_w}
		Suppose $a, k \geq 1$ and $\scr{X} \not\simeq \varnothing$. Then the simplicial complex $\widetilde
S_n(\scr{A},\scr{X})$ is $\lfloor \frac{n-a}{k} \rfloor$-connected for all $n \geq 0$ if and only if the semi-simplicial set $\widetilde
W_n(\scr{A},\scr{X})$ is $\lfloor \frac{n-a}{k} \rfloor$-connected for all $n \geq 0$.
	\end{lemma}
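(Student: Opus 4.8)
The plan is to recognise \cref{s_to_w} as an instance of the standard comparison between a simplicial complex and its semi-simplicial set of \emph{ordered} simplices, carried out inside the framework of \cite{randalwilliamswahl2017}. First I would unwind the definitions. By the discussion following \cref{embeddings_are_monomorphisms}, a scissors embedding $\scr{X}^{\sqcup p+1} \hookrightarrow \scr{B}$ is the same datum as an ordered $(p+1)$-tuple of pairwise disjoint scissors embeddings $\scr{X} \hookrightarrow \scr{B}$; and since two equal disjoint scissors embeddings force $\scr{X} \simeq \varnothing$ (the argument used in \cref{lem:ug-conditions}, via \cref{empty_set_properties}), under the hypothesis $\scr{X} \not\simeq \varnothing$ the entries of such a tuple are automatically distinct. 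Hence $\widetilde{W}_n(\scr{A},\scr{X})_p$ is precisely the set of orderings of the $p$-simplices of the simplicial complex $\widetilde{S}_n(\scr{A},\scr{X})$; the symmetric group $\Sigma_{p+1}$ acts freely on it by permuting the tuple (equivalently, by precomposing with a permutation automorphism of $\scr{X}^{\sqcup p+1}$), the quotient being the set of $p$-simplices of $\widetilde{S}_n(\scr{A},\scr{X})$, and the face maps match. Thus $\widetilde{W}_n(\scr{A},\scr{X})$ is the semi-simplicial set of ordered simplices of $\widetilde{S}_n(\scr{A},\scr{X})$, and there is a natural order-forgetting surjection $|\widetilde{W}_n(\scr{A},\scr{X})| \to |\widetilde{S}_n(\scr{A},\scr{X})|$.

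For the implication ``$\widetilde{W}_n$ connected $\Rightarrow$ $\widetilde{S}_n$ connected'' I would show that this surjection is surjective on homotopy groups in all degrees and at all basepoints: given a simplicial map $f\colon \Sigma \to \widetilde{S}_n(\scr{A},\scr{X})$ from a triangulated sphere, choose any total order on the vertices of $\Sigma$ and send each ordered simplex of $\Sigma$ to the correspondingly ordered simplex of $\widetilde{W}_n(\scr{A},\scr{X})$; this is a well-defined simplicial lift of $f$, because every simplex of $\widetilde{S}_n(\scr{A},\scr{X})$ is, in \emph{every} order of its vertices, a simplex of $\widetilde{W}_n(\scr{A},\scr{X})$. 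Hence if $\widetilde{W}_n(\scr{A},\scr{X})$ is $\ell$-connected then so is $\widetilde{S}_n(\scr{A},\scr{X})$, and applying this with $\ell = \lfloor\tfrac{n-a}{k}\rfloor$ for every $n$ settles this direction.

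The substantive direction is ``$\widetilde{S}_n$ connected for all $n$ $\Rightarrow$ $\widetilde{W}_n$ connected for all $n$'', and here the hypothesis for \emph{all} $n$ is essential, since a contractible simplicial complex need not have contractible ordered-simplex realisation. The key point is that the complexes $\widetilde{S}_n(\scr{A},\scr{X}) = \widetilde{S}(\scr{X},\scr{A}\sqcup\scr{X}^{\sqcup n})$ form a weakly Cohen--Macaulay family: by \cref{disjoint_iff_complement} the link of a $p$-simplex of $\widetilde{S}_n(\scr{A},\scr{X})$ is isomorphic to $\widetilde{S}(\scr{X},\scr{C})$ for $\scr{C}$ a complement of the corresponding embedding $\scr{X}^{\sqcup p+1}\hookrightarrow \scr{A}\sqcup\scr{X}^{\sqcup n}$, and by the cancellation property \ref{enum:ass-vol-zylev} --- which holds automatically in $\cU\widetilde{\cG}(\cA)$ --- we have $\scr{C}\simeq \scr{A}\sqcup\scr{X}^{\sqcup(n-p-1)}$, so the link is isomorphic to $\widetilde{S}_{n-p-1}(\scr{A},\scr{X})$, hence $\lfloor\tfrac{n-p-1-a}{k}\rfloor$-connected by assumption, hence (as $k \geq 1$) at least $(\lfloor\tfrac{n-a}{k}\rfloor - p - 1)$-connected. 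Thus $\widetilde{S}_n(\scr{A},\scr{X})$ is weakly Cohen--Macaulay of dimension $\lfloor\tfrac{n-a}{k}\rfloor + 1$ in the sense of \cite[\S2]{randalwilliamswahl2017}, and the comparison of Randal-Williams and Wahl between such a complex and its semi-simplicial set of ordered simplices (\cite[\S2]{randalwilliamswahl2017}) then shows that $\widetilde{W}_n(\scr{A},\scr{X})$ is $\lfloor\tfrac{n-a}{k}\rfloor$-connected, as wanted. I expect this direction to be the main obstacle: one must correctly identify links of simplices of $\widetilde{S}_n(\scr{A},\scr{X})$ with the smaller complexes $\widetilde{S}_{n-p-1}(\scr{A},\scr{X})$ --- which is exactly the place where passing to $\cU\widetilde{\cG}(\cA)$ rather than $\cU\cG(\cA)$ is used, as it supplies \ref{enum:ass-vol-zylev} unconditionally --- and then keep careful track of the floor functions and dimension shifts so that the weakly-Cohen--Macaulay bound fed into the comparison lemma is exactly met.
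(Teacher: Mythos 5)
Your proposal is correct and takes essentially the same approach as the paper. The paper's proof simply verifies local homogeneity and local standardness and then cites \cite[Proposition 2.9]{randalwilliamswahl2017} and \cite[Theorem 2.10]{randalwilliamswahl2017}, while you reconstruct the content of those citations in detail (the identification of $\widetilde{W}_n$ as the ordered simplices of $\widetilde{S}_n$, the identification of links of $p$-simplices in $\widetilde{S}_n$ with $\widetilde{S}_{n-p-1}$ using the automatic cancellation in $\cU\widetilde{\cG}(\cA)$, and the weakly Cohen--Macaulay bookkeeping feeding into the comparison lemma).
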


	\begin{proof}
		Since $\scr{X} \not\simeq \varnothing$, the category $\cU\widetilde{\cG}(\cA)$ is locally homogeneous (in fact homogeneous) and locally standard at $(\scr{A},\scr{X})$ by \cref{lem:ug-conditions} and \cref{lem:ug-conditions-hom}. Hence, \cite[Proposition 2.9]{randalwilliamswahl2017} and \cite[Theorem 2.10]{randalwilliamswahl2017} imply the claim.
	\end{proof}

	\begin{theorem}\label{thm:embeddings-induce-homology-isomorphisms-vol}
	Let $\cA$ be an EA-assembler. Then for any scissors embedding $\scr{P} \hookrightarrow \scr{Q}$ with $\vol(\scr{P})>0$ the map
	\[\aut_{\category{UG}(\cA)}(\scr{P}) \longrightarrow \aut_{\category{UG}(\cA)}(\scr{Q})\] 
	induces an isomorphism on homology, and the same is true with abelian local coefficients. These isomorphisms are independent of the choice of scissors embedding.
	\end{theorem}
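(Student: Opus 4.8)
The plan is to run the homological stability machine of Randal-Williams--Wahl \cite{randalwilliamswahl2017} on the category $\cU\widetilde{\cG}(\cA)$, using the structural inputs already assembled: $\cU\widetilde{\cG}(\cA)$ is homogeneous (\cref{lem:ug-conditions-hom}), it is locally standard at every pair $(\scr{A},\scr{X})$ with $\scr{X}\not\simeq\varnothing$ (\cref{lem:ug-conditions}), it has the same automorphism groups as $\cG(\cA)$, and the connectivity of its destabilisation complexes is $\lfloor\tfrac{n-3}{2}\rfloor$ (\cref{cor:connectivity-of-complexes-of-destabilisation-res}, transferred to the semi-simplicial sets $\widetilde{W}_n$ via \cref{s_to_w}). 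For a fixed homological degree $d$ this yields, for \emph{every} choice of $\scr{A}$ and $\scr{X}\not\simeq\varnothing$, a uniform constant $R=R(d)$ so that the stabilisation map $H_d(\aut(\scr{A}\sqcup\scr{X}^{\sqcup n}))\to H_d(\aut(\scr{A}\sqcup\scr{X}^{\sqcup n+1}))$ is an isomorphism once $n\geq R$; the analogous statement with abelian local coefficients follows from the coefficient-system version of the same theorem. The essential difficulty is that this is an isomorphism only in a \emph{range} of degrees, whereas the theorem asks for one in all degrees; the point of axioms \ref{enum:ass-vol-existence} and \ref{enum:ass-vol-archimedean} is precisely to bridge this gap, by letting us realise the single map $\aut(\scr{P})\to\aut(\scr{Q})$ as a finite composite of stabilisations that sit arbitrarily deep in the stable range.

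Concretely, given a scissors embedding $e\colon\scr{P}\hookrightarrow\scr{Q}$ with $\vol(\scr{P})>0$, I would first factor it as the standard inclusion $\scr{P}\hookrightarrow\scr{P}\sqcup\scr{P}'$ followed by a scissors congruence $\scr{P}\sqcup\scr{P}'\congto\scr{Q}$, where $\scr{P}'=\scr{Q}-e(\scr{P})$; the second factor is an isomorphism in $\category{UG}(\cA)$ (\cref{when_an_emb_is_a_cong}) and hence a homology isomorphism, so it suffices to treat the standard inclusion. Next---this is the key step---I would use \ref{enum:ass-vol-archimedean}, applied repeatedly by halving against ever-smaller objects, to subdivide $\scr{P}'\simeq\scr{P}'_1\sqcup\cdots\sqcup\scr{P}'_s$ into finitely many pieces with $R\cdot\vol(\scr{P}'_j)<\vol(\scr{P})$ for every $j$ (when $\vol(\scr{P})=\vol(\scr{Q})$ one may even take $s=1$). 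The standard inclusion $\scr{P}\hookrightarrow\scr{P}\sqcup\scr{P}'$ is then the composite of the standard inclusions $\scr{B}_j\hookrightarrow\scr{B}_j\sqcup\scr{P}'_j$ with $\scr{B}_j=\scr{P}\sqcup\scr{P}'_1\sqcup\cdots\sqcup\scr{P}'_{j-1}$, and it is enough to show that each of these is a homology isomorphism, also with abelian coefficients.

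For that step, since $\vol(\scr{B}_j)\geq\vol(\scr{P})>R\cdot\vol(\scr{P}'_j)$, axiom \ref{enum:ass-vol-existence} provides a scissors congruence $\scr{B}_j\simeq\scr{C}_j\sqcup(\scr{P}'_j)^{\sqcup R}$. Using that $\cU\widetilde{\cG}(\cA)$ has the same automorphism groups as $\cG(\cA)$, the map $\aut(\scr{B}_j)\to\aut(\scr{B}_j\sqcup\scr{P}'_j)$ is carried to (the extension-by-identity along) an embedding $\scr{C}_j\sqcup(\scr{P}'_j)^{\sqcup R}\hookrightarrow\scr{C}_j\sqcup(\scr{P}'_j)^{\sqcup R+1}$ of $\cU\widetilde{\cG}(\cA)$. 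If $\scr{P}'_j\simeq\varnothing$ this is an isomorphism; otherwise $\scr{P}'_j\not\simeq\varnothing$, and since $\vol(\scr{B}_j)>0$ the relevant complex is $d$-connected by \cref{cor:connectivity-of-complexes-of-destabilisation-res}, so Randal-Williams--Wahl applies to the pair $(\scr{C}_j,\scr{P}'_j)$. By homogeneity of $\cU\widetilde{\cG}(\cA)$, the embedding above induces the same map on homology as the Randal-Williams--Wahl stabilisation $H_*(\aut(\scr{C}_j\sqcup(\scr{P}'_j)^{\sqcup R}))\to H_*(\aut(\scr{C}_j\sqcup(\scr{P}'_j)^{\sqcup R+1}))$, which is an isomorphism in degree $d$ (and with abelian coefficients) because $R\geq R(d)$. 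Composing over $j$---for each $d$ and each abelian local coefficient system---proves the first two assertions of the theorem.

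For independence of the choice of $e$, suppose $e_0,e_1\colon\scr{P}\rightrightarrows\scr{Q}$. Since $\vol(\scr{Q})\geq\vol(\scr{P})>0$, I would choose an embedding $\scr{Q}\hookrightarrow\scr{R}$ with $\vol(\scr{R})>3\vol(\scr{P})$ (for instance $\scr{R}=\scr{Q}^{\sqcup 4}$); by the part just proved, $H_*(\aut(\scr{Q}))\to H_*(\aut(\scr{R}))$ is an isomorphism, in particular injective, and likewise with abelian coefficients. The volume bound leaves room---by a routine argument with common refinements of covers, using \ref{enum:ass-vol-existence}---for an embedding $f\colon\scr{P}\hookrightarrow\scr{R}$ disjoint from both composites $\scr{P}\xrightarrow{e_i}\scr{Q}\hookrightarrow\scr{R}$; by \cref{disjoint_implies_conjugate} each composite differs from $f$ by an inner automorphism of $\aut(\scr{R})$, which acts trivially on group homology with any coefficient module, so the two composites induce the same map on homology. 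Injectivity of $H_*(\aut(\scr{Q}))\to H_*(\aut(\scr{R}))$ then forces $(e_0)_*=(e_1)_*$. As indicated above, the one genuinely new obstacle here is the passage from range-stability to all-degree stability, which is dealt with by the subdivision trick; the remaining arguments are of a routine nature.
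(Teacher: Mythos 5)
Your main stability argument is essentially the paper's: both proofs subdivide the complement $\scr{P}'$ using axiom \ref{enum:ass-vol-archimedean} into pieces $\scr{P}'_j$ with volume small compared to $\vol(\scr{P})$, factor the inclusion $\scr{P}\hookrightarrow\scr{P}\sqcup\scr{P}'$ as a finite composite of single stabilisations, use axiom \ref{enum:ass-vol-existence} to rewrite each such stabilisation as one that is deep in the stable range for $\cU\widetilde{\cG}(\cA)$, and then quote the connectivity of $\widetilde{S}_n$ together with Randal-Williams--Wahl. The bookkeeping differs slightly---the paper models the complement $\scr{A}'$ so that it \emph{contains} one extra copy of $\scr{X}$ as a sub-tuple, improving the connectivity bound from $\lfloor\tfrac{n-3}{2}\rfloor$ to $\lfloor\tfrac{n-2}{2}\rfloor$, whereas your $\scr{C}_j$ need not; that only costs a worse constant $R(d)$ and does not affect correctness. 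Your independence argument also takes a slightly different route (embed into $\scr{R}$ with $\vol(\scr{R})>3\vol(\scr{P})$ and use \ref{enum:ass-vol-existence} to find a third embedding $f$ disjoint from both, versus the paper's cleaner trick of embedding into $\scr{Q}^{\sqcup 2}$ and using the swap $\sigma$ to make disjointness automatic), but both reduce to \cref{disjoint_implies_conjugate}.

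However, there is a genuine gap in the last paragraph. You assert that an inner automorphism of $\aut(\scr{R})$ ``acts trivially on group homology with any coefficient module.'' That is false once the coefficient module is nontrivial: if $h\in G$ and $M$ is a $G$-module, then conjugation $c_h\colon G\to G$ together with the identity on $M$ acts on $H_*(G;M)$ as multiplication by $h$ on the coefficients, which is nontrivial unless $h$ acts trivially on $M$. (The ``inner automorphisms act trivially'' slogan is for trivial coefficients, or more precisely for the pair $(c_h, h\cdot)$.) For an abelian local coefficient system the element $h$ acts through the abelianisation, so you need $h$ to lie in the commutator subgroup of $\aut(\scr{R})$ for your argument to go through. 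Nothing in your construction forces $h$ to be a commutator. The paper handles exactly this issue by instead embedding $\scr{Q}$ into \emph{three} copies of itself and choosing $\sigma$ and $h$ to be $3$-cycles, which are commutators and therefore do act trivially on all abelian modules. You would need an analogous adjustment (e.g.\ find two further embeddings $f, f'$ disjoint from each other and from both $\iota e_0$, $\iota e_1$, and take the conjugating elements to be $3$-cycles permuting triples of disjoint copies) to close the gap for the abelian-coefficients half of the theorem. As written, your proof establishes the conclusion only with constant coefficients.
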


	\begin{proof}
	We first prove that if $\cA$ is an EA-assembler and $\scr{A},\scr{X} \in \cG(\cA)$ with $\vol(\scr{A})>0$,
	then the map \[\aut_{\category{UG}(\cA)}(\scr{A}) \longrightarrow \aut_{\category{UG}(\cA)}(\scr{A} \sqcup \scr{X})\]
	induces an isomorphism on homology with constant or abelian local coefficients. The first part then follows after noting that a scissors embedding $e \colon \scr{P} \hookrightarrow \scr{Q}$ by definition allows us to write $\scr{Q} \simeq \scr{P} \sqcup \scr{P}'$.
	
	Note that without loss of generality, $\scr{X} \not\simeq \varnothing$. The idea is that when $\vol(\scr{X}) < \vol(\scr{A})$, the connectivity results proven above can be combined with \cite[Theorem 3.1]{randalwilliamswahl2017} show that the map $\aut(\scr{A} \sqcup \scr{X}^{\sqcup n}) \to \aut(\scr{A} \sqcup \scr{X}^{\sqcup n + 1})$ is an isomorphism on homology in a range of degrees tending to infinity with $n$. If we cut $\scr{X}$ into smaller pieces, however, the slope of that range increases. If we then remove those pieces from $\scr{A}$ and start with the resulting smaller object $\scr{A'}$, the offset of that range increases. In this way we can arrange that $\aut(\scr{A}) \to \aut(\scr{A} \sqcup \scr{X})$ is an isomorphism in homology in any degree that we want. 

	\medskip

	To be more specific, fix a degree $q \geq 0$. By iterating property \ref{enum:ass-vol-archimedean}, for any $k \geq 1$ we can find a scissors congruence $\bigsqcup_{i=1}^m \scr{X}_i \congto \scr{X}$ with $2^k\vol(\scr{X}_i) \leq \vol(\scr{A})$. By taking $k$ sufficiently large, we can assume that $(2q+2) \vol(\scr{X}_i)<\vol(\scr{A})$. It suffices to show that adjoining each $\scr{X}_i$ in turn induces an isomorphism on $q$th homology, so without loss of generality we now assume that
	\[ (2q+2) \vol(\scr{X})<\vol(\scr{A}). \]
	By \ref{enum:ass-vol-existence}, there exists a scissors embedding $\scr{X}^{ \sqcup (2q+2)} \to \scr{A}$. Let $\scr{A'}$ denote the complement of the first $(2q+1)$ copies of $\scr{X}$, modelled so that it contains one more copy of $\scr{X}$ as a sub-tuple, so that there exists an embedding $\scr{X} \to \scr{A'}$ in the restricted category $\cU\widetilde{\cG}(\cA)$.

	Then for each $n \geq 0$, $\scr{A'} \sqcup \scr{X}^{\sqcup n}$ contains $(n+1)$ sub-tuples isomorphic to $\scr{X}$. By \cref{cor:connectivity-of-complexes-of-destabilisation-res} \ref{enum:connectivity-ii}, the complex $\widetilde{S}_n(\scr{A'},\scr{X}) = \widetilde{S}(\scr{X},\scr{A'} \sqcup \scr{X}^{\sqcup n})$ is therefore $\lfloor \frac{n-2}{2} \rfloor$-connected for all $n \geq 0$. By \cref{s_to_w}, therefore the complexes $\widetilde{W}_n(\scr{A'},\scr{X})$ are also $\lfloor \frac{n-2}{2} \rfloor$-connected for all $n \geq 0$. Therefore the pair $(\scr{A},\scr{X})$ satisfies condition LH3 in \cite[Definition 2.2]{randalwilliamswahl2017}, and is locally homogeneous by \cref{lem:ug-conditions}, so by \cite[Theorem 3.1]{randalwilliamswahl2017} the map
	\[\aut(\scr{A'} \sqcup \scr{X}^{\sqcup n}) \to \aut(\scr{A'} \sqcup \scr{X}^{\sqcup n + 1})\]
	induces an isomorphism on $q$th homology when $q \leq \frac{n-1}{2}$. Taking $n = 2q+1$, we have $q = \frac{n-1}{2}$ and so we get an isomorphism on $q$th homology for the map
	\[\aut(\scr{A'} \sqcup \scr{X}^{\sqcup 2q+1}) \to \aut(\scr{A'} \sqcup \scr{X}^{\sqcup 2q+2}). \]
	However, $\scr{A} \simeq \scr{A'} \sqcup \scr{X}^{\sqcup 2q+1}$ in the unrestricted category $\cG(\cA)$, so this is isomorphic to the desired map $\aut(\scr{A}) \to \aut(\scr{A} \sqcup \scr{X})$.
	
	To prove the analogous statement with abelian local coefficients, we instead arrange for $\scr{A}$ to contain $(3q+4)$ copies of $\scr{X}$, and let $\scr{A'}$ denote the complement of a scissors embedding $\scr{X}^{ \sqcup (3q+3)} \to \scr{A}$. Then the complexes $S_n(\scr{A'},\scr{X})$ and $W_n(\scr{A'},\scr{X})$ are $\lfloor \frac{n-2}{2} \rfloor$-connected as before, and therefore also $\lfloor \frac{n-2}{3} \rfloor$-connected, for all $n \geq 0$. By \cite[Theorem 3.4]{randalwilliamswahl2017}, the map $\aut(\scr{A'} \sqcup \scr{X}^{\sqcup n}) \to \aut(\scr{A'} \sqcup \scr{X}^{\sqcup n + 1})$ is therefore an isomorphism on $q$th homology with abelian local coefficients when $q \leq \frac{n-3}{3}$. Taking $n = 3q+3$, we conclude that $\aut(\scr{A}) \to \aut(\scr{A} \sqcup \scr{X})$ is an isomorphism on $q$th homology with abelian local coefficients.

	\medskip

	To prove the second part---that the isomorphism is independent of the choice of scissors embedding---let $e,e' \colon \scr{P} \to \scr{Q}$ be two scissors embeddings. Compose these embeddings with an inclusion of the form $i\colon \scr{Q} \to \scr{Q}^{\sqcup 2}$, and let $\sigma$ be the self-map of $\scr{Q}^{\sqcup 2}$ that interchanges the two copies. Then $ie$ and $\sigma ie'$ are disjoint. By \cref{disjoint_implies_conjugate}, there is an automorphism $h\colon \scr{Q}^{\sqcup 2} \simeq \scr{Q}^{\sqcup 2}$ such that $hie = \sigma i e'$. Note that $\vol(\scr{Q})>0$, so $i$ induces an isomorphism on homology. On the other hand, the homomorphisms on the group $\aut_{\category{UG}(\cA)}(\scr{Q}^{\sqcup 2})$ induced by $h$ and $\sigma$ are inner automorphisms that conjugate by $h$ and $\sigma$, respectively. Therefore they induce the identity on group homology. It follows that $e$ and $e'$ induce identical maps on group homology.
	
	For abelian local coefficients, we run the same argument, except that we arrange for $\sigma$ and $h$ to lie in the commutator subgroup. To accomplish this we let $i$ be the embedding of $\scr{Q}$ into \emph{three} copies of $\scr{Q}$, and we let $\sigma$ be the 3-cycle that rotates the copies around. Then $\sigma$ is in the commutator subgroup. For $h$, we run the proof of \cref{disjoint_implies_conjugate} but with the three disjoint embeddings $ie$, $\sigma ie'$, and $\sigma^2 ie$. Then we construct $h$ as a 3-cycle that rotates the three embedded copies of $\scr{P}$ around. This still has the property that $hie = \sigma ie'$, but now $h$ is a commutator as well.
	\end{proof}

	\begin{remark}
	Without property \ref{enum:ass-vol-archimedean}, the above proof still gives homological stability in a range of degrees depending on the volumes of $\scr{A}$ and $\scr{X}$. For example, there is an assembler of finite sets, with volume function given by cardinality. This satisfies property \ref{enum:ass-vol-existence} but not property \ref{enum:ass-vol-archimedean}. Applying the above argument to this assembler, one recovers the homological stability theorem for symmetric groups \cite[Section 5.1]{randalwilliamswahl2017}.
	\end{remark}

	\begin{theorem}\label{thm:embeddings-induce-homology-isomorphisms-zae} Let $\cA$ be an S-assembler. Then for any scissors embedding $\scr{P} \hookrightarrow \scr{Q}$ with $\scr{P} \not \simeq \varnothing$ the map 
	\[\aut_{\category{UG}(\cA)}(\scr{P}) \longrightarrow \aut_{\category{UG}(\cA)}(\scr{Q})\] 
	induces an isomorphism on homology, and the same is true with abelian local coefficients. These isomorphisms are independent of the choice of scissors embedding.
	\end{theorem}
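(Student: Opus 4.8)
The plan is to run the proof of \cref{thm:embeddings-induce-homology-isomorphisms-vol} again, replacing the volume bookkeeping by direct appeals to the squeezing axiom \ref{enum:ass-zae-ae}; this actually streamlines the argument, since there is no longer any need to subdivide $\scr{X}$. I would begin with the same reduction. Given a scissors embedding $e\colon\scr{P}\hookrightarrow\scr{Q}$ with $\scr{P}\not\simeq\varnothing$, choose a complement and write $\scr{Q}\simeq\scr{P}\sqcup\scr{P}'$; note $\scr{Q}\not\simeq\varnothing$ by \cref{emb_into_zero}. If $\scr{P}'\simeq\varnothing$ then $e$ is a scissors congruence by \cref{when_an_emb_is_a_cong} and induces an isomorphism of groups, so we may assume $\scr{P}'\not\simeq\varnothing$. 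Under the identification $\scr{Q}\simeq\scr{P}\sqcup\scr{P}'$, the homomorphism induced by $e$ is the stabilisation map $\aut_{\category{UG}(\cA)}(\scr{P})\to\aut_{\category{UG}(\cA)}(\scr{P}\sqcup\scr{P}')$, $f\mapsto f\sqcup\id_{\scr{P}'}$, which is independent of the choice of complement by \cref{embeddings_are_monomorphisms} and its corollary. It therefore suffices to prove: for all $\scr{A},\scr{X}\in\cG(\cA)$ with $\scr{A},\scr{X}\not\simeq\varnothing$, the map $\aut_{\category{UG}(\cA)}(\scr{A})\to\aut_{\category{UG}(\cA)}(\scr{A}\sqcup\scr{X})$ is an isomorphism on homology with constant and with abelian local coefficients.

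To prove this in a fixed degree $q$, I would use \ref{enum:ass-zae-ae} --- applied to the nonempty objects $\scr{X}^{\sqcup(2q+2)}$ and $\scr{A}$ --- to produce a $\scr{C}\not\simeq\varnothing$ with $\scr{X}^{\sqcup(2q+2)}\sqcup\scr{C}\simeq\scr{A}$. Set $\scr{A}'':=\scr{X}\sqcup\scr{C}$, regarded as a genuine object of $\cG(\cA)$, so that for every $n\geq0$ the object $\scr{A}''\sqcup\scr{X}^{\sqcup n}=\scr{X}^{\sqcup(n+1)}\sqcup\scr{C}$ is nonempty and contains at least $n+1$ disjoint sub-tuples isomorphic to $\scr{X}$. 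By \cref{cor:connectivity-of-complexes-of-destabilisation-res} \ref{enum:cor-connectivity-ii} the complex $\widetilde{S}_n(\scr{A}'',\scr{X})=\widetilde{S}(\scr{X},\scr{A}''\sqcup\scr{X}^{\sqcup n})$ is $n$-connected, hence $(n-1)$-connected, so \cref{s_to_w} (with $a=k=1$) shows $\widetilde{W}_n(\scr{A}'',\scr{X})$ is $(n-1)$-connected for all $n\geq0$; this is condition LH3 of \cite[Definition 2.2]{randalwilliamswahl2017}. Since $\cU\widetilde{\cG}(\cA)$ is homogeneous (\cref{lem:ug-conditions-hom} \ref{enum:ug-conditions-ii}) and locally standard at $(\scr{A}'',\scr{X})$ (\cref{lem:ug-conditions}), \cite[Theorem 3.1]{randalwilliamswahl2017} then gives that $\aut(\scr{A}''\sqcup\scr{X}^{\sqcup n})\to\aut(\scr{A}''\sqcup\scr{X}^{\sqcup n+1})$ is an isomorphism on $H_q$ once $n\geq2q+1$. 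Taking $n=2q+1$ and observing $\scr{A}''\sqcup\scr{X}^{\sqcup(2q+1)}\simeq\scr{A}$ and $\scr{A}''\sqcup\scr{X}^{\sqcup(2q+2)}\simeq\scr{A}\sqcup\scr{X}$ identifies $\aut(\scr{A})\to\aut(\scr{A}\sqcup\scr{X})$ with an isomorphism on $H_q$. For abelian local coefficients I would instead take $\scr{X}^{\sqcup(3q+4)}\sqcup\scr{C}\simeq\scr{A}$, note the same complexes are a fortiori $\lfloor\tfrac{n-2}{3}\rfloor$-connected, and apply \cite[Theorem 3.4]{randalwilliamswahl2017} to conclude the map is an isomorphism on $H_q$ with abelian coefficients at $n=3q+3$.

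Finally, the independence of the resulting isomorphism from the choice of scissors embedding is proven exactly as in the last two paragraphs of \cref{thm:embeddings-induce-homology-isomorphisms-vol}, using no volume: for $e,e'\colon\scr{P}\to\scr{Q}$, compose with the inclusion $\scr{Q}\hookrightarrow\scr{Q}^{\sqcup2}$ (a homology isomorphism by the first part, since $\scr{Q}\not\simeq\varnothing$), make $ie$ and $\sigma ie'$ disjoint via the flip $\sigma$, use \cref{disjoint_implies_conjugate} to find $h$ with $hie=\sigma ie'$, and observe that $h$ and $\sigma$ act by inner automorphisms and hence trivially on homology; for abelian coefficients replace $\scr{Q}^{\sqcup2}$ by $\scr{Q}^{\sqcup3}$ and arrange $h,\sigma$ to be $3$-cycles, hence commutators, which act trivially on abelian-coefficient homology. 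I expect the only real subtlety to be the bookkeeping in the first two paragraphs --- identifying the $e$-induced map with the stabilisation map and checking that \ref{enum:ass-zae-ae} supplies enough disjoint copies of $\scr{X}$ inside $\scr{A}$, with nonempty leftover, to start the stability sequence far enough out --- since the genuinely homotopy-theoretic input, namely the high connectivity of the destabilisation complexes, is already furnished by \cref{cor:connectivity-of-complexes-of-destabilisation-res} \ref{enum:cor-connectivity-ii}.
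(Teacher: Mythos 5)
Your proposal is correct and follows essentially the same route the paper takes: reduce to the stabilisation map $\aut(\scr{A})\to\aut(\scr{A}\sqcup\scr{X})$, apply \ref{enum:ass-zae-ae} to squeeze $\scr{X}^{\sqcup(2q+2)}$ into $\scr{A}$, verify the connectivity of $\widetilde{S}_n$ via \cref{cor:connectivity-of-complexes-of-destabilisation-res}~\ref{enum:cor-connectivity-ii}, invoke \cite[Theorems 3.1 and 3.4]{randalwilliamswahl2017} at $n = 2q+1$ (resp.\ $3q+3$), and finish the independence claim with the flip/3-cycle trick — this is precisely what the paper does, though its proof is written tersely as a modification of the EA-case. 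Your explicit observation that axiom \ref{enum:ass-vol-archimedean} plays no role here (no need to subdivide $\scr{X}$, since \ref{enum:ass-zae-ae} produces the required disjoint copies directly) is correct and accurately captures why the S-assembler argument is simpler.
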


	\begin{proof}
		We first prove the inclusions $\scr{A} \to \scr{A} \sqcup \scr{X}$ induce an isomorphism on the homology of the automorphism groups when $\scr{A} \not\simeq \varnothing$. The argument will be the same as in \cref{thm:embeddings-induce-homology-isomorphisms-vol}, except we use \cref{cor:connectivity-of-complexes-of-destabilisation-res} \ref{enum:cor-connectivity-ii}. This gives that the complexes $\widetilde{S}_n(\scr{A'},\scr{X})$ are $(n-1)$-connected, and therefore the complexes $\widetilde{W}_n(\scr{A'},\scr{X})$ are $(n-1)$-connected, for all $n \geq 0$. However, we still have to remove $(2q+1)$ copies of $\scr{X}$ from $\scr{A}$, because \cite[Theorem 3.1]{randalwilliamswahl2017} only tells us the map
		\[\aut(\scr{A'} \sqcup \scr{X}^{\sqcup n}) \to \aut(\scr{A'} \sqcup \scr{X}^{\sqcup n + 1})\]
		is an isomorphism on $q$th homology when $q \leq \frac{n-1}{2}$. Adding the copies of $\scr{X}$ back in proves the isomorphism on homology in the desired degree $q$. The proof for abelian local coefficients proceeds in the same way. From this point onwards, we repeat the proof of \cref{thm:embeddings-induce-homology-isomorphisms-vol} verbatim.
	\end{proof}

	\subsection{Quasi-perfection}
	Recall that a group $G$ is said to be \emph{quasi-perfect} if its commutator subgroup, denoted by $G' \coloneqq [G,G]$, is perfect; this is equivalent to $H_1(G') = 0$. In this short section, we use our homological stability results \cref{thm:embeddings-induce-homology-isomorphisms-vol} and \cref{thm:embeddings-induce-homology-isomorphisms-zae} to show that $\aut_{\category{UG}(\cA)}(\scr{P})$ is often quasi-perfect:

	\begin{theorem}
		\label{thm:quasi-perfectness} Suppose that $\cA$ is an S-assembler and $\scr{P} \not \simeq \varnothing$, or $\cA$ is a EA-assembler and $\vol(\scr{P}) > 0$. Then $\aut_{\category{UG}(\cA)}(\scr{P})$ is quasi-perfect.
	\end{theorem}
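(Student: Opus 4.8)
The plan is to deduce quasi-perfection from the homological stability results by group-completing the symmetric monoidal groupoid $\cG(\cA)$. Write $G \coloneqq \aut_{\category{UG}(\cA)}(\scr P) = \aut_{\cG(\cA)}(\scr P)$ and work under the hypothesis $\vol(\scr P) > 0$; the $S$-assembler case $\scr P \not\simeq \varnothing$ is identical. Recall that $G$ is quasi-perfect exactly when $[G,G]$, which is the kernel of the abelianisation map $G \to G^{\ab}$, is perfect, so it suffices to exhibit $[G,G]$ as the kernel of the homomorphism induced on fundamental groups by an acyclic map out of $BG$.

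First I would form the classifying space $M \coloneqq B\cG(\cA) \simeq \bigsqcup_{[\scr Q]} B\aut_{\cG(\cA)}(\scr Q)$, the coproduct over scissors congruence classes. Since $\cG(\cA)$ is symmetric monoidal (\cref{lem:loc-sym}), $M$ is an $E_\infty$-space, so $\Omega B M$ is a grouplike $E_\infty$-space; in particular it is a loop space, so every path component of $\Omega BM$ has abelian fundamental group. Let $Z$ denote the component containing the image of the canonical map $B\aut(\scr P) \to \Omega BM$.

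Next I would compute $H_*(Z;-)$. Using axiom \ref{enum:ass-vol-existence} (resp.\ \ref{enum:ass-zae-ae}), the sequence of classes $([\scr P^{\sqcup n}])_{n \geq 1}$ is cofinal in the monoid $\pi_0 M$: for any $\scr Q$ and any $n$ large enough that $\vol(\scr Q) < n\,\vol(\scr P) = \vol(\scr P^{\sqcup n})$, one gets $\scr Q \sqcup \scr R \simeq \scr P^{\sqcup n}$ for some $\scr R$. Hence the group-completion theorem, in its version with abelian local coefficients, identifies $H_*(Z;-)$ with the colimit of $H_*(B\aut(\scr P^{\sqcup n});-)$ along the stabilisation maps, and by \cref{thm:embeddings-induce-homology-isomorphisms-vol} (resp.\ \cref{thm:embeddings-induce-homology-isomorphisms-zae}) every such stabilisation map is an isomorphism on homology with abelian local coefficients. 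Therefore $B\aut(\scr P) \to Z$ is an isomorphism on homology with abelian local coefficients.

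To finish, observe that $\pi_1 Z$ is abelian, so every local coefficient system on $Z$ is an abelian one; the previous step then says $B\aut(\scr P) \to Z$ is a homology isomorphism with \emph{all} local coefficient systems, i.e.\ an acyclic map. The kernel of the homomorphism induced by an acyclic map on fundamental groups is always perfect, and here that homomorphism $G \to \pi_1 Z$ is an isomorphism on $H_1$ onto an abelian group, hence is the abelianisation $G \to G^{\ab}$ with kernel $[G,G]$; so $[G,G]$ is perfect. I expect the only real work to be the careful invocation of the group-completion theorem — checking the cofinality of $([\scr P^{\sqcup n}])_n$ in $\pi_0 M$ and using a form of the theorem valid for abelian local coefficient systems — after which the argument is formal.
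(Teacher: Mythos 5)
Your proof is correct, but it takes a genuinely different route from the paper's. The paper's proof is short: it cites \cite[Section 3.2]{randalwilliamswahl2017} to conclude directly that the stable group $\aut_{\category{UG}(\cA)}(\scr{P}^{\sqcup\infty}) = \colim_n \aut_{\category{UG}(\cA)}(\scr{P}^{\sqcup n})$ is quasi-perfect (a general feature of homogeneous categories), and then applies \cref{thm:embeddings-induce-homology-isomorphisms-vol} (or \cref{thm:embeddings-induce-homology-isomorphisms-zae}) with the specific abelian coefficient module $\Z[H_1(\aut_{\category{UG}(\cA)}(\scr{P}^{\sqcup\infty}))]$ — which by Shapiro's lemma computes $H_1$ of the commutator subgroup — to transfer the vanishing of $H_1$ of the commutator subgroup down to $\aut_{\category{UG}(\cA)}(\scr{P})$. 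You instead essentially re-derive \cref{cor:stable_homology_acyclic} (the acyclicity of $B\aut_{\cG(\cA)}(\scr{P}) \to \Omega^\infty_{[\scr{P}]}K(\cA)$, via the Randal-Williams group-completion theorem and the same stability results) and then invoke the standard fact that the kernel of $\pi_1$ of an acyclic map to a space with abelian $\pi_1$ is a perfect normal subgroup equal to the commutator subgroup. This is perfectly valid and involves no circularity, since \cref{cor:stable_homology_acyclic} does not use \cref{thm:quasi-perfectness}; but it does reverse the logical order in the paper, where quasi-perfectness is established first precisely so that it can later feed into \cref{thm:plus-construction} (where one needs to know which perfect subgroup the plus-construction is being performed with respect to). Your route makes the relationship between quasi-perfectness, acyclicity, and the plus-construction more explicit; the paper's route is shorter, avoids invoking the group-completion theorem, and keeps the verification localized to $H_1$ with a single well-chosen coefficient system.
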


	\begin{proof}
		The discussion in \cite[Section 3.2]{randalwilliamswahl2017} shows that \[\aut_{\category{UG}(\cA)}(\scr{P}^{\sqcup \infty}) \coloneqq \underset{n \to \infty}{\colim}\, \aut_{\category{UG}(\cA)}(\scr{P}^{\sqcup n})\]
		is a quasi-perfect group, hence $H_1(\aut_{\category{UG}(\cA)}(\scr{P}^{\sqcup \infty})') = 0$. By applying \cref{thm:embeddings-induce-homology-isomorphisms-vol} with the abelian coefficients $\Z[H_1(\aut_{\category{UG}(\cA)}(\scr{P}^{\sqcup \infty}))]$, we conclude that the map $H_1(\aut_{\category{UG}(\cA)}(\scr{P})') \to H_1(\aut_{\category{UG}(\cA)}(\scr{P}^{\sqcup \infty})') = 0$ is an isomorphism, which implies the claim.
	\end{proof}

	\section{Connection to assembler K-theory}
	\label{sec:connection-to-assembler-k-theory}
	
	In this section we establish how the scissors automorphism groups are related to assembler K-theory, both via group completion and the plus construction.

	\subsection{Assembler K-theory as group completion}
	We first establish a ``group-completion'' description of the algebraic K-theory $K(\cA)$ of an assembler $\cA$. This is implicit in \cite{zakharevich2014} and is presumably known to the experts.
	
	The assembler K-theory spectrum $K(\cA)$ of \cite[Definition 2.12]{zakharevich2014} is defined to be the Segal K-theory of the symmetric monoidal category $\cW^\circ(\cA)$. The underlying infinite loop space can be described as follows: the symmetric monoidal structure on $\cW^\circ(\cA)$ induces an $E_\infty$-space structure on $B\cW^\circ(\cA)$, and we group-complete this by applying $\Omega B(-)$:
	\[\Omega^\infty K(\cA) \coloneqq \Omega B(B\cW^\circ(\cA)).\]

	\begin{theorem}\label{thm:group-completion}
		For any assembler $\category{A}$, we have an equivalence
		\[\Omega^\infty K(\cA) \simeq \Omega B(B\cG(\cA)).\]
	\end{theorem}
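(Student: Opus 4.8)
The plan is to compare the two $E_\infty$-spaces $B\cW^\circ(\cA)$ and $B\cG(\cA)$ by factoring the passage from covers to scissors congruences through the localisation functor, and then to apply a group-completion argument. First I would note that $\cG(\cA) = \cW(\cA)[\cW(\cA)^{-1}]$ by definition, and that $\cW^\circ(\cA) \hookrightarrow \cW(\cA)$ is an equivalence of symmetric monoidal categories on classifying spaces by the adjunction $i^\circ \dashv p^\circ$ of \cref{empty_sets_in_W_discussion}; so up to equivalence we may work with $\cW(\cA)$ throughout and it suffices to produce an equivalence $\Omega B(B\cW(\cA)) \simeq \Omega B(B\cG(\cA))$ of group completions. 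The localisation functor $\iota \colon \cW(\cA) \to \cG(\cA)$ is symmetric monoidal by \cref{lem:loc-sym} and \cref{lem:covers-loc}, so it induces a map of $E_\infty$-spaces $B\cW(\cA) \to B\cG(\cA)$ and hence a map of group completions $\Omega B(B\cW(\cA)) \to \Omega B(B\cG(\cA))$; the goal is to show this is an equivalence.

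The key step is to identify $B\cG(\cA)$ itself with the group completion $\Omega B(B\cW(\cA))$ already — or rather, to show that applying $\Omega B(-)$ to the map $B\cW(\cA) \to B\cG(\cA)$ is an equivalence because $B\cG(\cA)$ is ``already grouplike enough'' in the relevant sense. Concretely, I would invoke the standard fact (going back to the calculus of fractions and used in this style by e.g. the treatment of K-theory of symmetric monoidal categories) that for a symmetric monoidal category $\catC$ with a cofinal subcategory $W$ of weak equivalences satisfying the Ore and equalising conditions, the localisation $\catC[W^{-1}]$ has classifying space equivalent to the homotopy colimit / telescope that computes the group completion. Here every morphism of $\cW(\cA)$ is formally inverted, so $\cG(\cA)$ is a groupoid, and the content is that $B\cG(\cA)$ receives a map from $B\cW(\cA)$ which is a ``homology fibration''-type localisation: by the group-completion theorem in the form for symmetric monoidal categories, $H_*(\Omega B(B\cW(\cA)))$ is the localisation of $H_*(B\cW(\cA))$ at the translations by objects, and one checks that $H_*(B\cG(\cA))$ is this same localised ring, with the map $B\cW(\cA) \to B\cG(\cA)$ realising it. Since $B\cG(\cA)$ is grouplike (it is a symmetric monoidal groupoid in which every object is invertible up to scissors congruence — here one uses \cref{sc_to_empty} and the fact that $\pi_0$ of a symmetric monoidal groupoid built from a localisation at all morphisms is a group), the map $B\cG(\cA) \to \Omega B(B\cG(\cA))$ is an equivalence, and composing gives $\Omega B(B\cW(\cA)) \simeq \Omega B(B\cG(\cA))$ as desired. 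Wait — $\pi_0 B\cG(\cA)$ need not be a group without an extra argument, so more carefully: one shows directly that the map $B\cW(\cA) \to B\cG(\cA)$ is a group completion by verifying the homological criterion, i.e. that $H_*(B\cG(\cA))[\pi_0^{-1}] \cong H_*(B\cG(\cA))$ already (localisation is idempotent once things are grouplike) and that the map is a homology equivalence after localising the source.

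The main obstacle I anticipate is the last point: showing that $B\cW(\cA) \to B\cG(\cA)$ is a group completion, i.e. that passing to the category of fractions already performs the homological localisation. The clean way to do this is a cofinality / Quillen's Theorem A-style argument: one exhibits $\cG(\cA)$ as a filtered homotopy colimit of translation categories, or uses that for each object $\scr{P}$ the comma construction relating $\cW(\cA)$ over $\cG(\cA)$ has contractible enough fibers because cospans in $\cW(\cA)$ complete to squares (\cref{lem:covers-loc}). Here the Ore and equalising conditions of \cref{def:loc-conditions}, already verified in \cref{lem:covers-loc}, are exactly what make the category of fractions well-behaved: morphisms in $\cG(\cA)$ are honest spans, and the calculus of fractions guarantees the nerve of $\cW(\cA)$ maps to that of $\cG(\cA)$ by a map that is a homology fibration onto its image after stabilising. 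I would spell this out by choosing a sequence of objects $\scr{P}_1 \sqcup \scr{P}_2 \sqcup \cdots$ absorbing every object up to scissors congruence (possible since $\pi_0 \cW(\cA)$ under $\sqcup$ is generated by singletons and, by axiom \ref{enum:assembler-refinement} together with \cref{sc_to_empty}, has cofinal countable subset) and identifying $\Omega B(B\cW(\cA))$ with the telescope $\mathrm{hocolim}_n\, B\aut_{\cG(\cA)}(\scr{P}_1 \sqcup \cdots \sqcup \scr{P}_n)$, which visibly maps by a homology equivalence — in fact by the homological stability results of the previous section, once an EA- or S-assembler hypothesis is in play — to the corresponding component of $B\cG(\cA)$; the non-stable components are handled by translating. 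This is the step that requires the most care, but it is a well-trodden path and the needed inputs (localisation calculus, group-completion theorem, the structure of $\cG(\cA)$) are all in hand.
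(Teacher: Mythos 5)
Your overall reduction to comparing $B\cW(\cA)$ with $B\cG(\cA)$ is correct, as is the observation that the localisation functor is symmetric monoidal, but the heart of your argument contains a genuine gap. You try to show that $B\cW(\cA) \to B\cG(\cA)$ is a \emph{group completion}, i.e.\ that $B\cG(\cA)$ is already grouplike. This is false: $\pi_0 \cG(\cA)$ is the monoid of scissors congruence classes of tuples under $\sqcup$, which is not a group (a nonempty polytope has no inverse). The parenthetical claim that ``$\pi_0$ of a symmetric monoidal groupoid built from a localisation at all morphisms is a group'' confuses two different things: inverting all morphisms makes every \emph{arrow} invertible, turning $\cG(\cA)$ into a groupoid, but it does nothing to make \emph{objects} invertible under the tensor product. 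You notice the issue yourself mid-proof, but the patch (``localisation is idempotent once things are grouplike'' and a homological-stability argument) does not repair it: homological stability requires the EA- or S-assembler hypotheses, whereas the theorem is stated for an arbitrary assembler, so that route is out of scope.

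The correct and much simpler observation is that $B\cW(\cA) \to B\cG(\cA)$ is already an \emph{equivalence of spaces} before group completing. This is exactly where the calculus-of-fractions conditions (the Ore and equalising conditions you correctly cite from \cref{def:loc-conditions} and \cref{lem:covers-loc}) pay off: by a theorem of Dwyer--Kan, a localisation satisfying those conditions induces an equivalence on classifying spaces. With that in hand, $B\cW^\circ(\cA) \simeq B\cW(\cA) \simeq B\cG(\cA)$ as $E_\infty$-spaces, and applying $\Omega B(-)$ to the outer two gives the theorem immediately. No group-completion theorem, no stability, and no absorbing-sequence argument is needed at this stage; those ingredients enter later, in \cref{cor:stable-homology} and \cref{cor:stable_homology_acyclic}, where they genuinely require the extra hypotheses.
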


	\begin{proof}
		It suffices to prove that the maps
		\[B\cW^\circ(\cA) \longrightarrow B\cW(\cA) \longrightarrow  B\cG(\cA)\]
		are equivalences of $E_\infty$-spaces. The left map is an equivalence of $E_\infty$-spaces because it is induced by the inclusion $i^\circ \colon \cW^\circ(\cA) \to \cW(\cA)$, which is symmetric monoidal and admits a right adjoint. The right map is a map of $E_\infty$-spaces because it is induced by the localisation functor $\cW(\cA) \to \cW(\cA)[\cW(\cA)^{-1}] = \cG(\cA)$, which is symmetric monoidal by \cref{lem:loc-sym} and \cref{lem:covers-loc}. Next, recall that inverting all of the morphisms in a category does not in general induce an equivalence on classifying spaces. However, any localisation satisfying the conditions of \cref{def:loc-conditions} does induce an equivalence on classifying spaces, by combining \cite[4.3]{DwyerKanI} with \cite[2.2, 7.2]{DwyerKanII}. The conditions are satisfied here by \cref{lem:covers-loc}, so the right map above is an equivalence as well.
	\end{proof}

	\begin{remark}Picking a skeleton, we may identify $B\cG(\cA)$ as $\bigsqcup_{[P]} B\aut_{\cG(\cA)}(P)$ as in the introduction, where the coproduct is indexed by isomorphism classes of objects. This description is useful, even though it obscures the $E_\infty$-space structure.\end{remark}

	\subsection{Assembler K-theory as stable homology} Now that we have described assembler K-theory as a group completion, we can use scissors automorphisms to give a presentation of $K_1$, and conversely use K-theory to describe the stable homology of scissors automorphism groups.
	
	\begin{corollary}\label{cor:stable-homology}For any assembler $\cA$, we have an isomorphism
		\[H_*(B\cG(\cA))[\pi_0^{-1}] \overset{\cong}\longrightarrow H_*(\Omega^\infty K(\cA))\]
	and the left side can be computed as a filtered colimit of homology groups $H_*(B\aut_{\cG(\cA)}(\scr{P}))$.
	\end{corollary}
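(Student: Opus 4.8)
The plan is to deduce this from \cref{thm:group-completion} together with the group-completion theorem. By \cref{thm:group-completion} we have $\Omega^\infty K(\cA) \simeq \Omega B(B\cG(\cA))$, and since $\cG(\cA)$ is a symmetric monoidal groupoid its classifying space $M \coloneqq B\cG(\cA)$ is an $E_\infty$-space, hence in particular a homotopy-commutative $H$-space. The group-completion theorem of McDuff and Segal then identifies $H_*(\Omega B M)$ with the localisation $H_*(M)[\pi_0(M)^{-1}]$ of the Pontryagin ring at the multiplicative subset $\pi_0(M) = \pi_0\cG(\cA)$; homotopy-commutativity guarantees that $\pi_0\cG(\cA)$ acts centrally, so this localisation is well-behaved and receives a natural map from $H_*(M)$. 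Combining with the equivalence of \cref{thm:group-completion} yields the stated isomorphism $H_*(B\cG(\cA))[\pi_0^{-1}] \xrightarrow{\cong} H_*(\Omega^\infty K(\cA))$.

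For the second clause I would make the localisation explicit. Picking a skeleton of $\cG(\cA)$ yields $B\cG(\cA) \simeq \bigsqcup_{[\scr{P}]} B\aut_{\cG(\cA)}(\scr{P})$, hence $H_*(B\cG(\cA)) \cong \bigoplus_{[\scr{P}]} H_*(B\aut_{\cG(\cA)}(\scr{P}))$ as a graded module over $\Z[\pi_0\cG(\cA)]$, on which the class $[\scr{Q}]$ acts by Pontryagin multiplication --- on the summand indexed by $[\scr{P}]$ this is the stabilisation map $H_*(B\aut(\scr{P})) \to H_*(B\aut(\scr{P} \sqcup \scr{Q}))$ induced by the canonical scissors embedding $\iota_\scr{P}\colon \scr{P} \hookrightarrow \scr{P}\sqcup\scr{Q}$. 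I would then invoke the standard presentation of localisation over a commutative monoid ring as a colimit over the category $C$ with object set $\pi_0\cG(\cA)$ and $\hom_C([\scr{P}],[\scr{R}]) = \{[\scr{Q}] : [\scr{P}\sqcup\scr{Q}] = [\scr{R}]\}$, with the functor sending $[\scr{P}]$ to $H_*(B\aut_{\cG(\cA)}(\scr{P}))$ and a morphism $[\scr{Q}]$ to the associated stabilisation map. The category $C$ is filtered: any $[\scr{P}],[\scr{P}']$ both map to $[\scr{P}\sqcup\scr{P}']$, and two parallel morphisms $[\scr{Q}_1],[\scr{Q}_2]\colon [\scr{P}] \to [\scr{R}]$ are coequalised by $[\scr{P}]\colon [\scr{R}] \to [\scr{R}\sqcup\scr{P}]$, since commutativity of $\sqcup$ forces $[\scr{Q}_i \sqcup \scr{P}] = [\scr{P}\sqcup\scr{Q}_i] = [\scr{R}]$. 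This exhibits $H_*(B\cG(\cA))[\pi_0^{-1}]$ as a filtered colimit of the groups $H_*(B\aut_{\cG(\cA)}(\scr{P}))$, as claimed.

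The argument is formal, and I do not anticipate a serious obstacle; the points demanding attention are bookkeeping. First, one must know that the equivalence of \cref{thm:group-completion} is compatible with the $E_\infty$-structures, so that the group-completion theorem may be applied on the $B\cG(\cA)$ side --- but this is immediate, as the constituent maps $B\cW^\circ(\cA) \to B\cW(\cA) \to B\cG(\cA)$ in that proof are maps of $E_\infty$-spaces. Second, one must check that Pontryagin multiplication by $[\scr{Q}]$ really is the stabilisation homomorphism, which follows by unwinding the symmetric monoidal structure on $\cG(\cA)$ furnished by \cref{lem:loc-sym} and \cref{lem:covers-loc}. In the body one would also record that, under \ref{enum:ass-vol-archimedean} or \ref{enum:ass-zae-ae}, a single cofinal sequence $\scr{P} \hookrightarrow \scr{P}\sqcup\scr{P} \hookrightarrow \cdots$ suffices to compute this colimit, which is what makes contact with the plus-construction description of $\Omega^\infty K(\cA)$.
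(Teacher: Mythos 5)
Your proposal is correct and follows essentially the same route as the paper: invoke \cref{thm:group-completion} to identify $\Omega^\infty K(\cA) \simeq \Omega B(B\cG(\cA))$, strictify the homotopy-commutative $E_1$-space $B\cG(\cA)$ to a topological monoid so that the McDuff--Segal group-completion theorem applies, and then unpack the resulting localisation $H_*(B\cG(\cA))[\pi_0^{-1}]$ as a filtered colimit of the $H_*(B\aut_{\cG(\cA)}(\scr{P}))$ indexed over the translation category of $\pi_0\cG(\cA)$. Your treatment is somewhat more explicit than the paper's --- you spell out the indexing category $C$ and verify that it is filtered (the cocone condition via $[\scr{P}\sqcup\scr{P}']$, and coequalisation of parallel arrows using commutativity of $\sqcup$), whereas the paper records this as a ``standard consequence'' and merely names the stabilisation maps $[\scr{P}] \to [\scr{P}\sqcup\scr{Q}]$ --- but there is no substantive difference in the argument.
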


	\begin{proof}The map of $E_\infty$-spaces $B\cG(\cA) \to \Omega B(B\cG(\cA))$ 	induces by the group completion theorem \cite[Proposition 1]{mcduffsegal1976} an isomorphism
		\[ H_*(B\cG(\cA))[\pi_0^{-1}] \overset{\cong}\longrightarrow H_*(\Omega B(B\cG(\cA))),\]
	and we can use \cref{thm:group-completion} to rewrite the right side. (To apply this result, we remember only that $B\cG(\cA)$ is a homotopy-commutative $E_1$-space, and we strictify it to a homotopy-commutative topological monoid $M$.)
	
	A standard consequence of this is that if we restrict to a single connected component, the homology of $\Omega_0 B(B\cG(\cA))$ is the filtered colimit of the homology of the components,
		\[ H_*(\Omega_0 B(B\cG(\cA))) = \underset{[\scr{P}] \in \pi_0(B\cG(\cA))}\colim \, H_*(B\aut_{\cG(\cA)}(\scr{P})). \]
		The colimit system has a map $[\scr{P}] \to [\scr{P} \sqcup \scr{Q}]$ for each $[\scr{P}], [\scr{Q}] \in \pi_0(B\cG(\cA))$, which applies $(-) \sqcup \id_\scr{Q}$ and then any isomorphism in $\cG(\cA)$ between $\scr{P} \sqcup \scr{Q}$ and the chosen element of $\cG(\cA)$ representing the isomorphism class $[\scr{P} \sqcup \scr{Q}]$. (In forming the diagram, we have implicitly chosen one representative $\scr{P}$ from each isomorphism class $[\scr{P}]$.)
	\end{proof}
	
	\begin{proposition}\label{complete_presentation}
		For any assembler $\cA$, the partial presentation of $K_1(\cA)$ in \cite[Theorem B]{zak_k1} is in fact a presentation, i.e.~no additional relations are needed.
	\end{proposition}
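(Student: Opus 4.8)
The plan is to deduce this from the group-completion description of $K(\cA)$ established above, together with the Hurewicz theorem. Recall that by \cref{thm:group-completion} and the group completion theorem (as in the proof of \cref{cor:stable-homology}), we have $\Omega^\infty_0 K(\cA) \simeq B\aut_{\cG(\cA)}(\scr{P}_\infty)^+$ for a suitable ``infinite'' object, and more to the point $\pi_1(K(\cA)) = K_1(\cA) \cong H_1(\Omega^\infty_0 K(\cA))$ since $\Omega^\infty_0 K(\cA)$ is an infinite loop space and hence has abelian (in fact perfect commutator) fundamental group—so the Hurewicz map $\pi_1 \to H_1$ is an isomorphism after abelianisation, and $\pi_1$ is already abelian. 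Combining this with the filtered-colimit statement of \cref{cor:stable-homology}, we get
\[ K_1(\cA) \cong \colim_{[\scr{P}]} H_1(B\aut_{\cG(\cA)}(\scr{P})) \cong \colim_{[\scr{P}]} \aut_{\cG(\cA)}(\scr{P})^{\ab}. \]
First I would make this identification completely explicit: a generator is represented by a scissors automorphism of some object $\scr{P}$, and two scissors automorphisms of (possibly different) objects represent the same element of $K_1(\cA)$ precisely when they become conjugate—or differ by a product of commutators—after applying the stabilisation maps $(-)\sqcup\id_{\scr{Q}}$ of the colimit system.

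Next I would recall the partial presentation of $K_1(\cA)$ from \cite[Theorem B]{zak_k1}: it has one generator for each automorphism in $\cG(\cA)$ (equivalently, each scissors automorphism of each object), subject to a list of relations coming from composition, from the symmetric monoidal structure, and from the identification of automorphisms of an object with their images under stabilisation. The content of \cite[Theorem B]{zak_k1} is that these relations are \emph{valid} in $K_1(\cA)$ and that the generators \emph{surject} onto $K_1(\cA)$; what is left open is whether they \emph{generate all} relations. So the task is to show that the group presented by Zakharevich's generators and relations, call it $\widehat{K}_1$, maps isomorphically to $K_1(\cA)$. Surjectivity is given. For injectivity, I would argue that $\widehat{K}_1$ already receives a map from each $\aut_{\cG(\cA)}(\scr{P})^{\ab}$ (since the generators indexed by automorphisms of $\scr{P}$ and the composition relations make $\aut_{\cG(\cA)}(\scr{P}) \to \widehat{K}_1$ a homomorphism factoring through the abelianisation), that these maps are compatible with the stabilisation maps of the colimit system (using the relations that identify an automorphism with its stabilisation), and hence they assemble into a map $\colim_{[\scr{P}]} \aut_{\cG(\cA)}(\scr{P})^{\ab} \to \widehat{K}_1$. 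Composing with $\widehat{K}_1 \to K_1(\cA)$ recovers the isomorphism $\colim_{[\scr{P}]} \aut_{\cG(\cA)}(\scr{P})^{\ab} \cong K_1(\cA)$ from the first paragraph, so $\widehat{K}_1 \to K_1(\cA)$ is split injective; together with surjectivity this gives the result.

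The main obstacle I anticipate is purely bookkeeping: matching Zakharevich's explicit generators and relations in \cite[Theorem B]{zak_k1} with the combinatorial description of $\colim_{[\scr{P}]}\aut_{\cG(\cA)}(\scr{P})^{\ab}$ coming from \cref{cor:stable-homology}, and in particular checking that \emph{every} relation in Zakharevich's list is accounted for by (a) the composition law in a fixed $\aut_{\cG(\cA)}(\scr{P})$, (b) passing to the abelianisation, or (c) the identifications along the stabilisation maps $(-)\sqcup\id_{\scr{Q}}$. One should be careful that the direction of Zakharevich's scissors-congruence generators (which point from covers to refinements, potentially in mismatched directions) is correctly reconciled with morphisms in $\cG(\cA)$; here the category-of-fractions model of \cref{scissors_congruence_groupoid} and \cref{lem:covers-loc} is exactly what makes every such generator a genuine automorphism of an object, so no relation is lost in translation. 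Once the dictionary is set up, the isomorphism is formal. I would also remark, as the introduction already does for the Euclidean case via \cref{cor:k1_presentation}, that this gives a uniform conceptual explanation for why no further relations can appear: the abelianised automorphism groups \emph{are} the stable homology in degree one, by homological stability.
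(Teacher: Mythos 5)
Your overall strategy matches the paper's: describe $K_1(\cA)$ as the filtered colimit $\colim_{[\scr{P}]} \aut_{\cG(\cA)}(\scr{P})^{\ab}$ via \cref{cor:stable-homology}, write down the relations this colimit imposes (well-definedness under refinement, the composition law in each $\aut_{\cG(\cA)}(\scr{P})$, compatibility with scissors congruences, and compatibility with stabilisation), and show that Zakharevich's relations (A), (B), (C) imply all of them. Your split-injectivity framing of the conclusion is equivalent to the paper's direct argument.

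The gap is in what you dismiss as bookkeeping. The genuinely hard point is your item (a): that the map $\aut_{\cG(\cA)}(\scr{P}) \to \widehat{K}_1$ sending a scissors automorphism to the corresponding generator is a group \emph{homomorphism}. Zakharevich's relation (B) only adds classes whose zigzags share a common middle term — it says $[\scr{P} \leftarrow \scr{R} \rightarrow \scr{P}] + [\scr{Q} \leftarrow \scr{P} \rightarrow \scr{Q}] = [\scr{Q} \leftarrow \scr{R} \rightarrow \scr{Q}]$ along an actual composite of spans. By contrast, composing two arbitrary automorphisms $[\scr{P} \leftarrow \scr{R} \rightarrow \scr{P}]$ and $[\scr{P} \leftarrow \scr{R}' \rightarrow \scr{P}]$ in $\cG(\cA)$ involves passing to a common refinement $\scr{R}''$ of $\scr{R}$ and $\scr{R}'$, and the verification that this composite equals the sum in $\widehat{K}_1$ requires introducing a further refinement $\scr{R}'''$ and a six-step chain of applications of (A), (B), and the commutativity of an auxiliary diagram. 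This is precisely what the bulk of the paper's proof does, following \cite[\S 4]{zak_k1}, and the paper explicitly flags it as ``much less straightforward.'' It is not formal once the dictionary is set up, and asserting that it is leaves the proposal incomplete at exactly the point where the content lies; you need to either carry out the computation or cite the argument of \cite[\S 4]{zak_k1} explicitly.
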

		That is, $K_1(\cA)$ is generated as an abelian group by zigzags in $\cW^\circ(\cA)$
		\[[\scr{P} \overset{f_1}\longleftarrow \scr{R} \overset{f_2}\longrightarrow \scr{P}],\]
		subject to the relations:
		\begin{enumerate}[label={(\Alph*)}]
			\item \label{enum:zpres-a} $[\scr{P} \overset{f}\longleftarrow \scr{R} \overset{f}\longrightarrow \scr{P}] = 0$.
			\item \label{enum:zpres-b} $[\scr{P} \overset{f_1}\longleftarrow \scr{R} \overset{f_2}\longrightarrow \scr{P}]+[\scr{Q} \overset{g_1}\longleftarrow \scr{P} \overset{g_2}\longrightarrow \scr{Q}] = [\scr{Q} \overset{g_1f_1}\longleftarrow \scr{R} \overset{g_2f_2}\longrightarrow \scr{Q}]$.
			\item \label{enum:zpres-c} $[\scr{P} \overset{f_1}\longleftarrow \scr{R} \overset{f_2}\longrightarrow \scr{P}]+[\scr{Q} \overset{g_1}\longleftarrow \scr{S} \overset{g_2}\longrightarrow \scr{Q}] = [\scr{P} \sqcup \scr{Q} \xleftarrow{f_1\sqcup g_1} \scr{R} \sqcup \scr{S} \xrightarrow{f_2 \sqcup g_2} \scr{P} \sqcup \scr{Q}]$.
		\end{enumerate}
	
	\begin{proof} We use the description of $K_1(\cA) \cong H_1(\Omega^\infty_0 K(\cA))$ as a filtered colimit from \cref{cor:stable-homology}. We get a generator $g \in K_1(\cA)$ for each object $\scr{P}$ and element of $H_1(\aut_{\cG(\cA)}(\scr{P})) = \smash{\aut_{\cG(\cA)}(\scr{P})^{\ab}}$. Since elements of $\aut_{\cG(\cA)}(\scr{P})$ are represented by zigzags $\scr{P} \leftarrow \scr{R} \rightarrow \scr{P}$, we could also say we have a generator for each such zigzag. Then the following relations are sufficient to give the colimit:
		\begin{enumerate}[label={(\arabic*)}]
			\item \label{enum:pres-1} Any refinement of the zig-zag must give the same element of $K_1(\cA)$. In other words, the resulting map $\aut_{\cG(\cA)}(\scr{P}) \to K_1(\cA)$ is well-defined as a map of sets.
			\item \label{enum:pres-2} Composition of scissors automorphisms must go to addition in $K_1(\cA)$. In other words, the resulting map $\aut_{\cG(\cA)}(\scr{P}) \to K_1(\cA)$ is a homomorphism.
			\item \label{enum:pres-3} Any scissors congruence $\scr{P} \congto \scr{Q}$ must induce a commuting diagram
			\[ \begin{tikzcd}[arrows=rightarrow]
				\aut_{\cG(\cA)}(\scr{P}) \ar[rr] \ar[rd] && \aut_{\cG(\cA)}(\scr{Q}) \ar[dl] \\
				& K_1(\cA).
			\end{tikzcd} \]
			\item \label{enum:pres-4} For any $\scr{P}$ and $\scr{Q}$, extending by the identity must induce a commuting diagram
			\[ \begin{tikzcd}[arrows=rightarrow]
				\aut_{\cG(\cA)}(\scr{P}) \ar[rr] \ar[rd] && \aut_{\cG(\cA)}(\scr{P} \sqcup Q) \ar[dl] \\
				& K_1(\cA).
			\end{tikzcd} \]
		\end{enumerate}
	Now we compare to Zakharevich's presentation, which has the same generators, one for each zigzag $\scr{P} \leftarrow \scr{R} \rightarrow \scr{P}$. In \cite{zak_k1} this generator corresponds to the class in $K_1(\cA)$ that traverses the loop in the classifying space $B\cW^\circ(\cA)$ formed by the two morphisms $\scr{R} \rightrightarrows \scr{P}$. This agrees with the $K_1$ class given by this generator in our presentation.
	
	Let us assume the relations \ref{enum:zpres-a}, \ref{enum:zpres-b}, and \ref{enum:zpres-c} from Zakharevich's presentation. Then it is a pleasant exercise to deduce the relations \ref{enum:pres-1}, \ref{enum:pres-3}, and \ref{enum:pres-4}. It is much less staightforward to deduce \ref{enum:pres-2}, so we spell out the proof for this, following the argument from \cite[\S 4]{zak_k1}.
	
	Suppose we are given two scissors automorphisms $[\scr{P} \overset{f_1}\longleftarrow \scr{R} \overset{f_2}\longrightarrow \scr{P}]$ and $[\scr{P} \overset{f_1'}\longleftarrow \scr{R'} \overset{f_2'}\longrightarrow \scr{P}]$. Pick a common refinement $\scr{R}''$ of $\scr{R}$ and $\scr{R}'$ as well as a common refinement $\scr{R}'''$ of $\scr{R}'$ and $\scr{R}''$. Then the composition of the two scissors automorhisms is given by $[\scr{P} \overset{f_1 \circ g_1}\longleftarrow \scr{R}'' \overset{f_2' \circ g_2}\longrightarrow \scr{P}]$ and sits inside the following commutative diagram:
	\[
		\begin{tikzcd}
			\scr{R}''' \arrow{d}[description]{h_1} \arrow{r}[description]{h_2} &[5pt] \scr{R}'' \arrow{d}[description]{g_1} \arrow{rd}[description]{g_2} &[5pt] &[5pt] \\[5pt]
			\scr{R}' \arrow{d}[description]{f_2'} \arrow{dr}[description]{f_1'} & \scr{R} \arrow{d}[description]{f_1} \arrow{dr}[description]{f_2} & \scr{R}' \arrow{dr}[description]{f_2'} \arrow{d}[description]{f_1'} & \\[5pt]
			\scr{P} & \scr{P} & \scr{P} & \scr{P}
		\end{tikzcd}
	\]
	Then as in \cite[\S 4]{zak_k1} we deduce \ref{enum:pres-2} from the relations \ref{enum:zpres-a} and \ref{enum:zpres-b}:
	\begin{align*}
		&[\scr{P} \xleftarrow{f_1 \circ g_1} \scr{R}'' \xrightarrow{f_2' \circ g_2} \scr{P}]\\
		&= [\scr{P} \xleftarrow{f_1 \circ g_1 \circ h_2} \scr{R}''' \xrightarrow{f_2' \circ g_2 \circ h_2} \scr{P}] \tag{by \ref{enum:zpres-a} and \ref{enum:zpres-b}}\\
		&= [\scr{P} \xleftarrow{f_1' \circ h_1} \scr{R}''' \xrightarrow{f_2' \circ g_2 \circ h_2} \scr{P}] \tag{by commutativity}\displaybreak\\
		&= [\scr{R}' \overset{h_1} \longleftarrow \scr{R}''' \xrightarrow{g_2 \circ h_2} \scr{R}'] + [\scr{P} \overset{f_1'}\longleftarrow \scr{R'} \overset{f_2'}\longrightarrow \scr{P}] \tag{by \ref{enum:zpres-b}}\\
		&= [\scr{P} \xleftarrow{f_1' \circ h_1} \scr{R}''' \xrightarrow{f_1' \circ g_2 \circ h_2} \scr{P}] + [\scr{P} \overset{f_1'}\longleftarrow \scr{R'} \overset{f_2'}\longrightarrow \scr{P}] \tag{by \ref{enum:zpres-a} and \ref{enum:zpres-b}}\\
		&= [\scr{P} \xleftarrow{f_1 \circ g_1 \circ h_2} \scr{R}''' \xrightarrow{f_2 \circ g_1 \circ h_2} \scr{P}] + [\scr{P} \overset{f_1'}\longleftarrow \scr{R'} \overset{f_2'}\longrightarrow \scr{P}] \tag{by commutativity}\\
		&= [\scr{P} \overset{f_1}\longleftarrow \scr{R} \overset{f_2}\longrightarrow \scr{P}] + [\scr{P} \overset{f_1'}\longleftarrow \scr{R'} \overset{f_2'}\longrightarrow \scr{P}] \tag{by \ref{enum:zpres-a} and \ref{enum:zpres-b}}
	\end{align*}

	Since we know that the relations \ref{enum:pres-1}-\ref{enum:pres-4} are sufficient, we conclude that the relations \ref{enum:zpres-a}, \ref{enum:zpres-b}, and \ref{enum:zpres-c} from Zakharevich's presentation are also sufficient.
	\end{proof}

	In the special case of an EA- or S-assembler, we get the following stronger result, involving only the scissors automorphism group of a single object. Recall that a map $f \colon X \to Y$ is \emph{acyclic} if it induces an isomorphism $H_*(X;f^*\cL) \to H_*(Y;\cL)$ for any local coefficient system $\cL$ on $Y$.
	
	\begin{corollary}\label{cor:stable_homology_acyclic}
		Suppose that $\cA$ is an S-assembler and $P \not \simeq \varnothing$, or $\cA$ is a EA-assembler and $\vol(P) > 0$. Then the map
		\[ B\aut_{\cG(\cA)}(P) \longrightarrow \Omega^\infty_{[P]} K(\cA) \]
		is acyclic, where the subscript $[P]$ denotes the path component of $P$.
	\end{corollary}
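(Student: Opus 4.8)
The plan is to prove that $f\colon B\aut_{\cG(\cA)}(P)\to\Omega^\infty_{[P]}K(\cA)$ induces an isomorphism on homology with \emph{every} local coefficient system on the target; this is the definition of acyclicity. The observation that makes this tractable is that $\Omega^\infty_{[P]}K(\cA)$ is a path component of an infinite loop space, so its fundamental group $\pi_1 = K_1(\cA)$ is abelian. Consequently every local coefficient system on $\Omega^\infty_{[P]}K(\cA)$ is an \emph{abelian} local coefficient system in the sense of the introduction, and its pullback along any map into $\Omega^\infty_{[P]}K(\cA)$ is again an abelian local coefficient system (the action factors through a homomorphism to an abelian group). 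So it suffices to show that $f$ is a homology isomorphism with abelian local coefficients, which is precisely the sort of statement produced by the stability results of \cref{sec:scissors-congruecne-from-a-stability-perspective}.

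Next I would set up the target and run a colimit argument. By \cref{thm:group-completion} we identify $\Omega^\infty K(\cA)\simeq \Omega B(B\cG(\cA))$, so $\Omega^\infty_{[P]}K(\cA)$ is the path component indexed by $[P]$ of the group completion of the $E_\infty$-space $B\cG(\cA)\simeq\bigsqcup_{[\scr{Q}]}B\aut_{\cG(\cA)}(\scr{Q})$, and $f$ is the restriction of the group-completion map to the summand $B\aut_{\cG(\cA)}(P)$. As in the proof of \cref{cor:stable-homology}, the group completion theorem \cite{mcduffsegal1976} — in the form allowing coefficients pulled back along the group-completion map — identifies $H_*(\Omega^\infty_{[P]}K(\cA);\mathcal{L})$ with the filtered colimit $\colim_{[\scr{Q}]}H_*(B\aut_{\cG(\cA)}(\scr{Q});f_{\scr{Q}}^*\mathcal{L})$ over the diagram whose transition maps $[\scr{Q}]\to[\scr{Q}\sqcup\scr{R}]$ are extension-by-identity followed by a chosen scissors congruence. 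Now the full subdiagram on objects $\scr{Q}\not\simeq\varnothing$ (respectively $\vol(\scr{Q})>0$) is filtered and cofinal, since $[\scr{Q}]\to[\scr{Q}\sqcup P]$ lands in it; on this subdiagram every transition map is induced by a scissors embedding with nonempty domain (respectively domain of positive volume), hence by \cref{thm:embeddings-induce-homology-isomorphisms-zae} (in the S-assembler case) or \cref{thm:embeddings-induce-homology-isomorphisms-vol} (in the EA-assembler case) it is an isomorphism on homology with abelian local coefficients — and the $f_{\scr{Q}}^*\mathcal{L}$ are abelian, so these theorems apply. In a filtered colimit in which every transition map is an isomorphism, the canonical map from any term is an isomorphism; since $[P]$ lies in the subdiagram (here we use the hypothesis $P\not\simeq\varnothing$, resp.\ $\vol(P)>0$) and this canonical map is exactly $f_*$, we conclude $f_*$ is an isomorphism for each $\mathcal{L}$, i.e.\ $f$ is acyclic.

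The main obstacle I expect is technical rather than conceptual: one must confirm that the group completion theorem is invoked in a form valid for the non-constant (but abelian) local coefficient systems arising here, and that the identification of $H_*$ of the component of the group completion with the filtered colimit of \cref{cor:stable-homology} is genuinely $f$-compatible. An alternative route that avoids the local-coefficient version of group completion is to factor $f$ as
\[ B\aut_{\cG(\cA)}(P)\longrightarrow B\aut_{\cG(\cA)}(P^{\sqcup\infty})\longrightarrow B\aut_{\cG(\cA)}(P^{\sqcup\infty})^+\simeq\Omega^\infty_{[P]}K(\cA), \]
where the first map is the stabilisation map, a homology isomorphism with abelian local coefficients by \cref{thm:embeddings-induce-homology-isomorphisms-vol}/\cref{thm:embeddings-induce-homology-isomorphisms-zae}; the plus construction exists because $\aut_{\cG(\cA)}(P^{\sqcup\infty})$ is quasi-perfect (cf.\ \cref{thm:quasi-perfectness} and \cite[Section 3.2]{randalwilliamswahl2017}); the plus map is acyclic by construction; and the final equivalence is the classical identification of the basepoint component of the group completion of a homotopy-commutative monoid with the plus construction of its stable part. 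Since local coefficient systems on $\Omega^\infty_{[P]}K(\cA)$ pull back to abelian ones at every stage of this factorisation, combining the three facts again yields that $f$ is acyclic.
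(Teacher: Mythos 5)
Your first approach is essentially the paper's proof. The paper reduces to abelian local coefficients exactly as you do (the target is a path component of an infinite loop space, hence has abelian $\pi_1$), and then combines a strong form of the group-completion theorem with \cref{thm:embeddings-induce-homology-isomorphisms-vol}/\cref{thm:embeddings-induce-homology-isomorphisms-zae} to collapse the resulting colimit system of homology groups to a single term. The precise ``form allowing coefficients pulled back along the group-completion map'' that you need but do not name is exactly Randal-Williams's improvement of the group-completion theorem, \cite[Theorem~1.1]{RandalWilliamsGC}: under the hypothesis that every $\scr{Q}$ admits a scissors embedding into some $P^{\sqcup n}$ (which holds here), the canonical map $M_\infty \to \Omega BM$ is \emph{acyclic}, not merely a homology isomorphism with constant coefficients. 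Citing McDuff--Segal alone, as you do, is not enough: their theorem gives a $\Z[\pi_0^{-1}]$-module-coefficient statement, and the passage to local coefficients is precisely the content that needed to be, and was, supplied by Randal-Williams. So the ``main obstacle'' you flag is real, and the fix is to replace the McDuff--Segal citation with \cite[Theorem~1.1]{RandalWilliamsGC}.

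Your proposed alternative route does not actually sidestep this. The ``classical identification'' $B\aut_{\cG(\cA)}(P^{\sqcup\infty})^+ \simeq \Omega^\infty_{[P]}K(\cA)$ is not a consequence of the constant-coefficient group-completion theorem alone: McDuff--Segal only gives a homology isomorphism with $\Z$-coefficients, and upgrading this to a weak equivalence requires either knowing $B\aut(P^{\sqcup\infty})^+$ is simple (an H-space argument that has to be supplied) or, more cleanly, knowing the comparison map is acyclic --- which is again Randal-Williams's theorem. Indeed, in the paper the plus-construction description (\cref{thm:plus-construction}) is \emph{deduced from} the corollary you are trying to prove, so invoking it here would be circular. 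Both of your routes therefore reduce to the same missing citation, and once \cite[Theorem~1.1]{RandalWilliamsGC} is slotted in, your first argument matches the paper's.
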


	\begin{proof}
		We use the improvement of the group-completion theorem from \cite[Theorem 1.1]{RandalWilliamsGC}. For any object $Q$ of $\cA$, there exists an $n \geq 0$ such that there is a scissors embedding $Q \hookrightarrow P^{\sqcup n}$, so the hypothesis in Randal-Williams's theorem is satisfied. Thinking of $B\cG(\cA)$ as a homotopy-commutative $E_1$-space, we strictify it to a homotopy-commutative topological monoid $M$, and we form the mapping telescope 
		\[M_\infty \coloneqq \hocolim (M \overset{P \sqcup -}\longrightarrow M  \overset{P \sqcup -}\longrightarrow \cdots).\]
		Then \cite[Theorem 1.1]{RandalWilliamsGC} says that there is an acyclic map
		\[ p \colon M_\infty \longrightarrow \Omega BM \]
		extending the canonical map $M \to \Omega BM$. 
	
		As in the proof of \cref{complete_presentation}, we restrict to one component of $M_\infty$ and $\Omega BM$, this time the component corresponding to the element $[P]$. Unlike that proof, we consider not just ordinary homology but also homology with local coefficients coming from $\Omega BM$. Note these are all abelian since the components of $\Omega BM$ have abelian fundamental group.
		
		For any (necessarily abelian) local coefficient system $\cL$ on $\Omega B(B\cG(\cA))$, the map $p$ induces an isomorphism
		\[ \underset{n\to\infty}\colim \, H_*(B\aut_{\cG(\cA)}(P^{\sqcup n});p^* \cL) \overset{\cong}\longrightarrow H_*(\Omega_{[P]} B(B\cG(\cA));\cL). \]
		However, by the assumption on $P$ and either \cref{thm:embeddings-induce-homology-isomorphisms-vol} or \cref{thm:embeddings-induce-homology-isomorphisms-zae}, the maps in the colimit system on the left are all isomorphisms. We conclude that the canonical map gives an isomorphism
		\[ H_*(B\aut_{\cG(\cA)}(P);p^* \cL) \cong H_*(\Omega_{[P]} B(B\cG(\cA));\cL). \]
		Finally, we rewrite $\Omega_{[P]} B(B\cG(\cA))$ as $\Omega^\infty_{[P]} K(\cA)$ using \cref{thm:group-completion}.
	\end{proof}

	\subsection{Assembler K-theory as plus construction} Finally, we deduce from this a ``plus-construction'' description of the algebraic K-theory of any EA- or S-assembler $\cA$. We refer the reader to \cite[IV.1]{weibel2013} for details about Quillen's plus-construction.
	
	\begin{theorem}\label{thm:plus-construction}
		Suppose that $\cA$ is an S-assembler and $P \not \simeq \varnothing$, or $\cA$ is a EA-assembler and $\vol(P) > 0$. Then there is an equivalence
		\[ K_0(\cA) \times B\aut_{\cG(\cA)}(P)^+ \simeq \Omega^\infty K(\cA), \]
		where $(-)^+$ denotes the plus-construction with respect to the maximal perfect subgroup.
	\end{theorem}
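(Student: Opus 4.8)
The plan is to extract this from the acyclicity statement \cref{cor:stable_homology_acyclic} using the universal property of Quillen's plus construction. Write $G \coloneqq \aut_{\cG(\cA)}(P)$, and recall that $\Omega^\infty_{[P]} K(\cA)$ is a path component of an infinite loop space, hence a connected space whose fundamental group $K_1(\cA)$ is abelian. \cref{cor:stable_homology_acyclic} supplies an acyclic map $f \colon BG \to \Omega^\infty_{[P]} K(\cA)$, and the argument amounts to recognising $f$ as the plus construction of $BG$ with respect to its maximal perfect subgroup.

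The first step is to compute $N \coloneqq \ker\big(\pi_1(f)\colon G \to K_1(\cA)\big)$. Since $\pi_1(f)$ lands in an abelian group, it factors through $G^{\ab}$, so $[G,G] \subseteq N$. Since $f$ is acyclic its homotopy fibre is acyclic, hence has perfect fundamental group; as $N$ is a quotient of that fundamental group via the long exact sequence of the fibration, $N$ is itself perfect. But a perfect subgroup of $G$ is contained in $[G,G]$, so in fact $N = [G,G]$, and by \cref{thm:quasi-perfectness} this group is perfect --- therefore it is the maximal perfect subgroup of $G$. (In particular, the existence of $f$ re-proves quasi-perfection, but it is cleaner to cite \cref{thm:quasi-perfectness}.)

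The second step is formal. The plus construction map $BG \to BG^+$ with respect to the maximal perfect subgroup is acyclic and kills exactly $N$ on $\pi_1$; by the universal property characterising the plus construction among acyclic maps out of a fixed space (see \cite[IV.1]{weibel2013}), any acyclic map out of $BG$ killing $N$ on $\pi_1$ is equivalent to $BG \to BG^+$ under $BG$. Applying this to $f$ gives a homotopy equivalence $BG^+ \simeq \Omega^\infty_{[P]} K(\cA)$.

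It remains to reintroduce the other components. Since $\Omega^\infty K(\cA)$ is a grouplike $E_\infty$-space, translation by a point of any component is a homotopy equivalence, so $\Omega^\infty K(\cA) \simeq \pi_0\big(\Omega^\infty K(\cA)\big) \times \Omega^\infty_{[P]} K(\cA) = K_0(\cA) \times \Omega^\infty_{[P]} K(\cA)$. Combined with the equivalence of the previous step, this yields $K_0(\cA) \times BG^+ \simeq \Omega^\infty K(\cA)$. The only place genuine care is needed is the identification of $N$ with the maximal perfect subgroup in the first step; once that is in hand the rest is bookkeeping, and the substantive content of the theorem has already been established in \cref{cor:stable_homology_acyclic}.
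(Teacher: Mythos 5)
Your proof is correct and follows essentially the same route as the paper's: both reduce to a single component, invoke \cref{cor:stable_homology_acyclic} for the acyclic map, use \cref{thm:quasi-perfectness} to identify the maximal perfect subgroup with $[G,G]$, and then appeal to the uniqueness of the plus construction. You spell out in more detail why the kernel of $\pi_1$ of the acyclic map is \emph{exactly} $[G,G]$ (using perfection of the fibre's fundamental group), a point the paper leaves somewhat implicit in its citation of Weibel, but the underlying argument is the same.
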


	\begin{proof}
		Both sides have the same collection of path components and the property that all path components are equivalent, so it suffices to establish an equivalence on a single path component.
		
		The map $B\aut_{\cG(\cA)}(P) \to \Omega^\infty_{[P]} K(\cA)$ is acyclic by \cref{cor:stable_homology_acyclic}, and the target is a component of a loop space, so it has abelian fundamental group. Therefore, on $\pi_1$ the map kills the commutator subgroup of $\aut_{\cG(\cA)}(P)$, which is perfect by \cref{thm:quasi-perfectness}. By the uniqueness of the plus-construction \cite[IV.1.5(3)]{weibel2013}, therefore the induced map
		\[B\aut_{\cG(\cA)}(P)^+ \longrightarrow \Omega^\infty_{[P]} K(\cA) \]
		is an equivalence.
	\end{proof}

	\section{Applications I: Classical scissors congruence} \label{sec:applications-i-classical-sissors-congruence}
	Our first and motivating examples come from classical scissors congruence in Euclidean, hyperbolic, or spherical geometry. We first recall how these arise from assemblers (\cref{sec:classical-assemblers}) and then verify these are EA- or S-assemblers under mild assumptions (\cref{sec:ea}). Using a Thom spectrum model for assembler K-theory (\cref{sec:thom-spectrum}), we then compute the homology of scissors automorphism groups in several cases of interest (\cref{sec:thom-spectrum-computations}).
 
	\subsection{Assemblers for classical scissors congruence} \label{sec:classical-assemblers} 
	We now explain how scissors congruence of $n$-dimensional polytopes in Euclidean, hyperbolic, or spherical geometry arises from an assembler; this was first done in \cite[Section 3]{zakharevich2012} and \cite[Example 3.6]{zakharevich2014}, see also \cite[Section 5.2]{zakharevich2014}, \cite[Example 2.9]{bgmmz}, and \cite[Section 2]{malkiewich2022}.

	One starts with one of the following three subspaces $X^n \subseteq \mathbb{R}^{n+1}$ for $n \geq 0$, referred to as \emph{geometries}:
	\begin{enumerate}
		\item $H^n = \{(x_0,\ldots,x_n) \mid -x_0^2+\sum_{i=1}^n x_i^2 = -1 \text{ and }x_0>0\}$,
		\item $E^n = \{(x_0,\ldots,x_n) \mid x_0 = 1\}$,
		\item $S^n = \{(x_0,\ldots,x_n) \mid \sum_{i=0}^n x_i^2 = 1\}$.
	\end{enumerate}
	In one of these geometries we consider the following subsets:
	\begin{itemize}
		\item A \emph{geometric subspace} of $X^n$ is the intersection of $X^n$ with a linear subspace in $\R^{n+1}$.
		\item A \emph{geometric $n$-simplex} is given by taking the convex hull of $(n+1)$ points in $\mathbb{R}^{n+1}$ in general position, and projecting the result onto $X^n$ from the origin. Again, in the spherical case we demand that the convex hull does not contain the origin. In particular, every geometric $n$-simplex is a convex $n$-polytope.
		\item An \emph{$n$-polytope} is then a subset $P \subseteq X^n$ that is a finite union of geometric $n$-simplices.
	\end{itemize}
	We will later need the following definition:
	\begin{itemize}
		\item A \emph{convex $n$-polytope} in $X^n$ is an $n$-polytope obtained as an intersection of $X$ with finitely many linear half-spaces in $\R^{n+1}$. In the case of $S^n$, we also require that the polytope lies in an open hemisphere before it is considered to be convex.
	\end{itemize}
	As a consequence of \cite[Proposition 2.2]{malkiewich2022}, an intersection of $X^n$ with finitely many linear half-spaces in $\R^{n+1}$ is an $n$-polytope if and only if it has nonempty interior in $X^n$ and is bounded.

	Each of the geometries has an associated \emph{isometry group} $I(X^n) \leq \mathrm{GL}_{n+1}(\mathbb{R})$:
	\begin{enumerate}
		\item $I(H^n)$ consists of linear maps preserving the form $-x_0^2+\sum_{i=1}^n x_i^2$ and the sign of $x_0$. It is an index-two subgroup of the indefinite orthogonal group $O(1,n)$.
		\item $I(E^n)$ consists of linear maps preserving the form $\sum_{i=1}^n x_i^2$ and function $x_0$. It is a semidirect product
		\[ I(E^n) \cong \R^n \rtimes \mathrm{O}(n). \]
		\item $I(S^n) = O(n+1)$ consists of linear maps preserving the form $\sum_{i=0}^n x_i^2$.
	\end{enumerate}
	In Euclidean space, we also consider the larger \emph{affine transformation group}
	\[ A(E^n) \cong \R^n \rtimes \mathrm{GL}_n(\R)\]
	consisting of all affine-linear maps, or equivalently the subgroup of $\mathrm{GL}_{n+1}(\mathbb{R})$ preserving the function $x_0$.

	\begin{definition}\label{polytope_assembler}\,
		\begin{itemize}
			\item The category $\cX^n$ has objects given by $n$-polytopes $P$ in $X^n$. The morphisms $P \to Q$ with nonempty domain are given by isometries $g \in I(X^n)$ such that $g(P) \subseteq Q$, and if $P$ is empty there is a unique morphism $P \to Q$.
			\item More generally, for any subgroup $G \leq I(X^n)$, or $G \leq A(E^n)$ in the Euclidean case, we let $\cX^n_G$ be the category with the same objects of $\cX^n$ and morphisms given by transformations in $G$.
		\end{itemize}
	\end{definition}
	
	\begin{notation}In the Euclidean, hyperbolic, and spherical cases, we sometimes denote $\cX^n$ by $\cE^n$, $\cH^n$, or $\category{S}^n$, respectively, and similarly for $\cX^n_G$.\end{notation} 

	We endow $\cX^n_G$ with a Grothendieck topology by saying that a collection of morphisms is a covering family if the union of the images is equal to the target. Because the category only contains $n$-polytopes and not any $(n-1)$-polytopes, the morphisms in $\cX^n_G$ are disjoint if and only if the overlaps between their images do not contain any $n$-polytopes. This is equivalent to asking for the images of their interiors to be disjoint, or for the overlaps to have measure zero. This satisfies properties \ref{enum:assembler-initial}, \ref{enum:assembler-refinement}, \ref{enum:assembler-mono} of \cref{def:assembler}, and hence $\cX^n_G$ is a assembler.

	\begin{remark}
	The assembler $\category{G}_n$ of \cite[Section 5.2]{zakharevich2014} is equivalent to $\cE^n$ as long as $n>0$.
	\end{remark}

	We therefore get scissors automorphism groups that we abbreviate as
	\[\aut_G(P) \coloneqq \aut_{\category{UG}(\cX_G)}(P), \]
	for any of the above geometries, any group of transformations $G$, and any polytope $P \subseteq X$. For $\cX^n = \cE^n$, so in particular taking $G = I(E^n)$, this recovers the scissors automorphism groups from the introduction.
	
	\subsection{Proving the EA and S axioms}\label{sec:ea}
	To apply our homological stability results to the groups $\aut_G(P)$, we have to verify that $\cX^n_G$ is an EA- or S-assembler. This seems to require a few assumptions about $G$, so we prove the assumption in different cases separately. 
	
	\subsubsection{Euclidean geometry with isometries} 
	We begin with the case of Euclidean geometry and $G$ a subgroup of the isometries containing ``enough'' translations.

	\begin{proposition}\label{euclidean_ea}
	For $n>0$, the assembler $\cE^n_G$ is an EA-assembler with volume function given by the usual Euclidean volume, provided $G \leq I(E^n)$ and $G$ contains a group of translations that is dense in $\realnumbers^n$.
	\end{proposition}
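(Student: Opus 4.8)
The plan is to verify the two axioms of an EA-assembler from \cref{def:ass-vol-props}, taking for $\vol$ the ordinary ($n$-dimensional Lebesgue) volume of a polytope and $\bR = \R$ with its usual order. First one checks this is a volume function in the sense of \cref{volume_function}: a cover $\{P_i \to P\}_{i \in I}$ in $\cE^n_G$ is a finite family of isometries $g_i$ with $\bigcup_i g_i(P_i) = P$ and pairwise measure-zero overlaps, so additivity of Lebesgue measure, together with the fact that the boundary of a polytope is a finite union of pieces of hyperplanes and hence null, gives $\vol(P) = \sum_i \vol(P_i)$. Axiom \ref{enum:ass-vol-archimedean} is then immediate: every $n$-polytope subdivides into arbitrarily small simplices (for instance by iterating midpoint subdivision, or by intersecting with a fine cubical grid), so any $\scr{Q}$ admits a cover all of whose pieces have volume at most $\tfrac12 \vol(\scr{P})$ — here we use that a nonempty $n$-polytope has strictly positive volume, so $\vol(\scr{P}) > 0$ whenever $\scr{P} \not\simeq \varnothing$. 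The hypothesis $n > 0$ enters only at this point, since a $0$-polytope is a single point and admits no proper subdivision. The real content of the proposition is therefore axiom \ref{enum:ass-vol-existence}.

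To prove \ref{enum:ass-vol-existence}, suppose $\vol(\scr{P}) < \vol(\scr{Q})$; we must produce $\scr{P}'$ with $\scr{P} \sqcup \scr{P}' \simeq \scr{Q}$, and by definition of complement this is the same as producing a scissors embedding $\scr{P} \hookrightarrow \scr{Q}$ in $\cE^n_G$. Fix a small parameter $\delta > 0$, to be chosen last, and set $\delta' := \delta - \delta^2 < \delta$. Cut each $Q_j$ along the standard cubic grid of mesh $\delta$ and let $C_1, \dots, C_N$ be those grid cubes entirely contained in some $Q_j$; since $\partial \scr{Q}$ lies in finitely many hyperplanes, the number of grid cubes it meets is $O(\delta^{-(n-1)})$, so $N \ge \vol(\scr{Q})\,\delta^{-n} - O(\delta^{-(n-1)})$. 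Likewise cut each $P_i$ along the cubic grid of mesh $\delta'$, and let $\scr{R}$ be the tuple of those pieces $P_i \cap c$ that are full-dimensional (each is again an $n$-polytope, being a bounded finite union of convex polytopes); these cover each $P_i$ in $\cE^n_G$, their overlaps lying in grid hyperplanes and hence having measure zero, and by the same boundary estimate their number $M$ satisfies $M \le \vol(\scr{P})\,(\delta')^{-n} + O((\delta')^{-(n-1)})$. Since $\delta/\delta' \to 1$ as $\delta \to 0$ and $\vol(\scr{P}) < \vol(\scr{Q})$, multiplying these estimates by $\delta^n$ and letting $\delta \to 0$ shows $M \le N$ once $\delta$ is small enough; fix such a $\delta$.

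Now choose an injection of the index set of $\scr{R}$ into $\{1, \dots, N\}$ and move each piece $P_i \cap c$ into the corresponding cube $C_k$ as follows. The piece lies in a translate of $[0,\delta']^n$, and the translation carrying that translate onto the concentric sub-cube of $C_k$ of side $\delta'$ leaves a margin of $\tfrac12(\delta - \delta') = \tfrac12\delta^2 > 0$ on every face of $C_k$; since $G$ contains translations dense in $\R^n$, there is a translation $g \in G$ within $\tfrac12\delta^2$ of this one, and then $g(P_i \cap c) \subseteq C_k$. Because distinct grid cubes of the $Q_j$ have disjoint interiors, the polytopes $g(P_i \cap c)$ have pairwise disjoint interiors, each contained in one of the $Q_j$. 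This is exactly a scissors embedding $\scr{P} \hookrightarrow \scr{Q}$ as in \cref{scissors_embedding_category}: the cover $\scr{R} \to \scr{P}$ records the pieces, and the partial cover $\scr{R} \to \scr{Q}$ records the translations $g$, which for each fixed $j$ form a disjoint family inside $Q_j$ and hence extend to a cover of $Q_j$. Its complement is the desired $\scr{P}'$, establishing \ref{enum:ass-vol-existence}.

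The only delicate point is this counting. The naive idea — chop $\scr{P}$ into tiny boxes and pack them into $\scr{Q}$ — fails if $\scr{P}$ is subdivided too finely, since the piece count would overtake the number $N$ of available grid cubes in $\scr{Q}$; one must cut $\scr{P}$ on a grid only \emph{barely} finer than the one used on $\scr{Q}$, fine enough to leave positive room $\tfrac12\delta^2$ for the approximate translations supplied by the dense subgroup, but not so fine that $M > N$. Note that the whole construction uses only translations from $G$, which is exactly what the hypothesis provides; in particular no rotations or reflections are required.
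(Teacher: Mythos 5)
Your proof is correct and follows the same strategy as the paper's: approximate $\scr{P}$ from the outside and $\scr{Q}$ from the inside by cubical grids, compare the piece counts, and translate each piece of $\scr{P}$ into a distinct grid cube of $\scr{Q}$, using density of the translation subgroup of $G$ to find an approximating translation. The only cosmetic difference is how you leave slack for that approximation: you cut $\scr{P}$ on a slightly finer mesh $\delta' = \delta - \delta^2$ so each piece fits a concentric sub-cube with margin $\tfrac12\delta^2$, whereas the paper keeps the mesh equal but spaces the cubes inside $\scr{Q}$ with small gaps of relative size $\epsilon$ -- the two devices are equivalent.
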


	\begin{proof}
	Since $G$ consists of isometries, the Euclidean volume is additive over covers, so it is indeed a volume function (\cref{volume_function}). It remains to verify properties \ref{enum:ass-vol-existence} and \ref{enum:ass-vol-archimedean}.
	
	Verifying property \ref{enum:ass-vol-archimedean} is straightforward and follows by intersecting a polytope in $E^n$ with a sufficiently fine triangulation of $E^n$.
	
	A proof of property \ref{enum:ass-vol-existence} for the case $G = I(E^n)$ can be found in \cite{sah1979}: ($\ast$) on p.31 loc.cit.~is the Euclidean case. Before giving an argument that applies in our slightly more general setting, we discuss the argument for property \ref{enum:ass-vol-existence} when $G$ contains the full translation group.
	
	Note that every formal disjoint union of polytopes $\scr{P} \in \cG(\cE^n_G)$ can be represented by an actual polytope in $n$-Euclidean space $E^n$, so it suffices to consider two polytopes $P$ and $Q$ in $E^n$ with $\vol(P) < \vol(Q)$. Since polytopes are Jordan measurable (i.e.~their indicator functions are Riemann integrable), the volume of $P$ can be approximated arbitrarily well by small cubes $C_i$ that cover $P$, and the volume of $Q$ can be approximated arbitrarily well by small cubes $D_j$ that are completely contained in $Q$.

	\vspace{1em}
	\centerline{
	\def\svgwidth{3.0in}
\begingroup%
  \makeatletter%
  \providecommand\color[2][]{%
    \errmessage{(Inkscape) Color is used for the text in Inkscape, but the package 'color.sty' is not loaded}%
    \renewcommand\color[2][]{}%
  }%
  \providecommand\transparent[1]{%
    \errmessage{(Inkscape) Transparency is used (non-zero) for the text in Inkscape, but the package 'transparent.sty' is not loaded}%
    \renewcommand\transparent[1]{}%
  }%
  \providecommand\rotatebox[2]{#2}%
  \newcommand*\fsize{\dimexpr\f@size pt\relax}%
  \newcommand*\lineheight[1]{\fontsize{\fsize}{#1\fsize}\selectfont}%
  \ifx\svgwidth\undefined%
    \setlength{\unitlength}{184.78468489bp}%
    \ifx\svgscale\undefined%
      \relax%
    \else%
      \setlength{\unitlength}{\unitlength * \real{\svgscale}}%
    \fi%
  \else%
    \setlength{\unitlength}{\svgwidth}%
  \fi%
  \global\let\svgwidth\undefined%
  \global\let\svgscale\undefined%
  \makeatother%
  \begin{picture}(1,0.47619098)%
    \lineheight{1}%
    \setlength\tabcolsep{0pt}%
    \put(0,0){\includegraphics[width=\unitlength,page=1]{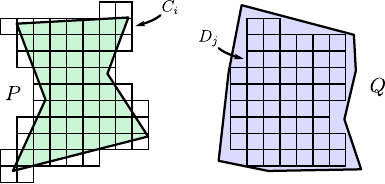}}%
  \end{picture}%
\endgroup%

	}
	\vspace{1em}
	
	We can arrange so all cubes have the same size and the number $M$ of cubes $\{C_i\}_{i \in \{1, \dots, M\}}$ containing $P$ is less than the number $N$ of cubes $\{D_i\}_{i \in \{1, \dots, N\}}$ contained in $Q$. Once this is done, we cut $P$ into the intersections $P \cap C_i$ and send each these into the corresponding cubical piece $D_i$ of $Q$ using the unique translation that maps $C_i$ to $D_i$. This gives the desired scissors embedding $P \to Q$.

	This argument can be modified to only use translations in a dense subset of $\realnumbers^n$, as follows: We approximate $Q$ by cubes $D_i$ of the same size that have small gaps in between -- this is possible because the error introduced by expanding the cubes by a factor of $(1+\epsilon)$ is a factor of $(1+\epsilon)^n$, so by making $\epsilon$ small we can still get as close to the volume of $Q$ as desired. To map $P \cap C_i$ into $Q$, we perturb the unique translation sending $C_i$ to $D_i$ by less than $\epsilon/2$ in each direction to a translation that lies in the subgroup $G$. This accomplishes the scissors embedding $P \to Q$ using only translations in $G$.
	\end{proof}

	\subsubsection{Euclidean geometry with scalings} \label{sec:eucl-scaling}
	Next we consider the case of Euclidean geometry where we allow non-trivial scalings in the following sense. Suppose that $G \leq A(E^n)$ contains an element that fixes a point $x \in E^n$, and if we use that point as an origin and consider the resulting linear map $T\colon \R^n \to \R^n$, then $T$ shrinks Euclidean distances, in other words $\|T\| < 1$. For instance, this is satisfied if $G$ contains a nontrivial dilatation about the point $x$.
	
	\begin{proposition}\label{euclidean_s}
	For $n>0$, the assembler $\cE^n_G$ is an S-assembler, provided $G$ contains an element that shrinks Euclidean distance and $G$ contains a group of translations that is dense in $\realnumbers^n$.
	\end{proposition}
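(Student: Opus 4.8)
The plan is to verify axiom \ref{enum:ass-zae-ae} directly. As in the proof of \cref{euclidean_ea}, every object of $\cG(\cE^n_G)$ is scissors congruent to an actual polytope in $E^n$ (translate the constituent polytopes apart using translations from the dense subgroup of $G$), so it suffices to show: given nonempty polytopes $P, Q \subseteq E^n$, there is a nonempty polytope $P'$ with $P \sqcup P' \simeq Q$ in $\cE^n_G$, equivalently a scissors embedding $P \hookrightarrow Q$ whose complement has positive Euclidean volume. Throughout I use Euclidean volume merely as a measure detecting (non)emptiness; note that it is no longer a volume function in the sense of \cref{volume_function}, precisely because $G$ now contains scalings.

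The key idea is to use the contracting element of $G$ to make $P$ arbitrarily small and then slot it into the interior of $Q$. Write the contracting element as $S \in G$, fixing a point $x \in E^n$ and acting by a linear map $T$ with $\|T\| < 1$ in coordinates centred at $x$. Then $S^k \in G$, the polytope $S^k(P)$ (an affine image of $P$) satisfies $\diam(S^k(P)) \le \|T\|^k \diam(P) \to 0$, and hence $\on{vol}(S^k(P)) \to 0$. Choose $q_0 \in \on{int}(Q)$ and $\rho > 0$ with $B(q_0,\rho) \subseteq \on{int}(Q)$, and then pick $k$ large enough that $\diam(S^k(P)) < \rho/2$ and $\on{vol}(S^k(P)) < \on{vol}(Q)$. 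The translation carrying some chosen point of $S^k(P)$ to $q_0$ then maps $S^k(P)$ into $B(q_0,\rho/2)$, with room to spare; by density of the translation subgroup of $G$, I would perturb it by less than $\rho/2$ to a translation $t \in G$ still satisfying $t(S^k(P)) \subseteq B(q_0,\rho) \subseteq \on{int}(Q)$. Set $g := t \circ S^k \in G$, so $g(P) \subseteq \on{int}(Q)$ and $\on{vol}(g(P)) = \on{vol}(S^k(P)) < \on{vol}(Q)$.

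It remains to produce the complement. I would triangulate $P$, push this triangulation forward by the affine map $g$ to a triangulation of $g(P)$, and — after passing to a common refinement, which is harmless for scissors congruence — extend it to a triangulation of $Q$. The simplices inside $g(P)$, transported back by $g^{-1} \in G$, form a cover of $P$; the remaining simplices form a cover of a polytope $P'$ with $\on{int}(P') = \on{int}(Q) \setminus g(P)$ and $\on{vol}(P') = \on{vol}(Q) - \on{vol}(g(P)) > 0$, so $P'$ is nonempty. This data assembles into a scissors congruence $P \sqcup P' \congto Q$ in $\cE^n_G$, and since a polytope admits an empty cover only if it is the empty polytope, \cref{sc_to_empty} gives $P' \not\simeq \varnothing$. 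This establishes \ref{enum:ass-zae-ae}.

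The main point requiring care is the interplay of the two hypotheses on $G$: the contracting element is what lets us shrink $P$ down to positive but arbitrarily small volume, and the density of translations is what lets us land the shrunken copy strictly inside $Q$ — with enough slack to leave a nonempty polytopal complement — while keeping every transformation used inside $G$. The only genuinely geometric input beyond this is the elementary fact that a polytope contained in a polytope of strictly larger volume has a polytopal complement of positive volume, which follows from extending triangulations together with the insensitivity of scissors congruence to subdivision. Compared with \cref{euclidean_ea}, the argument here is lighter: there is no archimedean-type condition to check, and a single contracted-and-translated copy of $P$ suffices, with no need to match up many small cubes.
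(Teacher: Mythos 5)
Your proposal is correct and takes essentially the same approach as the paper: use the contracting element to shrink $P$ into a ball of arbitrarily small radius, then use a translation from the dense subgroup to move that ball into the interior of $Q$, yielding a scissors embedding whose complement has positive volume. Your write-up is more explicit than the paper's (which is only a few sentences), particularly in spelling out why the complement is a nonempty polytope, but the underlying idea is identical.
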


	\begin{proof}
		We verify property \ref{enum:ass-zae-ae}. Again without loss of generality $P$ and $Q$ are nonempty polytopes, not just formal finite disjoint unions of $n$-polytopes. The first condition on $G$ implies that we can shrink $P$ to fit into a ball of arbitrarily small radius. This ball may then be translated into the interior of $Q$, since $G$ contains translations arbitrarily close to a given translation. This embeds $P$ into $Q$.
	\end{proof}
	
	\subsubsection{Hyperbolic and spherical geometries}
	Finally, we consider the hyperbolic and spherical cases. These require much more work than the Euclidean case to show that the assembler satisfies \ref{enum:ass-vol-existence} and \ref{enum:ass-vol-archimedean}. We focus on the hyperbolic case, since the spherical case is similar.
	
	Our proof will make significant use of first-order approximations, in other words the constant and linear terms of the Taylor series of a differentiable function. Suppose $f$ and $g$ are nonnegative differentiable real-valued functions on $[0,a)$ for some $a > 0$. We write $f(x) \sim g(x)$ if $f$ and $g$ have the same first-order approximation at $x = 0$. We write $f(x) \lesssim g(x)$ if the first-order approximation to $f$ at $x = 0$ is no larger than the first-order approximation to $g$ at $x = 0$.

	We first consider the volume of a region in any complete Riemannian manifold $M$ determined by taking a smoothly embedded disc $D^{n-1} \subseteq M$, and moving along geodesics perpendicular to $D^{n-1}$ to form a cylinder $D^{n-1} \times I \to M$, the $I$ direction parametrized by arc length.
	
	\vspace{1em}
	\centerline{
	\def\svgwidth{1.4in}
\begingroup%
  \makeatletter%
  \providecommand\color[2][]{%
    \errmessage{(Inkscape) Color is used for the text in Inkscape, but the package 'color.sty' is not loaded}%
    \renewcommand\color[2][]{}%
  }%
  \providecommand\transparent[1]{%
    \errmessage{(Inkscape) Transparency is used (non-zero) for the text in Inkscape, but the package 'transparent.sty' is not loaded}%
    \renewcommand\transparent[1]{}%
  }%
  \providecommand\rotatebox[2]{#2}%
  \newcommand*\fsize{\dimexpr\f@size pt\relax}%
  \newcommand*\lineheight[1]{\fontsize{\fsize}{#1\fsize}\selectfont}%
  \ifx\svgwidth\undefined%
    \setlength{\unitlength}{105.69598812bp}%
    \ifx\svgscale\undefined%
      \relax%
    \else%
      \setlength{\unitlength}{\unitlength * \real{\svgscale}}%
    \fi%
  \else%
    \setlength{\unitlength}{\svgwidth}%
  \fi%
  \global\let\svgwidth\undefined%
  \global\let\svgscale\undefined%
  \makeatother%
  \begin{picture}(1,1.24574229)%
    \lineheight{1}%
    \setlength\tabcolsep{0pt}%
    \put(0,0){\includegraphics[width=\unitlength,page=1]{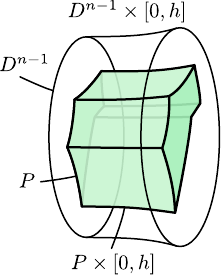}}%
  \end{picture}%
\endgroup%

	}
	\vspace{1em}
	
	\begin{lemma}\label{cylinder_volume}
		If $P \subseteq D^{n-1}$ is (Lebesgue) measurable then the volume of the image of $P \times [0,h]$ under this map is, to first order in $h$,
		\[ \vol(P \times [0,h]) \sim h\vol(P). \]
	\end{lemma}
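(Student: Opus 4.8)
The plan is to realise the map $D^{n-1}\times I \to M$ as the normal exponential map of the embedded disc and to compute its volume Jacobian along $\{t=0\}$. Concretely, write $F(x,t)=\exp_x(t\,\nu(x))$, where $\nu$ is the unit normal field along $D^{n-1}$ used to define the cylinder; since $M$ is complete this is defined for all $t$, and since we may take $D^{n-1}$ compact, the tubular neighbourhood theorem provides an $h_0>0$ such that $F$ restricts to a diffeomorphism of $D^{n-1}\times[0,h_0)$ onto an open subset of $M$. Let $J(x,t)\ge 0$ be the Jacobian of $F$ at $(x,t)$ with respect to the product measure $d\mathrm{vol}_{D^{n-1}}\otimes dt$ on the source and the Riemannian measure of $M$ on the target; the change-of-variables formula then gives
\[ \vol\bigl(F(P\times[0,h])\bigr)=\int_P\int_0^h J(x,t)\,dt\,d\mathrm{vol}_{D^{n-1}}(x) \]
for $0\le h<h_0$.

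The one geometric input needed is that $J(x,0)=1$ for every $x \in D^{n-1}$. This is immediate: $F(-,0)$ is the inclusion $D^{n-1}\hookrightarrow M$, which is an isometric embedding for the induced metric, so $dF_{(x,0)}$ agrees on $T_xD^{n-1}$ with the isometric inclusion $T_xD^{n-1}\hookrightarrow T_xM$, while $dF_{(x,0)}(\partial_t)=\nu(x)$ is a unit vector orthogonal to $T_xD^{n-1}$ — perpendicularity is built into the construction, and unit length because geodesics are parametrised by arc length. Hence $dF_{(x,0)}$ carries an orthonormal basis of $T_xD^{n-1}\oplus\mathbb{R}\partial_t$ to an orthonormal basis of $T_xM$, so its Jacobian determinant is $1$.

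It then remains to read off the first-order term. Set $g(h)\coloneqq \vol(F(P\times[0,h]))$, so $g(0)=0$. Since $J$ is continuous, it is bounded on the compact set $\overline{P}\times[0,h_0/2]$, so we may differentiate under the integral sign to get $g'(h)=\int_P J(x,h)\,d\mathrm{vol}_{D^{n-1}}(x)$, whence $g'(0)=\int_P J(x,0)\,d\mathrm{vol}_{D^{n-1}}(x)=\vol(P)$ by the previous paragraph. The constant-plus-linear Taylor approximations of $g$ and of $h\mapsto h\,\vol(P)$ at $h=0$ therefore coincide, i.e.\ $\vol(P\times[0,h])\sim h\,\vol(P)$.

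I expect no serious obstacle: the only substantive point is the identity $J(\cdot,0)\equiv 1$, which is the standard fact underlying Fermi coordinates, and everything else (tubular neighbourhood, Fubini, differentiation under the integral) is routine. The mild hypothesis that $\overline{P}$ is compact is harmless in the applications; for arbitrary Lebesgue-measurable $P$ one exhausts $P$ by compact subsets, noting that $h_0$ and the bound on $J$ may be chosen uniformly on a fixed compact enlargement of $D^{n-1}$, so the first-order identity passes to the limit.
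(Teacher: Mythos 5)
Your proposal is correct and follows essentially the same route as the paper: both express the volume as an integral of a Jacobian/metric determinant over $D^{n-1}\times[0,h]$, identify the integrand at $t=0$ with the volume form of $D^{n-1}$ via orthonormality of the normal direction (your $J(x,0)=1$ is the paper's block-diagonal form of $G$ along $t=0$), and extract the first-order term by the fundamental theorem of calculus (you phrase it as differentiation under the integral sign, which is equivalent). The closing paragraph about exhausting $P$ by compacts is unnecessary, since $P\subseteq D^{n-1}$ with $D^{n-1}$ compact already forces $\overline{P}$ compact, but this is harmless.
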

	
	\begin{proof}
		When $h$ is sufficiently small, the cylinder $P \times [0,h]$ is embedded, so its volume may be computed as an integral
		\[ \vol(P \times [0,h]) =  \int_{P \times [0,h]} \sqrt{\det G} = \int_{D^{n-1} \times [0,h]} \chi_P \cdot \sqrt{\det G}, \]
		where $\chi_P$ is the indicator function of the subset $P \subseteq D^{n-1}$, and $G$ is the matrix of the metric tensor of $M$, pulled back to $D^{n-1} \times [0,h]$. We rewrite this as
		\[ \vol(P \times [0,h]) =  \int_0^h f(t)\ dt, \qquad f(t) = \int_{P \times \{t\}} \sqrt{\det G}. \]
		We note that $f(t)$ is continuous in $t$, since $P$ has finite measure and $\sqrt{\det G}$ is uniformly continuous on $D^{n-1} \times [0,1]$.
		
		We next show that $f(0) = \vol(P)$. Since the geodesics are perpendicular to $D^{n-1}$ and parametrized by arc length, along $D^{n-1} \times \{0\}$ the matrix $G$ has block form
		\[ G = \begin{pmatrix}
			A & 0 \\ 0 & 1
		\end{pmatrix}, \]
		where $A$ is an $(n-1)\times(n-1)$ matrix given by the metric tensor in the direction of $D^{n-1} \times \{0\}$. Therefore
		\[ f(0) = \int_{P \times \{0\}} \sqrt{\det G} = \int_{P \times \{0\}} \sqrt{\det A} = \vol(P). \]
		
		In summary, the desired volume is of the form 
		\[ \vol(P \times [0,h]) = \int_0^h f(t)\ dt, \qquad f \textup{ continuous}, \qquad f(0) = \vol(P). \]
		The result now follows from the fundamental theorem of calculus.
	\end{proof}
	
	For a hyperbolic polytope $P$ and $\epsilon > 0$, let $P_{+\epsilon} \supseteq P$ be the closed set of all points that are $\leq \epsilon$ from $P$, and similarly let $P_{-\epsilon} \subseteq P$ be the closed set of points that are $\geq \epsilon$ away from the complement of $P$.
	
	\vspace{1em}
	\centerline{
	\def\svgwidth{2.6in}
\begingroup%
  \makeatletter%
  \providecommand\color[2][]{%
    \errmessage{(Inkscape) Color is used for the text in Inkscape, but the package 'color.sty' is not loaded}%
    \renewcommand\color[2][]{}%
  }%
  \providecommand\transparent[1]{%
    \errmessage{(Inkscape) Transparency is used (non-zero) for the text in Inkscape, but the package 'transparent.sty' is not loaded}%
    \renewcommand\transparent[1]{}%
  }%
  \providecommand\rotatebox[2]{#2}%
  \newcommand*\fsize{\dimexpr\f@size pt\relax}%
  \newcommand*\lineheight[1]{\fontsize{\fsize}{#1\fsize}\selectfont}%
  \ifx\svgwidth\undefined%
    \setlength{\unitlength}{148.51714982bp}%
    \ifx\svgscale\undefined%
      \relax%
    \else%
      \setlength{\unitlength}{\unitlength * \real{\svgscale}}%
    \fi%
  \else%
    \setlength{\unitlength}{\svgwidth}%
  \fi%
  \global\let\svgwidth\undefined%
  \global\let\svgscale\undefined%
  \makeatother%
  \begin{picture}(1,0.59039221)%
    \lineheight{1}%
    \setlength\tabcolsep{0pt}%
    \put(0,0){\includegraphics[width=\unitlength,page=1]{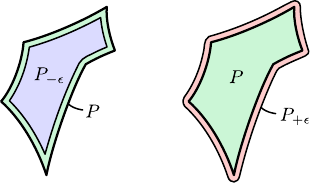}}%
  \end{picture}%
\endgroup%

	}
	\vspace{1em}
	
	\begin{lemma}\label{volume_converges}
		As $\epsilon \to 0$, the volumes of $P_{-\epsilon}$ and $P_{+\epsilon}$ both converge to $\vol(P)$.
	\end{lemma}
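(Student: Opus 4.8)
The plan is to sandwich the two ``collars'' $P\setminus P_{-\epsilon}$ and $P_{+\epsilon}\setminus P$ inside the closed $\epsilon$-neighbourhood $N_\epsilon(\partial P)\coloneqq\{x:d(x,\partial P)\le\epsilon\}$ of the boundary, and then show $\vol(N_\epsilon(\partial P))\to 0$. Since $P$ is closed, one has $P_{-\epsilon}\subseteq P\subseteq P_{+\epsilon}$, so $\vol(P_{+\epsilon})-\vol(P)=\vol(P_{+\epsilon}\setminus P)$ and $\vol(P)-\vol(P_{-\epsilon})=\vol(P\setminus P_{-\epsilon})$, and the lemma will follow once we check the inclusions $P_{+\epsilon}\setminus P\subseteq N_\epsilon(\partial P)$ and $P\setminus P_{-\epsilon}\subseteq N_\epsilon(\partial P)$ together with $\vol(N_\epsilon(\partial P))\to 0$. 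The two inclusions come from a geodesic-crossing observation: if $x\in P$ with $d(x,H^n\setminus P)<\epsilon$, a minimising geodesic from $x$ to a nearby point of $H^n\setminus P$ must pass through $\partial P=P\cap\overline{H^n\setminus P}$ (consider the largest parameter at which the geodesic still lies in the closed set $P$), and the crossing point is within $\epsilon$ of $x$; symmetrically for $x\notin P$ with $d(x,P)\le\epsilon$. This step is routine because $H^n$ is complete, hence geodesic.

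For the volume bound, write $P$ as a finite union of geometric $n$-simplices. Then $\partial P$ lies in the union of the finitely many totally geodesic hyperplanes $H_1,\dots,H_m$ spanned by the facets of these simplices, and each $\partial P\cap H_k$ is bounded. Fix a closed geodesic ball $D_k\subseteq H_k$ containing the $1$-neighbourhood of $\partial P\cap H_k$ taken inside $H_k$; this is a smoothly embedded $(n-1)$-disc in $H^n$ whose perpendicular geodesics are exactly those perpendicular to $H_k$. For $\epsilon\le 1$, nearest-point projection $\pi_k$ onto the totally geodesic hyperplane $H_k$ is distance non-increasing, so any $x$ with $d(x,\partial P\cap H_k)\le\epsilon$ has $\pi_k(x)\in D_k$ and $d(x,\pi_k(x))=d(x,H_k)\le\epsilon$; hence $N_\epsilon(\partial P\cap H_k)$ lies in the two-sided perpendicular cylinder of height $\epsilon$ over $D_k$. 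By \cref{cylinder_volume}, applied with $D_k$ in the role of the measurable set and on each of $[0,\epsilon]$ and $[-\epsilon,0]$, this cylinder has volume $\sim 2\epsilon\,\vol(D_k)$, which in particular tends to $0$ as $\epsilon\to 0$. Summing over $k$ gives $\vol(N_\epsilon(\partial P))\le\sum_k\vol(N_\epsilon(\partial P\cap H_k))\to 0$, finishing the argument; the spherical case is identical, using that projection onto a totally geodesic sphere is distance non-increasing within an open hemisphere.

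I do not expect a serious obstacle here: the only points requiring care are the geodesic-crossing step above and the verification that the $\epsilon$-neighbourhood of a bounded slab of a hyperplane is swept out by length-$\le\epsilon$ perpendicular geodesics emanating from a fixed compact disc, where the distance non-increasing property of the projection is the crucial input. As a cross-check --- and an alternative to the cylinder estimate --- one can argue directly that $N_\epsilon(\partial P)$ decreases as $\epsilon\downarrow 0$ to $\partial P$, which has measure zero (it is contained in finitely many hyperplanes), while $N_1(\partial P)$ is bounded and therefore of finite volume; continuity from above of the Riemannian volume measure then yields $\vol(N_\epsilon(\partial P))\to 0$ immediately. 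I would record this remark but run the cylinder computation in the body, since \cref{cylinder_volume} is already available.
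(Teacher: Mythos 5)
Your argument is essentially the paper's: cover the $\epsilon$-collar by cylinders over bounded discs in the totally geodesic hyperplanes spanned by the facets of $P$, then bound their volumes by \cref{cylinder_volume}; you fill in more detail than the paper does on the two inclusions (the geodesic-crossing step showing $P_{+\epsilon}\setminus P$ and $P\setminus P_{-\epsilon}$ lie in $N_\epsilon(\partial P)$, and the $1$-Lipschitz projection onto $H_k$ showing this neighbourhood lies in the cylinder), but the decomposition and the key estimate are the same. Your remarked alternative --- $N_\epsilon(\partial P)$ decreases to $\partial P$, which has measure zero, combined with continuity from above of the finite volume of $N_1(\partial P)$ --- is a cleaner route that dispenses with \cref{cylinder_volume} entirely; the paper keeps the cylinder argument presumably because that lemma is needed anyway for \cref{flow_polytopes_2}.
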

	
	\begin{proof}
		Decompose $P$ into finitely many convex pieces and take the finite set of hyperplanes defining these pieces. Then the region $P_{+\epsilon} - P$ is contained in the union of finitely many cylinders of the form $D^{n-1} \times [0,\epsilon]$ from \cref{cylinder_volume}. Since the volume of each of these cylinders goes to zero as $\epsilon \to 0$, so too does the volume of $P_{+\epsilon} - P$. The argument for $P - P_{-\epsilon}$ is similar.
	\end{proof}
	
	We next use a bit of hyperbolic trigonometry to analyze the behavior of thin almost-rectangles in $H^2$.
	
	\vspace{1em}
	\centerline{
	\def\svgwidth{2.3in}
\begingroup%
  \makeatletter%
  \providecommand\color[2][]{%
    \errmessage{(Inkscape) Color is used for the text in Inkscape, but the package 'color.sty' is not loaded}%
    \renewcommand\color[2][]{}%
  }%
  \providecommand\transparent[1]{%
    \errmessage{(Inkscape) Transparency is used (non-zero) for the text in Inkscape, but the package 'transparent.sty' is not loaded}%
    \renewcommand\transparent[1]{}%
  }%
  \providecommand\rotatebox[2]{#2}%
  \newcommand*\fsize{\dimexpr\f@size pt\relax}%
  \newcommand*\lineheight[1]{\fontsize{\fsize}{#1\fsize}\selectfont}%
  \ifx\svgwidth\undefined%
    \setlength{\unitlength}{154.70996324bp}%
    \ifx\svgscale\undefined%
      \relax%
    \else%
      \setlength{\unitlength}{\unitlength * \real{\svgscale}}%
    \fi%
  \else%
    \setlength{\unitlength}{\svgwidth}%
  \fi%
  \global\let\svgwidth\undefined%
  \global\let\svgscale\undefined%
  \makeatother%
  \begin{picture}(1,0.39508691)%
    \lineheight{1}%
    \setlength\tabcolsep{0pt}%
    \put(0,0){\includegraphics[width=\unitlength,page=1]{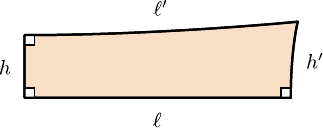}}%
  \end{picture}%
\endgroup%

	}
	\vspace{1em}
	
	\begin{lemma}\label{thin_rectangle}
	Given a figure as above in $H^2$ with all four sides geodesics, if we fix $\ell > 0$ and regard the other quantities as functions of $h$, then to first order in $h$,
		\[ \ell' \sim \ell \qquad \textup{and} \qquad h' \sim h \cosh \ell. \]
	\end{lemma}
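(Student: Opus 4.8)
The plan is to put the figure into Fermi (geodesic normal) coordinates along the bottom side and then solve the geodesic equation for the top side by quadrature, reading off $\ell'$ and $h'$ and Taylor-expanding in $h$. Write $b$ for the bottom geodesic, of length $\ell$. Recall that in $H^2$ the Fermi coordinates $(s,r)$ along $b$ --- $s$ the arc-length parameter on $b$, $r$ the signed distance to $b$ --- put the metric in the form $dr^2+\cosh^2(r)\,ds^2$. In these coordinates the bottom side is $\{r=0,\ 0\le s\le\ell\}$, the left side is the geodesic $\{s=0\}$ (perpendicular to $b$, with $r$ as its arc-length parameter), the right side is the geodesic $\{s=\ell\}$, the top-left corner is the point $(0,h)$, and the top side is the geodesic $T$ through $(0,h)$ whose initial velocity points in the $\partial_s$-direction (so that it meets the left side at a right angle). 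Then $\ell'$ is the arc length of $T$ from $(0,h)$ to its intersection $T_R$ with $\{s=\ell\}$, and $h'$ is the $r$-coordinate of $T_R$, which, since along $\{s=\ell\}$ the metric restricts to $dr^2$, is exactly the length of the right side from $(\ell,0)$ to $T_R$.

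First I would write down the geodesics of $dr^2+\cosh^2(r)\,ds^2$. Since $\partial_s$ is Killing, $\cosh^2(r)\,\dot s$ is constant along $T$; the initial data $r(0)=h$, $\dot r(0)=0$, $\dot s(0)=1/\cosh h$ give this constant the value $\cosh h$, and the unit-speed condition then gives $\dot r^2=1-\cosh^2(h)/\cosh^2(r)$. Separating variables in $dr/ds=\dot r/\dot s$ and substituting $\sinh r=\sinh(h)\sec\phi$ turns the $s$-integral into an elementary one, $\int \cosh(h)\cos\phi\,d\phi/(\cos^2\phi+\sinh^2 h)$, which evaluates to $\operatorname{arctanh}(\sin\phi/\cosh h)$. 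Setting the total $s$-displacement equal to $\ell$ gives, for the substitution value $\Phi$ at $T_R$, the relation $\sin\Phi=\cosh(h)\tanh(\ell)$, and hence
\[ \sinh(h')=\frac{\sinh h}{\sqrt{1-\cosh^2(h)\tanh^2(\ell)}}, \qquad \ell'=\operatorname{arctanh}\big(\cosh(h)\tanh(\ell)\big). \]
(For $h$ small relative to $\ell$ these are defined, since $\cosh(h)\tanh(\ell)<1$; equivalently $T$ genuinely reaches $\{s=\ell\}$ because it is $C^1$-close to $b$.)

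It then remains to take first-order Taylor expansions at $h=0$. Using $\cosh h=1+O(h^2)$ and $\sinh h=h+O(h^3)$ in the first formula gives $\sinh(h')=h\cosh(\ell)+O(h^2)$, hence $h'=h\cosh(\ell)+O(h^2)$, i.e.\ $h'\sim h\cosh\ell$. In the second formula the $h$-dependence enters only through the even function $\cosh h$, so $\ell'$ has vanishing derivative at $h=0$ while $\ell'(0)=\operatorname{arctanh}(\tanh\ell)=\ell$; thus the first-order approximation of $\ell'$ at $h=0$ is the constant $\ell$, i.e.\ $\ell'\sim\ell$. The spherical statement is proved identically, using the Fermi metric $dr^2+\cos^2(r)\,ds^2$ and obtaining $h'\sim h\cos\ell$, $\ell'\sim\ell$ (now the two verticals converge rather than diverge, which is reflected in $\cos\ell<1$).

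The main obstacle is not the computation --- which is a routine separation of variables --- but correctly fixing the geometry: one must identify the figure as the Lambert-type quadrilateral with right angles at the two bottom corners and at the top-left corner, only the top-right angle being acute, since this is exactly what makes the Fermi-coordinate description above valid and is what forces the asymmetry $h'\neq h$. A secondary point of care is the meaning of ``$\sim$'': the claim $\ell'\sim\ell$ is a statement about first-order Taylor polynomials, so one genuinely needs the evenness observation, not merely the limit $\ell'\to\ell$.
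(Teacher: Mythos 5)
Your proposal is correct, and it takes a genuinely different route from the paper's argument. The paper never solves a geodesic equation: instead it draws the diagonal $c$ from the top-left to the bottom-right corner, uses the hyperbolic Pythagorean theorem to get $\cosh c = \cosh\ell\cosh h$, and then applies the hyperbolic laws of cosines and sines to the upper-right triangle to obtain the two \emph{implicit} relations
\[ \cosh\ell' = \cosh\ell\cosh h\cosh h' - \sinh h\sinh h', \qquad \sinh h'\tanh\ell = \sinh\ell'\tanh h, \]
into which it substitutes the ansatz $\ell'\sim\ell+ah$, $h'\sim bh$ and solves for $a=0$, $b=\cosh\ell$. You instead place Fermi coordinates along the bottom side, integrate the geodesic equation for the top side by quadrature, and obtain \emph{closed-form} expressions $\sinh h' = \sinh h/\sqrt{1-\cosh^2 h\tanh^2\ell}$ and $\ell'=\operatorname{arctanh}(\cosh h\tanh\ell)$, from which the asymptotics drop out by Taylor expansion and the evenness-in-$h$ observation. (One can check these closed forms are equivalent to the paper's implicit relations.) Your approach gives strictly more information and makes the Taylor expansion transparent, at the cost of setting up coordinates and solving an ODE; the paper's is more elementary (pure hyperbolic trigonometry) but requires the reader to trust an ansatz-matching step and to correctly identify the ``lower-right angle'' whose cosine is $\sinh h/\sinh c$. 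Both hinge on the same identification of the figure as a Lambert-type quadrilateral with right angles at the two bottom corners and at the top-left corner, which you state explicitly — a point the paper leaves to its figure.

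One small presentational remark: you assert $\ell'=\operatorname{arctanh}(\cosh h\tanh\ell)$ without showing the arc-length integral, but the computation $d\sigma = \sec\phi\,d\phi$, $\int_0^\Phi\sec\phi\,d\phi=\operatorname{arctanh}(\sin\Phi)$ is a one-liner, so this is a matter of exposition rather than a gap.
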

	
	\begin{proof}
		When $h = 0$, $\ell' = \ell$ and $h' = 0$. Therefore the first-order approximation has the form
		\[ \ell' \sim \ell + ah, \qquad h' \sim bh. \]
		We draw a segment $c$ from the top-left to bottom-right of the rectangle, and establish $\cosh c = \cosh \ell\cosh h$ using the hyperbolic law of cosines. Then we use the hyperbolic law of cosines again on the top-right triangle to express $\ell'$ in terms of $c$, $h'$, and the cosine of the lower-right angle, which in turn can be rewritten as $\frac{\sinh h}{\sinh c}$. The $\sinh c$ cancels out and we can substitute $\cosh c = \cosh \ell\cosh h$, giving
		\[ \cosh \ell' = \cosh \ell \cosh h \cosh h' - \sinh h \sinh h'. \]
		Plugging in the first-order approximations for $\ell'$ and $h'$ gives
		\[ \cosh \ell + ah\sinh \ell \sim (\cosh \ell)(1)(1) - (h)(bh), \]
		which implies that $ah \sinh \ell \sim 0$ and therefore $a = 0$ as desired. A similar procedure with the hyperbolic law of sines on the upper-right triangle gives
		\[ \sinh h' \tanh \ell  = \sinh \ell' \tanh h, \]
		which on first-order approximations becomes
		\[ (bh)(\tanh \ell) \sim (\sinh \ell)(h), \]
		and therefore $b = \cosh \ell$ as desired.
	\end{proof}
	
	Next we use this to cut up a ball in hyperbolic space into polytopes of arbitrarily small diameter. This is a replacement for the step of the proof of \cref{euclidean_ea} where we cut Euclidean space into cubes $C_i$. Let $B_r(x)$ denote the ball of radius $r$ about $x \in H^n$.
	
	\begin{lemma}\label{flow_polytopes_1}
		For each $r > 0$ we can find a finite set of polytopes $\{C_i\}_{i \in I}$ in $H^n$ such that
		\begin{itemize}
			\item The $C_i$ have pairwise disjoint interiors,
			\item $B_r(x) \subseteq \bigcup_{i \in I} C_i \subseteq B_{2r}(x)$, and
			\item the diameters of the pieces $C_i$ are as small as desired.
		\end{itemize}
	\end{lemma}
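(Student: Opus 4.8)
The plan is to induct on $n$, at each stage slicing hyperbolic space into thin ``slabs'' between consecutive perpendicular geodesic hyperplanes and filling each slab with ``prisms'' erected over a decomposition of one of its bounding hyperplanes, which by induction is a copy of $H^{n-1}$; the quantitative diameter control will come from \cref{thin_rectangle}. We shall in fact prove the stronger statement in which the $C_i$ are taken to be \emph{convex} polytopes (which is what the construction produces). The cases $n=0,1$ are immediate: a point, respectively a subdivision of a sufficiently long geodesic interval. For the inductive step, first fix a bounded convex polytope $P_0$ with $B_r(x)\subseteq P_0\subseteq B_{2r}(x)$ -- e.g.\ the intersection of finitely many half-spaces whose bounding hyperplanes are tangent to the sphere $\partial B_{3r/2}(x)$ at a fine enough finite net of points. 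It then suffices to produce finitely many convex polytopes, with pairwise disjoint interiors and small diameter, that are contained in $P_0$ and together contain $B_r(x)$, since intersecting any such family with $P_0$ preserves disjointness of interiors, only shrinks diameters, and forces containment in $B_{2r}(x)$.

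Next, choose a complete geodesic $\gamma$ through $x=\gamma(0)$ and let $H_t$ be the geodesic hyperplane through $\gamma(t)$ orthogonal to $\gamma$. Since $\gamma$ is a geodesic line, the $\{H_t\}_{t\in\R}$ foliate $H^n$ via nearest-point projection to $\gamma$, and a short computation with the hyperbolic law of cosines shows that $B_r(x)$ meets only the slabs $S_j$ lying between $H_{jh}$ and $H_{(j+1)h}$ with $|jh|<r+h$, of which there are finitely many. Fix $h\le r$, to be shrunk later. On each relevant $H_{jh}\cong H^{n-1}$ apply the inductive hypothesis at $\gamma(jh)$ with radius $3r$, obtaining a finite family of small-diameter convex polytopes $\{D^{(j)}_\alpha\}$ with disjoint interiors covering $B_{3r}(\gamma(jh))\cap H_{jh}$. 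For each $\alpha$ let $C_{j,\alpha}$ be the intersection of $P_0$, the two half-spaces bounding the slab $S_j$, and, for each facet of $D^{(j)}_\alpha$, the half-space of $H^n$ bounded by the ``perpendicular extension'' of the geodesic hyperplane of $H_{jh}$ containing that facet; concretely, in the hyperboloid model $H_{jh}=H^n\cap v^\perp$ and a facet hyperplane $H_{jh}\cap w^\perp$ (with $w\in v^\perp$) extends to $H^n\cap w^\perp$. Each $C_{j,\alpha}$ is an intersection of finitely many half-spaces, bounded (it lies in $P_0$) and of nonempty interior, hence an $n$-polytope, and it is convex. The key geometric observation is that the nearest-point projection $\pi\colon H^n\to H_{jh}$ moves points along the perpendicular geodesics and, in the hyperboloid model, satisfies $\langle\pi(q),w\rangle$ proportional to $\langle q,w\rangle$ with positive factor; thus $\pi$ respects each half-space decomposition by a perpendicular extension, $\pi(\mathrm{int}\,C_{j,\alpha})\subseteq\mathrm{int}\,D^{(j)}_\alpha$, and therefore the $C_{j,\alpha}$ in a fixed slab have disjoint interiors (prisms in different slabs are separated by an $H_{jh}$). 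A point $p\in B_r(x)$ lies in some slab $S_j$; its projection $\bar p\in H_{jh}$ has $d(\gamma(jh),\bar p)\le d(\gamma(jh),p)<r+h<3r$, so $\bar p\in D^{(j)}_\alpha$ for some $\alpha$, and by the projection compatibility $p$ lies on the correct side of every defining half-space, so $p\in C_{j,\alpha}$; hence $B_r(x)\subseteq\bigcup_{j,\alpha}C_{j,\alpha}$. Finally $C_{j,\alpha}$ is contained in the image of $\bigl(D^{(j)}_\alpha\cap B_{3r}(\gamma(jh))\bigr)\times[0,h]$ under the perpendicular cylinder map of \cref{cylinder_volume}, which is smooth, hence Lipschitz on a compact neighbourhood of $B_{2r}(x)$ with constant $L=L(r)$; together with \cref{thin_rectangle} this gives $\diam(C_{j,\alpha})\lesssim \diam(D^{(j)}_\alpha)+h$. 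Choosing $h$ small and then, via the inductive hypothesis, the $D^{(j)}_\alpha$ of sufficiently small diameter makes every $\diam(C_{j,\alpha})$ as small as desired, completing the induction.

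The main obstacle is not the combinatorial bookkeeping but the hyperbolic geometry underpinning the prism construction: verifying that over a geodesic sub-hyperplane of $H_{jh}$ there is a well-defined geodesic hyperplane of $H^n$ perpendicular to $H_{jh}$, that finitely many of these (plus the slab and $P_0$) cut out a bounded convex polytope, and -- most importantly -- that nearest-point projection onto $H_{jh}$ is compatible with the resulting half-space decompositions, so that coverage of $H_{jh}$ by the $D^{(j)}_\alpha$ lifts to coverage of the slab and disjointness of interiors descends. These facts are elementary in the hyperboloid model, where a perpendicular extension corresponds to keeping the same spacelike normal vector, but they must be stated carefully. The remaining quantitative point -- that the diameter distortion under a short perpendicular flow is uniformly controlled over a compact region -- is exactly what \cref{thin_rectangle} provides. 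The spherical case is entirely analogous, working inside an open hemisphere when $2r$ is small enough, and when $B_{2r}(x)$ is all of $S^n$ simply replacing the slab argument by any sufficiently fine geodesic triangulation of $S^n$.
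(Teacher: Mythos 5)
Your proof follows essentially the same strategy as the paper's: induct on dimension, slice $H^n$ into thin slabs between equally spaced perpendicular geodesic hyperplanes, and fill each slab with perpendicular prisms over an inductively obtained small-diameter cover of the hyperplane, using \cref{thin_rectangle} to control the distortion; your auxiliary convex polytope $P_0$ and explicit half-space description of the prisms usefully firm up the upper containment in $B_{2r}(x)$ and the convexity and polytopality that the paper treats more briefly. One small slip: the prism $C_{j,\alpha}$ lies in the cylinder image over $[0,h']$ with $h' \lesssim h\cosh(2r)$ (this is exactly what \cref{thin_rectangle} gives), not over $[0,h]$, though this does not affect the conclusion.
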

	
	\begin{lemma}\label{flow_polytopes_2}
		For each $r > 0$ and $\epsilon > 0$, we can find two finite sets of polytopes $\{C_i\}_{i \in I}$ and $\{D_j\}_{j \in J}$ in $H^n$ satisfying all the conditions of the previous lemma, and such that
		\begin{itemize}
			\item for each $i \in I$ and $j \in J$ we have an orientation-preserving isometry $g_{ij}$ such that $g_{ij}(C_i)$ is contained in the interior of $D_j$, and
			\item the maximum ratio of the volumes satisfies the bound
			\[ \max_{i,j} \left( \frac{\vol(D_j)}{\vol(C_i)} \right) < (\cosh (2r))^{2n-2} + \epsilon. \]
		\end{itemize}
	\end{lemma}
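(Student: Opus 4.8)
The plan is to mimic the cube step in the proof of \cref{euclidean_ea}, replacing congruent Euclidean cubes by \emph{geodesic boxes} — the figures obtained by iteratively sweeping a geodesic segment along perpendicular geodesics, exactly as in \cref{thin_rectangle} — and to control the volume distortion that hyperbolic geometry introduces. Fix a small $\delta>0$, to be sent to $0$ at the end. Tile a neighbourhood of $x$ by geodesic $n$-boxes of arc-length size $\delta$ in each of the $n$ construction directions, keep those lying inside $B_{2r}(x)$, and note that since box diameters are $O(\delta)\ll r$ the kept boxes still cover $B_r(x)$; call this family $\{C_i\}_{i\in I}$. For $\{D_j\}_{j\in J}$ repeat the construction on a coarser grid, with arc-length size $(1+\eta)\delta$ in the first (geodesic) construction direction and $(1+\eta)\cosh(2r)\,\delta$ in each of the remaining $n-1$ directions, for a small auxiliary $\eta>0$; again discard boxes that leave $B_{2r}(x)$ and use the same fattening to keep $B_r(x)$ covered. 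By construction both families have pairwise disjoint interiors, lie between $B_r(x)$ and $B_{2r}(x)$, and have diameter $O(\delta)$, so they satisfy the conditions of \cref{flow_polytopes_1}.

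To estimate volumes I would build a geodesic box one construction direction at a time. Sweeping a $(k-1)$-dimensional geodesic box of arc-length size $\lesssim\delta$ along perpendicular geodesics a distance $\lesssim\delta$ multiplies its $(k-1)$-volume, to first order in $\delta$, by a factor lying in $[1,\cosh(2r)]$: this is \cref{thin_rectangle} carried out inside the totally geodesic copies of $H^2$ spanned by the relevant pairs of directions (with \cref{cylinder_volume} accounting for the transverse direction), the upper bound $\cosh(2r)$ because every length occurring is at most $\diam B_{2r}(x)=2r$ and $\cosh$ is increasing. The first construction direction contributes no distortion, so iterating the remaining $n-1$ sweeps gives $\vol(C_i)=\delta^n$ up to a factor in $[1-o(1),\ \cosh^{n-1}(2r)(1+o(1))]$, and similarly $\vol(D_j)\le(1+\eta)^n\cosh^{2n-2}(2r)\,\delta^n(1+o(1))$ — the two copies of $\cosh^{n-1}(2r)$ coming respectively from the $n-1$ enlarged sides of $D_j$ and from the sweeping distortion — all uniformly in $i$ and $j$, where $o(1)\to0$ as $\delta\to0$. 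Uniformity over the $O(\delta^{-n})$ boxes comes from uniform continuity of the metric tensor on the compact set $\bar B_{2r}(x)$, as in \cref{cylinder_volume}. Choosing first $\eta$ and then $\delta$ small enough makes $\max_{i,j}\vol(D_j)/\vol(C_i)<\cosh^{2n-2}(2r)+\epsilon$.

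It remains to produce the isometries $g_{ij}$. Each geodesic box carries a canonical corner flag (a vertex, the first-direction edge out of it, the $2$-face of the first two directions, and so on), and $H^n$ acts transitively on such flags, so, after matching orientations, there is an orientation-preserving isometry $g_{ij}$ carrying the corner flag of $C_i$ onto that of $D_j$. Every construction direction of $D_j$ has arc-length size strictly larger than the corresponding side of $C_i$: the first by a factor $1+\eta$, and each of the other $n-1$ by $(1+\eta)\cosh(2r)$, which for $\delta$ small dominates every side of every $C_i$, since each $C_i$-side is at most $\cosh(2r)\,\delta(1+o(1))$ while its first side is exactly $\delta$. Because the metric distortion across any region of diameter $O(\delta)$ is within $1+o(1)$ of the identity, the image $g_{ij}(C_i)$ is a geodesic box that is strictly side-smaller than $D_j$ in every direction; convexity of geodesic boxes then yields $g_{ij}(C_i)\subseteq\mathrm{int}(D_j)$. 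If the two flags fail to be co-oriented one uses instead the mirror-image corner flag of $D_j$, still realised by an orientation-preserving isometry of the ambient $H^n$.

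The main obstacle is the bookkeeping of first-order errors: \cref{thin_rectangle} controls the geometry only to first order in the sweep length, so one must show that the $o(1)$ terms above still tend to $0$ after $n-1$ iterated sweeps and after comparing boxes based at different points of $B_{2r}(x)$, with a \emph{single} $\delta$ working for all $O(\delta^{-n})$ boxes at once. As in the proof of \cref{cylinder_volume} this is a uniform-continuity statement on the compact set $\bar B_{2r}(x)$, but the argument has to be arranged so that the $\epsilon$- and $\delta$-quantifiers occur in the right order. The spherical case feeding \cref{spherical_ea} is handled by the same method, carried out inside an open hemisphere of $S^n$.
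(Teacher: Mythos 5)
Your overall strategy follows the paper — an inductive, flow-type construction with volume estimates controlled by \cref{thin_rectangle} and \cref{cylinder_volume}, yielding the $\cosh^{2n-2}(2r)$ bound from $n-1$ sweeps each costing up to $\cosh^2(2r)$ — but there is a genuine gap in the two places where your argument departs from the paper's.

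First, ``tile a neighbourhood of $x$ by geodesic $n$-boxes of arc-length size $\delta$ in each construction direction'' is not a construction that produces a cover by polytopes. Sweeping a geodesic segment perpendicularly by a fixed arc-length produces a region whose far face is an equidistant curve, not a geodesic, so the swept region is not a polytope; and hyperbolic boxes of constant side length do not tile, even locally. The paper resolves both problems at once by fixing a family of perpendicular \emph{hyperplanes} $H_\alpha$ with spacing $h$ along $\gamma$ and defining the flow polytope $f(P,h)$ as the part of the flow cylinder trapped between consecutive hyperplanes; the resulting pieces have variable side lengths (between $h$ and $\approx h\cosh(2r)$ depending on how far from $\gamma$ they sit), are genuine polytopes, and cover the slab by construction. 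If you insist on ``arc-length $\delta$ in every direction'' you lose both the cover and the polytope structure.

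Second, and more importantly, the final containment $g_{ij}(C_i)\subseteq\mathrm{int}(D_j)$ is where you rely on an unproved monotonicity claim (``the image is side-smaller in every direction; convexity of geodesic boxes then yields containment''). Even granting that $C_i$ and $D_j$ are well-defined convex pieces, a flag-matching isometry does not in general carry the hyperplanes $H_\alpha$ defining $C_i$ to the hyperplanes defining $D_j$, so $g_{ij}(C_i)$ is a flow polytope for the \emph{wrong} hyperplanes and its shape need not compare to $D_j$ by side-length bookkeeping alone. The paper's key observation — and the thing your write-up is missing — is that any isometry of $H^n$ sending $H_\alpha$ to $H_\beta$ \emph{commutes with the perpendicular geodesic flows} off those two hyperplanes. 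This means that if an $(n-1)$-dimensional, orientation-preserving isometric embedding $C'\hookrightarrow\mathrm{int}(D')$ exists between a $C$-piece in $H_\alpha$ and a $D$-piece in $H_\beta$, it extends automatically (using the thicker $D$-spacing $h'$) to an embedding $f(C',h)\hookrightarrow\mathrm{int}(f(D',h'))$, with orientation preserved. The whole point of that equivariance is to make the existence of the $g_{ij}$ propagate inductively from the trivial one-dimensional base case, so that no separate ``smaller box sits inside bigger box'' lemma is ever needed. Replacing it with a corner-flag-plus-convexity argument is not a cosmetic change: it reintroduces exactly the shape-comparison problem the flow equivariance was designed to avoid, and you have not supplied a proof of the comparison. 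To close the gap, rewrite the construction in terms of the hyperplane-trapped flow polytopes, and obtain $g_{ij}$ by induction via the flow-equivariance of hyperplane-matching isometries rather than by flag transitivity.
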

	
	\begin{proof}
		We begin with the proof of the first lemma; the second lemma will use the same construction.
		
		We induct on the dimension of our hyperbolic space $H^n$. The case where $n = 1$ is easy. For general $n$, we fix a geodesic $\gamma$ passing through $x$, and consider equally-spaced hyperplanes $H_\alpha$ perpendicular to $\gamma$, with arbitrarily small distance $h$ between them.

	\vspace{1em}
	\centerline{
	\def\svgwidth{2.5in}
\begingroup%
  \makeatletter%
  \providecommand\color[2][]{%
    \errmessage{(Inkscape) Color is used for the text in Inkscape, but the package 'color.sty' is not loaded}%
    \renewcommand\color[2][]{}%
  }%
  \providecommand\transparent[1]{%
    \errmessage{(Inkscape) Transparency is used (non-zero) for the text in Inkscape, but the package 'transparent.sty' is not loaded}%
    \renewcommand\transparent[1]{}%
  }%
  \providecommand\rotatebox[2]{#2}%
  \newcommand*\fsize{\dimexpr\f@size pt\relax}%
  \newcommand*\lineheight[1]{\fontsize{\fsize}{#1\fsize}\selectfont}%
  \ifx\svgwidth\undefined%
    \setlength{\unitlength}{180.13528095bp}%
    \ifx\svgscale\undefined%
      \relax%
    \else%
      \setlength{\unitlength}{\unitlength * \real{\svgscale}}%
    \fi%
  \else%
    \setlength{\unitlength}{\svgwidth}%
  \fi%
  \global\let\svgwidth\undefined%
  \global\let\svgscale\undefined%
  \makeatother%
  \begin{picture}(1,0.85119926)%
    \lineheight{1}%
    \setlength\tabcolsep{0pt}%
    \put(0,0){\includegraphics[width=\unitlength,page=1]{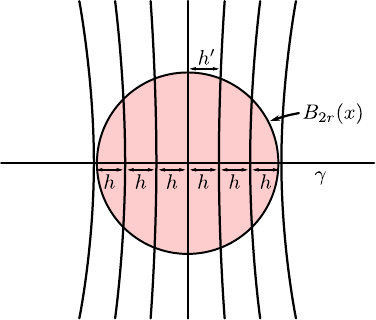}}%
  \end{picture}%
\endgroup%

	}
	\vspace{1em}
	
		We constrain ourselves to the ball $B_{2r}(x)$, and let $h'$ be the maximum distance over this ball that it takes to travel from any plane $H_\alpha$ to the next plane $H_{\alpha + 1}$, along a geodesic perpendicular to $H_\alpha$. By \cref{thin_rectangle}, we get that $h' \lesssim h \cosh (2r)$, for small $h$ and fixed $r$.

\renewcommand*{\figwidth}{1.4in}
\hspace{-1.5em}
\begin{minipage}[t]{\dimexpr\textwidth-\figwidth-\figmargin}
	
\setlength{\parindent}{1.5em}
	We take any polytope $P \subseteq H_\alpha$ and form the geodesic flow in the perpendicular direction to $H_\alpha$, giving a map
		\[ \Phi\colon P \times \R \to H^n. \]
		The map $\Phi$ is not an isometry, but every copy of $\R$ does go to a geodesic. It follows that if we intersect the image of $P \times \R$ with the space between the hyperplanes $H_\alpha$ and $H_{\alpha+1}$, we get a polytope that we call the \emph{flow polytope} $f(P,h)$.	By our definition of $h$ and $h'$ we get the inclusions
		\[ \Phi(P \times [0,h]) \subseteq f(P,h) \subseteq \Phi(P \times [0,h']). \]
		The first conclusion we draw from this is that
		\[ \diam f(P,h) \leq 2h' + \diam P \lesssim 2h\cosh (2r) + \diam P. \]

\vspace{.5em}

\end{minipage}%
\hfill 
\begin{minipage}[t]{\dimexpr\figwidth}\vspace{0pt}
	
	\def\svgwidth{\figwidth}
\begingroup%
  \makeatletter%
  \providecommand\color[2][]{%
    \errmessage{(Inkscape) Color is used for the text in Inkscape, but the package 'color.sty' is not loaded}%
    \renewcommand\color[2][]{}%
  }%
  \providecommand\transparent[1]{%
    \errmessage{(Inkscape) Transparency is used (non-zero) for the text in Inkscape, but the package 'transparent.sty' is not loaded}%
    \renewcommand\transparent[1]{}%
  }%
  \providecommand\rotatebox[2]{#2}%
  \newcommand*\fsize{\dimexpr\f@size pt\relax}%
  \newcommand*\lineheight[1]{\fontsize{\fsize}{#1\fsize}\selectfont}%
  \ifx\svgwidth\undefined%
    \setlength{\unitlength}{95.39742901bp}%
    \ifx\svgscale\undefined%
      \relax%
    \else%
      \setlength{\unitlength}{\unitlength * \real{\svgscale}}%
    \fi%
  \else%
    \setlength{\unitlength}{\svgwidth}%
  \fi%
  \global\let\svgwidth\undefined%
  \global\let\svgscale\undefined%
  \makeatother%
  \begin{picture}(1,1.36676821)%
    \lineheight{1}%
    \setlength\tabcolsep{0pt}%
    \put(0,0){\includegraphics[width=\unitlength,page=1]{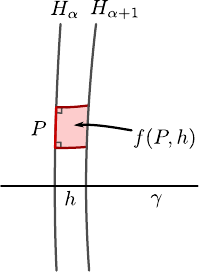}}%
  \end{picture}%
\endgroup%

\end{minipage}%

		Therefore if we inductively define polytopes $C_i$ by taking the flow polytopes for a collection of $(n-1)$-dimensional polytopes covering the ball of radius $r$ in each of the hyperplanes $H_\alpha$, using the same value of $h$ each time, we get $\diam C_i \lesssim 2nh\cosh(2r)$. This goes to zero as $h \to 0$, proving the first lemma. A sketch of the polytopes $\{C_i\}$ is given below.
		
	\vspace{.8em}
	\centerline{
	\def\svgwidth{2.4in}
\begingroup%
  \makeatletter%
  \providecommand\color[2][]{%
    \errmessage{(Inkscape) Color is used for the text in Inkscape, but the package 'color.sty' is not loaded}%
    \renewcommand\color[2][]{}%
  }%
  \providecommand\transparent[1]{%
    \errmessage{(Inkscape) Transparency is used (non-zero) for the text in Inkscape, but the package 'transparent.sty' is not loaded}%
    \renewcommand\transparent[1]{}%
  }%
  \providecommand\rotatebox[2]{#2}%
  \newcommand*\fsize{\dimexpr\f@size pt\relax}%
  \newcommand*\lineheight[1]{\fontsize{\fsize}{#1\fsize}\selectfont}%
  \ifx\svgwidth\undefined%
    \setlength{\unitlength}{180.13528095bp}%
    \ifx\svgscale\undefined%
      \relax%
    \else%
      \setlength{\unitlength}{\unitlength * \real{\svgscale}}%
    \fi%
  \else%
    \setlength{\unitlength}{\svgwidth}%
  \fi%
  \global\let\svgwidth\undefined%
  \global\let\svgscale\undefined%
  \makeatother%
  \begin{picture}(1,0.85119926)%
    \lineheight{1}%
    \setlength\tabcolsep{0pt}%
    \put(0,0){\includegraphics[width=\unitlength,page=1]{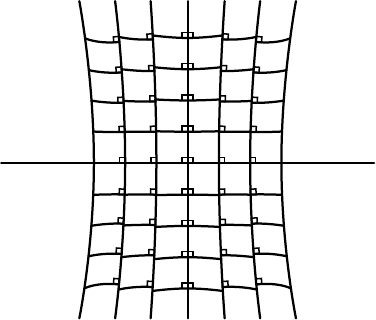}}%
  \end{picture}%
\endgroup%

	}
	\vspace{1em}
		
		For the second lemma we do the same construction to build both $C_i$ and $D_j$. We note that for any pair of hyperplanes $H_\alpha$ and $H_\beta$, any isometry of $H^n$ that moves $H_\alpha$ to $H_\beta$ will commute with the flow maps from these two hyperplanes. Therefore, if $C \subseteq H_\alpha \cap B_{2r}(x)$, $D \subseteq H_\beta \cap B_{2r}(x)$, and there an isometric embedding of $C$ into the interior of $D$ as subsets of $H^{n-1}$, then there is also an isometric embedding of $f(C,h)$ into $f(D,h')$ as subsets of $H^n$. Furthermore, if the isometry from $C$ into $D$ is oriented, the isometry from $f(C,h)$ into $f(D,h')$ is oriented as well.
		
		Therefore, if we use $h$ as the hyperplane spacing for $C_i$ and $h'$ as the hyperplane spacing for $D_j$, we get two covers of $B_r(x)$ with the property that each piece $C_i$ embeds into each piece $D_j$, as required.
		
		We then calculate the volume ratio using \cref{cylinder_volume}:
		\begin{align*}
			&h\vol(C) \lesssim \vol f(C,h) \lesssim h'\vol(C) \lesssim h\cosh(2r)\vol(C).
		\end{align*}
		If we therefore start with covers of the ball in $H^1$ by intervals, with volume differing by a factor of $(1+\delta)$ between the two covers for some $\delta > 0$, and apply this flow construction $(n-1)$ times, using $h$ for the first cover and $h'$ for the second cover each time, we get two covers of the ball in $H^n$ whose volume ratio is
		\[
			\frac{\vol(D_j)}{\vol(C_i)}
			\lesssim (1+\delta)\left(\frac{h'\cosh(2r)}{h}\right)^{n-1}
			\lesssim (1+\delta)\left(\frac{h\cosh^2(2r)}{h}\right)^{n-1}
			\lesssim (1+\delta)\left(\cosh(2r)\right)^{2n-2}.
		\]
		We can therefore make this volume ratio as close to $\left(\cosh(2r)\right)^{2n-2}$ as we want by making $h$ and $\delta$ small. Furthermore, the factor of $(1+\delta)$ and the inductive argument ensure that each $C_i$ embeds into the interior of each $D_j$ by an oriented isometry.
	\end{proof}
	
	\begin{lemma}\label{equal_ratio_pieces}
		Given two hyperbolic polytopes $P$ and $Q$, letting $m = \frac{\vol(Q)}{\vol(P)}$, we can cut $P$ and $Q$ into an equal number of pieces $P_1$, $\ldots$, $P_k$ and $Q_1$, $\ldots$, $Q_k$ such that 
		\begin{itemize}
			\item $\vol(Q_i) = m\vol(P_i)$ for $i = 1, \ldots, k$, and
			\item the diameters $\diam P_i$ and $\diam Q_i$ are as small as desired.
		\end{itemize}
	\end{lemma}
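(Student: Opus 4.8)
The plan is to chop $P$ and $Q$ into small pieces and then reduce the matching of volumes to an elementary bookkeeping argument. We may assume $P$ and $Q$ are nonempty, so $\vol(P),\vol(Q)>0$ and $m>0$ (otherwise the statement is vacuous). First I would subdivide $P$ into finitely many geometric $n$-simplices $P'_1,\dots,P'_N$ with pairwise disjoint interiors and each $\diam P'_a$ as small as desired, and likewise subdivide $Q$ into small geometric $n$-simplices $Q'_1,\dots,Q'_M$; this is possible by the construction in the first bullet of \cref{flow_polytopes_1}, or simply by passing to a sufficiently fine triangulation.

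The key elementary fact I would isolate is: \emph{a convex $n$-polytope $C$ in $H^n$ can be cut along geodesic hyperplanes into polytopes of any prescribed volumes $w_1,\dots,w_r$ with $\sum_i w_i=\vol(C)$}, each of diameter $\leq\diam C$. To see this, fix a geodesic $\gamma$ meeting the interior of $C$, let $H_t$ be the hyperplane perpendicular to $\gamma$ at arc-length parameter $t$, and consider the volume $v(t)$ of the part of $C$ on the near side of $H_t$. Since $C$ has finite measure and the half-spaces vary continuously, $v$ is continuous and increases from $0$ to $\vol(C)$, so by the intermediate value theorem we can choose cut positions realizing the partial sums $w_1,\ w_1+w_2,\ \dots$. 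Each resulting piece is the intersection of $C$ with a slab between two such hyperplanes, hence a (convex) polytope of diameter no larger than $\diam C$.

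Now I would do the matching. Order the pieces and form the cumulative volumes $0=s_0<s_1<\dots<s_N=\vol(P)$ with $s_a=\sum_{a'\leq a}\vol(P'_{a'})$, and $0=t_0<t_1<\dots<t_M=\vol(Q)$ with $t_b=\sum_{b'\leq b}\vol(Q'_{b'})$; these are strictly increasing because each $P'_a,Q'_b$ has positive volume. Since $m\,s_N=m\vol(P)=\vol(Q)=t_M$, the finite sets $\{m s_0,\dots,m s_N\}$ and $\{t_0,\dots,t_M\}$ both partition $[0,\vol(Q)]$, so their common refinement gives $0=u_0<u_1<\dots<u_K=\vol(Q)$. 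For each $c$ the interval $(u_{c-1},u_c)$ lies inside a unique $(t_{b(c)-1},t_{b(c)})$ and a unique $(m s_{a(c)-1},m s_{a(c)})$. Inside $Q'_b$, cut along geodesic hyperplanes (using the fact above) into pieces of the prescribed volumes $\{\,u_c-u_{c-1} : b(c)=b\,\}$, which sum to $t_b-t_{b-1}=\vol(Q'_b)$; this yields $Q_c\subseteq Q'_{b(c)}$ with $\vol(Q_c)=u_c-u_{c-1}$. Inside $P'_a$, cut into pieces of the prescribed volumes $\{\,(u_c-u_{c-1})/m : a(c)=a\,\}$, which sum to $(m s_a-m s_{a-1})/m=\vol(P'_a)$; this yields $P_c\subseteq P'_{a(c)}$ with $\vol(P_c)=(u_c-u_{c-1})/m$. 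Then $\{P_c\}_{c=1}^{K}$ and $\{Q_c\}_{c=1}^{K}$ decompose $P$ and $Q$ into equal numbers of pieces with $\vol(Q_c)=m\vol(P_c)$, and $\diam P_c\leq\max_a\diam P'_a$, $\diam Q_c\leq\max_b\diam Q'_b$ can be made as small as desired.

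I do not expect a serious obstacle: the only points requiring care are the continuity of the slicing volume $v(t)$ and the fact that a slab-intersection of a polytope is again a polytope, both routine; and keeping straight the telescoping identities $\sum_{b(c)=b}(u_c-u_{c-1})=t_b-t_{b-1}$ and $\sum_{a(c)=a}(u_c-u_{c-1})=m s_a-m s_{a-1}$, which guarantee that the pieces assigned to each $P'_a$ (resp.\ $Q'_b$) exhaust it, so that the $P_c$ really do cover $P$ and the $Q_c$ really do cover $Q$. The same argument applies verbatim in spherical geometry.
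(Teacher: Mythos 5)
Your proposal is correct and takes essentially the same approach as the paper: first subdivide into small-diameter pieces, then equalize volume ratios by hyperplane cuts justified by the intermediate value theorem. The paper phrases the matching step as a greedy two-pointer merge through the lists ("cut off a piece from whichever is larger, set both aside, repeat"), while you phrase it explicitly as the common refinement of the cumulative-volume partitions with the telescoping identities spelled out; these are the same algorithm, and your version merely makes the bookkeeping more explicit. The only cosmetic difference is that you obtain the initial small pieces by triangulating $P$ and $Q$ directly into small simplices, whereas the paper intersects them with the polytopes $C_i$ of \cref{flow_polytopes_1}; both give small-diameter pieces and both are fine, though for the IVT slicing step convexity is not actually required (a slab intersected with any polytope is a polytope of no larger diameter), so even that restriction could be dropped.
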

	
	\begin{proof}
		We first cut them into pieces of small diameter by intersecting with the polytopes $C_i$ from \cref{flow_polytopes_1}. Then we make the volume ratios equal by moving through the given lists one element at a time: if $m\vol(P_1) > \vol(Q_1)$, we cut off a piece from $P_1$ of volume $\frac{1}{m}\vol(Q_1)$ and set this piece and $Q_1$ to the side. Otherwise, we cut off a piece from $Q_1$ of volume $m\vol(P_1)$ and set this piece and $P_1$ to the side. Both of these moves are possible using a single hyperplane cut -- the intermediate value theorem guarantees that a hyperplane exists cutting the polytope into pieces of the desired volume. Repeating this step finitely many times, we achieve two lists of pieces with the desired property.
	\end{proof}
	
	\begin{theorem}\label{hyperbolic_ea}
	For $n>0$, the assembler $\cH^n_G$ is an EA-assembler, with volume function given by hyperbolic volume, provided $G \leq I(H^n)$ and $G$ contains a dense subgroup of the group of orientation-preserving isometries.
	\end{theorem}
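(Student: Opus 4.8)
The plan is to check the three requirements of \cref{def:ass-vol-props}. That hyperbolic volume is a volume function in the sense of \cref{volume_function} is immediate: every morphism of $\cH^n_G$ is an isometry, and hyperbolic volume is additive over decompositions of a polytope into sub-polytopes with pairwise disjoint interiors. Next, exactly as in the Euclidean case (\cref{euclidean_ea}), one observes that every object $\scr{P}$ of $\cG(\cH^n_G)$ is scissors congruent to an honest $n$-polytope: since $G$ is dense in the orientation-preserving isometry group and hyperbolic space admits isometries moving any polytope arbitrarily far away from a fixed one, we may pick $g_i \in G$ so that the translates $g_i(P_i)$ have pairwise disjoint interiors, and then $\bigsqcup_i g_i(P_i)$ is a polytope scissors congruent to $\scr{P}$. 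Thus it suffices to verify \ref{enum:ass-vol-existence} and \ref{enum:ass-vol-archimedean} for honest polytopes $P, Q \subseteq H^n$.

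Property \ref{enum:ass-vol-archimedean} is easy. Since here $\bR = \mathbb{R}$ with its usual order, it is enough (by the remark after \cref{def:ass-vol-props}) to produce for each polytope $Q$ a cover into pieces each of volume at most $\tfrac12 \vol(Q)$. Intersecting $Q$ with the small-diameter pieces $C_i$ of \cref{flow_polytopes_1} gives a cover of $Q$ by polytopes of arbitrarily small diameter, and a subset of $H^n$ of diameter $\delta$ lies in a ball of radius $\delta$, whose volume tends to $0$ as $\delta \to 0$; taking $\delta$ small enough finishes this axiom.

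The real content is \ref{enum:ass-vol-existence}. Suppose $\vol(P) < \vol(Q)$ and put $m = \vol(Q)/\vol(P) > 1$. The first step is \cref{equal_ratio_pieces}: cut $P$ and $Q$ into pieces $P_1, \dots, P_k$ and $Q_1, \dots, Q_k$ with $\vol(Q_i) = m\,\vol(P_i)$ and with all diameters below a threshold $r$ to be chosen; since a scissors embedding $P \hookrightarrow Q$ may be assembled from scissors embeddings $P_i \hookrightarrow Q_i$, and since (using density of $G$) we may further translate $Q_i$ to lie in a small common ball $B_r(x_i)$ together with $P_i$, we reduce to the following local problem: given two polytopes $P', Q' \subseteq B_r(x)$ with $\vol(Q') = m\,\vol(P')$, construct a scissors embedding $P' \hookrightarrow Q'$ using transformations in $G$. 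Choose the threshold $r$ so small that $(\cosh 2r)^{2n-2} < m$, with a little room to spare; the decisive point is that the distortion factor $(\cosh 2r)^{2n-2}$ furnished by \cref{flow_polytopes_2} tends to $1$ as $r \to 0$ while $m$ is a fixed number $> 1$, whereas in Euclidean space the analogous factor is identically $1$ and no shrinking is needed. Now apply the flow-polytope construction of \cref{flow_polytopes_2} at scale $r$ on $B_r(x)$: it yields covers $\{C_a\}$ and $\{D_b\}$ of $B_r(x)$ — both satisfying $B_r(x) \subseteq \bigcup C_a \subseteq B_{2r}(x)$ — with an orientation-preserving isometric embedding of each $C_a$ into the interior of each $D_b$ and with $\vol(D_b)/\vol(C_a) < (\cosh 2r)^{2n-2} + \epsilon < m$. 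Intersecting $P'$ with the $C_a$ and proceeding exactly as the congruent cubes $C_i \leftrightarrow D_i$ are handled in \cref{euclidean_ea} — using these $C_a \hookrightarrow D_b$ embeddings in place of translations of cubes — one covers $P'$ by small pieces and places $G$-isometric copies of them with pairwise disjoint interiors inside $Q'$; the volume bound $\vol(D_b)/\vol(C_a) < m$ together with $\vol(Q') = m\,\vol(P')$ ensures there is enough room, and where a genuinely strict inequality is needed one first replaces $Q'$ by the slightly smaller polytope $Q'_{-\delta}$, which costs only $\delta$ in volume by \cref{volume_converges}. Finally, as in \cref{euclidean_ea}, each isometry used to move a piece of $P'$ into $Q'$ is perturbed by an arbitrarily small orientation-preserving amount to land in the dense subgroup $G$; since the slots were taken slightly smaller than the available space, these perturbations preserve disjointness, and we obtain the desired scissors embedding in $\cH^n_G$.

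The main obstacle is precisely the volume bookkeeping in \ref{enum:ass-vol-existence}: because no hyperbolic isometry carries one small piece exactly onto another, one cannot tile efficiently as in Euclidean space, and the entire purpose of the preparatory results \cref{cylinder_volume}, \cref{thin_rectangle}, \cref{flow_polytopes_1}, and \cref{flow_polytopes_2} is to bound the unavoidable volume loss by a factor $(\cosh 2r)^{2n-2}$ that becomes as close to $1$ as we like upon passing to small scale. Once the Euclidean argument of \cref{euclidean_ea} is in hand, everything else — realizing objects of $\cG$ as polytopes, the archimedean property, and the perturbation into $G$ — is routine, and the spherical statement \cref{spherical_ea} follows by the same scheme with $\cosh$ replaced by $\cos$ (together with the usual hemisphere and convexity caveats of spherical geometry).
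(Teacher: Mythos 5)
Your proposal follows the same overall route as the paper: reduce via \cref{equal_ratio_pieces} to two small polytopes in a common ball $B_r(x)$ with volume ratio $m$, invoke \cref{flow_polytopes_2} to produce covers $\{C_i\}$ and $\{D_j\}$ with controlled distortion, and count that there are more $D_j$'s inside $Q$ than $C_i$'s covering $P$, using \cref{volume_converges} for the collar error and density of $G$ for the final perturbation. One point where you are more informal than you should be: you write that $\vol(D_b)/\vol(C_a) < m$ together with $\vol(Q') = m\vol(P')$ ``ensures there is enough room,'' but on its own this does not suffice---the cover $\{C_a\}$ of $P'$ overshoots into $P'_{+\epsilon}$ while the cover $\{D_b\}$ inside $Q'$ only fills $Q'_{-\epsilon}$, and both of these eat into the slack $m - (\cosh 2r)^{2n-2}$. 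You invoke $Q'_{-\delta}$ but not the companion inflation $P'_{+\delta}$, and you never state the comparison that actually closes the counting argument. The paper handles this cleanly by allocating a factor of $m^{1/3}$ to each of the three error sources (the $C/D$ ratio, the $P_{+\epsilon}$ overshoot, and the $Q_{-\epsilon}$ undershoot), giving the explicit inequality $\sum_j \vol(D_j)/\sum_i \vol(C_i) > m^{1/3} > \max_{i,j}\vol(D_j)/\vol(C_i)$, from which $\#D_j > \#C_i$ follows immediately; you should make this accounting explicit. A smaller remark: for axiom \ref{enum:ass-vol-archimedean} the paper uses the simpler route of cutting along hyperplanes via the intermediate value theorem (as in \cref{equal_ratio_pieces}), which avoids invoking \cref{flow_polytopes_1}; your diameter-to-volume argument is also fine but heavier than necessary.
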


	\begin{proof}
	As in the proof of \cref{equal_ratio_pieces}, a polytope $P$ can be cut along hyperplanes to form pieces of any desired volume, provided the volumes sum to $\vol(P)$, by a simple application of the intermediate value theorem. This is enough to prove \ref{enum:ass-vol-archimedean}.
	
	To prove \ref{enum:ass-vol-existence}, we consider any $P$ and $Q$ such that $m = \frac{\vol(Q)}{\vol(P)} > 1$. Using \cref{equal_ratio_pieces} we cut $P$ and $Q$ into pieces where the volume ratio is still $m$, but now the diameter $r$ of the pieces is so small that
	\[ (\cosh(2r))^{2n-2} < m^{1/3}. \]
	It suffices to prove the embedding for each piece separately, so we assume now that $P$ and $Q$ themselves have diameter less than $r$. Without loss of generality they are both subsets of $B_r(x)$.
	
	We next use \cref{volume_converges} to choose $\epsilon > 0$ such that
	\[ \frac{\vol(P_{+\epsilon})}{\vol(P)} < m^{1/3} \quad \textup{ and } \quad \frac{\vol(Q)}{\vol(Q_{-\epsilon)}} < m^{1/3},
	\quad \textup{ which implies } \quad  \frac{\vol(Q_{-\epsilon})}{\vol(P_{+\epsilon})} > m^{1/3}. \]
	We then use \cref{flow_polytopes_2} again to create two covers of $B_r(x)$ where the diameter of each piece is less than $\epsilon$, and the ratio of volumes is so close to $(\cosh(2r))^{2n-2}$ that it is also less than $m^{1/3}$:
	\[ \max_{i,j} \left( \frac{\vol(D_j)}{\vol(C_i)} \right) < m^{1/3}. \]
	We restrict attention to those polytopes $C_i$ in this cover that have nontrivial intersection with $P$, and those $D_j$ that are entirely contained in $Q$, as we did with the cubes in the proof of \cref{euclidean_ea}. Since the diameter of each $C_i$ and $D_j$ is smaller than $\epsilon$, we get
	\[ P \subseteq \bigcup_i C_i \subseteq P_{+\epsilon}, \qquad Q_{-\epsilon} \subseteq \bigcup_j D_j \subseteq Q. \]
	Therefore the ratio of volumes for these pieces satisfies
	\[
		\frac{\sum_j \vol(D_j)}{\sum_i \vol(C_i)}
		> \frac{\vol(Q_{-\epsilon})}{\vol(P_{+\epsilon})}
		> m^{1/3}.
	\]
	Since each $D_j$ has volume no more than $m^{1/3}\vol(C_i)$, this implies there are more pieces $D_j$ in this cover than there are pieces $C_i$. Since each $C_i$ embeds into the interior of each $D_j$, we can therefore cut $P$ into the polytopes $P \cap C_i$, and embed each of these into a different piece $D_j \subseteq Q$. This achieves the desired scissors embedding of $P$ into $Q$.
	
	Since each $C_i$ embeds into the interior of each $D_j$, any isometry sufficiently close to the given oriented isometry $g\colon C_i \to D_j$ also sends $C_i$ into $D_j$. We can therefore select $g$ to be in any given dense subgroup of the oriented isometry group, and the result still holds.

	\end{proof}
	
	\begin{theorem}\label{spherical_ea} For $n>0$, the assembler $\category{S}^n_G$ is an EA-assembler, with volume function given by spherical volume, provided $G$ contains a dense subgroup of the group of orientation-preserving isometries.
	\end{theorem}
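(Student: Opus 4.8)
The plan is to run the proof of \cref{hyperbolic_ea} essentially verbatim, substituting spherical trigonometry for hyperbolic trigonometry throughout. As there, axiom \ref{enum:ass-vol-archimedean} is the easy half: any convex spherical polytope can be cut along a hyperplane into pieces of prescribed volumes summing to its own, by the intermediate value theorem, so every polytope admits covers by pieces of arbitrarily small volume. The work is in axiom \ref{enum:ass-vol-existence}, for which I would re-establish, in order, the spherical analogues of \cref{cylinder_volume}, \cref{volume_converges}, \cref{thin_rectangle}, \cref{flow_polytopes_1}, \cref{flow_polytopes_2} and \cref{equal_ratio_pieces}. Once these are in place, the counting argument that proves \ref{enum:ass-vol-existence} in \cref{hyperbolic_ea} transplants with no change, and the density hypothesis on $G$ is used exactly as before: the isometries carrying the small pieces into the interiors of the large ones can be perturbed, hence chosen in any dense subgroup of the orientation-preserving isometry group $SO(n+1)$.

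Several ingredients need almost no adaptation. \cref{cylinder_volume} is stated for an arbitrary complete Riemannian manifold, so it applies to $S^n$ unchanged; \cref{volume_converges}, which deduces $\vol(P_{+\epsilon}),\vol(P_{-\epsilon}) \to \vol(P)$ by covering the thin shell $P_{+\epsilon}\setminus P$ with finitely many perpendicular cylinders, goes through once $P$ is decomposed into convex spherical polytopes, each contained in an open hemisphere as required by our convexity convention; and \cref{equal_ratio_pieces} uses only hyperplane bisection into prescribed volumes, which is available in $S^n$ just as in $H^n$. The one genuinely new computation is the spherical version of \cref{thin_rectangle}: for a thin right-angled geodesic quadrilateral in $S^2$ of fixed base length $\ell$ and variable height $h$, the spherical law of cosines and the spherical law of sines give, to first order in $h$,
\[ \ell' \sim \ell \qquad\text{and}\qquad h' \sim h\cos\ell. \]
This is derived just as in the hyperbolic case: draw the diagonal, apply the spherical law of cosines $\cos c = \cos\ell\cos h$, then apply the spherical laws of cosines and sines to the remaining small triangle, and compare first-order terms.

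Two structural differences deserve care. First, since $0 \le \cos\ell \le 1$ whereas $\cosh\ell \ge 1$, in positive curvature the perpendicular geodesic flow off a hyperplane is volume-\emph{contracting} rather than expanding; accordingly, in the spherical analogues of \cref{flow_polytopes_1} and \cref{flow_polytopes_2} one assigns the \emph{smaller} hyperplane spacing to the pieces $C_i$ and the larger to the $D_j$ --- the reverse of the hyperbolic choice --- and the resulting estimate on the volume ratio takes the form $\max_{i,j}(\vol(D_j)/\vol(C_i)) < (\sec(2r))^{2n-2} + \epsilon$, which still tends to $1$ as $r \to 0$. Second, $S^n$ has finite total volume, so (unlike in $E^n$ and $H^n$) a formal disjoint union $\scr{P} \in \cG(\category{S}^n_G)$ need not be representable by a single polytope; however, the embedding argument is entirely local --- it only ever places small-diameter polytopes lying in small balls into one another --- so one simply cuts every $P_i$ and $Q_j$ into sufficiently small pieces, using the spherical version of \cref{equal_ratio_pieces} together with \ref{enum:ass-vol-archimedean}, and runs the counting argument across the whole collection. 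With these adjustments, the reduction of \ref{enum:ass-vol-existence}, for polytopes $P$ and $Q$ with $m = \vol(Q)/\vol(P) > 1$, to small-diameter pieces in a ball $B_r(x)$ proceeds exactly as in \cref{hyperbolic_ea}: choose $r$ with $(\sec(2r))^{2n-2} < m^{1/3}$, choose $\epsilon$ by \cref{volume_converges} with $\vol(Q_{-\epsilon})/\vol(P_{+\epsilon}) > m^{1/3}$, and count the $C_i$ meeting $P$ against the $D_j$ contained in $Q$.

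I expect the main obstacle to be bookkeeping rather than anything conceptual: one must verify that every configuration that appears --- the balls $B_r(x)$, the pieces $C_i$ and $D_j$, and their images under the $g_{ij}$ --- stays inside a single open hemisphere, so that ``convex polytope'', ``hyperplane'' and ``perpendicular geodesic flow'' behave as in the Euclidean and hyperbolic pictures and no antipodal degeneracies arise, and one must track the reversed monotonicity coming from positive curvature so that the volume-ratio inequality lands on the correct side. Everything else is a faithful translation of the argument for \cref{hyperbolic_ea}, which is what one means by saying the spherical case is similar to the hyperbolic one.
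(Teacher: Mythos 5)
Your proposal is correct and essentially reproduces the paper's argument: the paper's proof is exactly a list of the local adaptations you identify (the estimate $h' \sim h\cos\ell$ in \cref{thin_rectangle} under the assumption $\ell < \pi/2$, the constraint $r < \pi/4$ to keep $B_{2r}(x)$ in an open hemisphere, replacing the maximum perpendicular travel distance by the minimum when defining $h'$, the volume-ratio bound $(\cos 2r)^{-(2n-2)}$, and cutting into convex pieces lying in open hemispheres in \cref{equal_ratio_pieces}), and your observation that formal disjoint unions need not be realized by a single polytope on the compact sphere is a genuine point of care that the paper leaves implicit and that you correctly resolve by working piecewise on small-diameter pieces. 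One small correction of phrasing: in \emph{both} the hyperbolic and spherical versions of \cref{flow_polytopes_2} the $C_i$ receive the smaller hyperplane spacing and the $D_j$ the larger, so ``the reverse of the hyperbolic choice'' overstates the change --- what actually reverses is only whether the freshly introduced quantity $h'$ is the maximum or the minimum perpendicular crossing distance, because perpendicular geodesics converge rather than diverge in positive curvature, which forces the paper to swap the roles of the labels $h$ and $h'$ while keeping the small-to-$C_i$, large-to-$D_j$ assignment unchanged.
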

	
	\begin{proof}
		The proof follows the same pattern as \cref{hyperbolic_ea}. The main differences are that:
		\begin{itemize}
			\item In \cref{thin_rectangle}, we assume that $\ell < \frac\pi{2}$, and show that $\ell' \sim \ell$ and $h' \sim h \cos \ell$.
			\item In \cref{flow_polytopes_1} we assume that $r < \frac\pi{4}$ so that $B_{2r}(x)$ is inside an open hemisphere.
			\item In the proof of \cref{flow_polytopes_1} we take $h'$ to be the minimum distance from each hyperplane $H_\alpha$ to the next one, rather than the maximum distance, and consequently use $h'$ for the hyperplane spacing for the smaller cover and $h$ for the larger cover.
			\item The volume estimate in \cref{flow_polytopes_2} is $(\cos(2r))^{-(2n-2)}$, so in the proof of \cref{hyperbolic_ea} we assume that \emph{this} quantity is $< m^{1/3}$.
			\item In the proof of \cref{equal_ratio_pieces} we first cut $P$ and $Q$ into convex pieces each lying in an open hemisphere of $S^n$.\qedhere
		\end{itemize}
	\end{proof}

	\begin{corollary}\label{cor:scissors-aut-geometries}
		In each of the above cases, the scissors automorphism groups $\aut_G(P) = \aut_{\category{UG}(\cX_G)}(P)$ have homology groups that are independent of $P$ as long as $P$ is nonempty, given by
		\[ H_*(\aut_G(P)) \cong H_*(\Omega^\infty_0 K(\cX_G)). \]
	\end{corollary}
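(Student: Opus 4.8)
The plan is to deduce this directly from the general machinery of \cref{sec:connection-to-assembler-k-theory}, once we observe that each of the geometry assemblers in question is an EA- or S-assembler and that the nonemptiness of $P$ supplies exactly the hypothesis needed to invoke \cref{cor:stable_homology_acyclic}.

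\textbf{Step 1: identify the assembler.} For Euclidean geometry with $G \leq I(E^n)$ containing a dense group of translations, \cref{euclidean_ea} shows $\cE^n_G$ is an EA-assembler; similarly \cref{hyperbolic_ea} and \cref{spherical_ea} show $\cH^n_G$ and $\category{S}^n_G$ are EA-assemblers under the stated density hypotheses on $G$, and \cref{euclidean_s} shows that Euclidean geometry with a distance-shrinking transformation and a dense group of translations gives an S-assembler. So in every listed case $\cX^n_G$ is an EA-assembler or an S-assembler, with its specified volume function in the former case.

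\textbf{Step 2: translate ``nonempty'' into the required hypothesis.} First recall $\aut_G(P) = \aut_{\category{UG}(\cX_G)}(P) = \aut_{\cG(\cX_G)}(P)$, since the isomorphisms of $\category{UG}(\cX_G)$ are precisely the scissors congruences of $\cG(\cX_G)$. If $\cX^n_G$ is an EA-assembler, the hypothesis of \cref{cor:stable_homology_acyclic} is $\vol(P) > 0$; this holds because a nonempty $n$-polytope is by definition a finite union of geometric $n$-simplices, each with nonempty interior in $X^n$, and hence has positive (Euclidean, hyperbolic, or spherical) volume. If $\cX^n_G$ is the S-assembler, the hypothesis is $P \not\simeq \varnothing$; by \cref{sc_to_empty} it suffices that $P$ admits no empty cover, which is clear since the empty family of morphisms has empty union while $P \neq \varnothing$.

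\textbf{Step 3: conclude.} Applying \cref{cor:stable_homology_acyclic} gives that $B\aut_{\cG(\cX_G)}(P) \to \Omega^\infty_{[P]} K(\cX_G)$ is acyclic, hence an isomorphism $H_*(\aut_G(P)) \cong H_*(\Omega^\infty_{[P]} K(\cX_G))$. Since $\Omega^\infty K(\cX_G)$ is a grouplike $E_\infty$-space, translation by a point is a homotopy equivalence permuting its path components, so $H_*(\Omega^\infty_{[P]} K(\cX_G)) \cong H_*(\Omega^\infty_0 K(\cX_G))$; this exhibits $H_*(\aut_G(P))$ as a group independent of the choice of nonempty $P$, as claimed. (The canonical form of the independence statement could instead be read off directly from \cref{thm:embeddings-induce-homology-isomorphisms-vol} and \cref{thm:embeddings-induce-homology-isomorphisms-zae}, which say that any scissors embedding of nonempty polytopes induces a homology isomorphism.) The only work here is the case bookkeeping of Steps 1--2; there is no new obstacle, as the substantive geometric input sits entirely in the EA/S verifications \cref{euclidean_ea}, \cref{euclidean_s}, \cref{hyperbolic_ea}, and \cref{spherical_ea}.
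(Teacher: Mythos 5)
Your proof is correct and follows essentially the same route as the paper: verify the EA/S axioms (\cref{euclidean_ea}, \cref{euclidean_s}, \cref{hyperbolic_ea}, \cref{spherical_ea}), note that a nonempty polytope has positive volume (hence is not scissors congruent to $\varnothing$), and then apply \cref{cor:stable_homology_acyclic}, identifying $\Omega^\infty_{[P]}$ with $\Omega^\infty_0$ via the grouplike $E_\infty$-structure. The paper states this corollary without an explicit argument, but the intended reasoning is precisely the bookkeeping you carry out in Steps 1--3.
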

	
	\subsection{Recollection of Thom spectrum model}\label{sec:thom-spectrum}
	In the next subsection we will make several computations of the homology of the spectrum $K(\cX_G)$ using its description as a Thom spectrum by the third author \cite{malkiewich2022}.  To state this result, we need the following ``polytopal Tits complex'':
	
	\begin{definition}\label{def:pt}Let $\PT(X^n)$ be the based space given by the quotient 
	\[\PT(X^n) \coloneqq \left( \underset{\varnothing \subsetneq U \subseteq X}\hocolim\, U \right) \Big/ \left( \underset{\varnothing \subsetneq U \subsetneq X^n}\hocolim\, U \right), \]
	where the homotopy colimits are taken over the partially ordered set of geometric subspaces of $X^n$, ordered by inclusion.\end{definition} 

	When $X^n$ is either Euclidean or hyperbolic geometry, the numerator of this fraction is contractible, and therefore we get the suspension of the denominator, in other words the suspension of the Tits complex of proper nonempty subspaces $\varnothing \subsetneq U \subsetneq X^n$.
	
	In any geometry, this quotient has an action by the isometry group $I(X^n)$ and therefore by any subgroup $G \leq I(X^n)$, so that we may take its based homotopy orbit space. It also has a vector bundle away from the basepoint, given by the tangent bundle $TX^n$ of $X^n$ restricted to each of the subspaces $U \subseteq X^n$. Formally desuspending by this vector bundle, we get a Thom spectrum. The following is \cite[Theorem 1.4]{malkiewich2022}:

	\begin{theorem}\label{thom_theorem}
		There is an equivalence
		\[ K(\cX^n_G) \simeq (\Sigma^{-TX^n} \PT(X^n))_{hG}. \]
	\end{theorem}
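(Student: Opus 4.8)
The plan is to compute $K(\cX^n_G)$ by a local-to-global argument over the poset of geometric subspaces of $X^n$, carrying the action of the isometry group $I(X^n)$ throughout, so that the final passage from the ``purely geometric'' assembler to $\cX^n_G$ is exactly a homotopy-orbit construction. This realizes the strategy of \cite{malkiewich2022}; below I sketch the moving parts.

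\emph{Step 1: a flag model via coning.} Every polytope has compatible subdivisions obtained by coning it off to a generic interior point, then coning each proper face of the resulting simplices to a generic interior point of that face, and so on down through the faces. Using Zakharevich's localisation and dévissage results for assemblers \cite{zakharevich2014}, this lets one replace $\cX^n_G$, without changing its K-theory, by an assembler of \emph{flagged simplices}: a geometric $n$-simplex together with a complete flag of its faces and a choice of interior point in each stratum, with morphisms coming from $G$. The discrete backbone of such an object is a complete flag of geometric subspaces $\varnothing \subsetneq U_0 \subsetneq \cdots \subsetneq U_n = X^n$, where $U_i$ is the subspace spanned by the $i$-th face, and the remaining continuous data—the chosen interior points and the shapes of the simplices—is fibered over the spaces $U_i$ and is controlled by the tangent bundle $TX^n$.

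\emph{Step 2: assembling the homotopy colimit, and taking orbits.} Filtering the flagged-simplex assembler by the length of the flag, or equivalently inducting on $n$, produces a sequence of localisation cofiber sequences whose successive layers are indexed by the geometric subspaces $U \subseteq X^n$, the layer at $U$ contributing the Thom spectrum of $TX^n|_U$ over $U$. Unfolding the induction identifies $K$ of the flagged-simplex assembler with the formal desuspension by $TX^n$ of the total cofiber of $U \mapsto U$ over $\varnothing \subsetneq U \subseteq X^n$ relative to the proper subposet $\varnothing \subsetneq U \subsetneq X^n$, i.e.\ with $\Sigma^{-TX^n}\PT(X^n)$; in the spherical case $X^n = \category{S}^n$ the numerator $\hocolim_{\varnothing \subsetneq U \subseteq X^n} U \simeq S^n$ does not collapse, which is why the relative form of $\PT$ is needed. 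All of this is natural in $I(X^n)$, giving an $I(X^n)$-action on $\Sigma^{-TX^n}\PT(X^n)$ that is the K-theory of the ``identity-isometries, all-subdivisions'' assembler; adjoining the subgroup $G$ to the morphisms has the effect, on the $E_\infty$-monoid $B\cW^\circ$ and hence after group completion, of replacing the $G$-spectrum by its homotopy orbits, yielding $K(\cX^n_G) \simeq (\Sigma^{-TX^n}\PT(X^n))_{hG}$.

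\emph{Main obstacle.} The substantive difficulty is Step 2: one needs the dévissage/localisation machinery for assemblers to produce the cofiber sequences in the first place, and then a careful tracking of orientations and tangent directions through the iterated coning to confirm that the bundle twisting the homotopy colimit is genuinely $TX^n$ (and not some other rank-$n$ bundle) and that the residual action is the geometric $I(X^n)$-action. Put differently, the technical heart is a descent statement for assembler K-theory along the stratification of $X^n$ by its geometric subspaces—that $K(-)$ takes this decomposition to the corresponding homotopy colimit of Thom spectra.
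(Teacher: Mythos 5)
First, note that the paper does not prove this statement: it is quoted verbatim as \cite[Theorem 1.4]{malkiewich2022}, and the closest thing to a proof in this paper is the sketch given for the restricted generalisation \cref{thm:thom-theorem-restricted}. Measured against that argument, your proposal takes a different route, and two of its steps have genuine gaps.

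The most serious gap is your final step, where you assert that adjoining the morphisms of $G$ to the assembler has the effect, ``on the $E_\infty$-monoid $B\cW^\circ$ and hence after group completion, of replacing the $G$-spectrum by its homotopy orbits.'' This is not formal. A morphism in $\cW^\circ(\cX^n_G)$ is a cover in which each piece is moved by a \emph{different} element of $G$, whereas the Grothendieck construction of the $G$-action on $\cW^\circ(\cX^n_1)$ (whose classifying space is the homotopy orbit space) only produces covers followed by a single global isometry; the comparison functor is far from an equivalence, and in general $K$ of a category with a group adjoined is not the homotopy orbits of $K$. The identification $K(\cX^n_G)\simeq K(\cX^n_1)_{hG}$ is precisely the main theorem of \cite{bgmmz}, and any proof of \cref{thom_theorem} must either invoke it or reprove it; you cannot get it for free from the symmetric monoidal structure.

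The second gap is Step 2. There is no d\'evissage/localisation machinery for assemblers that produces a filtration of $K(\cX^n_1)$ by flag length with identified layers; Zakharevich's results give cofibre sequences for sub- and quotient assemblers and a refinement/cofinality statement, but identifying the associated graded of your proposed filtration with Thom spectra of $TX^n|_U$ over the subspaces $U$ is exactly the hard content, not something the formalism hands you. The actual argument (see the proof of \cref{thm:thom-theorem-restricted}, following \cite[Section 4]{malkiewich2022}) avoids stratifications entirely: it writes $K(\cX^n_1)$ as a filtered homotopy colimit over subdivisions of finite products of sphere spectra via Barratt--Priddy--Quillen--Segal, identifies this colimit as a wedge of spheres using freeness of the polytope group, and then compares with $\Sigma^\infty\PT(X^n)$ via Pontryagin--Thom collapse and apartment-like maps. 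The cube/total-cofibre decomposition of $\PT(X^n)$ indexed by flags of subspaces, which your filtration is implicitly trying to reproduce, is derived \emph{after} the theorem as a computational tool, not used as an input to its proof.
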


	\begin{remark}
	This also holds when $X^n = E^n$ and $G \leq A(E^n)$, in other words $G$ is a subgroup of the affine transformation group rather than the (smaller) isometry group.
	\end{remark}
	
	By the Thom isomorphism theorem, the homology of the spectrum is therefore given by the reduced homology of the space $\PT(X^n)_{hG}$, with local $\Z$-coefficients that change sign whenever the orientation is reversed. We typically refer to this as $\Z^t$-coefficients, the $t$ standing for ``twist.'' This homotopy orbit space also appears in \cite[Theorem 1.20]{cz}.

	By \cite[Corollary 3.11]{malkiewich2022}, the polytopal Tits complex $\PT(X^n)$ can be equivalently expressed as the total homotopy cofibre of a cube-shaped diagram
	\[ \PT(X^n) \cong \tcofib\left(
	S \mapsto \left( \coprod_{\underset{\{ \dim U_i \} = S}{U_0 \subseteq \cdots \subseteq U_k}} U_0 \right)_+, \quad
	\varnothing \mapsto X^n_+
	\right), \]
	where $S$ ranges over all subsets of $\{0,1,\ldots,n-1\}$ with maps decreasing the size of $S$, and the disjoint union ranges over all flags of proper inclusions of subspaces $U_0 \subseteq \cdots \subseteq U_k$ whose dimensions match the set $S$. (In the Euclidean and hyperbolic case the $U_i$ are contractible, but in the spherical case they are not.) Commuting homotopy colimits, the based homotopy orbit space is thus the total homotopy cofibre of the cube of homotopy orbit spaces
	\[ \PT(X^n)_{hG} \simeq \tcofib\left(
	S \mapsto \left(\coprod_{\underset{\{ \dim U_i \} = S}{(U_0 \subseteq \cdots \subseteq U_k)/G}} (U_0)_{h\textup{Stab}(U_0 \subseteq \cdots \subseteq U_k)}\right)_+, \quad
	\varnothing \mapsto (X^n_{hG})_+
\right), \]
	where the disjoint union is over all $G$-orbits of flags, and each term has homotopy orbits along the group that stabilizes that flag. The morphisms in the cube are the obvious ones that forget some of the terms in the flag, and include a smaller stabilizer group into a larger one.
	
	\subsection{Computations in the Thom spectrum model}\label{sec:thom-spectrum-computations} Using the Thom spectrum model we now perform several computations:
	\begin{itemize}
		\item \ref{sec:high-dim}: Euclidean geometry of any dimension with translations.
		\item \ref{sec:one-dim}: one-dimensional Euclidean geometry.
		\item \ref{sec:two-dim}: two-dimensional Euclidean geometry.
		\item \ref{sec:two-dim-hyp-sph}: two-dimensional hyperbolic geometry.
	\end{itemize}
	
	We use the following notation frequently stating the results.
	\begin{notation}\label{Lambda}
		For an abelian group $A$, we let $\Lambda^*(A)$ denote the exterior algebra of $A$. For a \emph{graded} abelian group $$A = \bigoplus_{n \geq 0} A_n [n],$$ we let $\Lambda^*(A)$ denote the free graded-commutative algebra on $A$, which is a polynomial algebra on $A_{even}$ tensored with an exterior algebra on $A_{odd}$. In particular, this agrees with the previous usage of $\Lambda^*(A)$ if we make $A$ is concentrated in degree 1.
	\end{notation}

	\subsubsection{Euclidean geometry with translations} \label{sec:high-dim} We start with general results for Euclidean geometry $\cE^n$ using a group $G \leq A(E^n)$ containing sufficiently many isometries: (a) a rationality result  if $G$ contains all translations, (b) a vanishing result if $G$ additionally contains a dilatation by $a \neq 1$. We will perform computations for the spectra $K(\cE^n_G)$ using the Thom spectrum model from \cref{thom_theorem}, obtaining consequences for the scissors automorphism groups $\aut_G(P)$ via \cref{cor:scissors-aut-geometries}. As mentioned above, for $G = I(E^n)$ the group $\aut_G(P)$ recovers the scissors automorphism groups from the introduction.
	
	We start by recalling the proof from \cite[Proposition 5.3]{malkiewich2022} that Euclidean scissors congruence spectra are rational when $G$ contains all translations. Recall that a spectrum is rational when its homotopy groups are rational vector spaces. For bounded-below spectra, this is equivalent to having the homology groups be rational vector spaces.
	
	\begin{proposition}\label{prop:rationality}
		The spectrum $K(\cE^n_G)$ is rational if $G \leq A(E^n)$ and $G$ contains the translation subgroup $\R^n$.
	\end{proposition}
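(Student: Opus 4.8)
The plan is to use the Thom spectrum model from \cref{thom_theorem}, which identifies $K(\cE^n_G)$ with $(\Sigma^{-TE^n}\PT(E^n))_{hG}$, and then to exploit the translation subgroup $\R^n \leq G$ to kill torsion. First I would recall that by the Thom isomorphism, the homology of $K(\cE^n_G)$ agrees (up to a degree shift) with $\widetilde{H}_*(\PT(E^n)_{hG};\Z^t)$, so it suffices to show this homology is a rational vector space; equivalently, since the spectrum is bounded below, that its homology groups are uniquely divisible, or that multiplication by each prime $p$ is an isomorphism on homology.

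The key point is the cube-shaped decomposition of $\PT(E^n)_{hG}$ recalled at the end of \cref{sec:thom-spectrum}: $\PT(E^n)_{hG}$ is the total homotopy cofibre of a cube whose vertices are, up to a plus, disjoint unions over $G$-orbits of flags $U_0 \subseteq \cdots \subseteq U_k$ of geometric subspaces, of homotopy orbit spaces $(U_0)_{h\mathrm{Stab}(U_0 \subseteq \cdots \subseteq U_k)}$, with the $\varnothing$-vertex being $(E^n_{hG})_+$. Since total homotopy cofibres are built from finitely many mapping cones, and the class of spectra with rational homology is closed under (co)fibre sequences and finite wedges, it is enough to show that each individual vertex space — i.e.\ each $(U_0)_{h\mathrm{Stab}(\text{flag})}$ and $E^n_{hG}$ — has rational reduced homology (with the relevant twisted coefficients). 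So the problem reduces to a statement about the homotopy orbits of a single flag stabiliser acting on the contractible space $U_0 \cong E^{\dim U_0}$.

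Now I would analyse the stabiliser of a flag. Because $G$ contains the full translation group $\R^n$, the stabiliser in $G$ of any geometric subspace $U_0$ (an affine subspace of $E^n$) contains the translations parallel to $U_0$, which act on $U_0 \cong \R^{\dim U_0}$ simply transitively; more generally the stabiliser of the whole flag contains the translations along $U_0$. Thus $(U_0)_{h\mathrm{Stab}(\text{flag})} \simeq B\,\mathrm{Stab}_0(\text{flag})$ where $\mathrm{Stab}_0$ is the quotient of the flag stabiliser by the translations along $U_0$ — and this quotient contains $\R^{n-\dim U_0}$ (the translations transverse to $U_0$, which do not stabilise $U_0$ pointwise but... ) — more carefully, the short exact sequence $1 \to (\text{translations along }U_0) \to \mathrm{Stab}_G(\text{flag}) \to Q \to 1$ identifies the homotopy orbit space with $BQ$, and $Q$ is an extension of a subgroup of $\mathrm{GL}$ by $\R^{n-\dim U_0}$, hence contains a copy of the rational vector space $\R$ as a (central, or at least normal) subgroup. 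A group with a normal divisible-torsionfree subgroup has rational reduced homology away from what the quotient contributes — and by iterating, or simply invoking that $\R$ (and $\R^n$) is a $\Q$-vector space so $B\R \simeq K(\R,1)$ has $\widetilde{H}_*(B\R;\Z) = \Lambda^*_\Z \R$ which is rational — one concludes the reduced homology of each vertex is rational. I would assemble this via the Lyndon--Hochschild--Serre spectral sequence for the extension, whose $E_2$-page is $H_*(Q'; H_*(\R^{n-\dim U_0};\Z^t))$ with the inner homology a rational vector space, forcing the whole thing to be rational; and $E^n_{hG} = BG$ itself is handled the same way since $\R^n \trianglelefteq G$ with rational homology.

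The main obstacle I expect is bookkeeping the twisted coefficient system $\Z^t$ and checking that the translation subgroups act trivially on it (they do, since translations are orientation-preserving), so that the "inner" homology $H_*(\R^m;\Z^t) = H_*(\R^m;\Z) = \Lambda^*\R^m$ really is rational; and secondly, making sure the flag stabiliser genuinely contains a positive-dimensional translation subgroup for \emph{every} flag, including the top flag where $U_0$ might be a point — but even then the stabiliser contains the transverse translations $\R^n$ acting freely, so the homotopy orbit space is $B(\text{rest})$ with $\R^n$ killed, and one argues as before. Once every vertex of the cube has rational reduced homology, the total cofibre does too, hence so does $K(\cE^n_G)$, and since it is bounded below its homotopy groups are rational as well.
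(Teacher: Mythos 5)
There is a genuine gap in your argument, in two places.

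First, the identification $(U_0)_{h\mathrm{Stab}(\text{flag})} \simeq B\,\mathrm{Stab}_0(\text{flag})$ with $\mathrm{Stab}_0 = \mathrm{Stab}/(\text{translations along }U_0)$ is not correct. The space $U_0 \cong \mathbb{R}^{\dim U_0}$ is contractible as a topological space, so the unique equivariant map $U_0 \to *$ is a non-equivariant equivalence, and therefore $(U_0)_{h\mathrm{Stab}(\text{flag})} \simeq B\,\mathrm{Stab}(\text{flag})$ with the \emph{full} stabiliser, not the quotient. (The orbit--stabiliser reasoning you have in mind fails here because $\mathrm{Stab}$ is being treated as a discrete group acting on a topological $\mathbb{R}^{\dim U_0}$; the orbit map $\mathrm{Stab}/\mathrm{Stab}_x \to U_0$ is a continuous bijection but not a homeomorphism.) Relatedly, your description of $Q$ as ``an extension of a subgroup of $\mathrm{GL}$ by $\R^{n-\dim U_0}$'' is not right: since the only translations stabilising the flag are those along $U_0$, the quotient $Q = \mathrm{Stab}/(\text{translations along }U_0)$ embeds into $G/\R^n \leq \mathrm{GL}_n(\R)$ and contains no translations at all.

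Second, and more importantly, the overall strategy of proving rationality vertex-by-vertex cannot succeed: the vertices of the cube do \emph{not} have rational reduced homology. With the correction above, a vertex contributes $B\,\mathrm{Stab}(\text{flag})$, which sits in a fibration $BV_1 \to B\,\mathrm{Stab} \to BQ$ with $V_1$ the translations along $U_0$. The Serre spectral sequence $E^2_{p,q}=H_p(Q;H_q(BV_1))$ has rational entries for $q>0$ (since $\Lambda^q V_1$ is a $\Q$-vector space), but the row $q=0$ is $H_p(Q;\Z)$, which can have torsion --- for instance when $U_0$ is a point and $\mathrm{Stab}=O(n)$. The paper's proof avoids this exactly by first taking homotopy $\R^n$-orbits (so the vertices become $(\coprod_{V_1\subseteq\cdots} BV_1)_+$) and then collapsing the edge of the cube in the $0$-direction \emph{before} invoking rationality. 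That cofibre replaces the disjoint unions with wedges $\bigvee BV_1$ in which $\dim V_1 \geq 1$ always holds, and these wedges genuinely have rational reduced homology. Rationality then survives the remaining total cofibre, the Thom construction, and the $G/\R^n$-homotopy orbits. The extra step of collapsing the $0$-direction is precisely the ingredient your argument is missing; without it, the disjoint basepoints in the vertices inject an integral $H_0$ into the spectral sequence that you cannot dispose of.
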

	
	\begin{proof}The subgroup $\R^n$ is normal in $G$, so we can take homotopy orbits by $\R^n$ first, followed by homotopy orbits by $G/\R^n$. Taking homotopy orbits by $\R^n$ gives the cubical diagram
		\[ S \mapsto \left(\coprod_{\underset{\{ \dim V_i \} = S}{V_1 \subseteq \cdots \subseteq V_k}} BV_1\right)_+, \quad
		\varnothing \mapsto B\R^n_+ \]
	where $S \subseteq \{0,1,2,\ldots,n-1\}$ and the $V_i$ are vector subspaces of $\R^n$ (i.e.~must contain the origin). Before taking the remaining homotopy orbits, we next take homotopy cofibres in one direction in the cube, the direction where the element $0 \in S$ is removed from $S$. This gives the cube diagram
		\[ S \mapsto \bigvee_{\underset{\{ \dim V_i \} = S}{V_1 \subseteq \cdots \subseteq V_k}} BV_1, \quad
		\varnothing \mapsto B\R^n \]
	where now $S \subseteq \{1,2,\ldots,n-1\}$, and the $V_i$ are still vector subspaces of $\R^n$.
		
	At this point the spaces in the diagram have the property that their reduced integral homology groups are rational. It follows that any Thom spectrum on such a space is rational -- in fact a vector bundle on such a space must be oriented, so the spectrum has the same homology as the base. Since rational spectra are also preserved by homotopy colimits, we conclude that after evaluating the rest of the total homotopy cofibre and taking homotopy orbits by $G/\R^n$, the result is a rational spectrum.
	\end{proof}
	
	\begin{example}\cref{prop:rationality} applies to the scissors automorphism groups from the introduction---take $G = I(E^n)$---and then implies through \cref{cor:scissors-aut-geometries} that $\widetilde{H}_*(\aut_G(P))$ is rational. This implies for example that the inclusion $\mathrm{Isom}(P) \to \aut_G(P)$ of the group of isometries $\mathrm{Isom}(P) \leq I(E^n)$ of a polytope into its scissors automorphism group induces the trivial map on reduced homology, as the domain is a finite group and hence its reduced homology is torsion.\end{example}
	
	We use \cref{prop:rationality} to prove that Euclidean scissors congruence spectra are trivial when $G$ contains all translations and a single dilatation by $a \neq \pm 1$. This is a classical result at the level of $\pi_0$: all nonempty polytopes in $E^n$ are scissors congruent if dilatations by a nontrivial rational number $a$ are allowed, e.g.~\cite[Theorem 3.7]{sah1979}. The following result extends this statement on $\pi_0$ to the higher homotopy groups, and therefore also to the homology of the scissors automorphism group $\aut_G(P)$. It uses the following classical ``centre kills'' trick,	see e.g.~\cite[Lemma 5.4]{dupont_book}:
	
	\begin{lemma}\label{center_kills}
		If $G$ is a group, $k$ a commutative ring, $A$ a $k[G]$-module, and $g \in Z(G)$ is an element in the centre that acts on $A$ by multiplication by $r \in k$, then the group homology $\widetilde H_*(G;A)$ is $(r-1)$-torsion.
	\end{lemma}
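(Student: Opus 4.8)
The plan is to deduce this from the classical fact that inner automorphisms act trivially on group homology, applied with nontrivial coefficients.

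First I would recall that group homology is functorial for pairs: a homomorphism $\phi\colon G \to G'$ together with a map $\alpha\colon A \to A'$ satisfying $\alpha(h\cdot a) = \phi(h)\cdot\alpha(a)$ induces a map $H_*(G;A)\to H_*(G';A')$. The input I would use is the standard statement that, for any $g\in G$, the conjugation homomorphism $c_g\colon G\to G$, $h\mapsto ghg^{-1}$, together with the $c_g$-equivariant map $\ell_g\colon A\to A$, $a\mapsto g\cdot a$, induces the \emph{identity} on $H_*(G;A)$. This is proved by writing down an explicit chain homotopy on the bar resolution and is entirely classical.

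Now suppose $g\in Z(G)$. Then $c_g=\id_G$, so the pair $(\id_G,\ell_g)$ induces the identity on $H_*(G;A)$. But a pair of the form $(\id_G,\beta)$ induces on $H_*(G;A)$ exactly the map obtained by applying the coefficient endomorphism $\beta$. Since $g$ acts on $A$ by multiplication by $r\in k$, we have $\ell_g = r\cdot(-)$, and since $H_*(G;A)$ is a $k$-module on which coefficient scalars act $k$-linearly, the induced endomorphism is multiplication by $r$. Therefore multiplication by $r$ equals the identity on $H_*(G;A)$, i.e.\ multiplication by $r-1$ is the zero map, so every element of $H_*(G;A)$ is annihilated by $r-1$; in particular $\widetilde H_*(G;A)$ is $(r-1)$-torsion.

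There is no genuine difficulty here. The only point deserving attention is to invoke the inner-automorphism fact in its version with coefficients, where conjugation on $G$ must be paired with translation by $g$ on the module $A$ — omitting this pairing is the standard trap, and it is precisely this pairing that makes the conclusion sensitive to the scalar $r$ by which $g$ acts.
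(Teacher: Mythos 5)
Your proof is correct and is the standard argument for the ``centre kills'' lemma. The paper itself gives no proof for this statement, only a citation to a textbook reference; your derivation — pairing conjugation by $g$ with left translation by $g$, noting that conjugation is trivial when $g$ is central so that the induced map $H_*(G;\ell_g)$ must be the identity, and identifying $H_*(G;\ell_g)$ with multiplication by $r$ — is precisely the classical proof one finds in that reference. One small remark: your argument in fact shows that \emph{all} of $H_*(G;A)$, including $H_0(G;A)=A_G$, is $(r-1)$-torsion, which is slightly stronger than the stated $\widetilde H_*$ version and therefore certainly suffices.
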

	
	\begin{proposition}\label{prop:vanishing}
		If $G \leq A(E^n)$ contains both the translation subgroup $\R^n$ and  a dilatation by a rational number $a \neq \pm 1$ about some point, then the spectrum $K(\cE^n_G)$ is contractible, i.e.~equivalent to zero.
	\end{proposition}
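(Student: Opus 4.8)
The plan is to combine the rationality of $K(\cE^n_G)$ with a ``centre kills'' argument carried out in the Thom spectrum model.

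First, since $\R^n \le G$, \cref{prop:rationality} shows that $K(\cE^n_G)$ is a rational spectrum, and it is connective by construction; hence $K(\cE^n_G) \simeq \bigvee_m \Sigma^m H(\pi_m K(\cE^n_G))$ with each $\pi_m$ a $\mathbb{Q}$-vector space, so $H_*(K(\cE^n_G);\Z) \cong \pi_* K(\cE^n_G)$ and it suffices to prove that this graded group is torsion (a torsion rational vector space vanishes). By \cref{thom_theorem} and the Thom isomorphism, $H_*(K(\cE^n_G);\Z) \cong \widetilde H_{*+n}(\PT(E^n)_{hG};\Z^t)$, where $\Z^t$ is the orientation-twisted coefficient system. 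So the goal becomes: $\widetilde H_*(\PT(E^n)_{hG};\Z^t)$ is torsion. Note also that we may replace $a$ by $a^2$: if $D_a \in G$ then $D_a^2 = D_{a^2} \in G$, and $a \ne \pm 1$ forces $a^2 > 0$ and $a^2 \ne 1$; so from now on $a > 0$ and $a \ne 1$.

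Next I would filter $\PT(E^n)_{hG}$ by the isotropy stratification recalled in \cref{sec:thom-spectrum}: $\PT(E^n)_{hG}$ is the total homotopy cofibre of a cube whose non-initial terms are indexed by $G$-orbits of flags $U_0 \subsetneq \cdots \subsetneq U_k$ of \emph{proper} nonempty geometric (affine) subspaces of $E^n$, the term for such a flag being $(U_0)_{h\,\mathrm{Stab}_G(U_0 \subsetneq \cdots \subsetneq U_k)}$ after desuspension along $TE^n$. This produces a convergent spectral sequence abutting to $\widetilde H_*(\PT(E^n)_{hG};\Z^t)$ whose $E^1$-page is a direct sum, over these orbits of flags, of twisted group homology groups of the stabilisers, and it is enough to show each such $E^1$-summand is torsion. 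Fix a flag with bottom piece $U_0$ of dimension $d$ (so $0 \le d \le n-1$) and direction $V_0 \le \R^n$, and choose a point $p \in U_0$. The dilation $D_a^{(p)}$ about $p$ preserves every $U_i$, hence lies in $\mathrm{Stab}_G(U_0 \subsetneq \cdots \subsetneq U_k)$; it normalises the translation subgroup $V_0 \lhd \mathrm{Stab}_G(\cdots)$, acting on $V_0$ by scaling, and its image in the quotient $\mathrm{Stab}_G(\cdots)/V_0$ is a \emph{central} element which acts by a power of $a$ on the transverse homological data. Running the Lyndon--Hochschild--Serre spectral sequence for $V_0 \lhd \mathrm{Stab}_G(\cdots)$ and then applying \cref{center_kills} to this central element, one gets that $D_a^{(p)}$ acts on each $E^1$-summand as multiplication by some $a^{m}$ with $m \ge 1$; since $a > 0$ and $a \ne 1$ we have $a^{m} \ne 1$, so the summand is $(a^m-1)$-torsion, hence torsion. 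Feeding this through the spectral sequence shows $\widetilde H_*(\PT(E^n)_{hG};\Z^t)$ is torsion, which by the first paragraph gives $K(\cE^n_G) \simeq 0$. At the level of $\pi_0$ this recovers the classical statement that all nonempty $n$-polytopes become scissors congruent once a rational dilation is allowed \cite[Theorem 3.7]{sah1979}.

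The main obstacle is the bookkeeping in the penultimate step: one must pin down exactly which coefficient module governs each stratum, track how the twist $\Z^t$ and the desuspension along $TE^n$ interact with the homology of the translation subgroup $V_0$ and with the fibrewise scaling, and --- crucially --- check that the exponent $m$ of the central dilation is always $\ge 1$, i.e.\ that no ``$m=0$'' contribution of Steinberg type survives (on such a term a central dilation would act trivially and \cref{center_kills} would say nothing). Here one uses that $\PT(E^n)$ only involves \emph{proper} nonempty affine subspaces --- the top piece $E^n$ having been absorbed into the contractible numerator of \cref{def:pt} --- so every stratum carries a genuine transverse direction that the dilation scales, together with the fact that dilations act nontrivially on the (discrete) poset of affine subspaces. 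As a safeguard for any recalcitrant stratum one can invoke that ``scale by $a$'' is an auto-equivalence of $\cE^n_G$ naturally isomorphic to the identity --- via the morphisms $D_a^{(p)}\colon P \to aP$ --- so that it induces the identity on $H_*(K(\cE^n_G))$ while acting on that stratum by a nontrivial power of $a$, forcing it to be torsion as well.
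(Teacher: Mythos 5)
Your outline runs into a real obstruction at exactly the point you flag, and the ``safeguard'' you propose does not close it.

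The fundamental problem is that the $E^1$-page of the total-cofibre spectral sequence is \emph{not} a direct sum of torsion groups, so the strategy ``show each stratum is torsion'' cannot succeed. Concretely, with $a>0$ (your preferred reduction), the initial vertex of the cube contributes $H_*(BG;\Z^t)$, and every flag $\{p\}\subsetneq U_1\subsetneq\cdots$ with zero-dimensional bottom contributes $H_*(B\,\mathrm{Stab}_G;\Z^t)$ with $V_0=0$; in both cases $H_0$ is $\Z$ (or $\Z/2$ if $G$ has orientation-reversers), which is not torsion. More generally, in your Lyndon--Hochschild--Serre analysis for $V_0\lhd \mathrm{Stab}_G$ the part with $j=0$ gives $H_i(\mathrm{Stab}_G/V_0;\Z^t)$ on which $D_a^{(p)}$ acts trivially (as you observe, its determinant $a^n>0$ does not see the twist), so \cref{center_kills} is silent; and these $j=0$ pieces occur for \emph{every} flag, not just degenerate ones. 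Already for $n=1$ with $G=\R\rtimes a^{\Z}$ the cube is the single map $(BG_p)_+\to (BG)_+$ with $G_p\cong\Z$; the cofibre $BG/BG_p$ is contractible because the map is an equivalence, not because either side has torsion homology. The torsion appears only after cancelling the $j=0$ parts \emph{against each other}, which is precisely what a stratum-by-stratum argument cannot see. Your last-resort argument (scale-by-$a$ is naturally isomorphic to the identity, hence acts trivially on $H_*(K(\cE^n_G))$ but by $a^m$ on the stratum) gives information only when $m\geq 1$; on the offending $j=0$ strata it acts trivially, so no contradiction is produced.

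The paper's proof is organised specifically to sidestep this. It replaces $G$ by the normal subgroup $H=\R^n\rtimes\Z$ (translations together with powers of the dilation), first takes $\R^n$-orbits, and then takes an explicit cofibre in the direction ``remove $0$ from $S$'' of the cube. This cofibre merges each zero-dimensional flag-term into the adjacent term, turning $\coprod (BV_1)_+$ into $\bigvee BV_1$; the remaining vertices now compute \emph{reduced} homology of the $BV_1$'s, i.e.\ only the $j\geq 1$ exterior powers appear, and those are honestly killed by \cref{center_kills} applied to the $\Z$-action of the dilation, with no $j=0$ leftover. To repair your argument you would need an analogous preliminary collapse that absorbs the $j=0$ contributions before invoking ``centre kills''; asking for torsion on each summand of the raw flag stratification, as you do, is too strong a target.
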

	
	\begin{proof}
		It is enough to show that the homology of the spectrum with $\Q$-coefficients is zero. It also suffices to restrict attention to a normal subgroup $H \trianglelefteq G$ and prove that the homotopy $H$-orbits are contractible. We take $H$ to be the subgroup generated by the translations and the dilatations whose scaling factor is a power of $a$. Note that $H = \R^n \rtimes \Z$, where the $m \in \Z$ acts by dilatation by $a^m$ about a point. 
		
		To verify that $H$ is normal, we take any function of the form $x \mapsto a^m x + v$, with $m \in \Z$ and $v \in \R^n$, and conjugate by an affine-linear map $y \mapsto Ay + b$. This gives
		\[ x \mapsto A( a^m (A^{-1}(x-b)) + v) + b = a^m x + (-a^m b + Av + b), \]
		which is still of the form $a^m x + w$ for a different vector $w$.
		
		The transformations in $H$ do not flip orientation, so it suffices to show that the terms in the cubical diagram defining $\PT(E^n)_{hH}$ have vanishing homology with $\Q$-coefficients. Since $\R^n$ is normal in $H$, we can take the second cubical diagram in the proof of \cref{prop:rationality} and take homotopy $\Z$-orbits, giving the cubical diagram
		\[ S \mapsto \bigvee_{\underset{\{ \dim V_i \} = S}{V_1 \subseteq \cdots \subseteq V_k}} B(V_1)_{h\Z}, \quad
		\varnothing \mapsto B(\R^n)_{h\Z}. \]
		Each of these terms is given by the classifying space $B(V \rtimes \Z)$, where we restrict the action of $\Z$ to a linear subspace $V \subseteq \R^n$, except we have to quotient this by the subspace $B\Z$, since we took based homotopy orbits. Its homology can be computed using the Leray--Serre spectral sequence
		\[ H_i(\Z; \widetilde H_j(V;\Q)) \ \Longrightarrow \ \widetilde H_{i+j}( BV_{h\Z};\Q). \]
		The left-hand side is isomorphic to $H_i(\Z; \Lambda^j_\Q V)$, with the generator of $\Z$ acting on the coefficients by multiplication by $a^j$. By Center Kills (\cref{center_kills}), therefore $(a^j-1)$ acts trivially on the homology. Since $a \neq \pm 1$ and $j > 0$, this is always nonzero. We conclude the homology vanishes on the left-hand side, and therefore also on the right-hand side.
	\end{proof}
	
	\begin{corollary}\label{cor:vanishing}If $G \leq A(E^n)$ contains both the translation subgroup $\R^n$ and a dilatation by a rational number $a \neq \pm 1$ about some point, then for any polytope $P \subseteq E^n$ the scissors automorphism group $\aut_G(P)$ is acyclic.
	\end{corollary}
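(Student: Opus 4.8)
The plan is to deduce this directly from the contractibility of $K(\cE^n_G)$ proved in \cref{prop:vanishing}, using that $\cE^n_G$ is an S-assembler so that the identification of the stable homology with $\aut_G(P)$ from \cref{cor:scissors-aut-geometries} applies.

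First I would check that the hypotheses of \cref{euclidean_s} are met. We may assume $n > 0$, since $A(E^0)$ is trivial and the hypothesis on $G$ would otherwise be vacuous. As $G$ contains all translations, it contains a subgroup of translations dense in $\R^n$. It also contains an element that shrinks Euclidean distances: the given dilatation about some point with rational ratio $a \neq \pm 1$ does so when $|a| < 1$, and when $|a| > 1$ its inverse --- a dilatation by $a^{-1}$ with $|a^{-1}| < 1$ --- does. Hence \cref{euclidean_s} shows that $\cE^n_G$ is an S-assembler.

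Now if $P = \varnothing$ then $\aut_G(P)$ is the trivial group, hence acyclic, so we may assume $P$ is nonempty. By \cref{cor:scissors-aut-geometries} there is an isomorphism $H_*(\aut_G(P)) \cong H_*(\Omega^\infty_0 K(\cE^n_G))$. By \cref{prop:vanishing}, $K(\cE^n_G) \simeq 0$, so $\Omega^\infty_0 K(\cE^n_G)$ is weakly contractible and the right-hand side is the homology of a point. Therefore $\widetilde H_*(\aut_G(P)) = 0$, i.e.\ $\aut_G(P)$ is acyclic. One could equally well route the argument through \cref{cor:stable_homology_acyclic} or \cref{thm:plus-construction}: contractibility of $K(\cE^n_G)$ forces $B\aut_{\cG(\cE^n_G)}(P)^{+} \simeq *$, and since the plus construction induces an isomorphism on homology this again says $\aut_G(P)$ is acyclic.

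I do not anticipate any real obstacle: the substantive input --- the ``centre kills'' computation --- is already contained in \cref{prop:vanishing}, and the passage to scissors automorphism groups is exactly what \cref{cor:scissors-aut-geometries} provides. The only point requiring a moment's care is the orientation-blind verification that a rational dilatation ratio $a \neq \pm 1$ always produces a distance-shrinking element of $G$, so that we land in the S-assembler hypotheses of \cref{euclidean_s} rather than only in the EA-assembler setting.
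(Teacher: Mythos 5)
Your proof is correct and follows essentially the route the paper intends: the corollary is stated without a displayed proof because it is a direct combination of \cref{prop:vanishing} (contractibility of $K(\cE^n_G)$) with \cref{cor:scissors-aut-geometries} (identification of $H_*(\aut_G(P))$ with $H_*(\Omega^\infty_0 K(\cE^n_G))$). You correctly note the one non-obvious hypothesis check — that $\cE^n_G$ is an S-assembler rather than an EA-assembler, since $G$ contains a non-isometric dilatation — and handle it by observing that either $a$ or $a^{-1}$ has operator norm less than one, so \cref{euclidean_s} applies.
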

	
	\subsubsection{One-dimensional Euclidean geometry} \label{sec:one-dim} Let us now specialise to the case $n=1$, i.e.~consider $\category{E}^1_G$, when the corresponding scissors automorphism groups are variants of interval exchange groups (see \cref{sec:IET}). In this case, the total homotopy cofibre simplifies to a cofibre sequence
	\[ \R_+ \longrightarrow S^0 \longrightarrow \PT(E^1).\]
	Taking $G$ to be the translations, taking homotopy orbits gives
	\[ S^0 \longrightarrow B\R_+ \longrightarrow \PT(E^1)_{h\R}, \]
	so that $\PT(E^1)_{h\R} \simeq B\R$ as a based space. Taking a desuspension by a trivial line bundle, we conclude that the spectrum $K(\cE^1_{\R})$ is rational and that its $n$th homotopy group is
	\[ K_n(\cE^1_{\R}) \cong H_{n+1}(\R) \cong \Lambda^{n+1}\, \R = \Lambda^{n+1}_\Q \R. \]
	This calculation first appeared in \cite[Example 7.4]{malkiewich2022}. Note that we are considering $\R$ to be an abelian group, so $\Lambda^{n+1}\, \R$ denotes the $(n+1)$st exterior power, as in \cref{Lambda}. We sometimes write this as $\Lambda^{n+1}_\Q \R$ to emphasize that the exterior power is taken over $\Q$, or equivalently over $\Z$, but not over $\R$, which would make the exterior power trivial for $n \geq 1$.
	
	\begin{remark}
		This gives an elaborate re-derivation of the easy fact that the group of 1-dimensional polytopes up to translational scissors congruence is $K_0(\cE^1_{\R}) \cong \R$.
	\end{remark}

	Note that the associated group of scissors automorphisms of an interval $P \subseteq E^1$ is exactly the group of \emph{interval exchange transformations} $IET$, see \cref{sec:IET} below for more details. Therefore the calculation of $K_n(\cE^1_{\R})$ above, i.e.\ \cite[Example 7.4]{malkiewich2022}, implies using \cite[Theorem, p.~263]{MilnorMoore} the following result of Tanner \cite[Lemma 5.6]{tanner2023}. The computation of the abelianisation is due to Sah \cite{saf1}, see \cite[Theorem 1.3]{saf3} for a published reference.
	
	\begin{corollary}\label{IET_1}
		The integral homology of the group of interval exchange transformations is
		\[ H_*(IET) \cong \Lambda^* \left( \bigoplus_{n \geq 1} (\Lambda^{n+1}_\Q \R)[n] \right). \]
		In particular, its abelianisation is $IET^{ab} \cong H_1(IET) \cong \Lambda^2_\Q \R$.
	\end{corollary}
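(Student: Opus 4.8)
The strategy is to recognise $IET$ as a Euclidean scissors automorphism group and import the known computation of $K(\cE^1_{\R})$. First I would recall from \cref{sec:IET} that for an interval $P \subseteq E^1$ and $G = \R$ the subgroup of translations, the scissors automorphism group $\aut_\R(P) = \aut_{\category{UG}(\cE^1_\R)}(P)$ is exactly the interval exchange group $IET$. The translation subgroup $\R$ is trivially dense in $\R$, so \cref{euclidean_ea} shows $\cE^1_\R$ is an EA-assembler, and \cref{cor:scissors-aut-geometries} then gives a canonical isomorphism
\[ H_*(IET) \;\cong\; H_*(\Omega^\infty_0 K(\cE^1_\R)). \]

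Next I would feed in the computation recalled above (originally \cite[Example 7.4]{malkiewich2022}): after collapsing the defining total homotopy cofibre to the cofibre sequence $\R_+ \to S^0 \to \PT(E^1)$, taking homotopy $\R$-orbits and desuspending by the trivial bundle $TE^1$ gives $\PT(E^1)_{h\R} \simeq B\R$ and $K(\cE^1_\R) \simeq \Sigma^{-1}\Sigma^\infty B\R$. Since $B\R = K(\R,1)$ is a rational space, $K(\cE^1_\R)$ is a rational connective spectrum with $\pi_n K(\cE^1_\R) = K_n(\cE^1_\R) \cong \widetilde{H}_{n+1}(B\R;\Z) \cong \Lambda^{n+1}_\Z \R = \Lambda^{n+1}_\Q \R$ for $n \geq 1$. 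Hence $\Omega^\infty_0 K(\cE^1_\R)$ is a connected rational infinite loop space with $\pi_n \cong \Lambda^{n+1}_\Q \R$ in each degree $n \geq 1$.

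Finally I would invoke the structure theory of rational $H$-spaces. As a connected $H$-space, $\Omega^\infty_0 K(\cE^1_\R)$ has by \cite[Theorem, p.~263]{MilnorMoore} a primitively generated rational homology Hopf algebra isomorphic to the universal enveloping algebra of its homotopy Lie algebra; since an infinite loop space is homotopy-commutative, the Samelson bracket vanishes, so this enveloping algebra is the free graded-commutative algebra on $\pi_*$. Equivalently, the rational spectrum $K(\cE^1_\R)$ splits as a wedge of Eilenberg--MacLane spectra, so $\Omega^\infty_0 K(\cE^1_\R) \simeq \prod_{n \geq 1} K(\Lambda^{n+1}_\Q \R, n)$, whose homology one reads off from $H_*(K(V,n);\Q) = \Lambda^*(V[n])$ and the Künneth theorem, using that only finitely many factors contribute in each fixed degree. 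Either route gives
\[ H_*(IET) \;\cong\; \Lambda^*\!\left( \bigoplus_{n \geq 1} (\Lambda^{n+1}_\Q \R)[n] \right), \]
and taking the degree-one part together with the identity $H_1(G;\Z) = G^{\ab}$ for any group $G$ gives $IET^{\ab} \cong H_1(IET) \cong \Lambda^2_\Q \R$, recovering the computation of the abelianisation due to Sah. The serious work has already been done in the cited earlier results; the only step needing genuine care is the rational $H$-space structure theory, where one must ensure that the infinite-dimensionality of $\Lambda^{n+1}_\Q \R$ over $\Q$ and the infinite product of Eilenberg--MacLane factors cause no trouble — handled by reducing to finite-dimensional subspaces and to bounded ranges of degrees.
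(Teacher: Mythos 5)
Your proposal follows the paper's route exactly: identify $IET$ as $\aut_\R(P)$, invoke \cref{cor:scissors-aut-geometries} to reduce to the infinite loop space of $K(\cE^1_\R)$, use the Thom-spectrum computation $K(\cE^1_\R)\simeq\Sigma^{-1}\Sigma^\infty B\R$ from \cite[Example 7.4]{malkiewich2022}, and then appeal to Milnor--Moore (or, equivalently, to the splitting of a rational connective spectrum into Eilenberg--MacLane pieces) to read off the homology as the free graded-commutative algebra on $\pi_*$. The paper cites the same results and compresses the rational $H$-space step to a single reference; your extra care about infinite-dimensionality and infinite products is a welcome but standard clarification, not a different approach.
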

	
	If we instead we take all isometries, i.e.~allow reflections in addition to translations, we get the same expression except that only the odd exterior powers of $\R$ appear, see \cite[Theorem 1.11]{malkiewich2022}.

	\subsubsection{Two-dimensional Euclidean geometry} \label{sec:two-dim}
	We next perform a computation for two-dimensional Euclidean geometry. For two-dimensional Euclidean geometry we get that $\PT(E^2)$ is the total homotopy cofibre of the square of based spaces
	\[ \begin{tikzcd}[arrows=rightarrow]
		\bigvee_{U^0 \subseteq U^1 \subseteq E^2} S^0  \ar[r] \ar[d] &
		\bigvee_{U^0 \subseteq E^2} S^0 \ar[d]
		\\
		\bigvee_{U^1 \subseteq E^2} S^0  \ar[r] &
		S^0
	\end{tikzcd} \]
	where the maps forget subspaces. Taking $G$ to be the translation group, taking homotopy orbits we get that  $\PT(E^2)_{h\R^2}$ is the total homotopy cofibre of the square of based spaces
	\[ \begin{tikzcd}[arrows=rightarrow]
		\bigvee_{V^1 \subseteq \R^2} S^0  \ar[r] \ar[d] &
		S^0 \ar[d]
		\\
		\bigvee_{V^1 \subseteq \R^2} BV^1_+  \ar[r] &
		B\R^2_+
\end{tikzcd} \]
	where now $V^1$ is a one-dimensional linear subspace of $\R^2$, not an affine subspace, and $BV^1$ is its classifying space as an additive abelian group. Taking cofibres of the vertical maps, we get the cofibre sequence
	\[ \bigvee_{V^1 \subseteq \R^2} BV^1 \longrightarrow B\R^2 \longrightarrow \PT(E^2)_{h\R^2}. \]
	By \cref{prop:rationality} the spectrum $K(\cE^2_{\R^2}) \simeq \Sigma^{-2}\PT(E^2)_{h\R^2}$ is rational (c.f.~\cite[Example 7.8]{malkiewich2022}), and we get a splitting
	\[ K_n(\cE^2_{\R^2}) \cong \ker\left(\bigoplus_{V^1 \subsetneq \R^2} \Lambda^{n+1}_\Q (V^1) \to \Lambda^{n+1}_\Q (\R^2) \right)\oplus \coker\left(\bigoplus_{V^1 \subsetneq \R^2} \Lambda^{n+2}_\Q (V^1) \to \Lambda^{n+2}_\Q (\R^2) \right)\]
	for $n \geq 0$. In particular, $K_0(\cE^2_{\R^2})$ is the sum of $\R$ and the kernel of the map $\bigoplus_{V^1} V^1 \to \R^2$, corresponding to the area and Hadwiger invariant of a polygon, see e.g.~\cite[\S 2.6, \S 4]{dupont_book}.
	
	\begin{corollary}
		The homology of the group of translational scissors automorphism group of a nonempty polygon in $E^2$ is
		\[ H_*(\aut_{\R^2}(P)) \cong \Lambda^* \left( \bigoplus_{n \geq 1} K_n(\cE^2_{\R^2}) [n] \right) \qquad \text{with $K_n(\cE^2_{\R^2})$ as above.}\]
		In particular, its abelianisation is $H_1(\aut_{\R^2}(P)) \cong K_1(\cE^2_{\R^2})$.
	\end{corollary}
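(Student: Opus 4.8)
The plan is to argue exactly as in the one-dimensional case leading to \cref{IET_1}. Since the full translation group $\R^2$ is dense in $\R^2$, \cref{euclidean_ea} shows that $\cE^2_{\R^2}$ is an EA-assembler with Euclidean volume as its volume function, and a nonempty polygon $P$ has $\vol(P) > 0$. Hence \cref{cor:scissors-aut-geometries} gives a canonical isomorphism
\[ H_*(\aut_{\R^2}(P)) \cong H_*(\Omega^\infty_0 K(\cE^2_{\R^2})), \]
so the problem reduces to computing the homology of the zero component of the infinite loop space of $K(\cE^2_{\R^2})$.

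By \cref{prop:rationality} the spectrum $K(\cE^2_{\R^2})$ is rational, and it is connective since $\Omega^\infty K(\cE^2_{\R^2}) \simeq \Omega B(B\cG(\cE^2_{\R^2}))$ by \cref{thm:group-completion} is a group completion of an $E_\infty$-space. A bounded-below rational spectrum $E$ splits as $E \simeq \prod_{n \geq 0} \Sigma^n H\pi_n(E)$; applied to $E = K(\cE^2_{\R^2})$, whose homotopy groups are $\pi_n(E) = K_n(\cE^2_{\R^2})$, this gives
\[ \Omega^\infty_0 K(\cE^2_{\R^2}) \simeq \prod_{n \geq 1} K\!\left(K_n(\cE^2_{\R^2}),\, n\right). \]
For a $\Q$-vector space $V$ the rational homology $H_*(K(V,n);\Q)$ is the free graded-commutative algebra on a copy of $V$ placed in degree $n$ (a polynomial algebra for $n$ even, an exterior algebra for $n$ odd); combining this with the Künneth theorem over $\Q$ yields
\[ H_*(\aut_{\R^2}(P)) \cong \Lambda^*\!\left( \bigoplus_{n \geq 1} K_n(\cE^2_{\R^2})[n] \right), \]
with the groups $K_n(\cE^2_{\R^2})$ given by the kernel--cokernel formula already derived from the cofibre sequence $\bigvee_{V^1 \subseteq \R^2} BV^1 \to B\R^2 \to \PT(E^2)_{h\R^2}$ together with the equivalence $K(\cE^2_{\R^2}) \simeq \Sigma^{-2}\PT(E^2)_{h\R^2}$. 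The claim about the abelianisation is the degree-one part of this identity, using $H_1(G) \cong G^{\ab}$ for any group $G$.

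Since all the ingredients have already been established, there is no serious obstacle. The only point that needs a little care is the standard input from rational homotopy theory --- that a bounded-below rational spectrum decomposes as a product of Eilenberg--MacLane spectra, together with the ensuing computation of the rational homology of a product of Eilenberg--MacLane spaces --- which is exactly what was used (via \cite[Theorem, p.~263]{MilnorMoore}) to deduce \cref{IET_1}, and it applies here verbatim.
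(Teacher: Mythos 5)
Your proof is correct and follows essentially the same route as the paper: reduce to $H_*(\Omega^\infty_0 K(\cE^2_{\R^2}))$ via \cref{cor:scissors-aut-geometries}, use rationality from \cref{prop:rationality}, and then compute the homology of the infinite loop space of a connective rational spectrum. The only difference is the last technical step: the paper implicitly deduces the free graded-commutative algebra formula from the Milnor--Moore structure theorem for the Hopf algebra $H_*(\Omega^\infty_0 E;\Q)$ (the same input cited for \cref{IET_1}), while you obtain it by splitting $K(\cE^2_{\R^2})$ into a product of Eilenberg--Mac\,Lane spectra and applying the K\"unneth theorem. These are standard, interchangeable routes to the same formula. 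One small point worth noting: the corollary asserts the answer for \emph{integral} homology, while your K\"unneth computation is phrased over $\Q$; the two agree here because the infinite loop space is a rational space (all its homotopy groups are $\Q$-vector spaces), but this step is worth stating explicitly.
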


	If we now take $G$ to be the full isometry group, we get that $\PT(E^2)_{hI(E^2)}$ is the total homotopy cofibre of the square of based spaces
	\[ \begin{tikzcd}[arrows=rightarrow]
		B(O(1)\times O(1))_+  \ar[r] \ar[d] &
		BO(2)_+ \ar[d]
		\\
		B(O(1)\times I(E^1))_+  \ar[r] &
		BI(E^2)_+
	\end{tikzcd} \]
	where the maps are all the obvious inclusions of groups. Again this spectrum is rational by \cref{prop:rationality}. We are interested in the desuspension $\Sigma^{-TE^2}(\PT(E^2)_{hI(E^2)})$, where now the vector bundle $TE^2$ has a nontrivial orientation, described by the evident orientation homomorphism $I(E^2) \to \{\pm 1\}$. On the spaces in the above square, this becomes the desuspension by the canonical bundle $\gamma^2 \to BO(2)$, and its pullback to $BI(E^2)$, which by abuse of notation we also call $\gamma^2$.
	
	By the Thom isomorphism theorem, the rational homology of $\Sigma^{-\gamma^2} BO(2)$ agrees with the rational homology as $BO(2)$ with local $\Q$-coefficients, and similarly for the other terms in the square. We are therefore interested in the homology of the above spaces with $\Q^t$-coefficients, the $t$ decoration meaning that the group $I(E^2)$ acts through the orientation character on the coefficient group $\Q$. On the left-hand side of the square, this homology vanishes by \cref{center_kills}. In particular, elements in $O(1)$ are in the centre of the group $O(1) \times I(E^1)$, but they act on the $\Q$ coefficients by negation. So $-2 \in \Q$ annihilates the homology, and therefore the homology is trivial. The above total homotopy cofibre therefore simplifies (after rationalisation) to a cofibre sequence
	\[ \begin{tikzcd}[arrows=rightarrow]
		\Sigma^{-\gamma^2}_+ BO(2) \ar[r] & \Sigma^{-\gamma^2}_+ BI(E^2) \ar[r] & \Sigma^{-TE^2}(\PT(E^2)_{hI(E^2)}).
	\end{tikzcd} \]
	The associated long exact sequence on rational homology splits into short exact sequences of the form
	\vspace{-.2cm}\[\begin{tikzcd}[arrows=rightarrow]
		0 \ar[r] & H_{n+2}(O(2);\Q^t) \ar[r] & \ar[l,dashed, bend right] H_{n+2}(I(E^2);\Q^t) \ar[r] & K_n(\cE^2) \ar[r] & 0.
	\end{tikzcd} \]
	To compute the rest we use the following lemma, which can be found in \cite[\S 5]{dupont_book}.
	\begin{lemma}
		The homology of $I(E^n)$ with coefficients in $\Q^t$ splits as
		\[ H_m(I(E^n);\Q^t) \cong \bigoplus_{i+j = m} H_i\left( O(n) ; \Lambda^j_\Q (\R^n)^t \right). \]
	\end{lemma}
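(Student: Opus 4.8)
The plan is to use the Lyndon--Hochschild--Serre spectral sequence for the split extension
\[ 1 \longrightarrow \R^n \longrightarrow I(E^n) \longrightarrow O(n) \longrightarrow 1, \]
coming from the semidirect product decomposition $I(E^n) \cong \R^n \rtimes O(n)$ recalled above. This spectral sequence has the form
\[ E^2_{i,j} = H_i\big(O(n); H_j(\R^n;\Q^t)\big) \ \Longrightarrow \ H_{i+j}(I(E^n);\Q^t). \]
Since the translation subgroup $\R^n$ lies in the kernel of the orientation character $I(E^n) \to \{\pm 1\}$, it acts trivially on $\Q^t$, so $H_j(\R^n;\Q^t) \cong H_j(\R^n;\Q) \otimes \Q^t$ as $O(n)$-modules. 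As $\R^n$ is a $\Q$-vector space, the rational homology of its classifying space is the exterior algebra, $H_j(\R^n;\Q) \cong \Lambda^j_\Q(\R^n)$, with the residual $O(n)$-action the natural one induced by the conjugation (i.e.\ defining) action of $O(n)$ on $\R^n$. Tensoring with the determinant character supplied by $\Q^t$ we obtain $E^2_{i,j} \cong H_i(O(n);\Lambda^j_\Q (\R^n)^t)$, which is exactly the summand appearing on the right-hand side of the claimed formula.

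It remains to show that the spectral sequence degenerates at $E^2$ and that there are no extension problems. The latter vanish automatically because we work over the field $\Q$, so once degeneration is established the stated direct sum decomposition follows by reassembling the associated graded. For degeneration, I would exploit the scaling automorphisms of $I(E^n)$: for any $t \in \Q^{\times}$ the map $\mu_t \colon (b,A) \mapsto (tb,A)$ is a group automorphism of $\R^n \rtimes O(n)$ which restricts to multiplication by $t$ on the normal subgroup $\R^n$ and to the identity on the quotient $O(n)$, and is compatible with the $\Q^t$-module structure since $\R^n$ acts trivially there. Hence $\mu_t$ induces a self-map of the whole spectral sequence which on $E^2_{i,j}$ is the identity on the $O(n)$-factor and acts as multiplication by $t^j$ on $H_j(\R^n;\Q) = \Lambda^j_\Q(\R^n)$, i.e.\ as multiplication by $t^j$. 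Because every differential $d_r$ changes $j$ by $r-1 \geq 1$, compatibility with $\mu_t$ forces $t^j d_r = t^{\,j+r-1} d_r$ on $E^r_{i,j}$; choosing, say, $t = 2$ and using that the pages are $\Q$-vector spaces, this gives $d_r = 0$ for all $r \geq 2$.

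The main obstacle is precisely this last degeneration step: identifying the $E^2$-page is routine, and once one has $E^2 = E^\infty$ over a field the conclusion is immediate. The only point requiring a little care is checking that $\mu_t$ genuinely defines a morphism of the given extension (it covers $\id_{O(n)}$ and is $\Q^t$-linear), so that it really induces an endomorphism of the Lyndon--Hochschild--Serre spectral sequence acting as described on $E^2$; granting this, the computation is complete. This recovers the statement of \cite[\S 5]{dupont_book}.
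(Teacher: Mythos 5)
Your proof is correct and takes essentially the same approach as the paper: you run the Lyndon--Hochschild--Serre spectral sequence for $1 \to \R^n \to I(E^n) \to O(n) \to 1$, identify the $E^2$-page as $H_i(O(n);\Lambda^j_\Q(\R^n)^t)$, and then kill all higher differentials by the weight argument coming from the dilatation automorphism of the semidirect product. The paper phrases the degeneration step slightly more tersely (it observes that dilatation by $a\in\Q$, $a\neq\pm1$, gives a weight grading in which same-weight terms lie in a single row), but the mechanism — the scaling map acts by $t^j$ on $\Lambda^j_\Q(\R^n)$, covers the identity on $O(n)$, and is $\Q^t$-linear, so it commutes with the differentials and forces them to vanish — is identical, and your write-up just spells it out more explicitly (including the choice $t=2$ and the verification that $\mu_t$ is a genuine morphism of the extension).
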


	\begin{proof}
	We can calculate the homology with $\Q^t$-coefficients using the Leray--Serre spectral sequence for the short exact sequence of groups
		\[ 1 \longrightarrow \R^n \longrightarrow I(E^n) \longrightarrow O(n) \longrightarrow 1. \]
	The expression in the lemma is exactly the $E^2$ page of this spectral sequence, since $H_j(\R^n;\Q^t) \cong \Lambda^j(\R^n)^t$, so we have to argue that the spectral sequence collapses at the $E^2$-page. To do so, we observe that dilatation about the origin by a factor of $a \in \Q$ gives a well-defined action on the spectral sequence that commutes with the differential. Taking $a \neq \pm 1$, on the term $\Lambda^j(\R^n)$ this action scales by $a^j$ and we say that term has ``weight'' $j$. Since the differential must commute with the dilatation action, the differential is only nonzero between terms of the same weight. However, all terms of the same weight lie on the same row. We conclude that the $d^2$- and higher differentials vanish.
	\end{proof}

	In particular, for dimension 2 we get
	\[ H_m(I(E^2);\Q^t) \cong \bigoplus_{i+j = m} H_i( O(2) ; \Lambda^j_\Q (\R^2)^t). \]

	The terms with odd $j$ disappear by \cref{center_kills}, and the terms with $j = 0$ disappear after modding out by $H_m(O(2);\Q^t)$. We are left with the splitting
	\[ K_n(\cE^2) \cong H_n(K(\cE^2);\Q) \cong \bigoplus_{p+2q = n} H_p( O(2) ; \Lambda^{2q+2}_\Q (\R^2)^t) \]
	where $p$ and $q$ range over nonnegative integers such that $p+2q=n$. In particular, we can re-derive from this the known fact that $K_0 = H_0(O(2) ; \Lambda^{2}_\Q (\R^2)^t) \cong \R$.

	\begin{corollary}\label{cor:homology-two-dim}
		The homology of the group of all scissors automorphisms of a nonempty polygon $P$ in $E^2$ is
		\[ H_*(\aut_{I(E^2)}(P)) \cong \Lambda^* \left( \bigoplus_{p+2q \geq 1} H_p(O(2) ; \Lambda^{2q+2}_\Q (\R^2)^t) [p+2q] \right). \]
		In particular, its abelianisation is $H_1(\aut_{I(E^2)}(P)) \cong H_1( O(2) ; \Lambda^{2}_\Q (\R^2)^t)$.
	\end{corollary}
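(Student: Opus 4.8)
The plan is to identify $H_*(\aut_{I(E^2)}(P))$ with the homology of a rational infinite loop space and then read off the answer from the structure theory of such spaces, in exact parallel with the derivation of \cref{IET_1}. First I would apply \cref{cor:scissors-aut-geometries}, which gives $H_*(\aut_{I(E^2)}(P)) \cong H_*(\Omega^\infty_0 K(\cE^2))$ for any nonempty polygon $P$, so the problem is reduced to computing the homology of the basepoint component of $\Omega^\infty K(\cE^2)$.

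Since the full isometry group $I(E^2)$ contains the translation subgroup $\R^2$, \cref{prop:rationality} shows that $K(\cE^2)$ is a rational spectrum, and it is moreover connective. A connective rational spectrum is equivalent to a product of Eilenberg--MacLane spectra on its homotopy groups, so $\Omega^\infty_0 K(\cE^2) \simeq \prod_{n \geq 1} K\bigl(K_n(\cE^2), n\bigr)$. Invoking \cite[Theorem, p.~263]{MilnorMoore} (or directly the K\"unneth formula together with the computation of $H_*(K(V,n);\Z)$ for a $\Q$-vector space $V$), the integral homology of this product is the free graded-commutative algebra on the graded $\Q$-vector space $\bigoplus_{n \geq 1} K_n(\cE^2)[n]$; in the notation of \cref{Lambda}, this reads $H_*(\Omega^\infty_0 K(\cE^2)) \cong \Lambda^*\!\bigl(\bigoplus_{n\geq 1} K_n(\cE^2)[n]\bigr)$.

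To finish, I would substitute the identification $K_n(\cE^2) \cong \bigoplus_{p+2q=n} H_p\bigl(O(2);\Lambda^{2q+2}_\Q(\R^2)^t\bigr)$ obtained just above, and reindex the resulting double sum by the pairs $(p,q)$ of nonnegative integers with $p+2q = n \geq 1$; this yields the stated formula. For the final sentence of the corollary one notes that $H_1(\aut_{I(E^2)}(P)) \cong K_1(\cE^2)$, and the only pair with $p+2q=1$ is $(p,q)=(1,0)$, giving $H_1\bigl(O(2);\Lambda^2_\Q(\R^2)^t\bigr)$.

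I do not anticipate a genuine obstacle: all of the real content --- the computation of $K_*(\cE^2)$, its rationality, and the identification of $H_*(\aut)$ with the stable homology --- is already in place. The only point requiring a little care is verifying the hypotheses of the rational H-space input, namely that $K(\cE^2)$ is connective and rational (which it is) and that $\Omega^\infty_0 K(\cE^2)$ is a simple space (automatic, since it is an infinite loop space), so that the Cartan--Serre--Milnor--Moore machinery applies.
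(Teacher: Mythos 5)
Your proposal is correct and matches the paper's (largely implicit) derivation of the corollary: reduce to stable homology via \cref{cor:scissors-aut-geometries}, use rationality (\cref{prop:rationality}) plus the Milnor--Moore/Cartan--Serre free-graded-commutative-algebra structure for $H_*(\Omega^\infty_0)$ of a connective rational spectrum, and substitute the preceding computation of $K_n(\cE^2)$. The paper handles the analogous step in \cref{IET_1} with a bare citation to \cite[Theorem, p.~263]{MilnorMoore}, whereas you unpack it through the equivalence with a product of Eilenberg--MacLane spaces, but the content is the same.
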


	Note that the homology of $O(2) \cong \{\pm 1\} \ltimes \R/\Z$ is straightforward to calculate, but the homology with coefficients in $\Lambda^{2q+2}_\Q (\R^2)^t$ is not. As a result, we do not know at the time of writing whether the first homology group $H_1( O(2) ; \Lambda^{2}_\Q (\R^2)^t)$ vanishes. This is equivalent to \cref{k1_conjecture} from the introduction.

	\subsubsection{Two-dimensional hyperbolic geometry} \label{sec:two-dim-hyp-sph} For the hyperbolic plane $H^2$, note that $I(H^2) = PGL_2(\R)$. We do not know if the spectrum is rational, but if we are willing to rationalize everything then the same method gives a cofibre sequence
	\[ BO(2)_+ \longrightarrow BPGL_2(\R)_+ \longrightarrow \Pt(H^2)_{hI(H^2)}, \]
	giving a long exact sequence on homology
	\[ \cdots \longrightarrow H_{m+2}(O(2);\Q^t) \longrightarrow H_{m+2}(PGL_2(\R);\Q^t) \longrightarrow K_m(\cH^2) \otimes \Q \longrightarrow \cdots \]
	\begin{corollary}
	For any nonempty polygon $P$ in $H^2$, $\aut_{I(H^2)}(P)^\ab \otimes \Q$ sits in the above long exact sequence as $K_1(\cH^2) \otimes \Q$.
	\end{corollary}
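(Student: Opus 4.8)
The plan is to deduce the statement formally from \cref{cor:scissors-aut-geometries}, the Hurewicz theorem, and the long exact sequence already derived above. Since $G = I(H^2) = PGL_2(\R)$ contains $PSL_2(\R)$, the full group of orientation-preserving isometries of $H^2$ (which is trivially dense in itself), \cref{hyperbolic_ea} applies and $\cH^2_{I(H^2)}$ is an EA-assembler. Hence \cref{cor:scissors-aut-geometries} gives, for any nonempty polygon $P \subseteq H^2$, a canonical isomorphism $H_*(\aut_{I(H^2)}(P)) \cong H_*(\Omega^\infty_0 K(\cH^2))$.

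Next I would apply the Hurewicz theorem twice. On the one hand, $\Omega^\infty_0 K(\cH^2)$ is a connected H-space, so its fundamental group is abelian and $H_1(\Omega^\infty_0 K(\cH^2)) \cong \pi_1(\Omega^\infty_0 K(\cH^2)) = \pi_1(K(\cH^2)) = K_1(\cH^2)$. On the other hand, for the discrete group $\aut_{I(H^2)}(P)$ the Hurewicz theorem reads $H_1(\aut_{I(H^2)}(P)) \cong \aut_{I(H^2)}(P)^\ab$. Combining these with the degree-$1$ part of the isomorphism from the previous paragraph produces a canonical isomorphism $\aut_{I(H^2)}(P)^\ab \cong K_1(\cH^2)$; tensoring with $\Q$ gives $\aut_{I(H^2)}(P)^\ab \otimes \Q \cong K_1(\cH^2) \otimes \Q$.

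It then remains to observe that $K_1(\cH^2) \otimes \Q$ occurs as a term of the long exact sequence displayed just before the corollary: setting $m = 1$ there, the sequence contains
\[ \cdots \to H_3(O(2);\Q^t) \to H_3(PGL_2(\R);\Q^t) \to K_1(\cH^2) \otimes \Q \to H_2(O(2);\Q^t) \to \cdots, \]
which is exactly the claim. That exact sequence is already in hand: it arises from the cofibre sequence $BO(2)_+ \to BPGL_2(\R)_+ \to \Pt(H^2)_{hI(H^2)}$ of \cref{thom_theorem} by taking reduced $\Q^t$-homology, desuspending by the tangent bundle $TH^2$, and using the Thom isomorphism, together with the fact that rationalisation commutes with the homotopy colimits involved. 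So no further input is required.

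The only point that needs care is bookkeeping: one tracks the degree shift coming from $\Sigma^{-TH^2}$, a rank-$2$ bundle, so that $K_m(\cH^2) \otimes \Q \cong \widetilde{H}_m(K(\cH^2);\Q)$ matches $\widetilde{H}_{m+2}(\Pt(H^2)_{hI(H^2)};\Q^t)$ --- this is why the neighbouring groups in the long exact sequence live in degree $m+2$ --- and one checks that the twisted coefficients $\Q^t$ are those dictated by the nontrivial orientation character of $I(H^2)$ acting on $TH^2$. Beyond this, there is no genuine obstacle: the corollary is a formal consequence of the machinery assembled above.
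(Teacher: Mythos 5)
Your argument is correct and is exactly the (implicit) argument the paper intends: the corollary is a formal consequence of \cref{hyperbolic_ea} (giving the EA property), \cref{cor:scissors-aut-geometries} (identifying $H_*(\aut_{I(H^2)}(P))$ with $H_*(\Omega^\infty_0 K(\cH^2))$), the Hurewicz theorem applied on both sides to identify $\aut_{I(H^2)}(P)^\ab \cong K_1(\cH^2)$, and the long exact sequence already displayed. Your bookkeeping about the Thom isomorphism degree shift and the twisted coefficients $\Q^t$ is also accurate and matches the derivation of the long exact sequence just before the corollary.
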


	For the two-sphere $S^2$ we can use the same method, but the result is more interesting because the geometric subspaces of $S^2$ are not contractible. The full calculation will appear in future work of the third author and collaborators.

 	\section{Applications II: Restricting the polytopes}
 	\label{sec:applications-ii-restricting-the-polytopes}
 	
 	To get additional applications, we modify the assembler of polytopes $\cX_G$ from the previous section to only allow a subset of all possible polytopes. In this section we restrict our attention to particular collections of polytopes in the Euclidean case; in a follow-up paper, we will discuss the other geometries and more general collections of polytopes \cite{KLMMS-2}. 
 	
 	We start by discussing assemblers with restricted polytopes (\cref{sec:assembler-restrict}) and give a condition under which it admits a description as a Thom spectrum (\cref{sec:thom-spectrum-restricted}), which we use to prove a K\"unneth theorem for ``products'' of restricted polytopes (\cref{sec:kunneth-theorem}).  We then establish some rationality and vanishing results (\cref{sec:rationality-vanishing-restricted}) which are used in later computations (\cref{sec:computations-families}).

	\subsection{Assemblers with restricted polytopes} \label{sec:assembler-restrict} Throughout this section, $\cL$ will be a collection of affine subspaces of $E^n$, consisting of some collection of hyperplanes together with all of their nonempty intersections in $E^n$. To summarize this, we say that $\cL$ is \emph{generated by hyperplanes}. We adopt the convention that $E^n \in \cL$ but $\varnothing \not\in \cL$.
	
	\begin{definition}
		\label{def:cov-l-polytope}
		A \emph{convex $\cL$-polytope} is a convex $n$-polytope in $E^n$ in which the span of each nonempty face is in the collection $\cL$. Equivalently, it is a $n$-dimensional bounded region which is an intersection of half-spaces determined by hyperplanes in $\cL$.
	\end{definition}
	
	\begin{example}
		In $E^2$, we might take $\cL$ to consist of all vertical lines of the form $x = q$, $q \in \Q$, all horizontal lines of the form $y = q$, $q \in \Q$, and all intersections of such lines as well as $E^2$. Then a convex $\cL$-polytope is a rectangle aligned with the coordinate axes, whose corners have rational coordinates.
	\end{example}
	
	Let $G \leq A(E^n)$ be any subgroup that preserves $\cL$ as a set, and hence preserves convex $\cL$-polytopes. Let $\cE^{\mathcal L}_G \subseteq \cE^n_G$ be the full subcategory of polytopes that are finite unions of convex $\cL$-polytopes.
	
	\begin{lemma}\label{xlg_assembler}
		$\cE^{\mathcal L}_G$ is an assembler, with the notion of covering inherited from $\cE^n_G$.
	\end{lemma}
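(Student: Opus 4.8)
The plan is to deduce all of the assembler axioms (\cref{def:assembler}) for $\cE^{\mathcal L}_G$ from the fact, established in the previous section, that $\cE^n_G$ is an assembler, by checking that the class of objects of $\cE^{\mathcal L}_G$ — finite unions of convex $\cL$-polytopes — is preserved by the geometric operations used in that verification. The topology on $\cE^{\mathcal L}_G$ is the inherited one: a family $\{P_i \to P\}$ of morphisms in $\cE^{\mathcal L}_G$ is a covering family exactly when the union of the images of the $P_i$ equals $P$, as in $\cE^n_G$. Since $\cE^{\mathcal L}_G$ is a \emph{full} subcategory of $\cE^n_G$, every morphism of $\cE^{\mathcal L}_G$ is automatically a monomorphism, giving axiom \ref{enum:assembler-mono}; and the empty union $\varnothing$ lies in $\cE^{\mathcal L}_G$, remains initial there (its unique maps to other objects are inherited from $\cE^n_G$), and is covered by the empty family, giving axiom \ref{enum:assembler-initial}.

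The key point is a trio of closure properties of the collection $\mathcal{C}$ of finite unions of convex $\cL$-polytopes. First, $\mathcal{C}$ is preserved by $G$: if $g \in G$ and $A$ is a convex $\cL$-polytope, written using the second characterisation in \cref{def:cov-l-polytope} as a bounded intersection of half-spaces determined by hyperplanes in $\cL$, then $g(A)$ is a bounded intersection of half-spaces determined by hyperplanes in $g(\cL) = \cL$, hence again a convex $\cL$-polytope. Second, $\mathcal{C}$ is closed under the full-dimensional part of an intersection: if $A$ and $B$ are convex $\cL$-polytopes, then $A \cap B$ is again a bounded intersection of half-spaces determined by hyperplanes in $\cL$ — here one uses that $\cL$ is generated by hyperplanes, hence closed under intersecting its hyperplanes — so $A\cap B$ is either lower-dimensional or a convex $\cL$-polytope, and therefore the $n$-dimensional part of the intersection of two members of $\mathcal{C}$ (obtained by intersecting termwise and discarding the lower-dimensional pieces) is again a member of $\mathcal{C}$. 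Third, $\mathcal{C}$ is trivially closed under finite unions.

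Using these three properties one obtains the remaining content. For axiom \ref{enum:assembler-refinement}: two covers of $P$ in $\cE^{\mathcal L}_G$ are in particular covers of $P$ in $\cE^n_G$, so the standard common refinement — whose terms are the $n$-dimensional parts of the pairwise intersections of the images, equipped with the evident inclusion morphisms into the members of the two covers and into $P$ — has all of its terms in $\mathcal{C}$ by the first two closure properties, and all of its morphisms in $\cE^{\mathcal L}_G$ by fullness, hence is a common refinement inside $\cE^{\mathcal L}_G$. The same bookkeeping also shows that $\cE^{\mathcal L}_G$ genuinely inherits a Grothendieck topology: composites of covering families stay inside $\cE^{\mathcal L}_G$ by closure under unions, and the pullback in $\cE^n_G$ of a morphism of $\mathcal{C}$ along a morphism of $\mathcal{C}$ — again an $n$-dimensional part of an intersection of images — lies in $\mathcal{C}$, so it serves as a pullback in $\cE^{\mathcal L}_G$ and the pullback-stability of covering sieves descends from $\cE^n_G$. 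I expect the only genuinely substantive point to be the second closure property, i.e.\ recognising that the hypothesis "$\cL$ is generated by hyperplanes" (so closed under intersection) is precisely what makes the class of convex $\cL$-polytopes closed under intersection; the rest is formal transport of structure from $\cE^n_G$.
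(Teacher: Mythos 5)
Your proof is correct and takes essentially the same approach as the paper: axioms \ref{enum:assembler-initial} and \ref{enum:assembler-mono} are inherited from $\cE^n_G$, and axiom \ref{enum:assembler-refinement} follows from the closure of convex $\cL$-polytopes under intersection (which in turn uses that $\cL$ is generated by hyperplanes). You spell out a few extra details — $G$-invariance of $\mathcal{C}$, discarding lower-dimensional pieces, and pullback-stability of the topology — that the paper leaves implicit, but the key observation in both is identical.
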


	\begin{proof}
		Every face of the empty polytope $\varnothing$ is empty, so $\varnothing$ is a convex $\cL$-polytope in the sense of \cref{def:cov-l-polytope}. Axioms \ref{enum:assembler-initial} and \ref{enum:assembler-mono} are therefore inherited from $\cE^n$. Axiom \ref{enum:assembler-refinement} follows as the intersection of any two convex polytopes, defined as intersections of half-spaces from $\cL$, is itself an intersection of half-spaces from $\cL$.
	\end{proof}

	\begin{lemma}\label{lem:elg_ea}\,
		\begin{enumerate}
			\item \label{enum:elg_ea-i} Suppose $G \leq I(E^n)$, and for any $r > 0$ we can tessellate $\R^n$ by congruent $\cL$-polytopes of diameter $< r$, any two of which differ by the action of some $g \in G$. Then $\cE^{\cL}_G$ is an EA-assembler.
			\item \label{enum:elg_ea-ii} Suppose $G \leq A(E^n)$, that the $G$-orbit of any point is dense in $\R^n$, and that $G$ contains an element that shrinks Euclidean distance (as in \cref{sec:eucl-scaling}). Then $\cE^{\cL}_G$ is an S-assembler.
		\end{enumerate}
	\end{lemma}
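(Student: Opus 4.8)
The plan is to transcribe the proofs of \cref{euclidean_ea} and \cref{euclidean_s}, the only genuinely new ingredient being a closure property of the objects of $\cE^{\cL}_G$. Since $\cL$ is generated by hyperplanes and $G$ preserves $\cL$, every convex $\cL$-polytope is an intersection of closed half-spaces whose bounding hyperplanes lie in $\cL$ (its facets span such hyperplanes), the $G$-image of a convex $\cL$-polytope is again one, and the complement of such a half-space agrees, up to a measure-zero set, with the opposite half-space, which is bounded by the same hyperplane of $\cL$. Distributing a set-theoretic difference $A \setminus B$ of finite unions of convex $\cL$-polytopes over the half-spaces occurring in the $B$-part therefore exhibits it, up to measure zero, as a finite union of convex $\cL$-polytopes, i.e.\ as an object of $\cE^{\cL}_G$. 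The point is that if $Y$ is an object of $\cE^{\cL}_G$ and $X_1, \dots, X_m \subseteq Y$ are objects with pairwise disjoint interiors, then the partial cover $\{X_i \hookrightarrow Y\}_i$ extends to a cover of $Y$ by adjoining a decomposition of $Y \setminus \bigcup_i \operatorname{int}(X_i)$ into convex $\cL$-polytopes; this is what makes the scissors embeddings constructed below legitimate.

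For the first part, since $G \leq I(E^n)$ the Euclidean volume is $G$-invariant and hence additive over covers, so it is a volume function on $\cE^{\cL}_G$ in the sense of \cref{volume_function}, and it remains to verify \ref{enum:ass-vol-archimedean} and \ref{enum:ass-vol-existence}. For \ref{enum:ass-vol-archimedean}: intersecting the pieces of a given $\scr{Q}$ with a tessellation of $\R^n$ by congruent $\cL$-polytopes of sufficiently small diameter produces a cover of $\scr{Q}$ by $\cL$-polytopes of arbitrarily small volume. For \ref{enum:ass-vol-existence}: exactly as in the first step of \cref{euclidean_ea} we reduce to the case $\scr{P} = P$, $\scr{Q} = Q$ of single $\cL$-polytopes with $\vol(P) < \vol(Q)$, now using the tessellation hypothesis in place of dense translations to spread the pieces into pairwise disjoint $G$-images of a finite sub-union of tiles. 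Fix a tessellation by congruent $\cL$-polytopes of small mesh; let $C_1, \dots, C_M$ be the tiles meeting $P$, let $D_1, \dots, D_N$ be the tiles contained in $Q$, and let $v$ be the common volume of a tile. By Jordan measurability, $Mv \to \vol(P)$ and $Nv \to \vol(Q)$ as the mesh shrinks, so $M < N$ for a fine enough mesh. Cut $P$ into the $\cL$-polytopes $P \cap C_1, \dots, P \cap C_M$, and for each $i$ let $g_i \in G$ carry $C_i$ to the distinct tile $D_i \subseteq Q$; then $\{g_i(P \cap C_i) \subseteq Q\}_{i=1}^M$ is a disjoint family of $\cL$-polytopes in $Q$, which by the closure property extends to a cover of $Q$ and thus defines a scissors embedding $P \hookrightarrow Q$. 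Its complement is the required $\scr{P}'$.

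For the second part, $G \leq A(E^n)$ may contain scalings, so there is no volume function and we verify axiom \ref{enum:ass-zae-ae} of \cref{def:s-assembler}. Let $\scr{P}, \scr{Q} \not\simeq \varnothing$ and fix a nonempty piece $Q_j$ of $\scr{Q}$. Let $T \in G$ be a distance-shrinking element fixing a point $x$ (as in \cref{sec:eucl-scaling}), so that for every piece $P_i$ of $\scr{P}$ the sets $T^k P_i$ have diameter tending to $0$ and cluster near $x$. Choose pairwise disjoint balls $B_{\rho_i}(b_i) \subseteq \operatorname{int}(Q_j)$ with $\sum_i \vol(B_{\rho_i}(b_i)) < \vol(Q_j)$; by density of the $G$-orbits pick $g'_i \in G$ with $\lvert g'_i(x) - b_i \rvert < \rho_i/3$; and choose $k$ large enough that $g'_i(T^k P_i) \subseteq B_{\rho_i}(b_i)$ for all $i$, which is possible because $\diam(T^k P_i) \to 0$ while each $g'_i$ has a fixed bounded linear part. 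Each $g'_i T^k$ lies in $G$, hence preserves $\cL$, so it gives a morphism $P_i \to Q_j$ in $\cE^{\cL}_G$ that is an isomorphism onto its image $g'_i(T^k P_i)$. The family $\{g'_i(T^k P_i)\}_i$ together with $L_j := Q_j \setminus \bigcup_i \operatorname{int}(g'_i(T^k P_i))$ covers $Q_j$, and $L_j$ is a nonempty object of $\cE^{\cL}_G$ by the closure property. Hence $\scr{Q}$ is scissors congruent to $\scr{P} \sqcup \scr{P}'$, where $\scr{P}'$ consists of the pieces $Q_{j'}$ with $j' \neq j$ together with $L_j$; and $\scr{P}' \not\simeq \varnothing$ by \cref{sc_to_empty}, since the nonempty $\cL$-polytope $L_j$ has no empty cover.

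The only step requiring real care is the closure property of the first paragraph, and in particular the consequent fact that any partial cover of an object of $\cE^{\cL}_G$ by objects of $\cE^{\cL}_G$ extends to a cover; granting that, the first part is a faithful copy of \cref{euclidean_ea} with Jordan measurability supplying the counting estimate, and the second part is a faithful copy of \cref{euclidean_s}, the shrink-then-translate maneuver being assembled from the distance-shrinking element together with the density of the $G$-orbits.
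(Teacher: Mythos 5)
Your proof is correct, and it is exactly the adaptation of Propositions~\ref{euclidean_ea} and~\ref{euclidean_s} that the paper's one-line proof (``straightforward adaptation'') alludes to. The closure property you isolate at the outset---that a set-theoretic difference of finite unions of convex $\cL$-polytopes is again such up to measure zero, so that any disjoint partial cover by objects of $\cE^{\cL}_G$ extends to a cover---is the one genuinely $\cL$-specific point, which the paper leaves implicit both here and in Proposition~\ref{euclidean_ea}, and you supply it correctly before carrying out the two adaptations (tessellation hypothesis replacing small cubes and dense translations in part (i); shrink-then-move-by-dense-orbit replacing shrink-then-translate in part (ii)).
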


	\begin{proof}
		The proof is a straightforward adaptation of \cref{euclidean_ea} and \cref{euclidean_s}.
	\end{proof}

	\begin{corollary}\label{cor:scissors-aut-geometries-restricted}
	In each of the above cases, the scissors automorphism groups $\smash{\aut_{\cX^\cL_G}(P)} = \smash{\aut_{\category{UG}(\cX^\cL_G)}(P)}$ have homology groups that are independent of $P$ as long as $P$ is nonempty, given by
	\[ H_*(\aut_{\cX^\cL_G}(P)) \cong H_*(\Omega^\infty_0 K(\cX^\cL_G)). \]
	\end{corollary}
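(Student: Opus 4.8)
The plan is to run the proof of \cref{cor:scissors-aut-geometries} verbatim, now that \cref{lem:elg_ea} supplies the missing axioms. First I would record that by \cref{lem:elg_ea} the assembler $\cE^\cL_G$ is an EA-assembler in case \ref{enum:elg_ea-i}, with volume function the restriction of the usual Euclidean volume to finite unions of convex $\cL$-polytopes, and an S-assembler in case \ref{enum:elg_ea-ii}. In either case a nonempty $\cL$-polytope $P$ satisfies the hypotheses of \cref{cor:stable_homology_acyclic}: being a finite union of convex $\cL$-polytopes with nonempty interior, $P$ has strictly positive Euclidean volume, which takes care of the EA case; and $P \not\simeq \varnothing$, since by \cref{sc_to_empty} a scissors congruence $P \simeq \varnothing$ would force $P$ to admit an empty cover, impossible because the images of a covering family must cover $P$. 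This handles the S case.

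Next I would invoke \cref{cor:stable_homology_acyclic} to conclude that the canonical map
\[ B\aut_{\cG(\cE^\cL_G)}(P) \longrightarrow \Omega^\infty_{[P]} K(\cE^\cL_G) \]
is acyclic, hence an isomorphism on homology. Since the automorphism group of $P$ in $\category{UG}(\cE^\cL_G)$ coincides with its automorphism group in $\cG(\cE^\cL_G)$ (the isomorphisms of $\category{UG}(\cA)$ are exactly the scissors congruences of $\cG(\cA)$), this identifies $H_*(\aut_{\cX^\cL_G}(P))$ with $H_*(\Omega^\infty_{[P]} K(\cE^\cL_G))$. As $\Omega^\infty K(\cE^\cL_G)$ is an infinite loop space, all of its path components are homotopy equivalent, so $\Omega^\infty_{[P]} K(\cE^\cL_G) \simeq \Omega^\infty_0 K(\cE^\cL_G)$, which yields the displayed isomorphism. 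Independence of $P$ is then automatic since the right-hand side does not involve $P$; alternatively it follows directly from \cref{thm:embeddings-induce-homology-isomorphisms-vol} and \cref{thm:embeddings-induce-homology-isomorphisms-zae} applied to a common enlargement of $P$ and $Q$, which exists by property \ref{enum:ass-vol-existence} or \ref{enum:ass-zae-ae}.

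I do not expect any genuine obstacle here: all of the substantive work—the connectivity of the destabilisation complexes, the homological stability input of Randal-Williams–Wahl, the acyclic group-completion statement \cref{cor:stable_homology_acyclic}, and the verification of the EA/S axioms in \cref{lem:elg_ea}—has already been carried out. The only point deserving a sentence of care is checking that a nonempty restricted polytope is a genuinely nontrivial object of $\cE^\cL_G$ (positive volume, and not scissors congruent to $\varnothing$), so that the hypotheses of \cref{cor:stable_homology_acyclic} are met in both the EA and the S case.
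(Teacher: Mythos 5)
Your proposal is correct and is essentially the proof the authors intend (the paper states the corollary without an explicit argument, leaving it as an immediate consequence of \cref{lem:elg_ea} and \cref{cor:stable_homology_acyclic}). You correctly verify the hypotheses of \cref{cor:stable_homology_acyclic} in both the EA and S cases, identify $\aut_{\category{UG}(\cA)}(P)$ with $\aut_{\cG(\cA)}(P)$, pass to the basepoint component via the infinite-loop-space structure, and observe that independence of $P$ comes for free once the stable homology is identified.
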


	\subsection{A Thom spectrum model for restricted polytopes}\label{sec:thom-spectrum-restricted}
	
	For the examples below, we will need a more general version of \cref{thom_theorem} that allows us to restrict the $\cL$-polytopes. We prove this theorem for any collection of subspaces $\cL$ that satisfy two hypotheses, following \cite[Section 4]{malkiewich2022}. As mentioned before, in a follow-up paper we will show that these two hypotheses hold very broadly, giving many more examples to which \cref{thom_theorem} applies.
	
	Firstly, we construct $\PT^{\mathcal L}(\cE^n)$ by following the construction of $\PT(\cE^n)$ from \cref{def:pt}, but using only the geometric subspaces that occur in $\cL$:
	
	\begin{definition}Let $\PT^\cL(\cE^n)$ be the based space given by the quotient
	\[\PT^{\cL}(\cE^n) \coloneqq \left(\underset{U \in \cL}\hocolim\, U\right) \Big/ \left(\underset{U \in \cL \setminus \{E^n\}}\hocolim\, U\right),\]
	where the homotopy colimits are taken over the partially ordered set of elements of $\cL$, ordered by inclusion. In particular, $\PT^{\cL}(\cE^n)$ is equivalent to the suspension of the realization of the poset $\cL \setminus \{E^n\}$.
	\end{definition}
 
	For any convex $\cL$-polytope $P$, we say that a map
	\[\partial P \longrightarrow \underset{U \in \cL \setminus \{E^n\}}\hocolim\, U\]
	is \emph{apartment-like} if each face $F \subseteq \partial P$ of any dimension lands in the subspace
	\[ \underset{U \subseteq \mathrm{span}(F)}\hocolim\, U \subseteq \underset{U \in \cL \setminus \{E^n\}}\hocolim\, U. \]
	Viewing $\partial P = \cup_i F_i$ as the union of its top-dimensional faces $F_i$, this definition coincides with the definition of an apartment-like map in the sense of the second item in \cite[Definition 4.10]{malkiewich2022}.
	The space of such maps is contractible by the argument of \cite[Lemma 4.11]{malkiewich2022}, and moreover induces uniquely up to homotopy a map
	\[\apt(P) \colon P/\partial P \longrightarrow \PT^{\cL}(\cE^n).\]
	Since $P/\partial P$ is homeomorphic to a sphere $S^n$, the map $\apt(P)$ therefore gives a homology class in $\widetilde H_n(\PT^{\cL}(\cE^n))$. There is the issue of choosing the orientation on $P/\partial P$. We address this by fixing an orientation on $E^n$, and taking the induced orientation on each sphere $P/\partial P$. If $P$ admits a cover by finitely many $\cL$-polytopes $P_i$, then we get a collapse map
	\[ P/\partial P \longrightarrow \bigvee_i P_i/\partial P_i \]
	and it is degree one on each term. In other words, the chosen orientations agree along this collapse map.
	
	\begin{definition}
	We define the \emph{$\cL$-polytope group} $\Pt^\cL(\cE^n)$ to be the free abelian group on the convex $\cL$-polytopes $P$, modulo the relation that whenever $P$ admits a cover by finitely many convex $\cL$-polytopes $\{P_i\}$, we have $[P] = \sum_i [P_i]$:
	\[ \Pt^\cL(\cE^n) = \frac{ \Z\langle [P] \rangle }{ [P] = \sum_i [P_i] \textup{ for each cover }\{ P_i \to P\}_{i \in I}}. \]
	\end{definition}
	
	\begin{lemma}
		The apartment-like maps induce a well-defined homomorphism 
		\[\apt \colon \Pt^\cL(E^n) \longrightarrow \widetilde{H}_{n}(\PT^\cL(E^n)).\]
	\end{lemma}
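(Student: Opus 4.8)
The plan is the following. Since $\Pt^\cL(E^n)$ is by definition the free abelian group on the convex $\cL$-polytopes modulo the cover relations, it suffices to define $\apt$ on generators, to check it is well-defined there, and then to check that it carries each relator $[P] - \sum_i [P_i]$ associated to a cover to zero; the universal property of the quotient then produces the homomorphism. On a generator $[P]$ I set $\apt([P]) \coloneqq [\apt(P)] \in \widetilde{H}_n(\PT^\cL(E^n))$, the class obtained by pushing the fundamental class of the oriented sphere $P/\partial P$ forward along the map $\apt(P) \colon P/\partial P \to \PT^\cL(E^n)$.

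For well-definedness on generators I would invoke the contractibility of the space of apartment-like maps $\partial P \to \hocolim_{U \in \cL \setminus \{E^n\}} U$, which holds by the argument of \cite[Lemma 4.11]{malkiewich2022}. Any two apartment-like maps are therefore homotopic, so the induced maps $P/\partial P \to \PT^\cL(E^n)$ are homotopic, and hence $[\apt(P)]$ depends only on $P$ together with the fixed orientation of $E^n$. This already yields a homomorphism from the free abelian group on convex $\cL$-polytopes to $\widetilde{H}_n(\PT^\cL(E^n))$.

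The substantive step is to verify the cover relations. Given a cover $\{P_i \to P\}_{i \in I}$ by convex $\cL$-polytopes, I would choose a single map from the subdivision complex $\bigcup_i \partial P_i$ into $\hocolim_{U \in \cL \setminus \{E^n\}} U$ whose restriction to each $\partial P_i$ is apartment-like for $P_i$ and whose restriction to $\partial P \subseteq \bigcup_i \partial P_i$ is apartment-like for $P$; such a compatible choice exists because the relevant spaces of apartment-like maps are contractible. Passing to quotients, this exhibits a homotopy-commutative triangle relating $\apt(P)$, the collapse map $c \colon P/\partial P \to \bigvee_i P_i/\partial P_i$, and $\bigvee_i \apt(P_i)$. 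As noted in the paragraph preceding the lemma, $c$ is degree one onto each wedge summand and the chosen orientations agree along $c$, so under the identification $\widetilde{H}_n(\bigvee_i P_i/\partial P_i) \cong \bigoplus_i \widetilde{H}_n(P_i/\partial P_i)$ the fundamental class of $P/\partial P$ maps to the tuple of fundamental classes of the $P_i/\partial P_i$. Chasing the triangle on $\widetilde{H}_n$ then gives $[\apt(P)] = \sum_i [\apt(P_i)]$, so the relator is killed and $\apt$ descends to $\Pt^\cL(E^n)$.

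The step I expect to be the main obstacle is the construction, in the cover case, of a single apartment-like map on the whole subdivision complex $\bigcup_i \partial P_i$ that is simultaneously compatible with $P$ and with every $P_i$, together with the accompanying orientation bookkeeping along the collapse map — although the preceding discussion in the paper has essentially carried this out already at the level of $c$, so this should amount to packaging that discussion.
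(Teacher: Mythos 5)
Your proof is correct and follows essentially the same route as the paper's: define $\apt$ on generators via the pushforward of fundamental classes, use contractibility of the space of apartment-like maps for well-definedness, and kill the cover relations by exhibiting the homotopy-commutative triangle through the collapse map. The one small streamlining the paper makes is to observe that a map on $\bigcup_i \partial P_i$ that is apartment-like for each $P_i$ is \emph{automatically} apartment-like along $\partial P$ (since spans of faces of the $P_i$'s lying in a face $F$ of $P$ are contained in $\mathrm{span}(F)$), so one need not impose the two compatibility conditions simultaneously as you do; but this does not affect the validity of your argument.
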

	
	\begin{proof}
		If $P$ is covered by the $P_i$, a map $\cup_i \partial P_i \to T^\cL(E^n)$ that is apartment-like for each $P_i$ is automatically apartment-like along $\partial P$. Furthermore, such a map on $\cup_i \partial P_i$ exists by the argument for \cite[Lemma 4.11]{malkiewich2022}. This gives the commutativity up to homotopy of the diagram
		\[ \begin{tikzcd}[column sep=4em]
			P/\partial P \ar[d,"\textup{collapse}"'] \ar[r,"\apt(P)"] & \PT^{\cL}(E^n). \\
			\bigvee_i P_i/\partial P_i \ar[ru,"\bigvee_i \apt(P_i)"']
		\end{tikzcd} \]
		It follows that the generator $[P]$ in the polytope group is sent to a homology class in $\widetilde H_n(\ST^{\cL}(E^n))$ that is the sum of the images of the elements $[P_i]$, as required.
	\end{proof}
	
	\begin{theorem}\label{thm:thom-theorem-restricted} Suppose that $\cL$ has the following properties:
	\begin{enumerate}
		\item \label{enum:thom-i} $\PT^\cL(\cE^n)$ is equivalent to a wedge of $n$-spheres, and
		\item\label{enum:thom-ii} $\apt\colon \Pt^\cL(\cE^n) \to \widetilde{H}_n(\PT^\cL(\cE^n))$ is an isomorphism.
	\end{enumerate}
	Then there is an equivalence
	\[ K(\cE^\cL_G) \simeq (\Sigma^{-TE^n} \PT^\cL(\cE^n))_{hG}. \]
	\end{theorem}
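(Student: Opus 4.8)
The plan is to reduce \cref{thm:thom-theorem-restricted} to the already-established \cref{thom_theorem} by a formal argument identifying $\cE^\cL_G$, or rather its $K$-theory, with a suitable homotopy orbit construction. The key observation is that $K(-)$ of an assembler depends only on the category of covers $\cW^\circ(-)$, and that the two hypotheses \ref{enum:thom-i} and \ref{enum:thom-ii} are precisely what is needed to run the proof of \cite[Theorem 1.4]{malkiewich2022} verbatim with $\PT(\cE^n)$ replaced by $\PT^\cL(\cE^n)$. Concretely, I would first recall from \cite[Section 4]{malkiewich2022} that the Thom spectrum model is built by exhibiting $K(\cE^n_G)$ as the homotopy orbits $(\Sigma^{-TE^n}\PT(\cE^n))_{hG}$ via a ``local structure'' or derived assembler argument: one stratifies polytopes by the spans of their faces, and the associated graded pieces assemble into the Thom spectrum of the tangent bundle over the polytopal Tits complex. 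The restriction to $\cL$-polytopes simply replaces the full poset of geometric subspaces by the sub-poset $\cL$, so that $\PT(\cE^n)$ is replaced throughout by $\PT^\cL(\cE^n)$.

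First I would set up the comparison precisely. The inclusion $\cE^\cL_G \hookrightarrow \cE^n_G$ is a map of assemblers (by \cref{xlg_assembler} the source is an assembler and covers are inherited), inducing a map on $K$-theory; but this map is \emph{not} an equivalence, so instead I construct the Thom spectrum model directly for $\cE^\cL_G$. Following \cite[Section 4]{malkiewich2022}, one filters $\cW^\circ(\cE^\cL_G)$ by the dimension of the smallest subspace in $\cL$ spanning a cover piece, and computes the associated subquotients. Hypothesis \ref{enum:thom-i}---that $\PT^\cL(\cE^n)$ is a wedge of $n$-spheres---guarantees that the relevant spectrum is a suspension spectrum concentrated in the expected degree after desuspending by $TE^n$, i.e.\ that there are no ``higher'' cells obstructing the identification; this is the analogue of the fact that for the full $\cL$ the polytopal Tits complex is highly connected. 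Hypothesis \ref{enum:thom-ii}---that the apartment map $\apt$ is an isomorphism onto $\widetilde H_n(\PT^\cL(\cE^n))$---is what identifies $\pi_0$ of the resulting spectrum (after taking $G$-orbits, this is $K_0$, the $\cL$-polytope group $K_0(\cE^\cL_G)$) with the homology group $\widetilde H_n(\PT^\cL(\cE^n))_{hG}$ coming from the Thom isomorphism, and more importantly it rigidifies the map so that it becomes an equivalence of spectra rather than merely a rational or low-degree equivalence.

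The main steps, in order: (1) recall the structure of the proof of \cref{thom_theorem} from \cite[Section 4]{malkiewich2022}, isolating exactly where the two properties of $\PT(\cE^n)$---being a wedge of spheres, and the apartment map being an isomorphism---are used; (2) observe that the construction of $\PT^\cL(\cE^n)$, the apartment-like maps, and the homomorphism $\apt\colon \Pt^\cL(\cE^n)\to\widetilde H_n(\PT^\cL(\cE^n))$ all go through with the poset $\cL$ in place of the full poset of geometric subspaces, which we have just verified above; (3) substitute, obtaining that the filtration of $\cW^\circ(\cE^\cL_G)$ has associated graded the Thom spectrum $\Sigma^{-TE^n}\PT^\cL(\cE^n)$, and that under hypotheses \ref{enum:thom-i}--\ref{enum:thom-ii} the filtration degenerates so that $K(\cE^\cL_G)\simeq(\Sigma^{-TE^n}\PT^\cL(\cE^n))_{hG}$; (4) if preferred, package this instead as: $G$ acts on $\PT^\cL(\cE^n)$, the associated derived assembler has $K$-theory the homotopy orbits, and our two hypotheses identify this derived assembler with $\cE^\cL_G$ on the nose up to $K$-theory equivalence.

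I expect the main obstacle to be step (3): verifying that the two hypotheses on $\cL$ are genuinely \emph{sufficient} to run Malkiewich's argument, i.e.\ checking that nothing in \cite[Section 4]{malkiewich2022} secretly uses a property of the full subspace poset beyond high connectivity of the Tits-type complex and the isomorphism on top homology. The cleanest route is probably to cite \cite[Section 4]{malkiewich2022} for the general mechanism---which is already phrased there for a collection of subspaces satisfying these axioms---and merely note that \cref{xlg_assembler} plus the discussion above provide the required inputs, so that the theorem follows formally. In the writeup I would keep the argument short, explicitly invoking \cite[Section 4]{malkiewich2022} and remarking that the follow-up paper \cite{KLMMS-2} verifies hypotheses \ref{enum:thom-i} and \ref{enum:thom-ii} in broad generality.
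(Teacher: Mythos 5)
Your proposal takes essentially the same approach as the paper: rerun the argument of \cite[Section 4]{malkiewich2022} with the poset of subspaces in $\cL$ substituted for the full poset of geometric subspaces, and observe that hypotheses \ref{enum:thom-i} and \ref{enum:thom-ii} are exactly what that argument needs. Two points where your description of the mechanism drifts from what actually happens. First, the paper's opening move, which you omit entirely, is to quote the main result of \cite{bgmmz} to reduce to the trivial group: $K(\cE^\cL_G) \simeq K(\cE^\cL_1)_{hG}$, after which one only needs an equivalence of Borel $G$-spectra $K(\cE^\cL_1) \simeq \Sigma^{-TE^n}\PT^\cL(\cE^n)$, rearranged via \cite[Lemma 2.25]{malkiewich2022} to $S^{E^n}\wedge K(\cE^\cL_1)\simeq \Sigma^\infty\PT^\cL(\cE^n)$. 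Second, there is no ``filtration by dimension of the spanning subspace'' with ``associated graded'' that ``degenerates''; Malkiewich's argument, and the paper's adaptation of it, is a zig-zag of equivalences: one writes $K(\cE^\cL_1)$ as a filtered homotopy colimit over the poset $\cD^{\cL}$ of finite tuples of interior-disjoint convex $\cL$-polytopes, uses Barratt--Priddy--Quillen--Segal to see each term $K(\cE_{\{P_i\}})$ as a finite product of sphere spectra, then Pontryagin--Thom collapse maps to convert the $S^{E^n}$-suspension into a homotopy colimit of wedges $\bigvee_i P_i/\partial P_i$, and finally the apartment-like maps to compare to $\PT^\cL(\cE^n)$. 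Hypotheses \ref{enum:thom-i} and \ref{enum:thom-ii} enter in exactly two places: to show that $\hocolim_{\cD^\cL}\prod_i\mathbb S$ is a wedge of spheres (since it is connective with free $\pi_0 = \Pt^\cL(\cE^n)$), and to show that the apartment-induced map $\Sigma^\infty\hocolim_{\cD^\cL}(\bigvee_i P_i/\partial P_i)\to\Sigma^\infty\PT^\cL(\cE^n)$ is a homology isomorphism, hence an equivalence. Your high-level strategy (cite Section~4 of \cite{malkiewich2022}, substitute $\cL$, verify the two hypotheses are sufficient) is correct, and your instinct that the main risk is hidden uses of the full subspace poset is the right thing to worry about; but the filtration language would not survive an attempt to write the details.
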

	
	\begin{proof}
		The proof is essentially the same as in \cite[Section 4]{malkiewich2022}, but we describe it briefly and indicate where the assumptions \ref{enum:thom-i} and \ref{enum:thom-ii} are used. The main result of \cite{bgmmz} gives an equivalence of spectra
		\[ K(\cE^\cL_G) \simeq K(\cE^\cL_1)_{hG}, \]
		so it suffices to prove an equivalence of Borel $G$-spectra
		\[ K(\cE^\cL_1) \simeq \Sigma^{-TE^n} \PT^\cL(\cE^n). \]
		As in \cite[Lemma 2.25]{malkiewich2022}, this rearranges to proving that
		\[ S^{E^n} \wedge K(\cE^\cL_1) \simeq \Sigma^\infty \PT^\cL(\cE^n), \]
		where $S^{E^n}$ is the one-point compactification of $E^n$.
		
		We do this in several stages, as indicated in the diagram \cite[(4.1)]{malkiewich2022}. In this version of the proof, we let $\mathcal D^{\mathcal L}$ be the category of finite tuples of convex $\cL$-polytopes that are interior-disjoint (disjoint as objects of $\cE^{\mathcal{L}}_1$), with morphisms $\{P_i\}_{i \in I} \to \{Q_j\}_{j \in J}$ given by a subset $J' \subseteq J$ and a cover $\{Q_j\}_{j \in J'} \to \{P_i\}_{i \in I}$ in $\cE^{\mathcal{L}}_1$. Note that morphisms in $\cE^{\mathcal{L}}_1$ are simply inclusions of polytopes. Hence the category $\mathcal D^{\mathcal L}$ is a poset, where $\{Q_j\}_{j \in J}$ is larger than $\{P_i\}_{i \in I}$ if $\{Q_j\}_{j \in J}$ contains a cover of $\{P_i\}_{i \in I}$. Furthermore $\mathcal D^{\mathcal L}$ is a filtered category, because it is a poset and any two covers have a common refinement, by \cref{xlg_assembler}.
		
		The arguments for the first three equivalences in \cite[(4.1)]{malkiewich2022} are completely analogous to loc.cit.:
		For each tuple $\{P_i\} \in \mathcal D^{\mathcal L}$, we let $\cE_{\{P_i\}} \subseteq \cE^{\mathcal L}_1$ be the subcategory consisting only of those polytopes that are unions of the convex $\cL$-polytopes $\{P_i\}$. The entire category $\cE^{\mathcal L}_1$ is the filtered colimit of these subcategories over $\mathcal D^{\mathcal L}$, and hence $K(\cE^{\mathcal L}_1)$ is the homotopy colimit over $\mathcal D^{\mathcal L}$ of the spectra $K(\cE_{\{P_i\}})$. This is the first equivalence in \cite[(4.1)]{malkiewich2022}. By \cite[Lemma 4.6]{malkiewich2022} and the Barratt--Priddy--Quillen--Segal theorem each spectrum $K(\cE_{\{P_i\}})$ is a finite product of sphere spectra $\mathbb S^0$, and the maps between them are finite sums of identity maps. This yields a zig-zag of equivalences
		$$S^{E^n} \wedge K(\cE_1^n) \simeq S^{E^n} \wedge \left( \underset{\{P_i\} \in \mathcal{D}^{\mathcal{L}}}\hocolim\, \prod_i \mathbb S \right).$$
		
		The homotopy colimit spectrum $\underset{\{P_i\} \in \mathcal{D}^{\mathcal{L}}}\hocolim\, \prod_i \mathbb S$ is connective and has $\pi_0$ given by $\Pt^\cL(\cE^n)$, which is a free abelian group by \ref{enum:thom-i} and \ref{enum:thom-ii}. As in \cite[Theorem 4.8]{malkiewich2022}, these facts imply that this homotopy colimit is a wedge of spheres, $\bigvee \mathbb S^0$.
		
		We then use the suspension by $S^{E^n}$ and the Pontryagin--Thom collapse maps $S^{E^n} \to \bigvee_i P_i/\partial P_i$ to get a zig-zag of equivalences
		$$S^{E^n} \wedge \left( \underset{\{P_i\} \in \mathcal{D}^{\mathcal{L}}}\hocolim\, \prod_i \mathbb S \right) \simeq \Sigma^\infty\, \underset{\mathcal D^{\mathcal L}}\hocolim\, \left( \bigvee_i P_i/\partial P_i\right)$$
		where the maps between the terms in the system on the right are induced by collapse maps. These are equivalences four and five in \cite[(4.1)]{malkiewich2022}.
		
		The apartment-like maps induce a map of spectra 
		\[ \begin{tikzcd}[column sep=4em]
			\Sigma^\infty\, \underset{\mathcal D^{\mathcal L}}\hocolim\, \left( \bigvee_i P_i/\partial P_i\right) \ar[r,"\apt"] & \Sigma^\infty \PT^\cL(\cE^n).
		\end{tikzcd} \]
		As in \cite[Lemma  4.17]{malkiewich2022}, one can use \ref{enum:thom-i} and \ref{enum:thom-ii} to check that this map induces an isomorphism on spectrum homology. It follows that this is actually an equivalence of spectra. Finally, we use the technique from \cite[Section 4.3]{malkiewich2022} to make this equivalence $G$-equivariant, finishing the proof.
	\end{proof}

	\begin{proposition}\label{prop:0dim-restricted-hypotheses}For $n=1$, the hypotheses in \cref{thm:thom-theorem-restricted} are satisfied for any $\cL$.\end{proposition}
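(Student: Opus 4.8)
The plan is to verify directly the two hypotheses of \cref{thm:thom-theorem-restricted}, using that in dimension $1$ every proper geometric subspace is a point. A hyperplane in $E^1 = \R$ is a single point, and two distinct points have empty intersection, so $\cL = \{E^1\} \sqcup A$ for a set $A$ of points of $\R$ which, ordered by inclusion, is a discrete poset; we may assume $A \neq \varnothing$. Since $E^1$ is the maximum (hence terminal) element of the poset $\cL$ and $E^1$ is contractible, the homotopy colimit $\hocolim_{U \in \cL} U$ is contractible, so $\PT^\cL(\cE^1)$ is equivalent to the unreduced suspension of the realization of $\cL \setminus \{E^1\}$, i.e.\ of the discrete set $A$. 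That suspension is a connected graph with two vertices and $|A|$ edges, hence homotopy equivalent to $\bigvee_{|A|-1} S^1$ if $A$ is finite and to $\bigvee_{|A|} S^1$ otherwise; in either case a wedge of $1$-spheres, which is hypothesis \ref{enum:thom-i}.

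For hypothesis \ref{enum:thom-ii} I would identify both groups with $\widetilde H_0(A;\Z) = \ker(\Z[A] \xrightarrow{\epsilon} \Z)$, the free abelian group on differences of points of $A$. On the target side this is the suspension isomorphism $\widetilde H_1(\PT^\cL(\cE^1)) \cong \widetilde H_1(\Sigma A) \cong \widetilde H_0(A)$. On the source side, a convex $\cL$-polytope in $E^1$ is a closed bounded interval $[a,b]$ with $a < b$ and $a,b \in A$ (its two $0$-dimensional faces must span points lying in $\cL$), or the empty polytope, which has class $0$; and the cover relations are generated by the subdivisions $[a,b] = [a,c] + [c,b]$ for $a < c < b$ in $A$. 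Hence $[a,b] \mapsto [b] - [a]$ defines a homomorphism $\Pt^\cL(\cE^1) \to \ker(\epsilon)$, which is surjective since $A \neq \varnothing$. It is injective by a telescoping argument: given a class $\sum_i n_i [a_i, b_i]$ in the kernel, refine all the intervals along the finite set of their endpoints $c_0 < \dots < c_k$ to rewrite it as $\sum_j m_j [c_j, c_{j+1}]$; its image $\sum_j m_j([c_{j+1}] - [c_j])$ vanishes only if $m_0 = \dots = m_{k-1} = 0$, which one reads off from the coefficients of $[c_0], [c_1], \dots$ in turn, so the class is zero.

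It remains to check that $\apt \colon \Pt^\cL(\cE^1) \to \widetilde H_1(\PT^\cL(\cE^1))$ agrees with this endpoint-difference isomorphism. The apartment-like map $\apt([a,b]) \colon [a,b]/\partial[a,b] \cong S^1 \to \PT^\cL(\cE^1)$ is induced by the inclusion of $[a,b]$ into the contractible numerator followed by the collapse map, and tracing it through the identification $\PT^\cL(\cE^1) \simeq \Sigma A$ one sees it represents exactly $[b] - [a]$. I expect this compatibility, a local-degree computation with the unreduced suspension isomorphism, to be the only delicate point; the identification of $\Pt^\cL(\cE^1)$ and the wedge-of-spheres statement are routine.
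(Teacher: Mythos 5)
Your proof is correct and takes essentially the same route as the paper: identify $\cL\setminus\{E^1\}$ as a discrete set of points $A$, so the quotient defining $\PT^\cL(\cE^1)$ becomes the cofibre of $A \to \ast$ (the unreduced suspension of $A$), a wedge of circles, and $\widetilde H_1 \cong \ker(\epsilon\colon \Z[A]\to\Z)$. The paper's published proof ends at this identification and asserts (ii) ``easily follows,'' whereas you go further and actually verify (ii) by exhibiting the endpoint-difference isomorphism $\Pt^\cL(\cE^1)\cong\ker(\epsilon)$, $[a,b]\mapsto[b]-[a]$, and observing that $\apt$ realizes it; the final degree check you flag is indeed the one remaining point, but it is routine, and your overall argument is sound and more complete than what appears in the paper.
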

	
	\begin{proof}
	Let $\cL^0 = \cL \setminus \{E^1\}$, the set of 0-dimensional subspaces in $\cL$. As $\cL^0 \simeq \hocolim_{U \in \cL^0}\, U$, we have that $\PT^\cL(\cE^1)$ is equivalent to the homotopy cofibre of the map $\cL^0 \to \ast$. This is a wedge of circles, proving \ref{enum:thom-i}. The first homology is $\ker(\epsilon \colon \mathbb{Z}[\cL^0] \to \mathbb{Z})$, from which \ref{enum:thom-ii} easily follows.
	\end{proof}
	
	\begin{remark}
		In \cite{KLMMS-2} we will prove that \ref{enum:thom-i} and \ref{enum:thom-ii} are satisfied by \emph{any} collection of subspaces $\cL$ that are generated by hyperplanes, and that contain at least one 0-dimensional subspace of $E^n$. This significantly expands the number of interesting examples where scissors congruence K-theory is a Thom spectrum.
	\end{remark}

	\subsection{A K\"unneth theorem}\label{sec:kunneth-theorem}
	To compute the $K$-groups of the assembler $\cE^\cL_G$ in some cases, we establish a general K\"unneth theorem that applies when the collection of subspaces $\cL$ decomposes as a product of subspaces from lower-dimensional geometries. We use the following result as a preliminary. Given two diagrams of spaces $X\colon \category{I} \to \category{T}op$ and $Y\colon \category{J} \to \category{T}op$, their \emph{external product} is the diagram
	\[ X \boxtimes Y\colon \category{I} \times \category{J} \to \category{T}op, \qquad (i,j) \mapsto X(i) \times Y(j). \]
	We form homotopy colimits using the Bousfield-Kan formula, also known as the categorical bar construction, and consider the result up to homeomorphism.
	
	\begin{lemma}\label{lem:external-smash-diagrams} The projection maps induce a homeomorphism
		\[ \underset{\category{I} \times \category{J}}\hocolim\, X \boxtimes Y \overset{\cong}\longrightarrow
		\underset{\category{I}}\hocolim\, X \times \underset{\category{J}}\hocolim\, Y. \]
	\end{lemma}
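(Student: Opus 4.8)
The plan is to compute both sides via the categorical bar construction and then reduce the statement to the classical fact that geometric realization preserves finite products in a convenient category of spaces.

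First I would recall that, for a diagram $X\colon\category{I}\to\category{T}op$, the Bousfield--Kan homotopy colimit is the geometric realization $|B_\bullet(X)|$ of the simplicial space with $p$-th term $B_p(X)=\coprod_{i_0\to\cdots\to i_p}X(i_0)$, the coproduct indexed by the set $N_p\category{I}$ of $p$-simplices of the nerve, equipped with the evident face and degeneracy maps. The same formula describes $\hocolim_\category{J}Y$ and $\hocolim_{\category{I}\times\category{J}}(X\boxtimes Y)$, since all of these homotopy colimits are taken as the Bousfield--Kan construction by assumption.

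Next I would invoke two elementary observations. The nerve of a product of categories is the levelwise product of nerves, so a $p$-simplex of $\category{I}\times\category{J}$ is a pair $(\sigma,\tau)$ with $\sigma\in N_p\category{I}$ and $\tau\in N_p\category{J}$, and the value of $X\boxtimes Y$ at its source object is $X(\sigma(0))\times Y(\tau(0))$; moreover finite products distribute over coproducts in $\category{T}op$. Together these give a natural homeomorphism of simplicial spaces $B_\bullet(X\boxtimes Y)\cong B_\bullet(X)\times B_\bullet(Y)$, where the right-hand side denotes the levelwise product, and under this identification the maps induced by the two projections $X(i)\times Y(j)\to X(i)$ and $X(i)\times Y(j)\to Y(j)$ (composed with the canonical maps associated to $\mathrm{pr}_\category{I}$ and $\mathrm{pr}_\category{J}$) correspond exactly to the two projections of the product simplicial space.

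Finally I would pass to geometric realizations and apply the theorem that the canonical map $|A_\bullet\times B_\bullet|\to|A_\bullet|\times|B_\bullet|$ is a homeomorphism for any simplicial spaces $A_\bullet$, $B_\bullet$, valued in compactly generated spaces --- the simplicial-space form of Milnor's theorem that realization commutes with finite products. Applying this to $A_\bullet=B_\bullet(X)$ and $B_\bullet=B_\bullet(Y)$ and composing with the identification of the previous step produces the homeomorphism $\hocolim_{\category{I}\times\category{J}}(X\boxtimes Y)\overset{\cong}{\longrightarrow}\hocolim_\category{I}X\times\hocolim_\category{J}Y$, which by construction is the one induced by the projection maps. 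The single genuinely delicate point --- and the reason the statement is phrased up to homeomorphism and the ambient category of spaces is fixed --- is precisely this commutation of realization with finite products; everything else is bookkeeping with the bar construction and distributivity of products over coproducts.
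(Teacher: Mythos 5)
Your proof is correct and takes essentially the same approach as the paper: both write out the Bousfield--Kan bar construction, identify the level-$p$ space of the left-hand side with the product of the level-$p$ spaces of the two factors (equivalently, the diagonal of the external product of bisimplicial spaces), and conclude using the fact that geometric realization of simplicial spaces preserves finite products. Your version spells out the bookkeeping (nerve of a product category, distributivity of products over coproducts, and the compatibility of the identification with the projection maps) that the paper's two-sentence proof leaves implicit.
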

	
	\begin{proof}
		The simplicial space defining the left-hand side is at level $n$ the space
		\[ \coprod_{i_0,\ldots,i_n,j_0,\ldots,j_n} X(i_0) \times Y(j_0) \times \category{I}(i_0,i_1) \times \category{J}(j_0,j_1) \times \cdots
		\times \category{I}(i_{n-1},i_n) \times \category{J}(j_{n-1},j_n). \]
		This is clearly the diagonal of the product of the two simplicial spaces defining the homotopy colimits on the right-hand side. The claim follows from the fact that realisation of simplicial spaces preserves products.
	\end{proof}
	
	\begin{lemma}\label{lem:external-smash-orbits}
		If $G$ acts on $X$ and $H$ acts on $Y$ then the projection maps induce homeomorphisms
		\[(X \times Y)_{h(G \times H)} \overset{\cong}\longrightarrow X_{hG} \times Y_{hH} \quad \text{and} \quad (X \wedge Y)_{h(G \times H)} \overset{\cong}\longrightarrow X_{hG} \wedge Y_{hH}\]
		for unbased and based homotopy orbits respectively.
	\end{lemma}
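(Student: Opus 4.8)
The plan is to deduce both homeomorphisms from \cref{lem:external-smash-diagrams} by realising homotopy orbits as homotopy colimits over one-object groupoids. For a group $K$ let $\category{B}K$ denote the groupoid with one object and automorphism group $K$; for a $K$-space $Z$ the unbased homotopy orbit space is $Z^{u}_{hK} = \hocolim_{\category{B}K} Z$, formed by the Bousfield--Kan construction as in \cref{lem:external-smash-diagrams}, and for a based $K$-space $Z$ the based homotopy orbit space is the quotient $Z_{hK} = Z^{u}_{hK}/BK$, where $BK = (\ast)^{u}_{hK}$ is included via the basepoint $\ast\hookrightarrow Z$ (the reduced bar construction).

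For the unbased statement I would use the canonical isomorphism of categories $\category{B}(G\times H) \cong \category{B}G\times\category{B}H$, under which the $(G\times H)$-space $X\times Y$ is identified with the external product $X\boxtimes Y$ of the diagrams $X\colon\category{B}G\to\category{T}op$ and $Y\colon\category{B}H\to\category{T}op$. Then \cref{lem:external-smash-diagrams} gives the homeomorphism $(X\times Y)^{u}_{h(G\times H)} = \hocolim_{\category{B}G\times\category{B}H} X\boxtimes Y \cong \hocolim_{\category{B}G} X\times \hocolim_{\category{B}H} Y = X^{u}_{hG}\times Y^{u}_{hH}$, visibly induced by the projections.

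For the based statement I would exploit that the bar construction preserves colimits in the diagram variable. Applying $(-)^{u}_{h(G\times H)}$ to the pushouts presenting $X\vee Y = X\sqcup_\ast Y$ and $X\wedge Y = (X\times Y)/(X\vee Y)$, and feeding in the unbased statement (applied to $X$, $Y$, and to $\ast$ carrying the relevant product actions), identifies $(X\vee Y)^{u}_{h(G\times H)}$ with the subspace $X^{u}_{hG}\times BH\ \cup\ BG\times Y^{u}_{hH}$ of $X^{u}_{hG}\times Y^{u}_{hH}$, hence $(X\wedge Y)^{u}_{h(G\times H)}$ with $(X^{u}_{hG}\times Y^{u}_{hH})/(X^{u}_{hG}\times BH\ \cup\ BG\times Y^{u}_{hH})$. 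The orbit $B(G\times H)=BG\times BH$ of the basepoint of $X\wedge Y$ already lies in this collapsed subspace, so dividing further to reach the reduced homotopy orbits does nothing, and the elementary identity $(A\times B)/(A_0\times B\cup A\times B_0)\cong (A/A_0)\wedge(B/B_0)$ with $A=X^{u}_{hG}$, $A_0=BG$, $B=Y^{u}_{hH}$, $B_0=BH$ yields $(X\wedge Y)_{h(G\times H)}\cong X_{hG}\wedge Y_{hH}$, again compatibly with the projections.

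The main thing to be careful about is point-set rather than conceptual: one should work in a convenient category of spaces with closed (say nondegenerate) basepoints, so that $BK\hookrightarrow Z^{u}_{hK}$ is a closed inclusion and the bar-construction pushout $(X^{u}_{hG}\times BH)\sqcup_{BG\times BH}(BG\times Y^{u}_{hH})$ is genuinely homeomorphic to its image inside $X^{u}_{hG}\times Y^{u}_{hH}$. These conventions are already in force for \cref{lem:external-smash-diagrams}, and with them each step above is an honest homeomorphism; I do not anticipate any essential difficulty beyond this bookkeeping.
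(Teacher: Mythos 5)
The paper offers no proof of this lemma---it is stated after \cref{lem:external-smash-diagrams} and treated as an immediate consequence---so there is no written argument to compare against. Your unbased argument is exactly the evident specialization: identify $\category{B}(G\times H)\cong\category{B}G\times\category{B}H$ and read the $(G\times H)$-space $X\times Y$ as the external product of diagrams, then invoke \cref{lem:external-smash-diagrams}. That step is correct. Your based argument, pushing $X\vee Y\hookrightarrow X\times Y$ through unbased homotopy orbits, identifying $(X\vee Y)^u_{h(G\times H)}$ with $(X^u_{hG}\times BH)\cup(BG\times Y^u_{hH})$, and then using the quotient identity $(A\times B)/(A_0\times B\cup A\times B_0)\cong(A/A_0)\wedge(B/B_0)$, is also correct, and you are right to flag that all of this should be read in a convenient category where products commute with the relevant quotients and the basepoint inclusions $BK\hookrightarrow Z^u_{hK}$ are closed cofibrations. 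One remark for comparison: the based case can also be obtained more directly, without passing through unbased orbits at all, by rerunning the proof of \cref{lem:external-smash-diagrams} with the \emph{reduced} bar construction, whose $n$th level is $Z\wedge(K^n)_+$; the level-$n$ smash $\bigl(X\wedge(G^n)_+\bigr)\wedge\bigl(Y\wedge(H^n)_+\bigr)\cong(X\wedge Y)\wedge((G\times H)^n)_+$ identifies the diagonal of the product simplicial space with the reduced bar construction for $X\wedge Y$, and geometric realization of based simplicial spaces preserves smash products. That route avoids the colimit-chasing and the quotient identity, but both arguments are sound, and yours has the virtue of deducing the based case purely formally from the unbased one.
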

	
	Now suppose $\cL_1$ is a collection of nonempty affine subspaces in $\cE^m$ (including $\cE^m$ itself), and $\cL_2$ is a collection of nonempty affine subspaces in $\cE^n$ (including $\cE^n$ itself). Let $\cL_1 \times \cL_2$ denote the collection of all subspaces of $\cE^{m+n} = \cE^m \times \cE^n$ of the form $U_1 \times U_2$, where each $U_i$ is in $\cL_i$. For instance, if $m = n = 1$ and if $\cL_1$ and $\cL_2$ are the maximal collections consisting of every point in $\cE^1$, along with $\cE^1$ itself, then the product $\cL_1 \times \cL_2$ would consist of all points in $\cE^2$, all horizontal and vertical lines, and the entire space $\cE^2$.
	
	\begin{lemma}\label{lem:smash-product}
		We have a homeomorphism
		\[ \PT^{\cL_1 \times \cL_2}(\cE^{m+n}) \cong \PT^{\cL_1}(\cE^m) \wedge \PT^{\cL_2}(\cE^n). \]
	\end{lemma}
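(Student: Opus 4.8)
The plan is to reduce everything to \cref{lem:external-smash-diagrams} plus an elementary combinatorial observation about the ``punctured product'' of two posets with top elements. Throughout, write
\[ A_1 \coloneqq \underset{U \in \cL_1}\hocolim\, U,\quad A_2 \coloneqq \underset{U \in \cL_2}\hocolim\, U,\quad B_1 \coloneqq \underset{U \in \cL_1 \setminus \{\cE^m\}}\hocolim\, U,\quad B_2 \coloneqq \underset{U \in \cL_2 \setminus \{\cE^n\}}\hocolim\, U, \]
so that $\PT^{\cL_1}(\cE^m) = A_1/B_1$ and $\PT^{\cL_2}(\cE^n) = A_2/B_2$ by definition; all homotopy colimits use the Bousfield--Kan bar construction as agreed in \cref{sec:kunneth-theorem}, and the inclusion $\cL_i \to \{\text{subspaces of }\cE^{\bullet}\}$ is viewed as a functor to $\category{T}op$.

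First I would pin down the indexing poset. Under the bijection $(U_1,U_2) \mapsto U_1 \times U_2$ (injective because the $U_i$ are nonempty), the collection $\cL_1 \times \cL_2$ ordered by inclusion is isomorphic to the product poset $\cL_1 \times \cL_2$, its top element is $\cE^{m+n} = \cE^m \times \cE^n$, and the tautological functor $\cL_1 \times \cL_2 \to \category{T}op$ is exactly the external product of the two inclusion functors. Hence \cref{lem:external-smash-diagrams} gives a homeomorphism $\hocolim_{U \in \cL_1 \times \cL_2} U \cong A_1 \times A_2$, and it remains to identify the subspace $\hocolim_{U \in (\cL_1 \times \cL_2)\setminus\{\cE^{m+n}\}} U$ inside it.

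The key step is the punctured product. Set $\mathcal P_a \coloneqq \cL_1 \times (\cL_2 \setminus \{\cE^n\})$ and $\mathcal P_b \coloneqq (\cL_1 \setminus \{\cE^m\}) \times \cL_2$; then $\mathcal P_a \cup \mathcal P_b = (\cL_1 \times \cL_2)\setminus\{\cE^{m+n}\}$ and $\mathcal P_a \cap \mathcal P_b = (\cL_1\setminus\{\cE^m\}) \times (\cL_2\setminus\{\cE^n\})$. I claim every (non-strict) chain in $\mathcal P_a \cup \mathcal P_b$ lies entirely in $\mathcal P_a$ or entirely in $\mathcal P_b$: if some term $(U_1,\cE^n)$ of the chain has $U_1 \neq \cE^m$, then any smaller term has first coordinate contained in $U_1 \neq \cE^m$, while any larger term has second coordinate forced to be $\cE^n$ and, being distinct from $\cE^{m+n}$, has first coordinate $\neq \cE^m$ — so in every case that term lies in $\mathcal P_b$. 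Therefore the simplicial space computing the homotopy colimit over $(\cL_1 \times \cL_2)\setminus\{\cE^{m+n}\}$ is, levelwise, the pushout of those for $\mathcal P_a$ and $\mathcal P_b$ along that for $\mathcal P_a \cap \mathcal P_b$; since geometric realization preserves pushouts, and then applying \cref{lem:external-smash-diagrams} to each of the three posets, we get
\[ \underset{U \in (\cL_1\times\cL_2)\setminus\{\cE^{m+n}\}}\hocolim\, U \;\cong\; (A_1 \times B_2) \cup_{B_1 \times B_2} (B_1 \times A_2) \;=\; (A_1 \times B_2) \cup (B_1 \times A_2) \;\subseteq\; A_1 \times A_2, \]
the inclusions into $A_1 \times A_2$ being the evident ones by naturality of \cref{lem:external-smash-diagrams}.

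Finally I would invoke the standard homeomorphism $(A_1/B_1) \wedge (A_2/B_2) \cong (A_1 \times A_2)\big/\big((A_1 \times B_2) \cup (B_1 \times A_2)\big)$, valid for any pair of inclusions of spaces since the smash product collapses precisely the preimage of the wedge. Combining this with the two displays above gives
\[ \PT^{\cL_1\times\cL_2}(\cE^{m+n}) = (A_1 \times A_2)\big/\big((A_1 \times B_2) \cup (B_1 \times A_2)\big) \cong (A_1/B_1) \wedge (A_2/B_2) = \PT^{\cL_1}(\cE^m) \wedge \PT^{\cL_2}(\cE^n), \]
as claimed. The main obstacle is the penultimate paragraph: one must verify that the bar construction of the punctured-product poset is \emph{literally} (not merely up to homotopy) the pushout asserted, which rests on the chain-decomposition observation above together with realization commuting with that pushout; the point-set bookkeeping needed so that the ``$\cup$'' inside $A_1 \times A_2$ really is an honest subspace glued from $A_1 \times B_2$ and $B_1 \times A_2$ along $B_1 \times B_2$ is routine given the conventions of \cref{sec:kunneth-theorem}.
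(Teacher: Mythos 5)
Your proof is correct and follows essentially the same route as the paper's: both reduce the numerator to a product via \cref{lem:external-smash-diagrams} and identify the denominator with the subspace $(A_1 \times B_2) \cup (B_1 \times A_2)$, from which the smash-product description of the quotient is immediate. The only difference is that you carefully justify the denominator identification via the chain decomposition of the punctured product poset into $\mathcal P_a$ and $\mathcal P_b$, a point the paper's proof states as an observation ("is identified with the points in the product where either $U_1$ or $U_2$ is a proper subspace") without spelling out.
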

	
	\begin{proof}
		We write out the definition and get
		\[ \PT^{\cL_1 \times \cL_2}(\cE^{m+n}) = \frac{ \left( \underset{\varnothing \subsetneq U_1 \times U_2 \subseteq E^m \times E^n}\hocolim\, U_1 \times U_2 \right) }{ \left( \underset{\varnothing \subsetneq U_1 \times U_2 \subsetneq E^m \times E^n}\hocolim\, U_1 \times U_2 \right) }, \]
		where we only consider $U_1 \in \cL_1$ and $U_2 \in \cL_2$. The homotopy colimit on the top is along an external product of two diagrams, so by \cref{lem:external-smash-diagrams}, we get a homeomorphism
		\[ \left(\underset{\varnothing \subsetneq U_1 \times U_2 \subseteq E^m \times E^n}\hocolim\, U_1 \times U_2 \right)
		\cong \left( \underset{\varnothing \subsetneq U_1 \subseteq E^m}\hocolim\, U_1 \right) \times \left( \underset{\varnothing \subsetneq U_2 \subseteq E^n}\hocolim\, U_2 \right). \]
		Along this homeomorphism, the homotopy colimit of the proper subspaces $U_1 \times U_2 \subsetneq E^m \times E^n$ is identified with the points in the product where either $U_1$ or $U_2$ is a proper subspace. The quotient of homotopy colimits is therefore homeomorphic to the smash product of the quotients,
		\[ \frac{ \left( \underset{\varnothing \subsetneq U_1 \times U_2 \subseteq E^m \times E^n}\hocolim\, U_1 \times U_2 \right) }{ \left( \underset{\varnothing \subsetneq U_1 \times U_2 \subsetneq E^m \times E^n}\hocolim\, U_1 \times U_2 \right) }
		\cong
		\frac{ \left( \underset{\varnothing \subsetneq U_1 \subseteq E^m}\hocolim\, U_1 \right) }{ \left( \underset{\varnothing \subsetneq U_1 \subsetneq E^m}\hocolim\, U_1 \right) }
		\wedge
		\frac{ \left( \underset{\varnothing \subsetneq U_2 \subseteq E^n}\hocolim\, U_2 \right) }{ \left( \underset{\varnothing \subsetneq U_2 \subsetneq E^n}\hocolim\, U_2 \right) }.
		\qedhere \]
	\end{proof}

	\begin{lemma}\label{lem:kunneth-restricted-hypotheses} 
		If $\cL_1$ and $\cL_2$ satisfy the conditions in \cref{thm:thom-theorem-restricted}, so does $\cL_1 \times \cL_2$.
	\end{lemma}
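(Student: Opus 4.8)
The plan is to combine the topological splitting from \cref{lem:smash-product} with an algebraic identification of the polytope group of a product collection. Write $\cL = \cL_1 \times \cL_2 \subseteq \cE^{m+n}$, and recall we must verify conditions \ref{enum:thom-i} and \ref{enum:thom-ii} of \cref{thm:thom-theorem-restricted} for $\cL$. By \cref{lem:smash-product} there is a homeomorphism $\PT^{\cL}(\cE^{m+n}) \cong \PT^{\cL_1}(\cE^m) \wedge \PT^{\cL_2}(\cE^n)$, and by hypothesis each factor is equivalent to a wedge of spheres, of dimension $m$ and $n$ respectively. Since smashing with a fixed CW complex preserves wedges and $S^m \wedge S^n \cong S^{m+n}$, the smash product is equivalent to a wedge of $(m+n)$-spheres, which is \ref{enum:thom-i}. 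In particular $\widetilde H_*(\PT^\cL(\cE^{m+n}))$ is free abelian and concentrated in degree $m+n$, so the Künneth theorem supplies an isomorphism
\[ \kappa \colon \widetilde H_m(\PT^{\cL_1}(\cE^m)) \otimes \widetilde H_n(\PT^{\cL_2}(\cE^n)) \overset{\cong}\longrightarrow \widetilde H_{m+n}(\PT^\cL(\cE^{m+n})), \]
with no $\mathrm{Tor}$ contribution, the homology groups being free and each concentrated in a single degree.

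The key structural observation is that every convex $\cL$-polytope in $\cE^{m+n}$ is a product $P_1 \times P_2$ of a convex $\cL_1$-polytope $P_1$ with a convex $\cL_2$-polytope $P_2$. Indeed, a hyperplane of $\cL$ has the form $U_1 \times U_2$ with $U_i \in \cL_i$ and $\dim U_1 + \dim U_2 = m+n-1$, which forces $U_1 = E^m$ with $U_2$ a hyperplane of $\cL_2$, or symmetrically $U_2 = E^n$ with $U_1$ a hyperplane of $\cL_1$; hence every half-space bounded by a hyperplane of $\cL$ is of the form $H^{\pm}\times E^n$ or $E^m \times H^{\pm}$, and any intersection of such half-spaces splits as a product $Q_1 \times Q_2$, with $Q_1, Q_2$ bounded and of nonempty interior as soon as $Q_1\times Q_2$ is, by \cref{def:cov-l-polytope}. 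Consequently the assignment $[P_1]\otimes[P_2]\mapsto [P_1\times P_2]$ defines a \emph{surjective} homomorphism
\[ \mu \colon \Pt^{\cL_1}(\cE^m) \otimes \Pt^{\cL_2}(\cE^n) \longrightarrow \Pt^{\cL}(\cE^{m+n}); \]
it is well defined because multiplying a cover of $P_1$ by $P_2$, or $P_1$ by a cover of $P_2$, yields a cover by convex $\cL$-polytopes, so the defining relations of the tensor product map to relations in $\Pt^\cL(\cE^{m+n})$.

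The remaining point is that the square
\[ \begin{tikzcd}[column sep = 2.6em]
	\Pt^{\cL_1}(\cE^m) \otimes \Pt^{\cL_2}(\cE^n) \ar[r, "\apt \otimes \apt"] \ar[d, "\mu"'] & \widetilde H_m(\PT^{\cL_1}(\cE^m)) \otimes \widetilde H_n(\PT^{\cL_2}(\cE^n)) \ar[d, "\kappa"] \\
	\Pt^{\cL}(\cE^{m+n}) \ar[r, "\apt"'] & \widetilde H_{m+n}(\PT^{\cL}(\cE^{m+n}))
\end{tikzcd} \]
commutes. This reduces to identifying $\apt(P_1 \times P_2)$ with the smash $\apt(P_1) \wedge \apt(P_2)$ under the homeomorphisms $(P_1 \times P_2)/\partial(P_1\times P_2) \cong (P_1/\partial P_1)\wedge(P_2/\partial P_2)$ and \cref{lem:smash-product}. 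Concretely, an apartment-like map for $P_1 \times P_2$ can be built as the ``external smash'' of apartment-like maps for $P_1$ and for $P_2$: using $\mathrm{span}(F_1 \times F_2) = \mathrm{span}(F_1) \times \mathrm{span}(F_2)$ for faces $F_i$, the compatibility of homotopy colimits with products (\cref{lem:external-smash-diagrams}), and the fact that the subspaces $V_1 \times V_2$ with $V_i \subseteq \mathrm{span}(F_i)$ exhaust the relevant part of $\cL$, one checks the glued map is apartment-like, and fixing the product orientation on $E^{m+n}$ makes the degrees agree. This step runs parallel to \cite[Lemma 4.11, Lemma 4.17]{malkiewich2022}.

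Granting commutativity, $\apt \circ \mu = \kappa \circ (\apt \otimes \apt)$, which is an isomorphism since $\apt$ is an isomorphism for both $\cL_1$ and $\cL_2$ by hypothesis and $\kappa$ is the Künneth isomorphism. As $\mu$ is surjective, it follows that $\mu$ is an isomorphism and that $\apt\colon \Pt^\cL(\cE^{m+n}) \to \widetilde H_{m+n}(\PT^\cL(\cE^{m+n}))$ is an isomorphism, giving \ref{enum:thom-ii}. The main obstacle is the commutativity of the square --- equivalently, the multiplicativity of the apartment-like maps --- which is a routine but slightly delicate verification in the homotopy-colimit models; the rest of the argument is formal.
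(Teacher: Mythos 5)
Your proof is correct and follows essentially the same approach as the paper: both verify condition (i) directly from the smash decomposition of \cref{lem:smash-product}, and both establish condition (ii) via the same commutative square relating $\mu \colon [P_1]\otimes[P_2]\mapsto[P_1\times P_2]$ to the apartment maps, using surjectivity of $\mu$ (every convex $(\cL_1\times\cL_2)$-polytope is a product) together with the fact that the composite $\apt\circ\mu = \kappa\circ(\apt\otimes\apt)$ is an isomorphism. Your write-up is somewhat more explicit about the hyperplane analysis underlying surjectivity of $\mu$ and about naming the right-hand vertical map as a K\"unneth isomorphism, but the argument is the same.
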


	\begin{proof}Part \ref{enum:thom-i} follows directly from \cref{lem:smash-product}. For part \ref{enum:thom-ii}, we note that there is a commutative diagram
		\[\begin{tikzcd} \Pt^{\cL_1}(\cE^m) \otimes\Pt^{\cL_2}(\cE^n) \dar{\times} \rar{\apt \otimes \apt}[swap]{\cong} &[20pt] \widetilde{H}_{m}(\PT^{\cL_1}(\cE^m)) \otimes \widetilde{H}_{n}(\PT^{\cL_1}(\cE^n)) \dar{\cong} \\[-3pt]
			\Pt^{\cL_1 \times \cL_2}(\cE^{m+n}) \rar{\apt} & \widetilde{H}_{m+n}(\PT^{\cL_1 \times \cL_2}(\cE^{m+n})).\end{tikzcd}\]
	Here the left vertical map is given by sending $[P_1] \otimes [P_2]$ to $[P_1 \times P_2]$, which evidently is compatible with the relations in the domain. The diagram commutes because under the homeomorphism of \cref{lem:smash-product}, the map
	\[ \apt(P_1) \wedge \apt(P_2)\colon (P_1/\partial P_1) \wedge (P_2/\partial P_2) \to \PT^{\cL_1}(\cE^m) \wedge \PT^{\cL_2}(\cE^n) \]
	is sent to a representative of $\apt(P_1 \times P_2)$. The left vertical map is injective by the commutativity of the above diagram, and surjective because every convex $(\cL_1 \times \cL_2)$-polytope is a product of convex $\cL_1$- and $\cL_2$-polytopes. Hence it is an isomorphism, and hence the bottom horizontal map is an isomorphism as well.
	\end{proof}
	
	Now suppose that we have a group $G_1$ acting on $E^m$ preserving $\cL_1$, and $G_2$ acting on $E^n$ preserving $\cL_2$. Then the product group $G_1 \times G_2$ acts on $E^{m+n}$ preserving the subspaces $\cL_1 \times \cL_2$, so that we may form the K-theory spectrum $K(\cE^{\cL_1 \times \cL_2}_{G_1 \times G_2})$.
	
	\begin{theorem}\label{thm:kunneth} 
		If $\cL_1$ and $\cL_2$ satisfy the conditions in \cref{thm:thom-theorem-restricted}, then there is an equivalence of assembler K-theory spectra
		\[ K(\cE^{\cL_1 \times \cL_2}_{G_1 \times G_2}) \simeq K(\cE^{\cL_1}_{G_1}) \wedge K(\cE^{\cL_2}_{G_2}). \]
	\end{theorem}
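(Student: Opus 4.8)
The plan is to reduce everything to the Thom spectrum models of \cref{thm:thom-theorem-restricted} and then observe that the smash product commutes with every construction involved. First I would invoke \cref{lem:kunneth-restricted-hypotheses}, which guarantees that $\cL_1 \times \cL_2$ satisfies hypotheses \ref{enum:thom-i} and \ref{enum:thom-ii}; hence \cref{thm:thom-theorem-restricted} applies to all three assemblers and gives equivalences
\[ K(\cE^{\cL_1}_{G_1}) \simeq (\Sigma^{-TE^m} \PT^{\cL_1}(\cE^m))_{hG_1}, \qquad K(\cE^{\cL_2}_{G_2}) \simeq (\Sigma^{-TE^n} \PT^{\cL_2}(\cE^n))_{hG_2}, \]
\[ K(\cE^{\cL_1 \times \cL_2}_{G_1 \times G_2}) \simeq (\Sigma^{-TE^{m+n}} \PT^{\cL_1 \times \cL_2}(\cE^{m+n}))_{h(G_1 \times G_2)}. \]
Next I would rewrite $\PT^{\cL_1 \times \cL_2}(\cE^{m+n}) \cong \PT^{\cL_1}(\cE^m) \wedge \PT^{\cL_2}(\cE^n)$ using \cref{lem:smash-product}, noting that this homeomorphism is $(G_1 \times G_2)$-equivariant because it is induced by the two coordinate projections $E^{m+n} \to E^m, E^n$, which are natural in the subspaces.

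The heart of the argument is the desuspension step. Under the identification $E^{m+n} = E^m \times E^n$, the tangent bundle $TE^{m+n}$ restricted to a product subspace $U_1 \times U_2$ is the external sum of $TE^m|_{U_1}$ and $TE^n|_{U_2}$, and on the fibre the $(G_1 \times G_2)$-representation $\R^{m+n} = \R^m \oplus \R^n$ is the direct sum of the linear part of the $G_1$-action on $\R^m$ and of the $G_2$-action on $\R^n$. I would check that, under the homeomorphism of \cref{lem:smash-product}, the vector bundle on $\PT^{\cL_1 \times \cL_2}(\cE^{m+n})$ implicit in \cref{thm:thom-theorem-restricted} corresponds precisely to the external sum of the bundles on the two factors, compatibly with the group actions. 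Granting this, passing to Thom spectra and using that smashing with a (virtual) representation sphere commutes with the smash product of based spaces gives
\[ \Sigma^{-TE^{m+n}}\bigl(\PT^{\cL_1}(\cE^m) \wedge \PT^{\cL_2}(\cE^n)\bigr) \simeq \bigl(\Sigma^{-TE^m}\PT^{\cL_1}(\cE^m)\bigr) \wedge \bigl(\Sigma^{-TE^n}\PT^{\cL_2}(\cE^n)\bigr) \]
as Borel $(G_1 \times G_2)$-spectra, where $G_1$ acts only on the first smash factor and $G_2$ only on the second.

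Finally I would take homotopy orbits. Since $B(G_1 \times G_2) \simeq BG_1 \times BG_2$ and homotopy orbits are a colimit, they commute with smash products, so the spectrum-level analogue of \cref{lem:external-smash-orbits} yields $(\mathcal{X} \wedge \mathcal{Y})_{h(G_1 \times G_2)} \simeq \mathcal{X}_{hG_1} \wedge \mathcal{Y}_{hG_2}$ for a Borel $G_1$-spectrum $\mathcal{X}$ and a Borel $G_2$-spectrum $\mathcal{Y}$. Applying this to $\mathcal{X} = \Sigma^{-TE^m}\PT^{\cL_1}(\cE^m)$ and $\mathcal{Y} = \Sigma^{-TE^n}\PT^{\cL_2}(\cE^n)$ and chaining all the equivalences above gives $K(\cE^{\cL_1 \times \cL_2}_{G_1 \times G_2}) \simeq K(\cE^{\cL_1}_{G_1}) \wedge K(\cE^{\cL_2}_{G_2})$. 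I expect the only step requiring genuine care, rather than formal manipulation of colimits, to be the compatibility of the tangent bundle and its Thom spectrum with the smash decomposition of \cref{lem:smash-product} and with the $(G_1 \times G_2)$-action; once that is pinned down, the rest is bookkeeping, since homotopy colimits and homotopy orbits all commute with the smash product.
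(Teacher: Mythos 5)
Your proposal is correct and follows essentially the same route as the paper: invoke \cref{lem:kunneth-restricted-hypotheses} to get Thom spectrum models for all three assemblers, use \cref{lem:smash-product} to split the polytopal Tits complex as a smash, identify $TE^{m+n}$ with the external sum $TE^m \oplus TE^n$ to split the desuspension, and finish by passing to homotopy orbits via \cref{lem:external-smash-orbits}. You spell out the bundle compatibility and equivariance in a bit more detail than the paper does, but the skeleton and the cited lemmas are the same.
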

	
	\begin{proof}
		By \cref{thm:thom-theorem-restricted} and \cref{lem:kunneth-restricted-hypotheses}, it suffices to prove this on the associated Thom spectra. Along the homeomorphism of \cref{lem:smash-product}, we check that the sum of the tangent bundles of $E^m$ and $E^n$ is identified with the tangent bundle of $E^{m+n}$. This gives a homeomorphism
		\[ \Sigma^{-TE^{m+n}} \PT^{\cL_1 \times \cL_2}(\cE^{m+n})
		\cong \Sigma^{-TE^m} \PT^{\cL_1}(\cE^m)
		\wedge \Sigma^{-TE^n} \PT^{\cL_2}(\cE^n). \]
		By \cref{lem:external-smash-orbits}, this passes to a homeomorphism after taking homotopy orbits,
		\[ (\Sigma^{-TE^{m+n}} \PT^{\cL_1 \times \cL_2}(\cE^{m+n}))_{h(G_1 \times G_2)}
		\simeq (\Sigma^{-TE^m} \PT^{\cL_1}(\cE^m))_{hG_1}
		\wedge (\Sigma^{-TE^n} \PT^{\cL_2}(\cE^n))_{hG_2}. \]
		This proves that the formula holds on the associated Thom spectra up to homeomorphism. Therefore, it must hold on the K-theory spectra up to stable equivalence.
	\end{proof}

	\begin{remark}Zakharevich defines a symmetric monoidal structure on the category of closed assemblers, i.e.~those admitting all pullbacks, \cite[Section 6]{ZakharevichMonoidal}. Moreover, it is implicit in \cite[Section 7]{ZakharevichMonoidal} that $K$ yields a lax symmetric monoidal functor from assemblers to spectra. Since there is an isomorphism of assemblers $\cE^{\cL_1 \times \cL_2}_{G_1 \times G_2} \cong \smash{\cE^{\cL_1}_{G_1} \wedge \cE^{\cL_2}_{G_2}}$, it is natural to conjecture that under the hypotheses of \cref{thm:kunneth} the diagram
	\[\begin{tikzcd} K(\cE^{\cL_1}_{G_1}) \wedge K(\cE^{\cL_2}_{G_1 \times G_2}) \rar{\simeq} \dar & \Sigma^{-TE^m} \PT^{\cL_1}(\cE^m)
		\wedge \Sigma^{-TE^n} \PT^{\cL_2}(\cE^n) \dar{\simeq} \\[-3pt]
	K(\cE^{\cL_1 \times \cL_2}_{G_1 \times G_2}) \rar{\simeq} & \Sigma^{-TE^{m+n}} \PT^{\cL_1 \times \cL_2}(\cE^{m+n}) \end{tikzcd}\]
	commutes up to homotopy. However, we will not need this statement, and we do not verify it in this paper.
	\end{remark}
	
	\subsection{Divisibility and vanishing results}\label{sec:rationality-vanishing-restricted} 
	We next prove some general divisibility and vanishing results on the K-theory spectra, assuming throughout this subsection that
	\begin{itemize}
		\item the conditions in \cref{thm:thom-theorem-restricted} are satisfied, so that we may use the Thom spectrum model for K-theory,
		\item $G \leq A(E^n)$, in other words we do not have to restrict to isometries,
		\item $\cL$ contains at least one 0-dimensional subspace, equivalently every nonempty subspace of $\cL$ contains a 0-dimensional subspace (see \cite{KLMMS-2}), and that
		\item the translation subgroup $(G \cap \R^n) \leq G$ acts transitively on the 0-dimensional subspaces in $\cL$.
	\end{itemize}
	
	Let $q > 0$ be an integer. Recall that a spectrum is $q$-divisible, or local away from $q$, if multiplication by $q$ is a stable equivalence. Equivalently, the homotopy groups are all $\Z[1/q]$-modules. This is equivalent to being $p$-divisible for every prime $p \mid q$. We also have the following easy consequence of the Hurewicz theorem for spectra:
	
	\begin{lemma}
		When a spectrum is bounded below, it is $q$-divisible iff the homology groups are all $\Z[1/q]$-modules.
	\end{lemma}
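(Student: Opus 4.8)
The plan is to deduce both directions from the long exact sequence in integral homology associated to the cofibre sequence $X \xrightarrow{q} X \to X/q$, combined with the Hurewicz theorem for bounded-below spectra. Throughout, ``$q$-divisible'' means that multiplication by $q \colon X \to X$ is a stable equivalence.

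First I would dispatch the forward direction: if $q \colon X \to X$ is a stable equivalence, then it induces an isomorphism on $H_n(X;\Z)$ for every $n$; but this induced map is again multiplication by $q$, so $q$ acts invertibly on each $H_n(X;\Z)$, which is exactly the statement that $H_n(X;\Z)$ is a $\Z[1/q]$-module.

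For the converse, suppose every $H_n(X;\Z)$ is a $\Z[1/q]$-module, equivalently that multiplication by $q$ is an isomorphism on each $H_n(X;\Z)$. Let $X/q$ be the cofibre of $q \colon X \to X$; since $X$ is bounded below, so is $X/q$. From the long exact sequence
\[ \cdots \to H_n(X;\Z) \xrightarrow{q} H_n(X;\Z) \to H_n(X/q;\Z) \to H_{n-1}(X;\Z) \xrightarrow{q} H_{n-1}(X;\Z) \to \cdots \]
and the invertibility of multiplication by $q$ we conclude $H_n(X/q;\Z) = 0$ for all $n$. I would then invoke that a bounded-below spectrum with vanishing integral homology is contractible: if $X/q \not\simeq 0$ it would have a lowest nonvanishing homotopy group $\pi_k(X/q)$, and the Hurewicz theorem for spectra identifies $H_k(X/q;\Z) \cong \pi_k(X/q) \neq 0$, a contradiction. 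Hence $X/q \simeq 0$, so $q \colon X \to X$ is a stable equivalence and $X$ is $q$-divisible. There is essentially no obstacle here; the only point requiring care is the bookkeeping in the Hurewicz step — one needs bounded-belowness precisely to guarantee that a nonzero $X/q$ has a bottom homotopy group — and this is standard.
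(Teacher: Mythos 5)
Your proof is correct, and it follows exactly the route the paper indicates: the paper gives no separate proof, describing the lemma only as an ``easy consequence of the Hurewicz theorem for spectra,'' and your argument via the cofibre $X/q$, vanishing of $H_*(X/q;\Z)$, and the Hurewicz theorem to conclude $X/q \simeq 0$ is precisely that consequence.
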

	
	Note that all of our scissors congruence K-theory spectra are connective by definition, so they are also bounded below. We can now give conditions under which they are $q$-divisible.
	
	\begin{proposition}\label{rationality_restricted}
		If the above assumptions hold and the translation subgroup $G \cap \R^n$ is $q$-divisible, then the spectrum $K(\cE^\cL_G)$ is $q$-divisible.
	\end{proposition}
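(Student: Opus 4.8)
The plan is to adapt the proof of \cref{prop:rationality}, replacing ``rational'' by ``$q$-divisible'' throughout. Write $T = G \cap \R^n$ for the subgroup of translations in $G$. Since $\R^n$ is normal in $A(E^n) = \R^n \rtimes \mathrm{GL}_n(\R)$, the subgroup $T$ is normal in $G$, so using the Thom spectrum model of \cref{thm:thom-theorem-restricted} we may write
\[ K(\cE^\cL_G) \simeq \big( (\Sigma^{-TE^n}\PT^\cL(\cE^n))_{hT} \big)_{h(G/T)}. \]
The cofibre of multiplication by $q$ commutes with homotopy colimits, so $q$-divisibility is preserved by homotopy colimits; since the outer $(G/T)$-homotopy orbits form such a colimit, it suffices to prove that $(\Sigma^{-TE^n}\PT^\cL(\cE^n))_{hT}$ is $q$-divisible. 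This is a bounded-below spectrum, so by the lemma preceding this proposition it is enough to show its integral homology groups are $\Z[1/q]$-modules; and since the translations in $T$ preserve orientation, the Thom isomorphism applies with untwisted $\Z$-coefficients over the homotopy $T$-orbit space, reducing us to showing that $\widetilde H_*(\PT^\cL(\cE^n)_{hT};\Z)$ consists of $\Z[1/q]$-modules.

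For this I would reuse the cube argument from the proof of \cref{prop:rationality}, keeping only flags of subspaces lying in $\cL$. Arguing as there, $\PT^\cL(\cE^n)$ is the total homotopy cofibre of the $n$-cube assigning to $S \subseteq \{0,\dots,n-1\}$ the space $\big(\coprod U_0\big)_+$, the coproduct running over flags $U_0 \subsetneq \cdots \subsetneq U_k$ in $\cL$ with $\{\dim U_i\} = S$, and assigning $(E^n)_+$ to $S = \varnothing$. Commuting homotopy colimits, $\PT^\cL(\cE^n)_{hT}$ is the total homotopy cofibre of the cube of homotopy $T$-orbit spaces; since $T$ acts on flags with the stabiliser of a flag equal to $T \cap V_0$, where $V_0 \subseteq \R^n$ is the linear direction of the smallest member $U_0$, each vertex with $S \neq \varnothing$ becomes $\big(\coprod_{[F]} B(T \cap V_0(F))\big)_+$, indexed by $T$-orbits $[F]$ of flags, and the vertex $S = \varnothing$ becomes $(BT)_+$. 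Now I would take homotopy cofibres in the ``$0 \in S$'' direction, exactly as in \cref{prop:rationality}. This is where the two standing hypotheses enter: because $\cL$ contains a $0$-dimensional subspace --- equivalently, every nonempty member of $\cL$ contains one --- and $T$ acts transitively on the $0$-dimensional subspaces in $\cL$, the group $\mathrm{Stab}_T(F') = T \cap V_0(F')$ acts transitively, with trivial point stabilisers, on the set of $0$-dimensional $U^0 \in \cL$ contained in $U_0(F')$; hence over each $T$-orbit $[F']$ of flags with $0 \notin S$ there is a unique orbit $[F' \cup U^0]$, whose homotopy orbit space is $B(T \cap V_{U^0}) = \ast$. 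Consequently the cofibre in the ``$0$'' direction is $BT$ over the vertex $S' = \varnothing$ and $\bigvee_{[F']} B(T \cap V_0(F'))$ over each vertex $S' \neq \varnothing$, so that $\PT^\cL(\cE^n)_{hT}$ is the total homotopy cofibre of an $(n-1)$-cube all of whose vertices are wedges of classifying spaces $BH$ with $H \leq \R^n$ of the form $T$ or $T \cap V$ for $V$ a linear subspace.

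It then remains to record two elementary points. First, each such $H = T \cap V$ is again $q$-divisible: given $v \in T \cap V$ there is a $t \in T$, unique since $\R^n$ is torsion-free, with $qt = v$, and as $V$ is a real vector space $t = v/q$ lies in $V$, hence in $T \cap V$; so $q$ acts bijectively on $H$. Second, for a $q$-divisible abelian group $H$ the reduced integral homology $\widetilde H_*(BH;\Z)$ consists of $\Z[1/q]$-modules: writing $H$ as a filtered colimit of finitely generated $\Z[1/q]$-modules, this follows from the facts that $H_*(B(-);\Z)$ commutes with filtered colimits, the K\"unneth theorem, and the computation $\widetilde H_*(B\Z[1/q];\Z) \cong \Z[1/q]$ concentrated in degree $1$. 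Since the reduced homology of a wedge is the direct sum of the reduced homologies, and the total homotopy cofibre of a finite cube of spectra whose homology groups are $\Z[1/q]$-modules has the same property (via the iterated long exact sequences of its mapping cones), we conclude that $\widetilde H_*(\PT^\cL(\cE^n)_{hT};\Z)$ is a $\Z[1/q]$-module in every degree, which completes the argument by the reductions of the first paragraph.

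The main obstacle is the bookkeeping of the middle paragraph: one must check carefully that taking homotopy $T$-orbits and then cofibres in the $0$-direction genuinely collapses the cube to a diagram of wedges of classifying spaces, and it is exactly here that the hypotheses on $\cL$ and on the transitivity of the $T$-action on $0$-dimensional subspaces are used (in the general case, without the $0$-dimensional subspace one loses the clean cancellation of the degree-zero homology in the total cofibre). Once that reduction is in place, the remaining $q$-divisibility statements are formal.
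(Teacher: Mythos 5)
Your proof is correct and follows the same strategy as the paper's: pass to homotopy $(G\cap\R^n)$-orbits of the cube for $\PT^\cL(\cE^n)$, collapse the $0$-direction using the transitivity of $G\cap\R^n$ on $0$-dimensional subspaces, and observe that the resulting vertices are wedges of classifying spaces of $q$-divisible abelian groups. You spell out a few points the paper leaves implicit — the identification of $T$-orbits of affine flags with linear flags, the verification that $T\cap V$ is again $q$-divisible, and the filtered-colimit/K\"unneth argument for $\widetilde H_*(BH;\Z)$ — and these are all fine (in the last step you tacitly use that $T\cap V$ is torsion-free so that Lazard gives \emph{free} $\Z[1/q]$-modules, which is where the K\"unneth computation applies cleanly; since all the groups involved are subgroups of $\R^n$ this is automatic).
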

	
	\begin{proof}
		The proof is similar to \cref{prop:rationality}. We first take homotopy orbits by $G \cap \R^n$. By assumption $G$ acts transitively on the 0-dimensional subspaces in $\cL$, and each $k$-dimensional subspace contains such a 0-dimensional subspace. Therefore, up to the $(G \cap \R^n)$-action each subspace in $\cL$ can be translated to go through some fixed point of $E^n$, which may as well be the origin. We get that $\PT^\cL(\cE^n)_{h(G \cap \R^n)}$ is the total homotopy cofibre of the cube diagram
		\[ S \mapsto \left(\coprod_{\underset{\{ \dim V_i \} = S}{V_1 \subseteq \cdots \subseteq V_k}} B(G \cap V_1)\right)_+, \quad
		\varnothing \mapsto B(G \cap \R^n)_+ \]
		where $S \subseteq \{0,1,2,\ldots,n-1\}$ and the $V_i$ are vector subspaces of $\R^n$ that are parallel to some affine subspace in $\cL$. As in \cref{prop:rationality}, we next take homotopy cofibres in one direction in the cube, the direction where the element $0 \in S$ is removed from $S$. This gives the cube diagram
		\[ S \mapsto \bigvee_{\underset{\dim = S}{V_1 \subseteq \cdots \subseteq V_k}} B(G \cap V_1), \quad
		\varnothing \mapsto B(G \cap \R^n) \]
		where now $S \subseteq \{1,2,\ldots,n-1\}$ and the $V_i$ are vector subspaces of $\R^n$ parallel to some affine subspace in $\cL$.
		
		All of the spaces in this new diagram are $q$-divisible on reduced homology. It follows that the Thom spectra formed by desuspending by $TE^n$ are also $q$-divisible. (This bundle is still oriented on the homotopy $(G \cap \R^n)$-orbit space, because we haven't modded out yet by any of the orientation-reversing isometries.) As spectra that are $q$-divisible are preserved by homotopy colimits, we end up with an $q$-divisible spectrum after evaluating this total homotopy cofibre and taking homotopy orbits by $G/(G \cap \R^n)$.
	\end{proof}

	\begin{proposition}\label{vanishing_restricted}
		If the assumptions at the beginning of the subsection hold, and if $G$ contains a dilatation by a rational number $a \neq \pm 1$ about a point in $\cL$, then the spectrum $K(\cE^\cL_G)$ is rationally trivial.
	\end{proposition}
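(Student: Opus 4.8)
The plan is to imitate the proof of \cref{prop:vanishing}, with the full translation group there replaced by $N \coloneqq G \cap \R^n$ and with the transitivity hypothesis and the Thom spectrum model \cref{thm:thom-theorem-restricted} used as in the proof of \cref{rationality_restricted}. Fix a dilatation $\delta \in G$ by the rational number $a \neq \pm 1$ about a point $x$, which by hypothesis is a $0$-dimensional subspace in $\cL$. The first step is to isolate a convenient normal subgroup. Let $H \leq G$ be generated by $N$ and $\delta$. Conjugation by $\delta$ carries the translation by a vector $w$ to the translation by $aw$, which again lies in $G$, so $N$ is stable under conjugation by $\delta$; hence every element of $H$ can be written as $\tau\delta^{m}$, and $H \cong N \rtimes \langle\delta\rangle$ with $\langle\delta\rangle \cong \Z$ (as $a \neq \pm 1$) acting on $N$ by multiplication by $a$. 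Moreover $H$ is normal in $G$: the subgroup $N$ is normal in $G$ because affine conjugates of translations are translations in $G$, and for any $g \in G$ the element $g\delta g^{-1}\delta^{-1}$ has trivial linear part, hence is a translation lying in $G$, hence in $N$, so $g\delta g^{-1} \in H$.

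Since $H \trianglelefteq G$ and $K(\cE^\cL_G) \simeq (\Sigma^{-TE^n}\PT^\cL(\cE^n))_{hG}$, and since homotopy colimits --- in particular $(-)_{hG/H}$ --- preserve rational triviality, it suffices to show $(\Sigma^{-TE^n}\PT^\cL(\cE^n))_{hH}$ is rationally trivial. By \cref{thm:thom-theorem-restricted} applied to $H$, this spectrum is $K(\cE^\cL_H)$, which is connective, so by the Hurewicz theorem it is enough to see that its $\Q$-homology vanishes; by the Thom isomorphism this homology is $\widetilde H_*(\PT^\cL(\cE^n)_{hH};\Q^{\chi})$, where $\chi\colon H \to \{\pm 1\}$ is the orientation character of the $H$-action on $TE^n$, trivial on $N$ and equal to $\on{sign}(a)^{n}$ on $\delta$. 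Next I would take homotopy orbits by $N$ first. By the transitivity assumption, together with the fact (see \cite{KLMMS-2}) that every nonempty subspace of $\cL$ contains a $0$-dimensional one, each subspace in $\cL$ can be translated by an element of $N$ so as to pass through $x$; choosing such representatives and then forming the partial cofibre in the $0$-dimensional direction of the cube, as in the proof of \cref{prop:rationality}, yields a $\langle\delta\rangle$-equivariant cube-shaped diagram with entries $\bigvee B(N\cap V_1)$ and $BN$, the $V_i$ ranging over linear subspaces through $x$ parallel to subspaces in $\cL$.

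Finally I would take homotopy $\langle\delta\rangle$-orbits. As in \cref{prop:vanishing}, each entry $B(N\cap V_1)$ then becomes $B((N\cap V_1)\rtimes\langle\delta\rangle)$ with $B\langle\delta\rangle$ collapsed (the collapse coming from based homotopy orbits), and its $\Q^{\chi}$-homology is computed by the Leray--Serre spectral sequence with $E^2_{i,j} = H_i(\langle\delta\rangle; \widetilde H_j(B(N\cap V_1);\Q)\otimes\Q^{\chi})$. The row $j=0$ vanishes, and for $j \geq 1$ the coefficient module is $\Lambda^j_\Q((N\cap V_1)\otimes\Q)\otimes\Q^{\chi}$, on which the generator $\delta$ acts by the scalar $a^j\on{sign}(a)^n \in \Q$ (this uses $a \in \Q$). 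By the ``centre kills'' lemma \cref{center_kills}, this homology is annihilated by $a^j\on{sign}(a)^n - 1$, a nonzero rational since $|a^j\on{sign}(a)^n| = |a|^j \neq 1$; hence it vanishes. The same applies to $BN$, so every entry of the $H$-orbit cube has trivial reduced $\Q^{\chi}$-homology, and therefore so does the total homotopy cofibre $\PT^\cL(\cE^n)_{hH}$, which completes the argument. I expect the points requiring most care to be the normality of $H$ when $N$ is a proper subgroup of $\R^n$ and the bookkeeping for the orientation twist $\chi$ (which is nontrivial precisely when $a<0$ and $n$ is odd); otherwise the homological input is identical to that of \cref{prop:vanishing}.
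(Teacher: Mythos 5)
Your proof is correct and follows essentially the same approach as the paper's, which simply cites the proof of \cref{prop:vanishing} and asks the reader to replace $\R^n$ by $G\cap\R^n$, each $V$ by $G\cap V$, and to restrict to subspaces parallel to hyperplanes in $\cL$ (using $H\trianglelefteq A(E^n)\Rightarrow G\cap H\trianglelefteq G$ for normality) --- you have supplied exactly those details. One small point in your favour: the proof of \cref{prop:vanishing} asserts that $H$ preserves orientation, which fails when $a<0$ and $n$ is odd, whereas you track the character $\chi$ explicitly and note that $\lvert a^j\on{sign}(a)^n\rvert=\lvert a\rvert^j\neq 1$, so the ``centre kills'' step goes through in all cases.
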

	
	\begin{proof}
		The proof is as in \cref{prop:vanishing}, except that we replace all instances of $\R^n$ with $G \cap \R^n$, all subspaces $V \subseteq \R^n$ with $G \cap V$, and only consider subspaces parallel to hyperplanes in $\cL$. We also use the fact that if $H \trianglelefteq A(E^n)$, then $(G \cap H) \trianglelefteq G$.
	\end{proof}
	
	Thus for any such group $G$ and any polytope $P \subseteq E^n$ cut out by $\cL$, the scissors automorphism group $\aut_G(P)$ is rationally acyclic. We can sometimes do better than this:
	
	\begin{proposition}\label{vanishing_restricted_2}
		If the assumptions at the beginning of the subsection hold, if $(G \cap \R^n) \otimes \Q$ has rank $r$, and if $G$ contains a dilatation by an integer $\ell \neq 0$ about some point in $\cL$, then the spectrum $K(\cE^\cL_G)$ is trivial after inverting $(\ell-1)$, $(\ell^2-1)$, $\ldots$, and $(\ell^r-1)$.
	\end{proposition}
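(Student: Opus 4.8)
The plan is to re-run the ``centre kills'' argument of \cref{prop:vanishing} and \cref{vanishing_restricted} inside the Thom spectrum model of \cref{thm:thom-theorem-restricted}, but now tracking torsion and using the rank bound on $G\cap\R^n$ to cut off the relevant exterior powers. First I would fix a $0$-dimensional subspace $p_0\in\cL$ and let $d\in G$ be the given dilatation by $\ell$ about $p_0$; its linear part is the scalar $\ell I$, which is central in $\mathrm{GL}_n(\R)$, so $H_0\coloneqq \R^n\rtimes\langle\ell I\rangle$ is normal in $A(E^n)$ and therefore $H\coloneqq G\cap H_0$ is normal in $G$ --- the same device already used in \cref{vanishing_restricted}. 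Conjugation by $d$ acts on the translations $G\cap\R^n$ by multiplication by $\ell$, and $H$ is generated over $G\cap\R^n$ by $d$, so for $|\ell|\geq 2$ we get $H/(G\cap\R^n)\cong\Z$; for $|\ell|\leq 1$ one of the integers $\ell^j-1$ is $0$, so inverting it trivialises the spectrum and the statement holds degenerately. Using \cref{thm:thom-theorem-restricted} I would write $K(\cE^\cL_G)\simeq(\Sigma^{-TE^n}\PT^\cL(\cE^n))_{hG}$ and compute the homotopy $G$-orbits by taking homotopy $H$-orbits first.

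Next, exactly as in the proof of \cref{rationality_restricted}, I would take homotopy $(G\cap\R^n)$-orbits: using transitivity of $G\cap\R^n$ on the $0$-dimensional subspaces of $\cL$ to translate every subspace in $\cL$ through $p_0$, and then taking the homotopy cofibre in the cube direction indexed by $0\in S$, this presents $\PT^\cL(\cE^n)_{h(G\cap\R^n)}$ as a total homotopy cofibre of a finite cube whose terms are based spaces of the form $B(G\cap V_1)$, with $V_1$ a linear subspace of $\R^n$ parallel to a subspace in $\cL$; desuspension by $TE^n$ contributes at worst a global orientation twist, which I would carry along. Then I would take the residual homotopy $\langle d\rangle\cong\Z$-orbits and bound the torsion in $\widetilde H_*(B(G\cap V_1)_{h\Z})$ term by term. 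The key new observation is that $G\cap V_1\leq G\cap\R^n$ is torsion-free of rational rank at most $r$, so $H_q(B(G\cap V_1);\Z)\cong\Lambda^q_\Z(G\cap V_1)$ is torsion-free and vanishes for $q>r$ (the exterior power of a torsion-free group, built as a filtered colimit of finitely generated free ones, is torsion-free and dies rationally above the rank). In the Leray--Serre spectral sequence
\[ E^2_{pq}=H_p\big(\Z;\,\Lambda^q_\Z(G\cap V_1)\big)\ \Longrightarrow\ \widetilde H_{p+q}\big(B(G\cap V_1)_{h\Z}\big) \]
(with $q\geq 1$; the $q=0$ part drops out in the based construction, exactly as in \cref{prop:vanishing}) the generator $d$ acts on $\Lambda^q_\Z(G\cap V_1)$ by $\ell^q$, up to a sign from the orientation twist when $\ell<0$, so by \cref{center_kills} the $q$-th row contributes $(\ell^q-1)$-torsion. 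Combining this with the vanishing for $q>r$, inverting $(\ell-1),(\ell^2-1),\dots,(\ell^r-1)$ makes every term of the cube have vanishing reduced $\Z[S^{-1}]$-homology, where $S$ is the multiplicative set these integers generate.

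Finally, I would observe that forming the total homotopy cofibre (a finite homotopy limit) and then the homotopy $(G/H)$-orbits (a homotopy colimit) both preserve this property, and that desuspension by $TE^n$ does not change homology; since $K(\cE^\cL_G)$ is connective, the Hurewicz theorem then forces it to vanish after inverting $(\ell-1),\dots,(\ell^r-1)$. I expect the main obstacle to be the integral bookkeeping rather than any new idea: one must carefully identify $H_q(B(G\cap V_1);\Z)$ with $\Lambda^q_\Z(G\cap V_1)$ for a possibly infinitely generated torsion-free group, verify it genuinely vanishes above rank $r$, and propagate the exact $\ell^q-1$ bound through the (twisted) spectral sequence and the reduction to based spaces --- in particular the orientation twist appearing for negative $\ell$ is the one point where the signs need attention, and it is absent when $\ell>0$.
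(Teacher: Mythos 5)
Your argument follows the paper's proof step for step: reduce to the normal subgroup $H = (G\cap\R^n)\rtimes\langle d\rangle$, pass to the cube of \cref{rationality_restricted} and take pointed homotopy $\Z$-orbits, run the Leray--Serre spectral sequence $E^2_{pq}=H_p(\Z;\Lambda^q(G\cap V_1))$ for $q\ge 1$, kill the rows $q>r$ via \cref{exterior_vanishing}, and apply \cref{center_kills} to see that row $q\le r$ is $(\ell^q-1)$-torsion. This is exactly the route taken in the paper.

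The one substantive point where you diverge is your concern about the orientation twist for negative $\ell$, and here you are in fact \emph{more} careful than the published proof. The paper asserts that ``the transformations in $H$ do not flip orientation,'' but this fails when $\ell<0$ and $n$ is odd: the derivative of $d$ is $\ell I$, with determinant $\ell^n<0$. In that case the Thom twist you flag genuinely appears, and the generator of $\Z$ acts on $\Lambda^q(G\cap V_1)$ with $\Z^t$-coefficients by $\mathrm{sign}(\ell^n)\,\ell^q = -\ell^q$, so centre kills yields $(\ell^q+1)$-torsion rather than $(\ell^q-1)$-torsion. Neither your sketch nor the paper resolves this; the issue is narrow (absent for $\ell>0$ or $n$ even, and for $\ell=-1$ usually vacuous since $\ell^2-1=0$), but you were right to flag it. One small slip of your own: for $\ell=-1$ and $r=1$ your claim ``$|\ell|\le 1$ forces some $\ell^j-1$ to vanish'' is false, since only $\ell-1=-2$ is inverted; there $\langle d\rangle\cong\Z/2$ rather than $\Z$, so the quoted spectral sequence would need modification --- an edge case the paper's own proof shares.
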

	
	It is possible here for $r$ to be infinite, but then infinitely many things have to be invertible. It is also possible to have $\ell = 1$, but then the conclusion is vacuous, because inverting 0 causes every spectrum to be trivial. Similarly if $\ell = -1$ and $r \geq 2$ then the conclusion is vacuous. On the other hand, when $\ell = 2$ and $G \cap \R^n$ is rank one over the rationals (which essentially only happens when $n = 1$) then the spectrum is contractible without having to invert anything.
	
	Before giving the proof of \cref{vanishing_restricted_2}, we first check an algebraic lemma.
	\begin{lemma}\label{exterior_vanishing}
		If $A$ is a torsion-free abelian group then so is $\Lambda^j A$ for each $j \geq 0$.
	\end{lemma}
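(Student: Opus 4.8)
The plan is to exhibit $A$ as a filtered colimit of finitely generated free abelian groups and then to observe that $\Lambda^j$ preserves such colimits. Concretely, I would write $A = \colim_i A_i$ as the filtered colimit of its finitely generated subgroups $A_i$, indexed by the filtered poset of such subgroups. Each $A_i$ is a finitely generated torsion-free abelian group, hence free by the structure theorem for finitely generated abelian groups, so $A_i \cong \Z^{n_i}$ for some $n_i \geq 0$.

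Next I would use that the functor $\Lambda^j(-)$ commutes with filtered colimits of abelian groups. This is standard: $\Lambda^j$ is obtained from the $j$-fold tensor power, which commutes with all colimits since $(-)\otimes(-)$ is a left adjoint in each variable, followed by passing to the quotient by the subgroup generated by the tensors with a repeated factor, and this quotient (a cokernel) also commutes with filtered colimits. Hence $\Lambda^j A \cong \colim_i \Lambda^j A_i$, and each $\Lambda^j A_i \cong \Lambda^j \Z^{n_i} \cong \Z^{\binom{n_i}{j}}$ is free, in particular torsion-free.

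Finally I would invoke the fact that a filtered colimit of torsion-free abelian groups is torsion-free: if $x$ in the colimit satisfies $mx = 0$ for some integer $m > 0$, then $x$ is represented by some $\tilde x \in \Lambda^j A_i$, and $m\tilde x$ becomes zero after passing to some index $k \geq i$; since $\Lambda^j A_k$ is torsion-free, the image of $\tilde x$ in $\Lambda^j A_k$ already vanishes, so $x = 0$. This shows $\Lambda^j A$ is torsion-free. (Equivalently, one could phrase the entire argument via flatness over $\Z$: torsion-free is the same as flat over a PID, $\Lambda^j$ of a flat module is flat since flat modules are filtered colimits of finite free ones, and flat abelian groups are torsion-free.) There is no genuine obstacle; the only points deserving a word of justification are the interchange of $\Lambda^j$ with filtered colimits and the stability of torsion-freeness under such colimits, both of which are routine.
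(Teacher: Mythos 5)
Your proof is correct and is essentially the argument the paper gives: write $A$ as a filtered colimit of finitely generated free abelian groups, note that $\Lambda^j$ preserves both filtered colimits and finitely generated free groups, and conclude. The one cosmetic difference is how you reach the filtered colimit description: the paper invokes Lazard's theorem (torsion-free $\Leftrightarrow$ flat $\Leftrightarrow$ filtered colimit of f.g.\ free), whereas you take the filtered system of finitely generated subgroups directly and use the structure theorem to see each is free --- a slightly more elementary route that avoids Lazard; your closing parenthetical is exactly the paper's version. One small caution: your claim that the kernel of $A^{\otimes j}\twoheadrightarrow \Lambda^j A$ is a ``cokernel'' is not quite right, since the relations $a_1\otimes\cdots\otimes a_j$ with a repeated factor are not the image of a linear map in $A$; but the subfunctor they generate still commutes with filtered colimits (each such relation involves finitely many elements of $A$, and filteredness lets you realize finite data at a finite stage), so the interchange of $\Lambda^j$ with filtered colimits holds and the rest of your argument goes through unchanged.
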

	
	\begin{proof}
		We recall that for abelian groups, torsion-free is equivalent to flat, and by Lazard's theorem this is equivalent to being a filtered colimit of finitely generated free abelian groups. It is straightforward to check that $\Lambda^j(-)$ preserves both finitely generated free abelian groups and filtered colimits, and the conclusion follows.
	\end{proof}
	
	\begin{proof}[Proof of \cref{vanishing_restricted_2}]
		Since the spectrum is bounded below, it is enough to show that the homology of the spectrum is trivial after inverting $(\ell - 1)$ through $(\ell^r - 1)$.
		
		It also suffices to restrict attention to the subgroup $H \trianglelefteq G$ generated by the translations and the dilatations whose scaling factor is a power of $\ell$, as in \cref{prop:vanishing}. This subgroup is normal and of the form $H = (G \cap \R^n) \rtimes \Z$, where the $\Z$ acts by dilatation by powers of $\ell$ about one point. Note that as a consequence $(G \cap \R^n)$ is uniquely $\ell$-divisible, i.e. a $\Z[1/\ell]$-module.
		
		The transformations in $H$ do not flip orientation, so we are left with showing that the terms in the cube defining $\PT^\cL(\cE^n)_{hH}$ have vanishing homology after inverting $(\ell - 1)$ through $(\ell^r - 1)$. Since $(G \cap \R^n)$ is normal in $H$, we can take the cube of \cref{rationality_restricted} and take based homotopy $\Z$-orbits, giving the cube
		\[ S \mapsto \bigvee_{\underset{\dim = S}{V_1 \subseteq \cdots \subseteq V_k}} B(G \cap V_1)_{h\Z}, \quad
		\varnothing \mapsto B(G \cap \R^n)_{h\Z}. \]
		As in \cref{rationality_restricted}, the linear subspaces $V \subseteq \R^n$ are all parallel to affine subspaces in $\cL$. The spaces $B(G \cap \R^n)_{h\Z}$ are pointed homotopy orbits and their homology can be computed using a pointed homotopy orbit spectral sequence
			\[E^2_{ij} = H_i(\Z; \widetilde H_j(G \cap V;\Z)) \ \Longrightarrow  \widetilde{H}_{i+j}(B(G \cap \R^n)_{h\Z};\Z). \]
		The reader may prefer to think of this as a relative Lery--Serre spectral sequence: the pointed homotopy orbits $B(G \cap \R^n)_{h\Z}$ are equivalent to the quotient of the classifying space $B((G \cap V) \rtimes \Z)$ by the subspace $B\Z$.
	
		The entries on the left-hand side are given $H_i(\Z; \Lambda^j (G \cap V))$ with $j > 0$; here the generator of $\Z$ acts on $(G \cap V)$ by scaling by $\ell$ and therefore it acts on on $\Lambda^j (G \cap V)$ by scaling by $\ell^j$. These exterior powers vanish for $j > r$, since by \cref{exterior_vanishing} we have an injective map
		\[ \Lambda^j(G \cap V) \longrightarrow (\Lambda^j(G \cap V)) \otimes \Q \cong \Lambda^j \Q^r = 0. \]
		For $j \leq r$, the homology of $\Z$ with these coefficients is calculated as the homology of the two-term complex
		\[
		\begin{tikzcd}[column sep = 4em]
			\Lambda^j (G \cap V) \ar[r,"(\ell^j-1)"] & \Lambda^j (G \cap V).
		\end{tikzcd}
		\]
		If $(\ell^j-1) = 0$ for any value of $j \leq r$ then the theorem is vacuously true, so we assume that $(\ell^j-1) \neq 0$. Then by \cref{exterior_vanishing} the above map is injective, and it is surjective if $(\ell^j-1)$ is inverted. Therefore the $E^2$-page of the spectral sequence vanishes if each $(\ell^j-1)$ is inverted. It follows that the groups they converge to on the right-hand side also vanish after inverting these integers as well.
	\end{proof}
	
	The analysis we carried out in the proof of \cref{vanishing_restricted_2} becomes much simpler in dimension one:
	\begin{proposition}\label{moore_spectrum_calculation}
		In the line $E^1$, if the assumptions at the beginning of the subsection hold, and if $G$ is generated by its translation group $(G \cap \R)$ and a single dilatation by a real number $\lambda \neq \pm 1$, then the homology of the K-theory spectrum is described by the long exact sequence
		\begin{align*}
			\cdots \longrightarrow \Lambda^j(G \cap \R) \xrightarrow{\Lambda^j(\lambda) - 1} \Lambda^j(G \cap \R) \longrightarrow &\ H_{j-1}(K(\cE^1_G)) \longrightarrow \Lambda^{j-1}(G \cap \R) \longrightarrow \cdots \\
			\cdots \longrightarrow G \cap \R \xrightarrow{\lambda - 1} G \cap \R \longrightarrow &\ H_0(K(\cE^1_G)) \longrightarrow 0,
		\end{align*}
		where $\Lambda^j(\lambda)$ multiplies every slot in the $j$-fold tensor by $\lambda \in \R$.
	\end{proposition}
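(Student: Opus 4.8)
The plan is to specialise the Thom spectrum computations of this subsection to the line $E^1$, but instead of stopping at a divisibility or vanishing statement, to read the full homology off a Wang-type cofibre sequence.

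First I would invoke \cref{prop:0dim-restricted-hypotheses}, which guarantees that the hypotheses of \cref{thm:thom-theorem-restricted} hold for the collection $\cL$, so that $K(\cE^1_G)\simeq(\Sigma^{-TE^1}\PT^\cL(\cE^1))_{hG}$. Next I would record the structure of $G$: the translation subgroup $G\cap\R$ is normal in $G$, and since a real number $\lambda\neq\pm1$ is not a root of unity, the class of the chosen dilatation $D$ generates an infinite cyclic quotient, so $G\cong(G\cap\R)\rtimes\Z$ with the generator $t$ of $\Z$ acting on $G\cap\R$ by multiplication by $\lambda$; this last point is the elementary conjugation identity $D T_v D^{-1}=T_{\lambda v}$ for the translation $T_v$ by $v$.

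The second step is to compute homotopy $(G\cap\R)$-orbits. As in the proof of \cref{prop:0dim-restricted-hypotheses}, in dimension one $\PT^\cL(\cE^1)$ is the homotopy cofibre of the map $(\cL^0)_+\to E^1_+\simeq S^0$, where $\cL^0=\cL\setminus\{E^1\}$ is the discrete set of $0$-dimensional subspaces; correspondingly $\Sigma^{-TE^1}\PT^\cL(\cE^1)$ is the cofibre of the associated Thom spectra. The standing assumption that $G\cap\R$ acts transitively on $\cL^0$ — necessarily freely, with trivial stabilisers — gives $(\cL^0)_{h(G\cap\R)}\simeq\ast$, and contractibility of $E^1$ gives $(E^1)_{h(G\cap\R)}\simeq B(G\cap\R)$; moreover translations preserve orientation, so over these orbit spaces the desuspension by $TE^1$ becomes an ordinary $\Sigma^{-1}$. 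Hence $(\Sigma^{-TE^1}\PT^\cL(\cE^1))_{h(G\cap\R)}\simeq\Sigma^{-1}\Sigma^\infty B(G\cap\R)$; call this connective spectrum $Y$. Its residual $\Z$-action is induced by multiplication by $\lambda$ on $G\cap\R$, together with the orientation twist by $\on{sign}(\lambda)$ coming from the dilatation (trivial when $\lambda>0$).

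Finally I would take the remaining homotopy $\Z$-orbits. Since $G\cap\R$ is a subgroup of $\R$ it is torsion-free, so the Lazard/filtered-colimit argument used for \cref{exterior_vanishing} identifies $H_*(B(G\cap\R);\Z)$ with the exterior algebra $\Lambda^*(G\cap\R)$, whence $\widetilde H_j(Y)\cong\Lambda^{j+1}(G\cap\R)$ with $t$ acting by $\Lambda^{j+1}(\lambda)$ (up to the sign $\on{sign}(\lambda)$). Writing $K(\cE^1_G)\simeq Y_{h\Z}$ via the cofibre sequence $Y\xrightarrow{\,1-t\,}Y\to Y_{h\Z}$, its long exact sequence in homology, after the reindexing $j\mapsto j-1$ and negating the connecting maps, is exactly the asserted one: $H_{j-1}(K(\cE^1_G))$ sits in $\Lambda^j(G\cap\R)\xrightarrow{\Lambda^j(\lambda)-1}\Lambda^j(G\cap\R)\to H_{j-1}(K(\cE^1_G))\to\Lambda^{j-1}(G\cap\R)\to\cdots$, and the sequence terminates with $G\cap\R\xrightarrow{\lambda-1}G\cap\R\to H_0(K(\cE^1_G))\to0$ because $Y$ is connective with $\widetilde H_0(Y)\cong G\cap\R$ and $\widetilde H_{-1}(Y)=0$. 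Every step here is routine given \cref{thm:thom-theorem-restricted}; the only point requiring real care is the bookkeeping — tracking the $t$-action and its orientation twist, and matching indices and signs so that the boundary maps come out as $\Lambda^j(\lambda)-1$ (for $\lambda<0$ the twist inserts an extra $\on{sign}(\lambda)$, and the clean statement is the case $\lambda>0$).
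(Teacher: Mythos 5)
Your argument is correct and follows the same route as the paper: reduce via \cref{thm:thom-theorem-restricted} and \cref{prop:0dim-restricted-hypotheses} to the one-vertex cube $B(G\cap\R)_{h\Z}$, and read the long exact sequence off the (two-line) pointed homotopy-orbit spectral sequence for the residual $\Z$-action, which is exactly your Wang cofibre sequence $Y\to Y\to Y_{h\Z}$. Your parenthetical remark that for $\lambda<0$ the orientation twist on $TE^1$ inserts an extra $\operatorname{sign}(\lambda)$ into the boundary map is a genuine point that the paper's statement and proof elide (the assertion "the transformations in $H$ do not flip orientation" from \cref{vanishing_restricted_2} is what is being reused, and it is only literally true in odd ambient dimension when the dilatation factor is positive), so flagging it is appropriate rather than a flaw.
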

	Note that when $\lambda \in \Q$ this simplifies further to
	\[H_{j-1}(K(\cE^1_G)) \cong \Lambda^j(G \cap \R) / (\lambda^j- 1).\]
	
	\begin{proof}
		If we are not concerned with divisibility then there is no problem with $\lambda$ being a real number. We get the cube in the proof of \cref{vanishing_restricted_2}, and since $n = 1$ it has only one vertex, the space $B(G \cap \R)_{h\Z}$. Therefore the K-theory spectrum is the one-fold desuspension of this space, and its homology is given by the output of the Leray--Serre spectral sequence that we considered in the proof of \cref{vanishing_restricted_2}. As this spectral sequence is concentrated on two lines it collapses at the $E^2$-page, yielding a long exact sequence.
	\end{proof}
		
	\begin{example}
		Specializing further, if $G \cap \R$ is rank one, then we get a Moore spectrum on the group $(G \cap \R) / (\lambda -1)$. In other words, the spectrum has only one non-vanishing homology group, which is $H_0 \cong (G \cap \R) / (\lambda -1)$. (This is not to be confused with the homology of the associated infinite loop space, which is larger.)
	\end{example}

	\subsection{Computations for families of groups} \label{sec:computations-families} In this section we use the above results to perform computations for the following families of groups:
	\begin{itemize}
		\item \ref{sec:IET}: interval exchange groups.
		\item \ref{sec:rec}: rectangle exchange groups.
		\item \ref{sec:brin-thompson}: Brin--Thompson groups. 
		\item \ref{sec:brin-thompson-variants}: variants of Brin--Thompson groups.
	\end{itemize}
	
	\subsubsection{Interval exchange groups}\label{sec:IET} For any dense additive subgroup $\Gamma \subset \R$ containing $1$, there is an \emph{interval exchange group} $IE(\Gamma)$. When $\Gamma = \mathbb{R}$, this group is shortened to $IET$ and is well-known, going back to \cite{keane1975} (see \cite{Dahmani} for a recent survey). The homology of $IE(\Gamma)$ was previously studied by Li \cite{li2022} and Tanner \cite{tanner2023}. In \cref{IET_1} we recovered these calculations by different means in the case that $\Gamma = \R$, and now we consider the general case.
	
	Our starting point is to interpret---or rather define---the interval exchange groups as scissors automorphism groups. The relevant assembler is $\cE^\cL_\Gamma$, where $\cL$ consists of those hyperplanes in the line (i.e.~points) that are in $\Gamma$. Concretely, it is the assembler of intervals whose endpoints are in $\Gamma$ and whose admissible isometries are given by only translations in $\Gamma$. We abbreviate this to $\cR^1_\Gamma$, with $\cR$ standing for ``rectangle''.
	
	\begin{definition}
		For a dense additive subgroup $\Gamma \subset \R$ containing $1$, the \emph{interval exchange group} is defined as
		\[IE(\Gamma) \coloneqq \aut_{\cR^1_\Gamma}(P),\]
		the scissors automorphism group of the interval $P = [0,1]$.
	\end{definition}

	\vspace{1em}
	\centerline{
	\def\svgwidth{1.8in}
\begingroup%
  \makeatletter%
  \providecommand\color[2][]{%
    \errmessage{(Inkscape) Color is used for the text in Inkscape, but the package 'color.sty' is not loaded}%
    \renewcommand\color[2][]{}%
  }%
  \providecommand\transparent[1]{%
    \errmessage{(Inkscape) Transparency is used (non-zero) for the text in Inkscape, but the package 'transparent.sty' is not loaded}%
    \renewcommand\transparent[1]{}%
  }%
  \providecommand\rotatebox[2]{#2}%
  \newcommand*\fsize{\dimexpr\f@size pt\relax}%
  \newcommand*\lineheight[1]{\fontsize{\fsize}{#1\fsize}\selectfont}%
  \ifx\svgwidth\undefined%
    \setlength{\unitlength}{92.98331437bp}%
    \ifx\svgscale\undefined%
      \relax%
    \else%
      \setlength{\unitlength}{\unitlength * \real{\svgscale}}%
    \fi%
  \else%
    \setlength{\unitlength}{\svgwidth}%
  \fi%
  \global\let\svgwidth\undefined%
  \global\let\svgscale\undefined%
  \makeatother%
  \begin{picture}(1,0.40444797)%
    \lineheight{1}%
    \setlength\tabcolsep{0pt}%
    \put(0,0){\includegraphics[width=\unitlength,page=1]{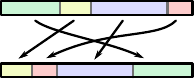}}%
  \end{picture}%
\endgroup%

	}
	\vspace{1em}
	
	More concretely, this is the group of cut-and-paste moves that send the interval $[0,1]$ to itself, cutting only along points in $\Gamma$ and using only translations in $\Gamma$ to move the pieces around. By \cref{lem:elg_ea} \ref{enum:elg_ea-i}, $\cR^1_\Gamma$ is an EA-assembler and by \cref{cor:scissors-aut-geometries-restricted} we have that
	\[H_*(IE(\Gamma)) \cong H_*(\Omega^\infty_0 K(\cR^1_\Gamma)).\]
	
	\begin{lemma}\label{IET_3}
		$K(\cR^1_\Gamma) \simeq \Sigma^{-1} \Sigma^\infty B\Gamma$.
	\end{lemma}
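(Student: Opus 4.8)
The plan is to apply the Thom spectrum model for restricted polytopes, \cref{thm:thom-theorem-restricted}, and then to mirror the computation already carried out for $\cE^1_{\R}$ in \cref{sec:one-dim}. First I would record that $\cR^1_\Gamma = \cE^\cL_\Gamma$, where $\cL$ is the collection consisting of $E^1$ together with the $0$-dimensional subspaces $\{x\}$ for $x \in \Gamma$, and $\Gamma \leq A(E^1)$ acts by translations, preserving $\cL$. By \cref{prop:0dim-restricted-hypotheses} the two hypotheses of \cref{thm:thom-theorem-restricted} hold in dimension one, so we obtain an equivalence $K(\cR^1_\Gamma) \simeq (\Sigma^{-TE^1} \PT^\cL(\cE^1))_{h\Gamma}$. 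Since every element of $\Gamma$ is a translation, its derivative is the identity, so $TE^1$ is $\Gamma$-equivariantly the trivial line bundle with trivial fibrewise action; hence the Thom spectrum is just $\Sigma^{-1}\Sigma^\infty \PT^\cL(\cE^1)$, and as $\Sigma^{-1}$ and $\Sigma^\infty$ commute with homotopy orbits we are reduced to proving $\PT^\cL(\cE^1)_{h\Gamma} \simeq B\Gamma$, after which $K(\cR^1_\Gamma) \simeq \Sigma^{-1}\Sigma^\infty B\Gamma$.

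To compute $\PT^\cL(\cE^1)_{h\Gamma}$ I would first unwind the definition $\Gamma$-equivariantly. The poset $\cL \setminus \{E^1\}$ is the discrete set $\{\{x\} : x \in \Gamma\}$ with the regular translation action of $\Gamma$, and the functor $U \mapsto U$ sends each of these objects to a point; therefore $\hocolim_{U \in \cL\setminus\{E^1\}} U$ is the discrete $\Gamma$-set $\Gamma_{\mathrm{disc}}$ with its regular action. On the other hand $E^1$ is terminal in $\cL$, so the canonical map $E^1 \to \hocolim_{U \in \cL} U$ is a $\Gamma$-equivariant weak equivalence, identifying the numerator with $E^1 = \R$ carrying the translation action. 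Under these identifications the map from denominator to numerator is the inclusion $\Gamma_{\mathrm{disc}} \hookrightarrow \R$, and by definition $\PT^\cL(\cE^1)$ is its cofibre. Thus there is a homotopy cofibre sequence of based $\Gamma$-spaces
\[ \Gamma_{\mathrm{disc},+} \longrightarrow \R_+ \longrightarrow \PT^\cL(\cE^1), \]
the exact analogue of the sequence $\R_+ \to S^0 \to \PT(E^1)$ used in \cref{sec:one-dim}.

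Now I would apply based homotopy $\Gamma$-orbits, which preserve cofibre sequences, together with the identity $(X_+)_{h\Gamma} = (X_{h\Gamma})_+$. Since $\Gamma_{\mathrm{disc}}$ is a free $\Gamma$-set, $(\Gamma_{\mathrm{disc}})_{h\Gamma} = E\Gamma \times_\Gamma \Gamma \simeq \ast$, while $\R$ is a contractible (free) $\Gamma$-space so that $\R_{h\Gamma} = E\Gamma \times_\Gamma \R \simeq B\Gamma$. Hence the cofibre sequence becomes $S^0 \to (B\Gamma)_+ \to \PT^\cL(\cE^1)_{h\Gamma}$, where the first map is, up to homotopy, the based map sending the non-basepoint to a point of the connected space $B\Gamma$; its homotopy cofibre is $B\Gamma$ (based at that point). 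This yields $\PT^\cL(\cE^1)_{h\Gamma} \simeq B\Gamma$ and, with the first paragraph, $K(\cR^1_\Gamma) \simeq \Sigma^{-1}\Sigma^\infty B\Gamma$.

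I do not anticipate a serious obstacle; the argument is short and runs exactly parallel to the already-established case $\Gamma = \R$ in \cref{sec:one-dim}. The only points requiring care are the bookkeeping between based and unbased homotopy orbits, checking that each map in the cofibre sequence is genuinely $\Gamma$-equivariant (which is automatic, since all maps in sight are induced by natural transformations and by inclusions of subposets), and the observation that $TE^1$ is equivariantly trivial precisely because $\Gamma$ consists of translations --- were reflections allowed, one would instead pick up the orientation twist and recover only the odd exterior powers, as in the final paragraph of \cref{sec:one-dim}.
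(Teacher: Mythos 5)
Your argument is correct and follows essentially the same route as the paper's: apply \cref{thm:thom-theorem-restricted} via \cref{prop:0dim-restricted-hypotheses}, observe the tangent bundle is $\Gamma$-equivariantly trivial since $\Gamma$ acts by translations, and pass the cofibre sequence $\Gamma_{\mathrm{disc},+} \to E^1_+ \to \PT^\cL(\cE^1)$ through based homotopy $\Gamma$-orbits to get $S^0 \to (B\Gamma)_+ \to \PT^\cL(\cE^1)_{h\Gamma}$, hence $\PT^\cL(\cE^1)_{h\Gamma} \simeq B\Gamma$. The paper's proof is a terse pointer to the analogous sequence in \cref{sec:one-dim}; your write-up spells out the same equivariant identifications explicitly.
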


	\begin{proof}We may apply \cref{thm:thom-theorem-restricted} using \cref{prop:0dim-restricted-hypotheses}. Then the cofibre sequence above \cref{IET_1} for $\cR^1_\Gamma$ becomes
	\[ S^0 \longrightarrow (B\Gamma)_+ \longrightarrow \PT^\cL(E^1)_{h\Gamma}. \]
	Since the action of $\Gamma$ on the tangent bundle of $\R$ is trivial, we only have to desuspend the right-most term once, and the conclusion follows.
	\end{proof}
	
	Note that this spectrum is $q$-divisible whenever the group $\Gamma$ is $q$-divisible, and in particular is rational if $\Gamma$ is uniquely divisible. This is consistent with \cref{rationality_restricted} above. We recover the following result, which appears as \cite[Lemma 5.4, 5.6]{tanner2023}.
	
	\begin{corollary}\label{IET_2}
		The integral homology of $IE(\Gamma)$ is given by
		\[ H_*(IE(\Gamma)) \cong H_*(\Omega^{\infty+1}_0 \Sigma^\infty B\Gamma). \]
		Therefore the rational homology is given by
		\[ H_*(IE(\Gamma);\Q) \cong \Lambda^* \left( \bigoplus_{n \geq 1} (\Lambda^{n+1}_\mathbb{Q} (\Gamma \otimes \Q))[n] \right), \]
		and the abelianisation fits into a long exact sequence
		\[ \begin{tikzcd}[arrows=rightarrow]
			H_3(\Gamma) \ar[r] & H_1(\Gamma;\Z/2) \ar[r] & IE(\Gamma)^\ab \ar[r] & H_2(\Gamma) \ar[r] & 0.
		\end{tikzcd} \]
	\end{corollary}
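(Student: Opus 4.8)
The plan is to simply combine the two facts already available: the identification $H_*(IE(\Gamma)) \cong H_*(\Omega^\infty_0 K(\cR^1_\Gamma))$ recorded just above the corollary (valid since $\cR^1_\Gamma$ is an EA-assembler by \cref{lem:elg_ea}\,\ref{enum:elg_ea-i} and $P=[0,1]$ has positive volume, so \cref{cor:scissors-aut-geometries-restricted} applies), and the computation $K(\cR^1_\Gamma) \simeq \Sigma^{-1}\Sigma^\infty B\Gamma$ of \cref{IET_3}. Since $\Omega^\infty_0\bigl(\Sigma^{-1}\Sigma^\infty B\Gamma\bigr) \simeq \Omega^{\infty+1}_0\Sigma^\infty B\Gamma$, this yields the first displayed isomorphism at once; no further argument is needed there.

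For the rational statement I would rationalise. The homotopy groups of $K(\cR^1_\Gamma)\simeq\Sigma^{-1}\Sigma^\infty B\Gamma$ are the reduced stable homotopy groups $\widetilde\pi^s_{n+1}(B\Gamma)$, which rationally agree with $\widetilde H_{n+1}(B\Gamma;\Q)$; since $\Gamma$ is a torsion-free abelian group, $H_*(B\Gamma;\Q)\cong\Lambda^*_\Q(\Gamma\otimes\Q)$, so $\pi_n(K(\cR^1_\Gamma))\otimes\Q\cong\Lambda^{n+1}_\Q(\Gamma\otimes\Q)$. Rationally the zero component of the infinite loop space of a connective spectrum splits as a product of rational Eilenberg--MacLane spaces, one for each positive-degree rational homotopy group, so its $\Q$-homology is the free graded-commutative algebra (in the sense of \cref{Lambda}) on $\bigoplus_{n\geq 1}\bigl(\Lambda^{n+1}_\Q(\Gamma\otimes\Q)\bigr)[n]$; one notes that in each fixed homological degree only finitely many factors contribute, so the Künneth computation is unproblematic even when $\Gamma\otimes\Q$ is infinite-dimensional (as for $\Gamma=\R$, recovering \cref{IET_1}). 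Alternatively one may invoke Milnor--Moore directly. This gives the second display.

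For the abelianisation, $IE(\Gamma)^{\ab}=H_1(IE(\Gamma))=H_1(\Omega^\infty_0 K(\cR^1_\Gamma))$, and since an infinite loop space is a connected H-space with abelian $\pi_1$, this equals $\pi_1(K(\cR^1_\Gamma))=\widetilde\pi^s_2(B\Gamma)$. I would then extract this group from the Atiyah--Hirzebruch spectral sequence $E^2_{p,q}=\widetilde H_p(B\Gamma;\pi^s_q(\mathbb S))\Rightarrow\widetilde\pi^s_{p+q}(B\Gamma)$: in total degree $2$ the only possibly nonzero $E^2$-entries are $H_2(\Gamma)$ at $(2,0)$, $H_1(\Gamma;\Z/2)$ at $(1,1)$, and $\widetilde H_0(B\Gamma;\Z/2)$ at $(0,2)$, the last vanishing because $B\Gamma$ is path-connected; for degree reasons the only relevant differential is $d^2\colon H_3(\Gamma)\to H_1(\Gamma;\Z/2)$. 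Hence $E^\infty_{2,0}=H_2(\Gamma)$ and $E^\infty_{1,1}=\operatorname{coker}(d^2)$, and the resulting two-step filtration on $\widetilde\pi^s_2(B\Gamma)$ unfolds to the exact sequence
\[ H_3(\Gamma)\longrightarrow H_1(\Gamma;\Z/2)\longrightarrow IE(\Gamma)^{\ab}\longrightarrow H_2(\Gamma)\longrightarrow 0, \]
with $H_*(B\Gamma)=H_*(\Gamma)$ group homology.

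\textbf{Main obstacle.} There is no genuinely hard step; the content was already carried by \cref{IET_3} and the homological stability results. The two points that deserve care are: (i) justifying the rational infinite-loop-space homology computation when $\Gamma\otimes\Q$ is not finitely generated, i.e.\ that $\Q$-homology commutes with the relevant filtered products of Eilenberg--MacLane spaces; and (ii) checking that the single $d^2$ is the only differential affecting the two relevant bidegrees in the Atiyah--Hirzebruch spectral sequence, so that the four-term sequence is exact as stated.
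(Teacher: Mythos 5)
Your proposal is correct and follows essentially the same path the paper intends: the first display is immediate from the identification $H_*(IE(\Gamma)) \cong H_*(\Omega^\infty_0 K(\cR^1_\Gamma))$ (via \cref{cor:scissors-aut-geometries-restricted} and \cref{lem:elg_ea}) combined with \cref{IET_3}; the rational statement follows from Milnor--Moore (or rational splitting into Eilenberg--MacLane factors) once one notes $\pi_n K(\cR^1_\Gamma)\otimes\Q\cong\widetilde H_{n+1}(B\Gamma;\Q)\cong\Lambda^{n+1}_\Q(\Gamma\otimes\Q)$ using that $\Gamma\subset\R$ is torsion-free; and the abelianisation sequence is exactly the Atiyah--Hirzebruch extraction you describe, with $E^\infty_{0,2}=0$ by connectivity, $E^\infty_{2,0}=H_2(\Gamma)$ because the $d^2$ out of $(2,0)$ lands in $E^2_{0,1}=\widetilde H_0(B\Gamma;\Z/2)=0$, and $E^\infty_{1,1}=\operatorname{coker}(d^2\colon H_3(\Gamma)\to H_1(\Gamma;\Z/2))$; splicing the two resulting exact pieces gives the stated four-term sequence. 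Your two flagged worries are both handled the way you suggest: degree-wise finiteness of the graded pieces $\Lambda^{n+1}_\Q(\Gamma\otimes\Q)[n]$ takes care of (i), and the vanishing of $E^2_{0,1}$, $E^2_{0,2}$ together with the zero columns in negative filtration take care of (ii).
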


	\begin{example}
		In particular, if $\Gamma$ is a rational vector space then $IE(\Gamma)^\ab \cong H_2(\Gamma) = \Gamma \wedge_\Q \Gamma$. This is result is due to Sah \cite{saf1}, see \cite[Theorem 1.3]{saf3} for a published reference.
	\end{example}
	
	\subsubsection{Rectangle exchange groups}\label{sec:rec} 
	Interval exchange transformation groups were recently generalised to \emph{rectangle exchange transformation groups} $Rec_n$ by Cornulier and LaCourte  \cite{cornulierlacourte2022}, who also calculated by direct methods their first homology groups.
	
	As above, we interpret---or rather define---rectangle exchange groups as scissors automorphism groups. The relevant assembler is $\cE^{\cL}_{\R^n}$, where $\cL$ consists of all hyperplanes in $\R^n$ perpendicular to the coordinate axes, and $G = \R^n$ is the full translation group. We abbreviate it as $\cR^n_{\R^n}$.
	
	\begin{definition}
		The \emph{rectangle exchange transformation group} is defined as
		\[ Rec_n \coloneqq \aut_{\cR^n_{\R^n}}(P),\]
		the scissors automorphism group of the unit cube $P = [0,1]^n$.
	\end{definition}

	\vspace{1em}
	\centerline{
	\def\svgwidth{2.8in}
\begingroup%
  \makeatletter%
  \providecommand\color[2][]{%
    \errmessage{(Inkscape) Color is used for the text in Inkscape, but the package 'color.sty' is not loaded}%
    \renewcommand\color[2][]{}%
  }%
  \providecommand\transparent[1]{%
    \errmessage{(Inkscape) Transparency is used (non-zero) for the text in Inkscape, but the package 'transparent.sty' is not loaded}%
    \renewcommand\transparent[1]{}%
  }%
  \providecommand\rotatebox[2]{#2}%
  \newcommand*\fsize{\dimexpr\f@size pt\relax}%
  \newcommand*\lineheight[1]{\fontsize{\fsize}{#1\fsize}\selectfont}%
  \ifx\svgwidth\undefined%
    \setlength{\unitlength}{123.08767657bp}%
    \ifx\svgscale\undefined%
      \relax%
    \else%
      \setlength{\unitlength}{\unitlength * \real{\svgscale}}%
    \fi%
  \else%
    \setlength{\unitlength}{\svgwidth}%
  \fi%
  \global\let\svgwidth\undefined%
  \global\let\svgscale\undefined%
  \makeatother%
  \begin{picture}(1,0.38985391)%
    \lineheight{1}%
    \setlength\tabcolsep{0pt}%
    \put(0,0){\includegraphics[width=\unitlength,page=1]{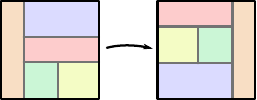}}%
  \end{picture}%
\endgroup%

	}
	\vspace{1em}
	
	More concretely, this is the group of all cut-and-paste moves that send the cube $[0,1]^n$ to itself, cutting only into rectangular prisms $P_i$ with sides parallel to the coordinate axes, and using only translations to move the pieces around.  By \cref{lem:elg_ea} \ref{enum:elg_ea-i}, $\cR^n_{\R^n}$ is an EA-assembler and by \cref{cor:scissors-aut-geometries-restricted} we have that
	\[H_*(Rec_n) \cong H_*(\Omega^\infty_0 K(\cR^n_{\R^n})).\]
	By \cref{thm:kunneth} and \cref{prop:0dim-restricted-hypotheses}, we see the spectrum for rectangle exchange transformations is the $n$-fold smash product of the spectrum for interval exchange transformations:
	
	\begin{lemma}$K(\cR^n_{\R^n}) \simeq (\Sigma^{-1} B\R)^{\wedge n}$.\end{lemma}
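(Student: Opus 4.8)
The plan is to realise the assembler $\cR^n_{\R^n}$ as an $n$-fold product of copies of the one-dimensional assembler $\cR^1_\R$ and then invoke the Künneth theorem \cref{thm:kunneth}. Concretely, for each $1 \le i \le n$ let $\cL_i$ be the collection of affine subspaces of the $i$-th copy of $E^1$ consisting of all points together with $E^1$ itself, and let $G_i = \R$ act by translations, so that $\cR^1_\R = \cE^{\cL_i}_{G_i}$. A hyperplane of $E^n = E^1 \times \cdots \times E^1$ perpendicular to the $i$-th coordinate axis is exactly a product $U_1 \times \cdots \times U_n$ with $U_i$ a point and $U_j = E^1$ for $j \ne i$, and intersections of such hyperplanes are again products of this shape; hence the collection $\cL$ used to define $\cR^n_{\R^n}$ is precisely $\cL_1 \times \cdots \times \cL_n$, and the translation group $\R^n$ is $G_1 \times \cdots \times G_n$, so that $\cR^n_{\R^n} = \cE^{\cL_1 \times \cdots \times \cL_n}_{G_1 \times \cdots \times G_n}$.

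First I would check that the hypotheses of \cref{thm:thom-theorem-restricted} hold for each factor: this is exactly \cref{prop:0dim-restricted-hypotheses} (applied with $n = 1$ there). An induction using \cref{lem:kunneth-restricted-hypotheses} then shows that $\cL_1 \times \cdots \times \cL_k$ satisfies those hypotheses for every $k$. Writing $\cL_1 \times \cdots \times \cL_n = (\cL_1 \times \cdots \times \cL_{n-1}) \times \cL_n$ and applying \cref{thm:kunneth} repeatedly gives an equivalence of spectra
\[ K(\cR^n_{\R^n}) \simeq K(\cR^1_\R)^{\wedge n}. \]
Finally, \cref{IET_3} with $\Gamma = \R$ identifies $K(\cR^1_\R) \simeq \Sigma^{-1} \Sigma^\infty B\R$, and smashing $n$ copies yields $K(\cR^n_{\R^n}) \simeq (\Sigma^{-1} B\R)^{\wedge n}$ in the notation of the statement.

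The only content beyond bookkeeping is the identification $\cR^n_{\R^n} \cong \cE^{\cL_1 \times \cdots \times \cL_n}_{G_1 \times \cdots \times G_n}$ of assemblers: one must check that finite unions of convex $(\cL_1 \times \cdots \times \cL_n)$-polytopes are the same as finite unions of axis-parallel rectangular prisms, and that the covering families and morphism sets agree on the nose. This is immediate from the description of convex $\cL$-polytopes as bounded intersections of half-spaces determined by $\cL$, since every convex $(\cL_1 \times \cdots \times \cL_n)$-polytope is a product of intervals — the same observation already used in the proof of \cref{lem:kunneth-restricted-hypotheses}. Thus the main (and essentially only) obstacle is assembling the product decomposition carefully; no new homotopy-theoretic input is required.
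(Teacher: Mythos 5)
Your proof is correct and takes exactly the same route as the paper: the paper's own justification is the one-line "By \cref{thm:kunneth} and \cref{prop:0dim-restricted-hypotheses}", which you have simply unpacked, including the (correct) identification $\cR^n_{\R^n} = \cE^{\cL_1\times\cdots\times\cL_n}_{G_1\times\cdots\times G_n}$, the inductive use of \cref{lem:kunneth-restricted-hypotheses} to verify the Künneth hypotheses at each stage, and the base-case input \cref{IET_3}.
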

	
	Therefore the $K$-groups are the $n$-fold tensor power
	\[K_*(\cR^n_{\R^n}) \cong \bigotimes_{i=1}^n \Lambda^{*+1>0}_\Q(\R) \cong \bigotimes_{i=1}^n \left( \bigoplus_{m \geq 0} \Lambda^{m+1}_\Q(\R) [m] \right).\]
	
	\begin{corollary}
		The homology of the group of rectangle exchange transformations in $E^n$ is
		\[ H_*(Rec_n) \cong \Lambda^* \left( \bigotimes_{i=1}^n \left( \bigoplus_{m \geq 0} \Lambda^{m+1}_\Q(\R) [m] \right) \right). \]
		In particular, its abelianisation is $H_1(Rec_n) \cong \bigoplus_{i=1}^n ((\Lambda^2_\Q \R) \otimes \R^{\otimes (n-1)})$.
	\end{corollary}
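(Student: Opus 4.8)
The plan is to assemble the statement from the spectrum-level computation immediately preceding it, together with \cref{cor:scissors-aut-geometries-restricted} and a Milnor--Moore argument, in exact parallel with the proof of \cref{IET_1}. First I would note that $\cR^n_{\R^n}$ is an EA-assembler by \cref{lem:elg_ea}\,\ref{enum:elg_ea-i}, so that \cref{cor:scissors-aut-geometries-restricted} reduces the problem to computing $H_*(\Omega^\infty_0 K(\cR^n_{\R^n}))$. By the preceding lemma (an instance of \cref{thm:kunneth} and \cref{prop:0dim-restricted-hypotheses}), $K(\cR^n_{\R^n}) \simeq (\Sigma^{-1} B\R)^{\wedge n}$; since $\R$ is a $\Q$-vector space, $\widetilde H_*(B\R) \cong \Lambda^*_\Q \R$ is concentrated in positive degrees and is rational, so this spectrum is connective and rational, with integral homology equal to rational homology.

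Next I would compute its homotopy groups. One has $\pi_*(\Sigma^{-1}B\R) \cong \widetilde H_{*+1}(B\R) \cong \bigoplus_{m \geq 0}\Lambda^{m+1}_\Q(\R)[m]$, and the Künneth theorem over $\Q$ (no Tor terms) gives
\[ K_*(\cR^n_{\R^n}) \cong \bigotimes_{i=1}^n \Bigl( \bigoplus_{m \geq 0}\Lambda^{m+1}_\Q(\R)[m] \Bigr), \]
which is the formula recorded just above the corollary. Now, for a connective rational spectrum $E$ the basepoint component $\Omega^\infty_0 E$ is rationally a product of Eilenberg--MacLane spaces $\prod_{m \geq 1} K(\pi_m E \otimes \Q, m)$, so by the computation of the homology of such a product---this is the Milnor--Moore input used for \cref{IET_1}, see \cite[Theorem, p.~263]{MilnorMoore}---the homology $H_*(\Omega^\infty_0 E)$ is the free graded-commutative algebra (in the sense of \cref{Lambda}) on the positive-degree part of $K_*(\cR^n_{\R^n})$. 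Substituting the tensor power above yields the stated description of $H_*(Rec_n)$.

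Finally, for the abelianisation I would use $Rec_n^{\ab} \cong H_1(Rec_n) \cong \pi_1(\Omega^\infty_0 K(\cR^n_{\R^n})) \cong K_1(\cR^n_{\R^n})$ and extract the degree-one summand of the $n$-fold tensor power: a class of total degree $1$ picks degree $1$ from exactly one tensor factor (contributing $\Lambda^2_\Q\R$) and degree $0$ from the remaining $n-1$ factors (each contributing $\Lambda^1_\Q\R \cong \R$), so summing over the distinguished factor gives $\bigoplus_{i=1}^n \bigl((\Lambda^2_\Q\R)\otimes \R^{\otimes(n-1)}\bigr)$. I do not expect a genuine obstacle here: the mathematical content is entirely in the preceding lemma and in \cref{cor:scissors-aut-geometries-restricted}, and the only point requiring care is the bookkeeping of degrees in the $n$-fold tensor power (in particular treating the degree-$0$ slots $\Lambda^1_\Q\R \cong \R$ consistently and applying $\Lambda^*$ only to positive degrees, as in the infinite loop space homology formula) so that the answer matches the stated expression.
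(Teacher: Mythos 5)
Your proposal is correct and takes essentially the same route as the paper: reduce to $\Omega^\infty_0 K(\cR^n_{\R^n})$ via \cref{cor:scissors-aut-geometries-restricted}, identify the spectrum as $(\Sigma^{-1}B\R)^{\wedge n}$ via the K\"unneth theorem, compute the $K$-groups as a tensor power, and apply the Milnor--Moore argument (as in \cref{IET_1}). You supply the detail the paper leaves implicit (the spectrum is rational so integral and rational homology agree, and $\Omega^\infty_0$ of a connective rational spectrum is a product of rational Eilenberg--MacLane spaces), and you correctly flag that $\Lambda^*$ is to be applied only to the positive-degree part, which is the reading consistent with the abelianisation formula and the precedent of \cref{IET_1}.
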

	
	The second part of this corollary appears as \cite[Theorem 1.4]{cornulierlacourte2022}, while the first is new.

	\subsubsection{Brin--Thompson groups}\label{sec:brin-thompson} Thompson's group $V$ serves an important role as an example and counterexample in group theory \cite{CFP}. It was generalised to arbitrary dimensions by Brin \cite[Section 2]{brin2004}, to the Brin--Thompson groups $nV$. 
	
	To interpret these groups as scissors automorphisms groups, let $\cL$ be the collection of hyperplanes in $E^n$ perpendicular to the coordinate axes, at coordinates that are dyadic rationals $\frac{a}{2^k}$. Let $G \leq A(E^n)$ be generated by translations by dyadic rationals, and the scalings in each coordinate direction by factors of 2. The relevant assembler is then $\cE^\cL_G$, which we abbreviate to $\cD^n$. Concretely, it is the assembler of rectangles with all endpoints dyadic rational numbers:
		\[ P = \left[ \frac{a_1}{2^{k_1}}, \frac{b_1}{2^{k_1}} \right] \times \left[ \frac{a_2}{2^{k_2}}, \frac{b_2}{2^{k_2}} \right] \times \ldots \times \left[ \frac{a_n}{2^{k_n}}, \frac{b_n}{2^{k_n}} \right], \]
	and whose morphisms translate by dyadic rationals and scale in each coordinate direction by a power of 2.
	
	\begin{definition}
		The \emph{Brin--Thompson group} is defined as
		\[ nV \coloneqq \aut_{\cD^n}(P),\]
		the scissors automorphism group of the unit cube $P = [0,1]^n$.
	\end{definition} 
		
	By \cref{lem:elg_ea} \ref{enum:elg_ea-ii}, $\cD^n$ is an S-assembler, so using \cref{cor:scissors-aut-geometries-restricted} we may evaluate the homology of the group $nV$ by computing the homotopy of $K(\cD^n)$. When $n = 1$, this spectrum is contractible by \cref{vanishing_restricted_2}. By repeated application of the K\" unneth theorem (\cref{thm:kunneth}), the spectrum when $n > 1$ is just the $n$-fold smash product of the same spectrum for $n = 1$, and hence is contractible for every value of $n$:
	
	\begin{lemma} $K(\cD^n) \simeq K(\cD^1)^{\wedge n} \simeq *$.\end{lemma}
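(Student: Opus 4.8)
The plan is to assemble this from the Künneth theorem of \cref{sec:kunneth-theorem} together with the vanishing results of \cref{sec:rationality-vanishing-restricted}. Write $\cL_1$ for the collection consisting of the dyadic points in $E^1$ together with $E^1$ itself, and let $G_1 \leq A(E^1)$ be the subgroup generated by translations by dyadic rationals and the dilatation by $2$ about the origin; thus $\cD^1 = \cE^{\cL_1}_{G_1}$. First I would observe that under the identification $E^n = E^1 \times \cdots \times E^1$ one has $\cL = \cL_1 \times \cdots \times \cL_1$ in the sense of \cref{sec:kunneth-theorem}: a hyperplane of $E^n$ perpendicular to a coordinate axis at a dyadic coordinate is exactly a product $U_1 \times \cdots \times U_n$ with one $U_i$ a dyadic point and the others equal to $E^1$, and an intersection of such hyperplanes decomposes coordinatewise as a product of elements of $\cL_1$. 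Likewise $G = G_1 \times \cdots \times G_1$ as a subgroup of $A(E^n)$, since an element of $G$ is precisely a map that translates by a dyadic vector and scales each coordinate direction by a power of $2$. Hence $\cD^n \cong \cE^{\cL_1 \times \cdots \times \cL_1}_{G_1 \times \cdots \times G_1}$.

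Next, by \cref{prop:0dim-restricted-hypotheses} the collection $\cL_1$ satisfies hypotheses \ref{enum:thom-i} and \ref{enum:thom-ii} of \cref{thm:thom-theorem-restricted}, and by \cref{lem:kunneth-restricted-hypotheses} so does every finite product $\cL_1 \times \cdots \times \cL_1$. Applying \cref{thm:kunneth} repeatedly (using associativity of the smash product) then yields an equivalence $K(\cD^n) \simeq K(\cD^1)^{\wedge n}$.

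It remains to show $K(\cD^1) \simeq *$. I would check the standing assumptions of \cref{sec:rationality-vanishing-restricted} for $\cD^1 = \cE^{\cL_1}_{G_1}$: the Thom spectrum hypotheses hold by \cref{prop:0dim-restricted-hypotheses}; $G_1 \leq A(E^1)$ by construction; $\cL_1$ contains the $0$-dimensional subspace $\{0\}$; and the translation subgroup $G_1 \cap \R$, the dyadic rationals, acts transitively on the dyadic points of $E^1$. Since $G_1$ contains the dilatation by the integer $\ell = 2$ about the point $0 \in \cL_1$, and $(G_1 \cap \R) \otimes \Q$ has rank $r = 1$, \cref{vanishing_restricted_2} shows that $K(\cD^1)$ is trivial after inverting $\ell^1 - 1 = 1$; inverting $1$ is vacuous, so $K(\cD^1)$ is already contractible. (Equivalently, \cref{moore_spectrum_calculation} with $\lambda = 2$ identifies $K(\cD^1)$ with the Moore spectrum on $(G_1 \cap \R)/(\lambda - 1) = 0$.) Combining this with the previous paragraph gives $K(\cD^n) \simeq *$; via \cref{cor:scissors-aut-geometries-restricted} this in turn recovers $\widetilde H_*(nV) = 0$ for all $n$.

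The argument uses no genuinely new input; the work is entirely bookkeeping — matching the concrete description of $\cD^n$ against the product formalism of \cref{sec:kunneth-theorem} and verifying the list of standing hypotheses. The one step that deserves care is the coordinatewise decomposition $\cL = \cL_1 \times \cdots \times \cL_1$: one must confirm that \emph{every} subspace in $\cL$, not merely the generating hyperplanes, arises as such a product, which holds because $\cL$ is generated by hyperplanes and intersection distributes over products of subspaces.
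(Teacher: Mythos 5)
Your argument is correct and is the same route the paper takes: verify $\cD^n$ decomposes coordinatewise as a product assembler, apply \cref{thm:kunneth} iteratively to get $K(\cD^n) \simeq K(\cD^1)^{\wedge n}$, and then kill $K(\cD^1)$ via \cref{vanishing_restricted_2} with $\ell = 2$ and $r = 1$, since $\ell^1 - 1 = 1$ and inverting $1$ is vacuous. You have merely spelled out the bookkeeping (the identifications $\cL = \cL_1 \times \cdots \times \cL_1$ and $G = G_1 \times \cdots \times G_1$, and the standing hypotheses of \cref{sec:rationality-vanishing-restricted}) that the paper leaves implicit.
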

	
	\begin{theorem}\label{nv_acyclic}
		For all $n \geq 1$, the Brin--Thompson group $nV$ is acyclic: $\widetilde{H}_*(nV) = 0$.
	\end{theorem}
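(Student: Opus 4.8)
The plan is to assemble this from the two results stated immediately before it. The first step is to verify that \cref{cor:scissors-aut-geometries-restricted} applies to $\cD^n$ and $P=[0,1]^n$: the defining group $G$ of $\cD^n$ contains the dyadic-rational translations, whose orbits are dense in $\R^n$, and it contains the coordinatewise scaling by $\tfrac12$, which strictly contracts Euclidean distance, so by \cref{lem:elg_ea}~\ref{enum:elg_ea-ii} the assembler $\cD^n$ is an S-assembler; moreover the unit cube $P=[0,1]^n$ has nonempty interior, hence admits no empty cover and is not scissors congruent to $\varnothing$. Therefore \cref{cor:scissors-aut-geometries-restricted} gives
\[ H_*(nV) = H_*(\aut_{\cD^n}(P)) \cong H_*(\Omega^\infty_0 K(\cD^n)). \]

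The second step is to feed in the preceding lemma, which identifies $K(\cD^n)$ with the $n$-fold smash power $K(\cD^1)^{\wedge n}$ and records that $K(\cD^1)$ is contractible. Concretely, for $\cD^1$ the translation group is $\Z[\tfrac12]$, of rational rank one, and $G$ contains the dilatation by $\ell=2$, so this is the case of \cref{vanishing_restricted_2} where one must invert only $\ell-1=1$, i.e.\ nothing at all; alternatively one reads it off the two-term complex in \cref{moore_spectrum_calculation}, using that $\Lambda^j_\Z(\Z[\tfrac12])$ vanishes for $j\ge 2$ since it is a torsion-free abelian group of rational rank one (\cref{exterior_vanishing}). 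Since smashing with a contractible spectrum yields a contractible spectrum, $K(\cD^n)\simeq *$ for every $n\ge 1$, as in \cref{thm:kunneth}.

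Finally, the infinite loop space of a contractible spectrum is a point up to weak equivalence, so $\Omega^\infty_0 K(\cD^n)\simeq *$ and hence $\widetilde H_*(nV) \cong \widetilde H_*(\Omega^\infty_0 K(\cD^n)) = 0$; for $n=1$ this recovers the Szymik--Wahl result $\widetilde H_*(V)=0$. The main obstacle has already been cleared before reaching this statement: it resides in the homological-stability machinery underlying \cref{cor:scissors-aut-geometries-restricted}, the Thom-spectrum model and the Künneth theorem \cref{thm:kunneth}, and the ``centre kills'' computation behind \cref{vanishing_restricted_2}. What remains at this point is only the bookkeeping of checking that $\cD^n$ and $P=[0,1]^n$ meet the hypotheses of those results, which is routine.
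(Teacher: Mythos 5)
Your proposal is correct and follows the paper's own argument step for step: establish that $\cD^n$ is an S-assembler via \cref{lem:elg_ea}~\ref{enum:elg_ea-ii}, invoke \cref{cor:scissors-aut-geometries-restricted}, observe $K(\cD^1)\simeq *$ by \cref{vanishing_restricted_2} (since $\ell-1=1$ need not be inverted), and apply the Künneth theorem \cref{thm:kunneth} to conclude $K(\cD^n)\simeq K(\cD^1)^{\wedge n}\simeq *$. The extra verification that $P=[0,1]^n\not\simeq\varnothing$ and the alternative derivation of $K(\cD^1)\simeq *$ from \cref{moore_spectrum_calculation} and \cref{exterior_vanishing} are welcome but do not change the route.
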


	Note that the case $n=1$ of this is originally due to Szymik--Wahl \cite[Corollary B]{szymikwahl2019}, while for higher $n$ this is due to Li \cite[Section 6.4]{li2022}. Our proof uses similar homological stability machinery to these two earlier approaches, and we deduce the result for higher $n$ by a similar technique to Li, but our technique for the homology calculation for $n = 1$ is different, relying on the Thom spectrum model. 
	
	\subsubsection{Variants of Brin--Thompson groups} \label{sec:brin-thompson-variants} Our method for proving \cref{nv_acyclic} easily generalises to variants of the Brin--Thompson groups. We will give a few illustrative examples, omitting details for the sake of brevity:
	
	\begin{example}[Varying denominators]
		Consider the generalisation of the Brin--Thompson groups where the factors of 2 in the denominators and scalings are replaced by powers of $d$, for any integer $d \geq 2$. For $n=1$ the corresponding scissors automorphism groups of a unit interval are the \emph{Higman--Thompson groups}, and by \cref{moore_spectrum_calculation} $n = 1$ the associated algebraic K-theory spectrum is a Moore spectrum $M(\Z/(d-1))$, recovering \cite[Theorem A]{szymikwahl2019}. Applying the K\" unneth theorem again (\cref{thm:kunneth}), we conclude that for general $n$ we get the smash product of Moore spectra $M(\Z/(d-1))^{\wedge n}$. This is consistent with the fact that the spectrum is rationally trivial by \cref{vanishing_restricted}.
	\end{example}
	
	\begin{example}[Allowing more cut points]
		Consider next the generalisation where we allow cut points and translations of the form $\frac{a + b\theta}{2^k}$ for a fixed irrational number $\theta$, but still only allow scaling by powers of $2$. By \cref{moore_spectrum_calculation}, when $n = 1$ the associated algebraic K-theory spectrum is a Moore spectrum $\Sigma M(\Z/3)$. Applying the K\" unneth theorem again (\cref{thm:kunneth}), we conclude that for general $n$ we get the smash product of Moore spectra $(\Sigma M(\Z/3))^{\wedge n}$. This is once more consistent with the spectrum being rationally trivial by \cref{vanishing_restricted}. (We warn the reader that replacing $2$ by a different integer $d > 2$ will \emph{not} yield a suspension of a Moore spectrum. When $n = 1$, the spectrum will have two nonzero homology groups, $H_0 = (\Z/(d-1))^2$ and $H_1 = \Z/(d^2-1)$.)
	\end{example}
	
	\begin{example}[Irrational slope Thompson groups] 
		In \cite{BNR}, the authors consider a variant of Thompson's group $V$ that they call $V_\tau$. In this group, the cut points and translations lie in $\Z[\tau] \subset \R$ for $\tau = \smash{\frac{\sqrt{5}-1}{2}}$ (satisfying $\tau^2+\tau = 1$), and the scalings are by powers of $\tau$. By \cref{moore_spectrum_calculation}, the associated algebraic K-theory spectrum has
		\[ H_0 = (\Z[\tau]/(\tau -1))^2 = 0, \qquad H_1 = \Lambda^2(\Z[\tau])/(\Lambda^2(\tau)-1) = \Z/2. \]
		Therefore the K-theory spectrum is a Moore spectrum $\Sigma M(\Z/2)$, and hence the K-theory of the analogous Brin--Thompson groups is $(\Sigma M(\Z/2))^{\wedge n}$. This is consistent with the abelianisation of $V_\tau$ being $\Z/2$ from \cite[Section 5]{BNR}. (Observe that a non-abelian simple group is perfect.)
	\end{example}

	\section{Applications III: Topological full groups}
	\label{sec:topological-full-groups}
	Finally, we briefly describe how the homological stability results in this paper are related to those for topological full groups in \cite{li2022}. Consider a topological groupoid $G$, that is, a groupoid internal to topological spaces. We will identify the topological space of objects as a subspace of the topological space of morphisms through the inclusion $i\colon G^{(0)} \to G$ of the subspace of identity morphisms. We will denote the continuous source and range (target) maps by $s,r\colon G \rightrightarrows G^{(0)}$, and the continuous composition map by $m \colon G \times_{G^{(0)}} G \to G$.
	
	\begin{definition}
		An \emph{ample groupoid} $G$ is a topological groupoid in which $G^{(0)}$ is locally compact Hausdorff, $s$ and $r$ are local homeomorphisms, and $G$ has a basis consisting of compact open bisections, that is, compact open sets $\sigma \subseteq G$ such that the maps $\sigma \to s(\sigma)$ and $\sigma \to r(\sigma)$ are homeomorphisms.
	\end{definition}
	
	We often think of a bisection as a homeomorphism from the subspace $s(\sigma) \subseteq G^{(0)}$ to the subspace $r(\sigma) \subseteq G^{(0)}$, defined by the zig-zag
	\[\begin{tikzcd}
		s(\sigma) & \ar[l, "s"', "\cong"] \sigma \ar[r, "r", "\cong"'] & r(\sigma).
	\end{tikzcd} \]
	The fact that $G$ has a basis of compact open sets makes these morphisms look like ``cut-and-paste'' operations. In particular, any time a compact open set $U \subseteq G^{(0)}$ is expressed as a disjoint union of finitely many compact open sets $U_i$, we can pick the continuous maps on each $U_i$ separately and they automatically give a continuous map on $U$.
	
	Given an ample groupoid with $G^{(0)}$ compact, we define the \emph{topological full group} $F(G)$ to be the group whose elements are compact open bisections $\sigma \subseteq G$ such that $s(\sigma) = r(\sigma) = G^{(0)}$. In other words, this is the group of cut-and-paste operations defined on the entire space $G^{(0)}$. If $G^{(0)}$ is not compact, we instead define $F(G)$ as a colimit of the groups $F(G_U^U)$, where $F(G^U_U)$ is the group of compact open bisections with $s(\sigma) = t(\sigma) = U$, and $U$ ranges over all compact subsets of $G^{(0)}$.
	
	The main results of \cite{li2022} give homological stability for $F(G_U^U)$, and a presentation of the homology of $F(G)$ in terms of the homology of the groupoid $G$. We describe how the first of these results can be recaptured using our language (the second result is analogous to \cref{thom_theorem}), by considering the following assembler:
	
	\begin{definition}
		For any ample groupoid $G$, let $\cA_G$ be the following category:
		\begin{itemize}
			\item The objects are the compact open sets $U \subseteq G^{(0)}$.
			\item A morphism $V \rightarrow U$ is given by a compact open subspace  $\phi \subseteq G$ such that $s \colon \phi \rightarrow V$ is a bijection (equivalently, a homeomorphism) and $r \colon \phi \rightarrow U$ is an injection (equivalently, a topological embedding).
			\item Composition is induced by the composition $m$ of $G$.
		\end{itemize}  
		We define the Grothendieck topology on $\cA_G$ as follows. For every $U \in \cA_G$, we define the covering sieves to be
		\[ J(V) \coloneqq \left\{ \,\cC \subseteq \cA_G /U \,\middle \vert\, \cC \text{ is a sieve, and } \cup_{\phi \in \cC} r(\phi) = U \, \right\}. \]
		In other words, a collection of morphisms $\{ U_i \to U \}$ forms a covering family if and only if the union of their images under $r$ is $U$. 
	\end{definition}
	
	We say that a morphism $\phi\colon V \to U$ in $\cA_G$ is an \emph{inclusion} if $\phi \subseteq G^{(0)}$. This implies that $\phi = V$, $s$ is the identity map, and $r = rs^{-1}$ is an inclusion of a subset $V \subseteq U$. It is easy to see that:
	
	\begin{lemma}\label{isomorphic_to_inclusion}
		Every morphism $\phi\colon V \to U$ factors into an isomorphism followed by an inclusion.
	\end{lemma}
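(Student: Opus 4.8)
The plan is to produce the factorisation explicitly, taking the intermediate object to be the $r$-image of $\phi$. Given a morphism $\phi \colon V \to U$, i.e.\ a compact open $\phi \subseteq G$ with $s\colon \phi \to V$ a homeomorphism and $r\colon \phi \to U$ a topological embedding, set $W \coloneqq r(\phi)$. First I would check that $W$ is a legitimate object of $\cA_G$: since $r$ is a local homeomorphism it is an open map, so $W$ is open in $G^{(0)}$, and since $\phi$ is compact and $r$ continuous, $W$ is compact; moreover $W \subseteq U$ because $r$ maps $\phi$ into $U$. Thus $W$ is a compact open subset of $U$.

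Next I would identify the two maps in the factorisation. Let $\iota \colon W \to U$ be the inclusion, that is, the compact open subspace $W \subseteq G^{(0)} \subseteq G$; here $s\colon W \to W$ is the identity and $r\colon W \to U$ is the subset inclusion, so $\iota$ is a morphism of $\cA_G$ and by definition an inclusion. Let $\psi \colon V \to W$ be given by the same subspace $\phi \subseteq G$, now regarded as a morphism with target $W$: the map $s\colon \phi \to V$ is a homeomorphism by hypothesis, and $r\colon \phi \to W = r(\phi)$ is a homeomorphism since $r\colon \phi \to U$ was an embedding. Hence $\psi$ is a morphism, and it is an isomorphism, with inverse the image $\phi^{-1} \subseteq G$ of $\phi$ under the inversion of $G$ (which is again compact open, with $s$ and $r$ interchanged).

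Finally I would verify that the composite $\iota \circ \psi$, computed via the multiplication map $m$ of $G$, equals $\phi$ as a morphism $V \to U$. The composite is $m\big(\phi \times_{G^{(0)}} W\big)$; for $g \in \phi$ we have $r(g) \in W$ and $m(g, \mathrm{id}_{r(g)}) = g$, so $m\big(\phi \times_{G^{(0)}} W\big) = \phi$ as subsets of $G$, with source map $V$ and range map landing in $U$. Therefore $\phi = \iota \circ \psi$ with $\psi$ an isomorphism and $\iota$ an inclusion, as claimed. There is no real obstacle here; the only point requiring a word of care is confirming that $W = r(\phi)$ is genuinely compact \emph{and} open, which uses precisely that $r$ is both continuous (for compactness) and a local homeomorphism (for openness), both part of the definition of an ample groupoid.
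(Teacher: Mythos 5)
Your proof is correct and is the natural (essentially unique) argument; the paper omits the proof entirely, asserting the lemma is ``easy to see.'' Your write-up supplies exactly the expected details: take $W=r(\phi)$, note that $W$ is compact open because $r$ is continuous and a local homeomorphism, observe that $\phi$ viewed with target $W$ is an isomorphism (with inverse $\phi^{-1}$), and compose with the identity bisection on $W$ viewed as the inclusion $W\hookrightarrow U$.
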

	
	From this we deduce that the pullback of two morphisms $U_1, U_2 \rightrightarrows U$ in $\cA_G$ is isomorphic to the intersection of the images of $U_1$ and $U_2$ in $U$, and so two morphisms are disjoint in the formal category-theoretic sense if and only if their images are disjoint in $U$. It now is straightforward to check properties \ref{enum:assembler-initial}, \ref{enum:assembler-refinement}, \ref{enum:assembler-mono} of \cref{def:assembler}, so that $\cA_G$ is a assembler.
	\begin{lemma}\label{sc_iff_isomorphic}
		Two objects $U$, $V$ in $\cA_G$ are scissors congruent if and only if they are isomorphic. Furthermore the scissors automorphism group $\aut_{\cA_G}(U)$ is identified with the topological full group $F(G^U_U)$.
	\end{lemma}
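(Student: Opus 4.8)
The plan is to establish both assertions by explicit manipulation of compact open bisections, leaning on the factorisation of \cref{isomorphic_to_inclusion} and on the standard closure properties of compact open bisections in an ample groupoid. One direction of the first assertion is immediate: if $U \cong V$ in $\cA_G$, then the one-element covers $\{U \to U\}$ and $\{V \to V\}$ together with the given isomorphism exhibit $U$ and $V$ as scissors congruent in the sense of \cref{scissors_congruent}.

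For the converse, suppose $U$ and $V$ are scissors congruent, witnessed by covers $\{U_i \to U\}_{i \in I}$ and $\{V_i \to V\}_{i \in I}$ and isomorphisms $U_i \cong V_i$. Unwinding \cref{def:cover} in $\cA_G$: the morphism $U_i \to U$ is a compact open bisection $\phi_i$ with $s(\phi_i) = U_i$, and since the family is disjoint and covering, the images $\{r(\phi_i)\}_{i \in I}$ form a finite partition of $U$ into compact open sets; likewise $\{V_i \to V\}$ gives bisections $\psi_i$ with $s(\psi_i) = V_i$ whose $r$-images partition $V$, and $U_i \cong V_i$ gives a compact open bisection $\chi_i$ with $s(\chi_i) = U_i$ and $r(\chi_i) = V_i$. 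I would then form the product bisection $\theta_i := \psi_i \chi_i \phi_i^{-1}$ (the composites match up because $r(\phi_i^{-1}) = U_i = s(\chi_i)$ and $r(\chi_i) = V_i = s(\psi_i)$); it is a compact open bisection with $s(\theta_i) = r(\phi_i)$ and $r(\theta_i) = r(\psi_i)$. As the $s(\theta_i)$ are pairwise disjoint, the union $\phi := \bigcup_{i \in I} \theta_i$ is again a compact open subset of $G$, and $s\colon \phi \to U$ and $r\colon \phi \to V$ are bijections since the $s(\theta_i)$ partition $U$ and the $r(\theta_i)$ partition $V$. Thus $\phi$ is a morphism $U \to V$ in $\cA_G$ with $r(\phi) = V$, i.e.\ an isomorphism.

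For the second assertion, I would observe directly from the definition that a morphism $U \to U$ in $\cA_G$ is a compact open bisection $\phi \subseteq G$ with $s(\phi) = U$, and by \cref{isomorphic_to_inclusion} it is an automorphism exactly when moreover $r(\phi) = U$; such a $\phi$ is precisely a compact open bisection contained in $G^U_U$ with full support, i.e.\ an element of $F(G^U_U)$. Composition in $\cA_G$ is induced by the groupoid multiplication, which on bisections is the product of bisections and hence the group law of $F(G^U_U)$; the identity morphism $U \subseteq G^{(0)}$ is the unit and $\phi \mapsto \phi^{-1}$ is inversion, so $\aut_{\cA_G}(U) \cong F(G^U_U)$ as groups. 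Finally, combining this with the first assertion and \cref{embeddings_are_monomorphisms}, every scissors congruence from the singleton $\{U\}$ to itself is represented by a unique such bisection, so this group is also the scissors automorphism group of $U$ in $\cG(\cA_G)$ and in $\category{UG}(\cA_G)$.

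The main obstacle I anticipate is verifying that the cut-and-paste operations above never leave the world of compact open bisections, namely: the product of two compact open bisections is a compact open bisection, the inverse of one is, and a finite union of compact open bisections with pairwise disjoint sources and pairwise disjoint ranges is a compact open bisection whose source (resp.\ range) is the union of the sources (resp.\ ranges). These are standard consequences of the ample groupoid axioms, but they must be applied with care — in particular, since $G$ itself may fail to be Hausdorff, one uses that each compact open bisection is Hausdorff to promote a continuous bijection to a homeomorphism wherever needed, consistent with the parenthetical remarks in the definition of $\cA_G$. A secondary, purely bookkeeping matter is keeping the source/range conventions straight in the definition of morphisms of $\cA_G$ and in the composition of bisections.
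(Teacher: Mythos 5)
Your proof is correct and follows essentially the same route as the paper's: both directions of the first assertion are handled identically in spirit, with your version being slightly more explicit in building the global bisection as a union of triple products $\psi_i\chi_i\phi_i^{-1}$, whereas the paper first invokes \cref{isomorphic_to_inclusion} to reduce all cover maps to inclusions and then simply takes the disjoint union of the resulting bisections $U_i \cong V_i$. For the second assertion your identification of $\mathrm{Aut}_{\cA_G}(U)$ with $F(G^U_U)$ as a group is also right.

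One citation is off target, though. The scissors automorphism group in the statement is $\aut_{\cG(\cA_G)}(\{U\})$, not the automorphism group inside $\cA_G$ itself, so you need the localisation functor $\cA_G \to \cG(\cA_G)$ to induce a bijection on these automorphism groups. Surjectivity is exactly what your ``only if'' direction gives (every span is equivalent, after refinement, to the one coming from the constructed bisection), but injectivity is not a consequence of \cref{embeddings_are_monomorphisms}, which is a statement about $\category{UG}(\cA)$. What you actually need is the elementary observation that if two bisections $\phi_1,\phi_2 \in F(G^U_U)$ become equal in $\cG(\cA_G)$, then by the equivalence of spans there is a cover $\{R_\ell \to U\}$ with $\phi_1\rho_\ell = \phi_2\rho_\ell$ for each $\ell$, and since the $r(\rho_\ell)$ partition $U$ into compact open sets this forces $\phi_1 = \phi_2$ as subsets of $G$. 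The paper's proof is also terse here (it is folded into the sentence about ``passing to and from refinements''), so this is a small precision point rather than a structural gap; still, replace the appeal to \cref{embeddings_are_monomorphisms} with this direct patching argument.
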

	
	\begin{proof}
		A scissors congruence $U \congto V$ is given by two covers $\{ U_i \to U \}$ and $\{ V_i \to V \}$, which without loss of generality are all inclusions, along with bisections $U_i \cong V_i$. Since the $U_i$ are disjoint from each other, we may take the (disjoint) union of the bisections $U_i \cong V_i$, giving a bisection $U \cong V$, in other words an isomorphism in $\cA_G$. These moves are all given by passing to and from refinements, so the scissors congruence itself is represented by this isomorphism. When $U = V$, by definition this is the same thing as an element of $F(G^U_U)$.
	\end{proof}
	
	Therefore \cref{thm:embeddings-induce-homology-isomorphisms-vol} and \cref{thm:embeddings-induce-homology-isomorphisms-zae} recover a slightly different version of \cite[Theorem F]{li2022}:
	\begin{theorem}
		For each inclusion of compact open sets $U \subseteq V$ in $G^{(0)}$, if $\cA_G$ is an EA-assembler and $\vol(U) > 0$, or $\cA_G$ is an S-assembler and $U \neq \varnothing$, the induced map of full groups
		\[ F(G^U_U) \longrightarrow F(G^V_V)\]
		induces an isomorphism on homology, and the same is true with abelian local coefficients.
	\end{theorem}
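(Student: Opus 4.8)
The plan is to realise the homomorphism $F(G^U_U)\to F(G^V_V)$ as the map on scissors automorphism groups induced by a scissors embedding $U\hookrightarrow V$ of objects of $\cA_G$, and then to quote \cref{thm:embeddings-induce-homology-isomorphisms-vol} and \cref{thm:embeddings-induce-homology-isomorphisms-zae}. The first step is the observation that if $U\subseteq V$ are compact open subsets of the locally compact Hausdorff space $G^{(0)}$, then $V\setminus U$ is again compact and open: it is open because $U$ is closed (a compact subset of a Hausdorff space), and it is compact because it is closed in the compact set $V$. Hence $U$ and $V\setminus U$ are objects of $\cA_G$, their inclusions into $V$ are disjoint morphisms (disjointness in $\cA_G$ being disjointness of images, by the discussion following \cref{isomorphic_to_inclusion}), and together they form a cover of $V$ in the sense of \cref{def:cover}. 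Such a cover is precisely a scissors congruence $U\sqcup(V\setminus U)\congto V$ in $\cG(\cA_G)$, equivalently a scissors embedding $e\colon U\hookrightarrow V$ in $\category{UG}(\cA_G)$ with complement $V\setminus U$, cf.\ \cref{scissors_embedding_category}.

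The second step is to identify the induced map $\aut_{\category{UG}(\cA_G)}(U)\to\aut_{\category{UG}(\cA_G)}(V)$ with the usual inclusion of topological full groups. By \cref{sc_iff_isomorphic} these automorphism groups are $F(G^U_U)$ and $F(G^V_V)$, an element being a compact open bisection with both source and range equal to $U$, resp.\ $V$. Unwinding the definition of the map induced by a scissors embedding, a bisection $\sigma$ is sent to $\sigma\sqcup\mathrm{id}_{V\setminus U}$, where $\mathrm{id}_{V\setminus U}\subseteq G^{(0)}\subseteq G$ is the identity bisection on the complement; this is exactly the extension-by-the-identity homomorphism appearing in \cite{li2022}.

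Finally I would check the hypotheses of the two theorems. When $\cA_G$ is an S-assembler, $U\neq\varnothing$ implies $U\not\simeq\varnothing$: by \cref{sc_to_empty} we have $U\simeq\varnothing$ exactly when $U$ admits an empty cover, and in $\cA_G$ the empty family covers $U$ only if $U=\varnothing$; so \cref{thm:embeddings-induce-homology-isomorphisms-zae} applies to $e$. When $\cA_G$ is an EA-assembler, $\vol(U)>0$ is exactly the hypothesis of \cref{thm:embeddings-induce-homology-isomorphisms-vol}. In either case $e$ induces an isomorphism on homology with constant and with abelian local coefficients, which is the claim. I expect the only genuinely non-formal point to be the bookkeeping in the second step: making sure that the map coming out of the Randal-Williams--Wahl machinery is literally the extension-by-identity map of full groups used in \cite{li2022}, and not a conjugate or twisted version of it. Everything else is a direct application of results already established.
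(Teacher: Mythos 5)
Your proposal is correct and is essentially what the paper has in mind: the paper presents this theorem as a direct corollary of \cref{thm:embeddings-induce-homology-isomorphisms-vol} and \cref{thm:embeddings-induce-homology-isomorphisms-zae} via the observation (\cref{sc_iff_isomorphic}) that scissors automorphism groups in $\cA_G$ are topological full groups, and your write-up fills in exactly the details (compactness and openness of $V\setminus U$, the resulting scissors embedding, the hypothesis checks) that the paper leaves implicit. Your closing worry about the map being a ``conjugate or twisted version'' of extension-by-identity is in fact harmless even without the unwinding you sketch, since both theorems explicitly state that the isomorphism on homology is independent of the choice of scissors embedding.
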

	
	\begin{remark}
		The assumptions in this theorem are different from those in \cite[Theorem F]{li2022}. Li assumes that $G$ is ``minimal and has comparison.'' This implies the existence of several measures (not necessarily a single one) on $G^{(0)}$ such that if $\mu(U) < \mu(V)$ for every $\mu$ then there is an inclusion $U \to V$, so when the set of such measures is nonempty, this is weaker than axiom \ref{enum:ass-vol-existence}. On the other hand, if the set of such measures is empty, then any nonempty $U$ and $V$ there is an inclusion $U \to V$, as in axiom \ref{enum:ass-zae-ae}.
		
		Of course, \cref{thm:embeddings-induce-homology-isomorphisms-vol} and \cref{thm:embeddings-induce-homology-isomorphisms-zae} can also be applied to any EA- or S-assembler, and not all of these arise from ample groupoids. For instance, the polytope assembler $\cX_G$ from \cref{polytope_assembler} fails to satisfy the property proven in \cref{sc_iff_isomorphic}, so it does not arise from an ample groupoid.
	\end{remark}
	
	It is also straightforward to show that the symmetric monoidal category $\cG(\cA_G)$ we constructed in \cref{scissors_congruence_groupoid} is equivalent to the permutative category $\mathfrak B_{G} = \mathfrak B_{G \curvearrowright G^{(0)}}$ that plays the central role in \cite{li2022}.
	Therefore \cref{cor:stable_homology_acyclic} gives us the following version of \cite[Theorem B]{li2022}.
	\begin{theorem}
		If $\cA_G$ is either an EA-assembler with some object with positive volume, or an S-assembler with some nonempty object, then
		\[ H_*(F(G)) \cong H_*(\Omega^\infty_0 K(\cA_G)). \]
	\end{theorem}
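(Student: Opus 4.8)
The plan is to deduce this from \cref{cor:stable_homology_acyclic} together with the identification of scissors automorphism groups with topological full groups recorded in \cref{sc_iff_isomorphic}, using the homological stability theorems \cref{thm:embeddings-induce-homology-isomorphisms-vol} and \cref{thm:embeddings-induce-homology-isomorphisms-zae} to reduce the statement about the colimit group $F(G)$ to one about a single compact open set. Throughout, recall that $\aut_{\cG(\cA_G)}$ and $\aut_{\category{UG}(\cA_G)}$ agree.

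\textbf{Step 1 (stable homology of a single object).} Fix a compact open set $P \subseteq G^{(0)}$ with $\vol(P) > 0$ in the EA case, or $P \not\simeq \varnothing$ in the S case; note that a nonempty compact open subset of $G^{(0)}$ admits no empty cover, so the second condition just says $P \neq \varnothing$, and such a $P$ exists by hypothesis. By \cref{cor:stable_homology_acyclic}, the canonical map $B\aut_{\cG(\cA_G)}(P) \to \Omega^\infty_{[P]} K(\cA_G)$ is acyclic, hence an isomorphism on homology (with any coefficients). Since $\Omega^\infty K(\cA_G)$ is a grouplike $E_\infty$-space, translation identifies all of its path components, so $\Omega^\infty_{[P]} K(\cA_G) \simeq \Omega^\infty_0 K(\cA_G)$. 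Combining this with the isomorphism $\aut_{\cG(\cA_G)}(P) \cong F(G^P_P)$ of \cref{sc_iff_isomorphic} gives $H_*(F(G^P_P)) \cong H_*(\Omega^\infty_0 K(\cA_G))$.

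\textbf{Step 2 (comparing $F(G^P_P)$ with $F(G)$) and conclusion.} Recall that $F(G)$ is the filtered colimit of the groups $F(G^U_U)$ over compact open $U \subseteq G^{(0)}$, with transition maps the extension-by-identity homomorphisms associated to inclusions $U \subseteq V$. By \cref{isomorphic_to_inclusion} such an inclusion is a morphism of $\cA_G$; since $U$ is compact open and $G^{(0)}$ is Hausdorff, $V \setminus U$ is again compact open and serves as a complement, so $U \hookrightarrow V$ represents a scissors embedding in $\category{UG}(\cA_G)$ whose induced map $\aut_{\cG(\cA_G)}(U) \to \aut_{\cG(\cA_G)}(V)$ is, under \cref{sc_iff_isomorphic}, exactly the extension-by-identity map $F(G^U_U) \to F(G^V_V)$. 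Call $U$ \emph{good} if $\vol(U) > 0$ (EA case), resp. $U \neq \varnothing$ (S case); by \cref{thm:embeddings-induce-homology-isomorphisms-vol}, resp. \cref{thm:embeddings-induce-homology-isomorphisms-zae}, the transition map attached to an inclusion of good sets is an isomorphism on homology (even with abelian local coefficients). The good $U$ are cofinal: in the S case any nonempty compact open works and $U_0 \cup P$ dominates $U_0$; in the EA case any compact open $U \supseteq P$ has $\vol(U) \geq \vol(P) > 0$ by monotonicity of $\vol$ under embeddings, and $U_0 \cup P$ again dominates $U_0$. Since homology commutes with filtered colimits, $H_*(F(G)) = \colim_U H_*(F(G^U_U))$, and restricting to the cofinal subsystem of good $U$ — on which every transition map is an isomorphism — gives $H_*(F(G)) \cong H_*(F(G^P_P))$. (If $G^{(0)}$ is already compact then $F(G) = F(G^{G^{(0)}}_{G^{(0)}})$ and $G^{(0)}$ is good by hypothesis, so this step is immediate.) Chaining with Step 1 yields $H_*(F(G)) \cong H_*(\Omega^\infty_0 K(\cA_G))$.

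\textbf{Main obstacle.} All of the heavy lifting is already done in the cited results; the one point requiring genuine care is the bookkeeping in Step 2 — verifying that, under the dictionary of \cref{isomorphic_to_inclusion} and \cref{sc_iff_isomorphic}, the extension-by-identity homomorphisms defining the colimit $F(G)$ really do correspond to the maps on automorphism groups induced by the scissors embeddings $U \hookrightarrow V$, together with checking that enough compact open $U$ are good to form a cofinal subsystem. The volume function needed in the EA case is supplied by the hypothesis that $\cA_G$ is an EA-assembler, so no additional geometric input is required.
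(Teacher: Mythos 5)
Your proof is correct and follows the route the paper intends: the paper asserts the theorem as a consequence of Corollary~\ref{cor:stable_homology_acyclic} but leaves the bookkeeping implicit, and your two steps supply exactly the missing details. Step~1 (single object) is the paper's own reduction via Corollary~\ref{cor:stable_homology_acyclic} and Lemma~\ref{sc_iff_isomorphic}. Step~2 is the genuinely needed check that the paper glosses over: you correctly observe that for compact open $U \subseteq V$ in a Hausdorff space $V\setminus U$ is again compact open, so the inclusion is a scissors embedding whose induced map on automorphism groups matches extension-by-identity; you then use the filteredness of the poset of compact opens, the cofinality of good ones (via $U_0 \cup P$), and the stability theorems to collapse the colimit onto $H_*(F(G^P_P))$. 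This is exactly what is required to handle the case of non-compact $G^{(0)}$, where $F(G)$ is defined as a colimit, and your proof is a valid and complete fleshing-out of the paper's one-line justification.
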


	\sloppy
	\printbibliography
\end{document}